\theoremstyle{plain}
\newtheorem{thm}{Theorem}[section]
\newtheorem{cor}[thm]{Corollary}
\newtheorem{lem}[thm]{Lemma}
\newtheorem{prop}[thm]{Proposition}
\newtheorem{dfn}[thm]{Definition}
\newtheorem*{acknowledgements}{Acknowledgements}
\newtheorem*{notations}{Notations and conventions}
\theoremstyle{remark}
\newtheorem{rmk}{Remark}[section]
\numberwithin{equation}{section}
\newcommand{\pref}{\prettyref}
\newcommand{\Ab}{\operatorname{Ab}}
\newcommand{\Acycl}{\operatorname{Acycl}}
\newcommand{\Add}{\operatorname{Add}}
\newcommand{\Alg}{\operatorname{Alg}}
\newcommand{\AR}{\operatorname{ar}}
\newcommand{\Art}{\operatorname{Art_\bfk}}
\newcommand{\can}{\operatorname{can}}
\newcommand{\Cat}{\operatorname{Cat}}
\newcommand{\coh}{\operatorname{coh}}
\newcommand{\Com}{\operatorname{Com}}
\newcommand{\Coder}{\operatorname{Coder}}
\newcommand{\CAlg}{\operatorname{CAlg}}
\newcommand{\Cone}{\operatorname{Cone}}
\newcommand{\ddef}{\operatorname{def}}
\newcommand{\Def}{\operatorname{Def}}
\newcommand{\Der}{\operatorname{Der}}
\newcommand{\Des}{\operatorname{Des}}
\newcommand{\dgCat}{\operatorname{dgCat}}
\newcommand{\DOT}{\operatorname{dot}}
\newcommand{\embr}{\operatorname{embr}}
\newcommand{\End}{\operatorname{End}}
\newcommand{\Ext}{\operatorname{Ext}}
\newcommand{\Gd}{\operatorname{Gd}}
\newcommand{\Ho}{\operatorname{Ho}}
\newcommand{\Hom}{\operatorname{Hom}}
\newcommand{\Hmo}{\operatorname{Hmo}}
\newcommand{\id}{\operatorname{id}}
\newcommand{\Ind}{\operatorname{Ind}}
\newcommand{\Inj}{\operatorname{Inj}}
\newcommand{\KS}{\operatorname{KS}}
\newcommand{\MC}{\operatorname{MC}}
\newcommand{\mmod}{\operatorname{mod}}
\newcommand{\Mod}{\operatorname{Mod}}
\newcommand{\Nat}{\operatorname{Nat}}
\newcommand{\Ob}{\operatorname{Ob}}
\newcommand{\PCom}{\operatorname{PCom}}
\newcommand{\PDes}{\operatorname{PDes}}
\newcommand{\Perf}{\operatorname{Perf}}
\newcommand{\Qch}{\operatorname{Qch}}
\newcommand{\QPr}{\operatorname{QPr}}
\newcommand{\Rng}{\operatorname{Rng}}
\newcommand{\Set}{\operatorname{Set}}
\newcommand{\sign}{\operatorname{sign}}
\newcommand{\Spec}{\operatorname{Spec}}
\newcommand{\Spf}{\operatorname{Spf}}
\newcommand{\Tor}{\operatorname{Tor}}
\newcommand{\Tw}{\operatorname{Tw}}
\newcommand{\cC}{\mathcal{C}}
\newcommand{\cH}{\mathcal{H}}
\newcommand{\cN}{\mathcal{N}}
\newcommand{\cP}{\mathcal{P}}
\newcommand{\cU}{\mathcal{U}}
\newcommand{\cV}{\mathcal{V}}
\newcommand{\cW}{\mathcal{W}}
\newcommand{\bN}{\mathbb{N}}
\newcommand{\bQ}{\mathbb{Q}}
\newcommand{\bZ}{\mathbb{Z}}
\newcommand{\bfc}{\mathbf{c}}
\newcommand{\bff}{\mathbf{f}}
\newcommand{\bfg}{\mathbf{g}}
\newcommand{\bfh}{\mathbf{h}}
\newcommand{\bfk}{\mathbf{k}}
\newcommand{\bfm}{\mathbf{m}}
\newcommand{\bfC}{\mathbf{C}}
\newcommand{\bfD}{\mathbf{D}}
\newcommand{\bfI}{\mathbf{I}}
\newcommand{\bfP}{\mathbf{P}}
\newcommand{\bfR}{\mathbf{R}}
\newcommand{\bfS}{\mathbf{S}}
\newcommand{\scrA}{\mathscr{A}}
\newcommand{\scrB}{\mathscr{B}}
\newcommand{\scrC}{\mathscr{C}}
\newcommand{\scrD}{\mathscr{D}}
\newcommand{\scrE}{\mathscr{E}}
\newcommand{\scrF}{\mathscr{F}}
\newcommand{\scrG}{\mathscr{G}}
\newcommand{\scrO}{\mathscr{O}}
\newcommand{\scrT}{\mathscr{T}}
\newcommand{\scrX}{\mathscr{X}}
\newcommand{\fraka}{\mathfrak{a}}
\newcommand{\frakb}{\mathfrak{b}}
\newcommand{\frakc}{\mathfrak{c}}
\newcommand{\fraki}{\mathfrak{i}}
\newcommand{\frakj}{\mathfrak{j}}
\newcommand{\frakm}{\mathfrak{m}}
\newcommand{\frakI}{\mathfrak{I}}
\newcommand{\frakS}{\mathfrak{S}}
\newcommand{\frakU}{\mathfrak{U}}
\newcommand{\frakZ}{\mathfrak{Z}}
\begin{document}

\title{Versal dg deformation of Calabi--Yau manifolds}
\author[H.~Morimura]{Hayato Morimura}
\address{SISSA, via Bonomea 265, 34136 Trieste, Italy}
\email{hmorimur@sissa.it}

\date{}
\pagestyle{plain}

\begin{abstract}
We prove the equivalence of
the deformation theory for a higher dimensional Calabi--Yau manifold
and
that for its dg category of perfect complexes
by giving a natural isomorphism of the deformation functors.
As a consequence,
the dg category of perfect complexes on a versal deformation of the original manifold provides a versal Morita deformation of its dg category of perfect complexes.
Besides the classical uniqueness up to \'etale neiborhood of the base,
we prove another sort of uniqueness of versal Morita deformations. 
Namely,
given a pair of derived-equivalent higher dimensional Calabi--Yau manifolds,
the dg categories of perfect complexes of their algebraic deformations over a common base,
which always exist,
become quasi-equivalent close to effectivizations.
Then the base change along the corresponding first order approximation yields quasi-equivalent versal Morita deformations.
We introduce the generic fiber of the versal Morita deformation as a Drinfeld quotient,
which is quasi-equivalent to the dg category of perfect complexes on the generic fiber of the versal deformation.
\end{abstract}

\maketitle

\section{Introduction}
The derived category of coherent sheaves on an algebraic variety is an intensively studied invariant
which carries rich information about geometric properties of the variety.
For instance,
given a smooth projective variety
either of whose canonical or anticanonical bundle is ample,
one can reconstruct the variety from its derived category
\cite{BO01}.
The condition guarantees the absence of nontrivial autoequivalences of the derived category.
Such autoequivalences often stem from the derived equivalence of nonisomorphic,
sometimes even nonbirational Calabi--Yau manifolds.
According to the homological mirror symmetry conjecture by Kontsevich,
derived-equivalent Calabi--Yau manifolds should share their mirror partner.
Usually,
the homological mirror symmetry is considered for families of K\"{a}hler manifolds.

A goal of this paper is to study the relationship between
deformations
and
the derived category
of a higher dimensional Calabi--Yau manifold.
There seems to be a consensus among some experts
that
deforming
an algebraic variety
and
its derived category
are essentially the same. 
Philosophically,
it is reasonable
since their Hochschild cohomology,
which in general is known to control deformations of a mathematical object,
are isomorphic.
However,
before
\cite{LV06}
we were not given the correct framework to study deformations of even
linear
nor
abelian categories.
To every second Hochschild cocycle on a smooth projective variety,
Toda associated the category of twisted coherent sheaves on the corresponding noncommutative scheme over the ring of dual numbers
\cite{Tod}.
In
\cite{DLL}
Dinh--Liu--Lowen showed that
Toda's construction indeed yields flat abelian first order deformations of the category of coherent sheaves on the variety in the sense of
\cite{LV06}.
We fill the gap between this point
and
the conclusion
stated below more precisely. 

Let
$X_0$
be a Calabi--Yau manifold of dimension more than two in the strict sense,
i.e.,
a smooth projective $\bfk$-variety with
$\omega_{X_0} \cong \scrO_{X_0}$
and
$H^i (\scrO_{X_0}) = 0$
for
$0 < i < \dim X_0$.
We denote by
$\Perf_{dg}(X_0)$
the dg category of perfect complexes on
$X_0$.
The deformation functor
\begin{align*}
\Def_{X_0}
\colon
\Art
\to
\Set
\end{align*}
sends each local artinian $\bfk$-algebra
$A \in \Art$
with residue field
$\bfk$
to the set of equivalence classes of $A$-deformations of
$X_0$
and each morphism
$B \to A$
in
$\Art$
to the map
$\Def_{X_0}(B) \to \Def_{X_0}(A)$
induced by the base change.
Consider another deformation functor
\begin{align*}
\Def^{mo}_{\Perf_{dg}(X_0)}
\colon
\Art
\to
\Set
\end{align*}
which sends each
$A \in \Art$
to the set of isomorphism classes of Morita $A$-deformations of
$\Perf_{dg}(X_0)$
and each morphism
$B \to A$
in
$\Art$
to the map
$\Def^{mo}_{\Perf_{dg}(X_0)}(B) \to \Def^{mo}_{\Perf_{dg}(X_0)}(A)$
induced by the derived dg functor
$- \otimes^L_B A$.
Our first main result claims that
the deformation theory for
$X_0$
is equivalent to
that for
$\Perf_{dg}(X_0)$
in the following sense.

\begin{thm} {\rm{(}\pref{thm:natisom}\rm{)}} \label{thm:natisomINTRO}
There is a natural isomorphism
\begin{align*}
\zeta \colon \Def_{X_0} \to \Def^{mo}_{\Perf_{dg}(X_0)}
\end{align*}
of deformation functors.
\end{thm}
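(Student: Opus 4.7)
The plan is to define $\zeta$ pointwise by $\zeta_A(X \to \Spec A) = \Perf_{dg}(X)$, equipped with its natural $A$-linear structure. First I would verify that this lands in $\Def^{mo}_{\Perf_{dg}(X_0)}(A)$ and is natural in $A$: this amounts to flat base change, giving $\Perf_{dg}(X) \otimes^L_A \bfk \simeq \Perf_{dg}(X_0)$ in the Morita homotopy category, together with functoriality of $\Perf_{dg}$ on equivalences of $A$-deformations and the projection formula $\Perf_{dg}(X_B \times_{\Spec B} \Spec A) \simeq \Perf_{dg}(X_B) \otimes^L_B A$ along $B \to A$. These are standard properties of perfect complexes in flat families.

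The heart of the argument is showing each $\zeta_A$ is bijective. I would handle the first-order case $A = \bfk[\epsilon]$ directly and then extend by induction along small extensions $B \twoheadrightarrow A$ in $\Art$. At first order, the strict Calabi--Yau vanishings $H^i(\scrO_{X_0}) = 0$ for $0 < i < \dim X_0$ together with HKR collapse $HH^2(\Perf_{dg}(X_0))$ onto $H^1(T_{X_0})$, so the two tangent spaces match. Surjectivity at first order is exactly the content of the Toda--Dinh--Liu--Lowen picture cited in the introduction: every class in $HH^2$ produces a flat abelian first-order deformation of $\coh(X_0)$; the Calabi--Yau vanishings force the associated Toda noncommutative scheme to be commutative, hence geometric, and passing to perfect complexes exhibits it as $\zeta_{\bfk[\epsilon]}$ applied to the corresponding first-order geometric deformation $X_\epsilon$. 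Injectivity at first order follows by matching Kodaira--Spencer classes on both sides.

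For the inductive step along $B \twoheadrightarrow A$ with kernel $I$ satisfying $\frakm_B I = 0$, the geometric side is formally smooth by Bogomolov--Tian--Todorov, and the obstruction to lifting a Morita $A$-deformation to $B$ lies in $HH^3(\Perf_{dg}(X_0)) \otimes_\bfk I$; under HKR this matches the geometric obstruction in $H^2(T_{X_0}) \otimes_\bfk I$, which vanishes. The torsors of lifts on both sides are then modelled by $H^1(T_{X_0}) \otimes_\bfk I$, and the first-order analysis identifies the induced maps between them. The main obstacle is the inductive surjectivity: from a Morita $B$-lift of the dg category associated to an $A$-deformation $X_A$, one must produce a geometric $B$-lift $X_B$ whose dg category of perfect complexes is Morita equivalent to it. The natural route is to extract an $A$-linear abelian deformation of $\coh(X_0)$ from the dg datum via a $t$-structure on the $\Ind$-completed derived category, apply a higher-order analogue of Dinh--Liu--Lowen, and verify order by order that the resulting noncommutative scheme is commutative thanks to the CY vanishings, while controlling all choices so as to assemble into a well-defined two-sided inverse to $\zeta$ compatible with base change.
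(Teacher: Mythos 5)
Your overall skeleton matches the paper's: define $\zeta_A$ by $(X_A,i_A)\mapsto(\Perf_{dg}(X_A),i_A^*)$, check naturality via derived base change (the paper uses \cite[Theorem 1.2]{BFN} through \cite[Corollary 5.7]{Coh} for exactly your projection-formula step), and prove bijectivity by induction along a tower of small extensions, using the Calabi--Yau vanishings to identify $HH^2$ with $H^1(\scrT)$. The first-order analysis you give is essentially correct and agrees with the paper.

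The genuine gap is the inductive surjectivity step, which you correctly flag as ``the main obstacle'' but do not resolve. The paper's mechanism is \emph{not} to extract an abelian deformation of $\coh(X_0)$ from the dg datum via a $t$-structure and a ``higher-order analogue of Dinh--Liu--Lowen''; no such analogue is established, and this sketch is not a proof. Instead, the paper works \emph{relatively}: given the $A_n$-deformation $Y_{A_n}$ produced by the inductive hypothesis, it invokes \pref{thm:mocdg}, a bijection $\Def^{mo}_{\Perf_{dg}(Y_{A_n})}(A_{n+1})\to\Def^{cdg}_{\Perf_{dg}(Y_{A_n})}(A_{n+1})$ between Morita and \emph{curved} dg deformations, whose proof rests on \cite[Propositions 3.7, 3.12]{KL} (injectivity from bounded-above cohomology, surjectivity from exhibiting a Morita-equivalent full subcategory on which the curvature class vanishes, namely $\frakm(\phi)=\overline{\Perf_{dg}(Y_{A_n,\phi})}\otimes_{A_{n+1}}A_n$). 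Curved dg deformations are classified by $H^2\bfC(\Perf_{dg}(Y_{A_n}))^{\oplus l_n}\cong HH^2(Y_{A_n}/A_n)^{\oplus l_n}=H^1(\scrT_{Y_{A_n}/A_n})^{\oplus l_n}$, and \pref{thm:Intertwine2} identifies $\Perf_{dg}(Y_{A_n,\phi})$ with the (maximal partial) dg deformation along $\embr_\delta(\phi)$; this is what makes every Morita lift geometric with no obstruction analysis at all. Two further inaccuracies in your sketch: the obstruction theory for \emph{Morita} deformations is not simply ``a class in $HH^3\otimes_\bfk I$'' (Morita deformations always admit a curved lift, and the subtlety is uncurving it, which is governed by the characteristic morphism, not by $HH^3$); and $H^2(T_{X_0})\otimes_\bfk I$ does not vanish for a strict Calabi--Yau manifold (e.g.\ $h^{2,2}=1$ for the quintic threefold) --- unobstructedness of $\Def_{X_0}$ is Bogomolov--Tian--Todorov, i.e.\ vanishing of the obstruction \emph{class}, not of the obstruction space, and $HH^3(\Perf_{dg}(X_0))$ contains $H^3(\scrO_{X_0})\oplus H^0(\wedge^3\scrT_{X_0})\neq 0$ besides $H^2(\scrT_{X_0})$, so the claimed HKR matching of obstruction spaces is also off.
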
  

In particular,
Morita deformations of
$\Perf_{dg}(X_0)$
is controlled by the Kodaira--Spencer differential graded Lie algebra.   
To obtain
$\zeta$
we need to consider certain maximal partial curved dg deformations of
$\Perf_{dg}(X_0)$.
Curved dg deformations of a dg category is a special case of
curved $A_\infty$-deformations of an $A_\infty$-category.
Let
$(\fraka, \mu)$
be a dg category over
$\bfR \in \Art$
with a square zero extension
\begin{align*}
0 \to \bfI \to \bfS \to \bfR \to 0.
\end{align*}
Choose generators
$\epsilon = (\epsilon_1, \ldots, \epsilon_l)$    
of
$\bfI$
regarded as a free $\bfR$-module of rank
$l$.
By
\cite[Theorem 4.11]{Low08}
there is a bijection
\begin{align} \label{eq:classifyingINTRO}
H^2 \bfC (\fraka)^{\oplus l} \to \Def^{cdg}_\fraka(\bfS), \
\phi \mapsto \fraka_\phi = (\fraka[\epsilon], \mu + \phi \epsilon)
\end{align}
where
$\phi$
is a Hochschild cocycle.
In other words,
curved dg $\bfS$-deformations of
$\fraka$
are classified by the direct sum of the second Hochschild cohomology.

Assume that
$\fraka$
is an $\bfR$-linear category.
We denote by
$\Com^+(\fraka)$
the dg category of bounded below complexes of $\fraka$-objects.
Then by
\cite[Theorem 4.8]{Low08}
the characteristic morphism
\begin{align*}
\chi^{\oplus l}_\fraka
\colon
H^\bullet \bfC(\fraka)^{\oplus l}
\to
\frakZ^\bullet K^+(\fraka)^{\oplus l}
\end{align*}
maps
$\phi \in Z^2 \bfC(\fraka)^{\oplus l}$
to obstructions against deforming objects of
$K^+(\fraka)$
to objects of
$K^+(\fraka_\phi)$.
In particular,
for each
$C \in K^+(\fraka)$
there exists a lift to
$K^+(\fraka_\phi)$
if and only if
$\chi^{\oplus l}_\fraka (\phi)_C = 0$.
The characteristic morphism
$\chi^{\oplus l}_\fraka$
is induced by a $B_\infty$-section
\begin{align*}
\embr_\delta \colon \bfC(\fraka)^{\oplus l} \to \bfC(\Com^+(\fraka))^{\oplus l}
\end{align*}
of the canonical projection,
which is a quasi-isomorphism of $B_\infty$-algebras
\cite[Theorem 3.22]{Low08}.
Hence
\pref{eq:classifyingINTRO}
induces another bijection
\begin{align}
H^2 \bfC (\fraka)^{\oplus l} \to \Def^{cdg}_{\Com^+(\fraka)}(\bfS), \
\phi
\mapsto
\Com^+(\fraka)_{\embr_\delta (\phi)}
=
(\Com^+(\fraka)[\epsilon], \embr_\delta(\mu + \phi \epsilon)).
\end{align}
From the proof of
\cite[Theorem 4.8]{Low08}
it follows that
$\chi^{\oplus l}_\fraka(\phi)_C = 0$
if and only if
the curvature element
$(\embr_\delta(\mu + \phi \epsilon))_{0, C}$
vanishes
for each
$C \in \Com^+(\fraka)$.
Hence any full dg subcategory of
$\Com^+(\fraka)$
spanned by object
$C$
with
$\chi^{\oplus l}_\fraka(\phi)_C = 0$
dg deforms along the restriction of
$\embr_\delta(\phi)$.

Let
$X$
be an $\bfR$-deformation of
$X_0$
and
$X_\phi$
its deformation along a cocycle
$\phi \in HH^2(X)^{\oplus l}
=
H^1(\scrT_{X/\bfR})^{\oplus l}$.
The above argument can be adapted to our setting
so that
for each
$E \in \Perf_{dg}(X)$
the curvature element vanishes
if and only if
there exists a lift of
$E \in \Perf(X)$
to
$\Perf(X_\phi)$.
With a little more effort
one can apply
\cite[Proposition 3.12]{KL}
to obtain

\begin{thm} {\rm{(}\pref{thm:mocdg}\rm{)}}
There is a bijection
\begin{align*}
\Def^{mo}_{\Perf_{dg}(X)}(\bfS) \to \Def^{cdg}_{\Perf_{dg}(X)}(\bfS)
\end{align*}
between
the set of isomorphism classes of Morita $\bfS$-deformations
and
the set of isomorphism classes of curved dg $\bfS$-deformations
of
$\Perf_{dg}(X)$.
\end{thm}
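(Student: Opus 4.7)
The plan is to build the bijection by coupling Lowen's classification \pref{eq:classifyingINTRO} of curved dg deformations with the natural isomorphism $\zeta$ of \pref{thm:natisomINTRO}, and then verify compatibility via an application of \cite[Proposition 3.12]{KL}. Explicitly, given a Morita $\bfS$-deformation of $\Perf_{dg}(X)$, I would first apply (the relative version of) $\zeta^{-1}$ to obtain a geometric $\bfS$-deformation $X_\phi$ of $X$, which via HKR corresponds to a Hochschild cocycle $\phi \in H^2\bfC(\Perf_{dg}(X))^{\oplus l}$ sitting inside $H^1(\scrT_{X/\bfR})^{\oplus l}$. I would then send this data to the curved dg deformation $\Perf_{dg}(X)_{\embr_\delta(\phi)}$ produced by \pref{eq:classifyingINTRO} with cocycle $\embr_\delta(\phi)$.

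For the inverse direction, given a curved dg deformation $\Perf_{dg}(X)_{\embr_\delta(\phi)}$, I would extract the full dg subcategory spanned by objects $E$ whose curvature element $(\embr_\delta(\mu + \phi\epsilon))_{0,E}$ vanishes; by the discussion preceding the theorem, these are precisely the perfect complexes on $X$ admitting a lift to $\Perf(X_\phi)$, so this subcategory is a genuine dg deformation. Invoking \cite[Proposition 3.12]{KL} then upgrades it to a Morita $\bfS$-deformation of $\Perf_{dg}(X)$, and the two constructions are mutually inverse because both are controlled by the same cocycle $\phi$ up to the quasi-isomorphism $\embr_\delta$.

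The main obstacle will be verifying the hypotheses of \cite[Proposition 3.12]{KL} in our setting, namely that the subcategory of uncurved objects is sufficiently large for its Morita envelope to reproduce $\Perf_{dg}(X_\phi)$. Concretely, one needs every perfect complex on $X_\phi$ to arise, up to quasi-isomorphism, as an $\bfS$-flat lift of a perfect complex on $X$; this follows from flatness of the deformation together with standard descent for perfect complexes on flat deformations, where the Calabi--Yau hypothesis kills the relevant higher obstructions contained in $H^{\geq 2}(\scrO_{X_0})$. Once this is in place, bijectivity is automatic since both assignments are governed by the same Hochschild cocycle $\phi$ through the $B_\infty$-section $\embr_\delta$.
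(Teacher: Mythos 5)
Your proposal has three genuine problems.

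First, it is circular. You propose to start from a Morita $\bfS$-deformation and apply ``(the relative version of) $\zeta^{-1}$'' to produce a geometric deformation $X_\phi$. But the natural isomorphism $\zeta$ of \pref{thm:natisom} is proved \emph{after} and \emph{by means of} \pref{thm:mocdg}: the surjectivity of $\zeta_A$ in that proof is exactly where \pref{thm:mocdg} (together with \pref{thm:Intertwine2}) is invoked to show that an abstract Morita deformation is geometric. You cannot assume the existence of $\zeta^{-1}$ here. The paper instead uses the canonical map $\Def^{mo}_{\Perf_{dg}(X)}(\bfS)\to H^2\bfC(\Perf_{dg}(X))^{\oplus l}$ of \cite[Proposition 3.3]{KL} (via an $h$-flat resolution), composed with the inverse of the classification bijection \pref{eq:classifying}; no geometric input is needed to define the map.

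Second, your verification of the hypothesis of \cite[Proposition 3.12]{KL} is wrong. You claim that flatness plus the Calabi--Yau condition ``kills the relevant higher obstructions contained in $H^{\geq 2}(\scrO_{X_0})$,'' so that every perfect complex lifts. The obstruction to deforming $E\in\Perf(X)$ to $\Perf(X_\phi)$ is $\chi^{\oplus l}(\phi)_E\in\Ext^2_X(E,E)^{\oplus l}$, not a class in $H^{\geq 2}(\scrO_{X_0})$, and it does \emph{not} vanish in general: the paper's own remark after \pref{thm:Intertwine2} gives the Fermat quintic, where structure sheaves of lines do not deform, so $\frakm(\phi)\subsetneq\Perf_{dg}(X)$. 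What \cite[Proposition 3.12]{KL} actually requires is only that the full subcategory of unobstructed objects be \emph{Morita equivalent} to $\Perf_{dg}(X)$, and the paper secures this by taking $\frakm(\phi)=\overline{\Perf_{dg}(X_\phi)}\otimes_\bfS\bfR$ and invoking \cite[Theorem 1.2]{BFN} to identify $\Perf_{dg}(X_\phi)\otimes^L_\bfS\bfR$ with $\Perf_{dg}(X)$ up to Morita equivalence; every object of $\frakm(\phi)$ lifts by construction. Your argument needs this replacement, not a vanishing statement.

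Third, you do not address injectivity. Saying ``bijectivity is automatic since both assignments are governed by the same cocycle'' begs the question of why distinct Morita deformations cannot map to the same cocycle class. The paper proves injectivity separately via \cite[Proposition 3.7]{KL}, whose hypothesis (that $\Perf_{dg}(X)$ has bounded above cohomology) is checked with the local-to-global Ext spectral sequence. Some such argument is required.
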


In particular,
giving curved dg $\bfS$-deformations of
$\Perf_{dg}(X)$
is equivalent to giving its Morita $\bfS$-deformations.
Consider the dg category
$\Perf_{dg}(X_\phi)$
of perfect complexes on
$X_\phi$.
It defines a Morita $\bfS$-deformation of
$\Perf_{dg}(X)$.
Let
\begin{align*}
\frakm(\phi)
=
\Perf_{dg}(X_\phi) \otimes^L_\bfS \bfR
\end{align*}
be the image of the derived base change.
Then any $h$-flat resolution
$\overline{\Perf_{dg}(X_\phi)}$
defines a dg deformation
$\overline{\Perf_{dg, \Gamma}(X_\phi)}$
of
$\frakm(\phi)$,
where
$\overline{\Perf_{dg, \Gamma}(X_\phi)}$
is the full dg subcategory of
$\overline{\Perf_{dg}(X_\phi)}$
consisting the collection of one chosen lift of each object in
$\frakm(\phi)$.
There is an isomorphism
\begin{align*}
HH^2(X)^{\oplus l}
\cong
H^2 \bfC(\Perf_{dg}(X))^{\oplus l}
\end{align*}
induced by the $B_\infty$-section
\begin{align*}
\embr_\delta
\colon
\bfC(\Inj(\Qch(X)))^{\oplus l}
\to
\bfC(\Com^+(\Inj(\Qch(X))))^{\oplus l},
\end{align*}
where
$\Inj(\Qch(X)) \subset \Qch(X)$
is the full $\bfR$-linear subcategory of injective objects.
We denote by
$\embr_\delta(\phi)$
the image of
$\phi$
under the isomorphism,
which defines another dg $\bfS$-deformation
$\frakm(\phi)_{\embr_\delta(\phi)}$
of
$\frakm(\phi)$
along
$\embr_\delta(\phi)$.
Deformations
and
taking the dg category of perfect complexes
intertwine in the following sense.

\begin{thm} {\rm{(}\pref{thm:Intertwine2}\rm{)}}
There is an isomorphism
\begin{align*}
\overline{\Perf_{dg, \Gamma} (X_\phi)}
\simeq
\frakm(\phi)_{\embr_\delta(\phi)}
\end{align*}
of dg $\bfS$-deformations of
$\frakm(\phi)$.
In particular,
the Morita $\bfS$-deformation
$\Perf_{dg} (X_\phi)$
defines a maximal partial dg $\bfS$-deformaiton of
$\Perf_{dg} (X)$
along
$\embr_\delta(\phi)$.
\end{thm}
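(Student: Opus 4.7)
The strategy is to recognize both dg $\bfS$-deformations as two incarnations of a single curved dg deformation along $\embr_\delta(\phi)$, each restricted to the subcategory of objects on which the curvature happens to vanish. On the right-hand side this is essentially the definition: $\frakm(\phi)_{\embr_\delta(\phi)}$ is the dg deformation classified by the cocycle $\embr_\delta(\phi)$ via the bijection of \cite[Theorem 4.11]{Low08}. On the left-hand side, \pref{thm:mocdg} attaches to the Morita $\bfS$-deformation $\Perf_{dg}(X_\phi)$ a curved dg $\bfS$-deformation of $\Perf_{dg}(X)$, and after passing to an $h$-flat resolution the derived base change $-\otimes^L_\bfS \bfR$ is computed strictly, so that $\frakm(\phi) = \overline{\Perf_{dg}(X_\phi)} \otimes_\bfS \bfR$ as a full subcategory of $\Perf_{dg}(X)$.

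First I would make explicit, as in the proof of \cite[Theorem 4.8]{Low08}, that the curvature of the deformation along $\embr_\delta(\phi)$ at an object $C$ is the element $(\embr_\delta(\mu+\phi\epsilon))_{0,C}$, which vanishes precisely when $C$ lifts to $\Com^+(\Inj(\Qch(X_\phi)))$. The chosen lifts comprising $\overline{\Perf_{dg,\Gamma}(X_\phi)}$ are therefore exactly the objects of $\frakm(\phi)$ on which $\embr_\delta(\phi)$ restricts to a genuine (uncurved) dg deformation. Second I would invoke the uniqueness part of the classifying bijection: since both $\overline{\Perf_{dg,\Gamma}(X_\phi)}$ and $\frakm(\phi)_{\embr_\delta(\phi)}$ reduce modulo $\bfI$ to $\frakm(\phi)$, and since their classifying cocycles in $H^2\bfC(\frakm(\phi))^{\oplus l}$ are both obtained from $\phi$ via the $B_\infty$-section $\embr_\delta$, they represent the same class and hence define isomorphic dg $\bfS$-deformations.

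The main obstacle is pinning down that the classifying cocycle of $\overline{\Perf_{dg,\Gamma}(X_\phi)}$ is precisely $\embr_\delta(\phi)$, rather than merely a cohomologous representative. This requires a careful comparison of Hom-complex differentials: on the left they arise from genuine perfect complexes over $X_\phi$ through the $h$-flat resolution, while on the right they arise from evaluating the explicit $B_\infty$-section of \cite[Theorem 3.22]{Low08} on Hochschild cochains. The reconciliation runs through the identification $HH^2(X)^{\oplus l} \cong H^2\bfC(\Perf_{dg}(X))^{\oplus l}$ together with the compatibility of $\embr_\delta$ with the characteristic morphism $\chi^{\oplus l}$. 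Once this is done, the maximality statement follows at once: $\frakm(\phi)$ is exactly the full subcategory of $\Perf_{dg}(X)$ where $\chi^{\oplus l}$ vanishes at $\phi$, and $\overline{\Perf_{dg,\Gamma}(X_\phi)}$ realizes a partial dg $\bfS$-deformation supported on every object admitting one, with no room for extension.
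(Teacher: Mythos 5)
Your overall architecture matches the paper's: both deformations live on the common quiver $\frakm(\phi)[\epsilon]$, so the theorem reduces to showing that the two Hochschild cocycles classifying the respective dg structures are cohomologous, whereupon the classification bijection of \cite[Theorem 4.11]{Low08} yields the isomorphism, and the maximality statement follows from the identification of $\frakm(\phi)$ with the unobstructed objects. You also correctly isolate the crux: identifying the cocycle that classifies $\overline{\Perf_{dg,\Gamma}(X_\phi)}$ as a deformation of $\frakm(\phi)$.

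However, that crux is precisely where your proposal stops, and the tool you name for closing it would not suffice. You propose to reconcile the two dg structures via the isomorphism $HH^2(X)^{\oplus l}\cong H^2\bfC(\Perf_{dg}(X))^{\oplus l}$ together with ``the compatibility of $\embr_\delta$ with the characteristic morphism $\chi^{\oplus l}$.'' But the characteristic morphism is the composition of $\embr_\delta$ with the projection $\pi_0$ onto the zero (curvature) part; it only controls the obstruction components $(\embr_\delta(\mu+\phi\epsilon))_{0,C}$, i.e.\ which objects lift. Two dg deformations of $\frakm(\phi)$ can both have vanishing curvature and the same reduction and still be non-isomorphic, because they may differ in the arity-one and arity-two components of the deforming cocycle. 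What actually closes the gap in the paper is an explicit computation in the style of \cite[Theorem 4.15]{Low08}: the dg structure on $\overline{\Perf_{dg,\Gamma}(X_\phi)}$ is induced by $\embr_{\delta+\delta'\epsilon}(m+\phi\epsilon)$, where $\delta+\delta'\epsilon$ are the predifferentials of the chosen lifts, and one shows
\begin{align*}
\embr_{\delta+\delta'\epsilon}(m+\phi\epsilon)
=
\embr_\delta(m)+\embr_\delta(\phi)\epsilon
+
d_{\embr_\delta(m)}(\delta'_1)\epsilon_1+\cdots+d_{\embr_\delta(m)}(\delta'_l)\epsilon_l ,
\end{align*}
so that the discrepancy is an explicit coboundary (here cohomologous really is enough; you need not force the cocycle to be literally $\embr_\delta(\phi)$). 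Without this computation, or some substitute that compares the full cocycles rather than their curvature parts, the proof is incomplete.
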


This is the key to prove
\pref{thm:natisomINTRO}. 
Unwinding Toda's construction,
from
\cite[Theorem 5.12]{DLL}
we obtain an equivalence
$\Qch(X)_\phi \simeq \Qch(X_\phi)$
of Grothendieck abelian categories,
where
$\Qch(X)_\phi$
is the flat abelian $\bfS$-deformation of
$\Qch(X)$
along
$\phi \in Z^2 \bfC_{ab}(\Qch(X))^{\oplus l}$.
Here,
we use the same symbol
$\phi$
to denote the image under the isomorphism
\begin{align*}
HH^2(X)^{\oplus l}
\cong
H^2 \bfC_{ab}(\Qch(X))^{\oplus l}.
\end{align*}
Via the induced equivalence 
\begin{align*}
D_{dg}(\Qch(X_\phi))
\simeq 
D_{dg}(\Qch(X)_\phi)
\end{align*}
we regard
$\Perf_{dg}(X_\phi)$
as the full dg subcategory of compact objects of
$D_{dg}(\Qch(X)_\phi)$.
Based on the idea in the proof of
\cite[Theorem 4.15]{Low08},
we compare the dg structure on
$\overline{\Perf_{dg, \Gamma} (X_\phi)}$
with that on
$\frakm(\phi)_{\embr_\delta(\phi)}$.

Working with Morita deformations of
$\Perf_{dg}(X_0)$,
by
\cite[Corollary 5.7]{Coh}
we may apply
\cite[Theorem 1.2]{BFN}
to obtain reductions.
In particular,
given a deformation
$(X_B, i_B) \in \Def_{X_0}(B)$
and
a morphism
$B \to A$
in
$\Art$,
there is a Morita equivalence
\begin{align*}
\Perf_{dg}(X_B) \otimes^L_B A
\simeq_{mo}
\Perf_{dg}(X_B) \otimes^L_B \Perf_{dg}(A)
\simeq_{mo}
\Perf_{dg}(X_A)
\end{align*}
of $A$-linear dg categories,
where
$- \otimes^L_B -$
is the derived pointwise tensor product of dg categories. 
Further application of
\cite[Theorem 1.2]{BFN}
shows that
any universal formal family for
$\Def^{mo}_{\Perf_{dg}(X_0)}$
is effective.
If we ignore the set theoretical issues,
the deformation functor
$\Def^{mo}_{\Perf_{dg}(X_0)}$
can naturally be extended to a functor defined on the category
$\Alg^{aug}(\bfk)$
of augmented noetherian $\bfk$-algebras.
Although we do not know
whether it would be locally of finite presentation
(colimit preserving),
one can always construct a versal Morita deformation  
via geometric realization in the following sense.

\begin{cor} {\rm{(}Corollary \pref{cor:algebraizable}\rm{)}} \label{cor:algebraizableINTRO}
Any effective universal formal family for
$\Def^{mo}_{\Perf_{dg}(X_0)}$
is algebraizable.
In particular,
an algebraization is given by
$\Perf_{dg}(X_S)$
where
$(\Spec S, s, X_S)$
is a versal deformation of
$X_0$. 
\end{cor}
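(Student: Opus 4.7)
The plan is to exploit \pref{thm:natisomINTRO} in order to transport the classical algebraization of a versal deformation of $X_0$ into the Morita deformation setting. By the Schlessinger--Grothendieck existence theory for smooth projective varieties, $\Def_{X_0}$ admits a versal deformation $(\Spec S, s, X_S)$, and its formal completion $\widehat{X}_S = \{X_S \otimes_S S/\frakm_s^n\}_n$ is an effective universal formal family of $\Def_{X_0}$ that is algebraized by $X_S$ itself.

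Transporting this system along the natural isomorphism $\zeta$, one obtains a compatible system
\[
\zeta(\widehat{X}_S) \;=\; \{\Perf_{dg}(X_S \otimes_S S/\frakm_s^n)\}_n,
\]
which is an effective universal formal family of $\Def^{mo}_{\Perf_{dg}(X_0)}$. I would then propose $\Perf_{dg}(X_S)$ as its algebraization and verify the base change equivalence
\[
\Perf_{dg}(X_S) \otimes^L_S S/\frakm_s^n \;\simeq_{mo}\; \Perf_{dg}(X_S \otimes_S S/\frakm_s^n),
\]
which is precisely the chain of Morita equivalences spelled out in the Introduction following \cite[Corollary~5.7]{Coh} and \cite[Theorem~1.2]{BFN}, after rewriting the derived base change as the pointwise tensor product $\Perf_{dg}(X_S) \otimes^L_S \Perf_{dg}(S/\frakm_s^n)$. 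Since any two effective universal formal families of $\Def^{mo}_{\Perf_{dg}(X_0)}$ must be isomorphic (inherited from the analogous fact for $\Def_{X_0}$ via $\zeta$), the single object $\Perf_{dg}(X_S)$ algebraizes every such family, giving both the algebraizability claim and the explicit description.

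The main obstacle is making the base change equivalence rigorous and natural in $n$. For this one must identify the pointwise derived tensor product of dg categories underlying the definition of $\Def^{mo}$ with the geometric base change of perfect complexes, which in turn requires checking that $X_S \to \Spec S$ is smooth and proper so that $\Perf_{dg}(X_S)$ is a smooth and proper $S$-linear dg category to which \cite[Theorem~1.2]{BFN} applies. Once this compatibility is secured, matching $\{\Perf_{dg}(X_S) \otimes^L_S S/\frakm_s^n\}_n$ with $\zeta(\widehat{X}_S)$ and invoking the universal property completes the argument.
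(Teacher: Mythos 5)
Your proposal is correct and follows essentially the same route as the paper: the paper first establishes (in the preceding corollaries) that $\Def^{mo}_{\Perf_{dg}(X_0)}$ is prorepresented by $R$ and that its universal formal family $\tilde{\xi}$ is effective via $\Perf_{dg}(X_R)$ and the base change theorem of Ben-Zvi--Francis--Nadler, and then observes that since the reductions of $X_S$ give a system isomorphic to $\xi$, the reductions of $\Perf_{dg}(X_S)$ give a system isomorphic to $\tilde{\xi}$ by the natural isomorphism $\zeta$. Your explicit verification of the Morita base change $\Perf_{dg}(X_S)\otimes^L_S S/\frakm_s^n \simeq_{mo} \Perf_{dg}(X_S\otimes_S S/\frakm_s^n)$ is exactly the ingredient the paper delegates to \cite[Theorem 1.2]{BFN} together with \cite[Corollary 5.7]{Coh}.
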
  

The versal Morita deformation
$\Perf_{dg}(X_S)$
may be regarded as a family of Morita deformations of
$\Perf_{dg}(X_0)$.
More generally,
for such a family determined by an enough nice $S$-scheme
$X_S$
we introduce its generic fiber as follows.

\begin{dfn}
Let
$X_S$
be a smooth separated scheme
over a noetherian connected regular affine $\bfk$-scheme
$\Spec S$
whose closed points are
$\bfk$-rational.
Then the
\emph{dg categorical generic fiber}
of
$\Perf_{dg}(X_S)$
is the Drinfeld quotient
\begin{align*}
\Perf_{dg}(X_S) / \Perf_{dg}(X_S)_0,
\end{align*}
where
$\Perf_{dg}(X_S)_0 \subset \Perf_{dg}(X_S)$
is the full dg subcategory of perfect complexes with $S$-torsion cohomology.
\end{dfn}

We impose a technical assumption on
$S$
to include also the case
where
$S$
is a formal power series ring.
The Drinfeld quotient is a natural dg enhancement of the categorical generic fiber introduced in
\cite{Morb},
which is in turn based on the categorical general fiber by Huybrechts--Macr\`i--Stellari
\cite{HMS11}.
Taking the generic fiber
and
the dg category of perfect complexes 
intertwine in the following sense.

\begin{prop} {\rm{(}Proposition \pref{prop:dgCGF}\rm{)}}
Let
$X_S$
be a smooth separated scheme
over a noetherian connected regular affine $\bfk$-scheme
$\Spec S$
whose closed points are
$\bfk$-rational.
Then there is a quasi-equivalence
\begin{align*}
\Perf_{dg}(X_S) / \Perf_{dg}(X_S)_0 \simeq_{qeq} \Perf_{dg}(X_{Q(S)})
\end{align*}
where
$Q(S)$
is the quotient field of
$S$
and
$X_{Q(S)}$
is the generic fiber of
$X_S$.
\end{prop}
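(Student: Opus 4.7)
The plan is to produce a natural dg functor from $\Perf_{dg}(X_S)$ to $\Perf_{dg}(X_{Q(S)})$ via derived restriction to the generic fiber, to check that it factors through the Drinfeld quotient by $\Perf_{dg}(X_S)_0$, and then to verify that the induced functor is a quasi-equivalence by proving quasi-full faithfulness (via flat base change for perfect complexes) and essential surjectivity (via a spreading-out argument combined with Thomason--Trobaugh's localization theorem).

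First I would set up the comparison. Let $j\colon X_{Q(S)} \hookrightarrow X_S$ denote the inclusion of the generic fiber, which is flat because $S \to Q(S)$ is a flat localization. Derived pullback along $j$ defines a dg functor
\begin{align*}
F \colon \Perf_{dg}(X_S) \to \Perf_{dg}(X_{Q(S)}).
\end{align*}
For any $E \in \Perf_{dg}(X_S)_0$, one has $H^i(j^* E) \cong (H^i E) \otimes_S Q(S) = 0$, since the cohomology of $E$ is $S$-torsion and $Q(S)$ inverts every nonzero element of $S$. Hence $F$ sends objects of $\Perf_{dg}(X_S)_0$ to contractible ones, so by the universal property of the Drinfeld quotient it descends to
\begin{align*}
\bar F \colon \Perf_{dg}(X_S)/\Perf_{dg}(X_S)_0 \to \Perf_{dg}(X_{Q(S)}).
\end{align*}

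For quasi-full faithfulness, take $E, F \in \Perf_{dg}(X_S)$. Since $E$ is perfect and $j$ is flat, flat base change yields
\begin{align*}
\operatorname{RHom}_{X_{Q(S)}}(j^* E, j^* F) \simeq \operatorname{RHom}_{X_S}(E, F) \otimes_S Q(S).
\end{align*}
On the other hand, the Hom complex in $\Perf_{dg}(X_S)/\Perf_{dg}(X_S)_0$ is computed by a filtered colimit over roofs $E' \to E$ whose cone lies in $\Perf_{dg}(X_S)_0$. Passing to cohomology, inverting such roofs amounts to localizing the finitely generated $S$-module $\operatorname{Ext}^i_{X_S}(E, F)$ at the multiplicative set $S \setminus \{0\}$, which reproduces the tensor product $\operatorname{Ext}^i_{X_S}(E, F) \otimes_S Q(S)$. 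Matching the two descriptions gives quasi-full faithfulness of $\bar F$.

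Essential surjectivity is the technical heart and the anticipated main obstacle, due to the idempotent completion that typically appears in Thomason--Trobaugh. Given $P \in \Perf(X_{Q(S)})$, a spreading-out argument using noetherianness of $X_S$ and finite presentation of $P$ produces a quasi-compact open $U \subset \Spec S$ containing the generic point and $\tilde P \in \Perf(X_U)$ with $\tilde P|_{X_{Q(S)}} \cong P$. Thomason--Trobaugh's theorem then identifies $\Perf(X_U)$ with the idempotent completion of $\Perf(X_S)/\Perf_Z(X_S)$, where $Z = \Spec S \setminus U$; hence $\tilde P$ is a direct summand of $\tilde P^+|_{X_U}$ for some $\tilde P^+ \in \Perf(X_S)$. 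To bypass the idempotent completion, I would exploit the further localization from $U$ to the generic point: the complementary summand persists to every smaller $U' \subset U$, and after the cofiltered passage corresponding to inverting all of $S\setminus\{0\}$, the splitting of $\tilde P^+|_{X_{Q(S)}} = P \oplus Q'$ in $\Perf(X_{Q(S)})$ lifts through the already-established fully faithful $\bar F$ to an orthogonal pair of idempotents in the quotient. The regularity of $S$ and smoothness of $X_S$ should ensure that the idempotent completion is absorbed by this limit, so that $P$ is in the essential image of $\bar F$, completing the proof.
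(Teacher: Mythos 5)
Your route is genuinely different from the paper's. The paper's proof is essentially a concatenation of citations: Drinfeld's theorem identifying the homotopy category of the Drinfeld quotient with the Verdier quotient $\Perf(X_S)/\Perf(X_S)_0$, the author's earlier triangulated equivalence $\Perf(X_S)/\Perf(X_S)_0 \simeq \Perf(X_{Q(S)})$ from the paper on the categorical generic fiber, and the Canonaco--Neeman--Stellari uniqueness of dg enhancements to promote that equivalence to a quasi-equivalence. You instead construct the comparison dg functor directly. Your first two steps are sound in outline: the descent of $j^*$ through the Drinfeld quotient is correct (via the universal property in the homotopy category of dg categories), and full faithfulness via flat base change plus the calculus-of-fractions description of morphisms in the Verdier quotient is the standard argument (note only that $\Ext^i_{X_S}(E,F)$ need not be finitely generated over $S$ since $X_S$ is not assumed proper; this is harmless because flat base change gives $\Ext^i_{X_S}(E,F)\otimes_S Q(S)\cong\Ext^i_{X_{Q(S)}}(j^*E,j^*F)$ without finiteness).

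The genuine gap is in essential surjectivity. After spreading out and Thomason--Trobaugh you have $P$ as a direct summand of $\tilde P^{+}|_{X_{Q(S)}}$ for some $\tilde P^{+}\in\Perf(X_S)$, and you propose to conclude by lifting the corresponding idempotent through the fully faithful $\bar F$ to an idempotent endomorphism of $\tilde P^{+}$ in the quotient. But an idempotent in the Verdier quotient splits only if the quotient is idempotent complete, and that is precisely what is in question: Verdier quotients of idempotent complete triangulated categories are in general not idempotent complete, so the step ``the idempotent completion is absorbed by this limit'' asserts the conclusion rather than proving it. Regularity does close the gap, but by a mechanism you have not supplied. Two standard options: (i) by Thomason's classification of dense triangulated subcategories, $\Perf(X_S)/\Perf(X_S)_0$ sits densely in its idempotent completion $\Perf(X_{Q(S)})$ and is determined by the image of $K_0(\Perf(X_S))\to K_0(\Perf(X_{Q(S)}))$; regularity identifies $K_0(\Perf(-))$ with $G_0(-)$, and the $G$-theory localization sequence together with the colimit over opens $U\subset\Spec S$ gives surjectivity of this map, hence essential surjectivity on the nose; or (ii) bypass Thomason--Trobaugh entirely: extend $P$ to an object of $D^b(\coh X_U)$ for some open $U$, then to an object of $D^b(\coh X_S)$ using extension of coherent sheaves on the noetherian scheme $X_S$, and observe that regularity of $X_S$ makes any such bounded coherent complex perfect, so $P$ lifts directly to $\Perf(X_S)$ and no idempotent completion ever enters. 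Without one of these inputs the proof of essential surjectivity is incomplete.
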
  

Another goal of this paper is to show the uniqueness of versal Morita deformations
with respect to geometric realizations.
Recall that
up to \'etale neighborhood of the base
versal deformations 
of
$X_0$
are unique.
Namely,
if
$(\Spec S, s, X_S)$
$(\Spec S^\prime, s^\prime, X_{S^\prime})$
are two versal deformations of
$X_0$,
then there is another versal deformation
$(\Spec S^{\prime \prime}, s^{\prime \prime}, X_{S^{\prime \prime}})$
such that
$(\Spec S^{\prime \prime}, s^{\prime \prime})$
is an \'etale neighborhood of
$s, s^\prime$
in
$\Spec S, \Spec S^\prime$
respectively
and
$X_{S^{\prime \prime}}$
is the pullback along the corresponding \'etale morphisms.
The deformation functor
$\Def_{X_0}$
has an effective universal formal family
$(R, \xi)$,
where
$R$
is a regular complete local noetherian $\bfk$-algebra.
Choose an isomorphism 
$R \cong \bfk \llbracket t_1, \ldots, t_d \rrbracket$
and
let 
$T = \bfk [t_1, \ldots, t_d]$
with
$d = \dim_\bfk \text{H}^1 (\scrT_{X_0})$.
There is a filtered inductive system
$\{ R_i \}_{i \in I}$
of finitely generated $T$-subalgebras of
$R$
whose colimit is
$R$.
Then
$(\Spec S, s)$
is an \'etale neighborhood of
$t$
in
$\Spec T$
with
$t$
corresponding to the maximal ideal
$(t_1, \ldots, t_d) \subset T$,
and
$X_S$
is the pullback of a deformation
$X_{R_j}$ 
of
$X_0$
along a first order approximation
$R_j \to S$
of
$R_j \hookrightarrow R$
for sufficiently large
$j \in I$.
Hence the ambiguity of
$X_S$ 
stems from the choice of
$j \in I$,
besides the choice of \'etale neighborhoods.

In
\cite{Mora}
the author constructed smooth projective versal deformations
$X_S, X^\prime_S$
of
$X_0, X^\prime_0$
over a common nonsingular affine variety
$\Spec S$,
while deforming simultaneously the Fourier--Mukai kernel connecting deformations of
$X_0, X^\prime_0$.
By
Corollary
\pref{cor:algebraizableINTRO}
we have two versal Morita deformations
$\Perf_{dg}(X_S), \Perf_{dg}(X^\prime_S)$
of
$\Perf_{dg}(X_0)$.
\pref{thm:natisomINTRO}
together with the construction of versal deformations
suggests that
$\Perf_{dg}(X_S), \Perf_{dg}(X^\prime_S)$
should be determined only by
quasi-equivalent universal formal families
and
the same sufficiently large index
$j \in I$.
From this observation we arrive at our second main result.

\begin{thm} {\rm{(}\pref{thm:inherit}\rm{)}} \label{thm:inheritINTRO}
Let
$X_0, X^\prime_0$
be derived-equivalent Calabi--Yau manifolds of dimension more than two
and
$\cP_0 \in D^b(X_0 \times_\bfk X^\prime_0)$
the Fourier--Mukai kernels.
Then there exists an index
$j \in I$
such that
for all
$k \geq j$
the integral functors
\begin{align*}
\Phi_{\cP_k}
\colon
\Perf(X_{R_k})
\to
\Perf(X^\prime_{R_k})
\end{align*}
defined by deformations
$\cP_k$
of
$\cP_0$
are equivalences of triangulated categories of perfect complexes.
In particular,
the dg categories
$\Perf_{dg}(X_{R_k}), \Perf_{dg}(X^\prime_{R_k})$
of perfect complexes are quasi-equivalent.
\end{thm}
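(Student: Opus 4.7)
The plan is to construct compatible lifts $\cP_k$ of the Fourier--Mukai kernel $\cP_0$ over each $R_k$ for sufficiently large $k$, starting from the simultaneous deformation over the common base $\Spec S$ produced in \cite{Mora} and descending along the filtered inductive system $\{R_i\}_{i \in I}$. From \cite{Mora} one obtains smooth projective deformations $X_S, X'_S$ of $X_0, X'_0$ over $\Spec S$, together with a perfect complex $\cP_S$ on $X_S \times_S X'_S$ lifting $\cP_0$ such that $\Phi_{\cP_S}\colon \Perf(X_S) \to \Perf(X'_S)$ is an equivalence; by the setup recalled in the introduction, for sufficiently large $j \in I$ the $S$-schemes $X_S, X'_S$ arise as base changes along the first-order approximation $R_j \to S$ of deformations $X_{R_j}, X'_{R_j}$ over $R_j$.

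Next I would pass to the formal level by further base change along the canonical map $S \to R$, obtaining smooth proper deformations $X_R, X'_R$ over $R$ and a perfect complex $\cP_R$ on $X_R \times_R X'_R$; since base change carries adjunctions to adjunctions and preserves their units and counits, $\Phi_{\cP_R}$ is again an equivalence. I would then descend $\cP_R$ together with its equivalence datum to a finite level: writing $R = \varinjlim_k R_k$ and using continuity of perfect complexes and their derived $\Hom$-spaces under filtered colimits of noetherian rings, one obtains (after possibly enlarging $j$) a perfect complex $\cP_j$ on $X_{R_j} \times_{R_j} X'_{R_j}$ lifting $\cP_0$, together with kernel isomorphisms trivializing the cones of the unit and counit of the adjunction $(\Phi_{\cP_j}, \Psi_{\cP_j})$, where $\Psi$ is defined via the Serre-dual kernel, well-defined since $X_{R_j}, X'_{R_j}$ are smooth proper over $R_j$. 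These witness that $\Phi_{\cP_j}$ is already an equivalence over $R_j$. For any $k \geq j$, setting $\cP_k := \cP_j \otimes^L_{R_j} R_k$, base change along $R_j \hookrightarrow R_k$ transports the equivalence $\Phi_{\cP_j}$ to $\Phi_{\cP_k}$, proving the first assertion; the quasi-equivalence $\Perf_{dg}(X_{R_k}) \simeq_{qeq} \Perf_{dg}(X'_{R_k})$ then follows from the uniqueness of the standard dg enhancement of $\Perf$ on smooth proper schemes, applied to the equivalence of triangulated categories of perfect complexes.

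The main obstacle will be the descent step from $R$ down to $R_j$, specifically descending the witnessing isomorphisms rather than merely the complex $\cP_R$: while $\cP_R$ itself descends by noetherian approximation for finitely presented perfect complexes, the trivializations of the cones of the unit and counit live in derived $\Hom$-spaces, and verifying that they descend under the filtered colimit $R = \varinjlim_k R_k$ requires continuity of derived $\Hom$ between compact objects with respect to base change, together with compatibility of Serre duality over the non-local base rings $R_j, R_k$. The Calabi--Yau hypothesis with $\dim X_0 > 2$ enters here through the vanishing properties of the Kodaira--Spencer dgla that underlie the naturality in \pref{thm:natisomINTRO}, which ensures that the Morita-theoretic and classical deformations track each other closely enough for the descent to close up at a finite level.
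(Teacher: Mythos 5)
Your overall architecture (deform the kernel, verify the equivalence at the limit level, then spread out to a finite level $R_k$ by noetherian approximation of the witnessing kernel isomorphisms) is the same as the paper's, but your starting point is not available and makes the argument circular. You take as input that $\Phi_{\cP_S}\colon \Perf(X_S)\to\Perf(X'_S)$ is an equivalence over the whole versal base $\Spec S$. That statement is not what \cite{Mora} provides: \cite[Theorem 1.1]{Mora} only extends the derived equivalence from the special to the \emph{general} fibers, i.e.\ after shrinking $\Spec S$, and the equivalence over all of $\Spec S$ (without shrinking) is precisely Corollary \pref{cor:inherit}, which the paper \emph{deduces from} \pref{thm:inherit}. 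The only genuinely available inputs are the equivalence $\Phi_{\cP_0}$ on the closed fiber and (if you wish) the equivalence over the complete local ring $R$ from \cite[Corollary 4.2]{Mora}. The paper in fact re-derives the latter directly: it takes a single compact generator $E_{R_j}$ of $\Perf(X_{R_j})$, forms the cone $C(E_{R_j})$ of the counit of $(\Phi^L_{\cP_j},\Phi_{\cP_j})$, pulls back to $X_R$, observes that its restriction to $X_0$ vanishes because $\Phi_{\cP_0}$ is an equivalence, and then uses that the support of $f_j^*C(E_{R_j})$ is a closed subset of $X_R$ missing the closed fiber --- hence empty, since $R$ is local. This support argument is the step your proposal is missing; without it you have nothing to descend.

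Once the vanishing over $R$ is in hand, your descent step is essentially the paper's: the paper invokes \cite[Proposition 2.2.1]{Lie} to conclude $f_{jk}^*C(E_{R_j})\cong 0$ for $k$ sufficiently large, where you propose descending the trivializations $\cP*\cP_L\cong\scrO_\Delta$ of the unit and counit at the kernel level; both are instances of continuity of perfect complexes along the filtered colimit $R=\varinjlim R_i$, and yours would work. Note, however, that the paper does not stop there: to pass from the vanishing of one cone over $R_k$ to the full equivalence of $\Phi_{\cP_k}$ it restricts to the fibers $f_{jk}^{-1}(X_u)$ over closed points $u\in\Spec R_j$, shows each fiberwise functor is an equivalence using generators of $\Perf(X_u)$, and concludes via the fiberwise criterion \cite[Proposition 1.3]{LST}; some such globalization step is needed in your write-up as well. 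Finally, your closing remark mislocates the role of the Calabi--Yau hypothesis: it enters through the unobstructedness of the kernel deformations (\pref{lem:OBSf}) and the smoothness and projectivity of the $X_{R_i}$, not through the Kodaira--Spencer dgla behind \pref{thm:natisomINTRO}, which plays no role in this proof.
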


\pref{thm:inheritINTRO}
tells us that,
given two algebraic Morita deformations
$\Perf_{dg}(X_{R_k}), \Perf_{dg}(X^\prime_{R_k})$
geometrically realized by algebraic deformations
$X_{R_k}, X^\prime_{R_k}$
of two derived-equivalent higher dimensional Calabi--Yau manifolds
$X_0, X^\prime_0$,
if
$X_{R_k}, X^\prime_{R_k}$
are enough close to effectivizations
$X_R, X^\prime_R$
then
$\Perf_{dg}(X_{R_k}), \Perf_{dg}(X^\prime_{R_k})$
are Morita equivalent.
The base change along the homomorphism
$R_k \to S$
yields Morita equivalent versal Morita deformations
$\Perf_{dg}(X_S), \Perf_{dg}(X^\prime_S)$.
In other words,
up to Morita equivalence
the versal Morita deformation
$\Perf_{dg}(X_S)$
does not depend on the choice of geometric realizations in the following sense.

\begin{cor} {\rm{(}Corollary \pref{cor:inherit}\rm{)}} \label{cor:inheritINTRO}
Let
$X_0, X^\prime_0$
be derived-equivalent Calabi--Yau manifolds of dimension more than two
and
$X_S, X^\prime_S$
their smooth projective versal deformations
over a common nonsingular affine $\bfk$-variety
$\Spec S$.
Let
$\{ R_i \}_{i \in I}$
be a filtered inductive system of finitely generated $T$-subalgebras
$R_i \subset R$
whose colimit is
$R$
with
\begin{align*}
T = \bfk [t_1, \ldots, t_d], \
R \cong \bfk \llbracket t_1, \ldots, t_d \rrbracket, \
d = \dim_\bfk H^1(\scrT_{X_0}).
\end{align*}
Assume that
$X_S, X^\prime_S$
correspond to a first order approximation
$R_j \to S$
of
$R_j \hookrightarrow R$
for sufficiently large
$j \in I$.
Then
$X_S, X^\prime_S$
are derived-equivalent.
In particular,
the dg categories
$\Perf_{dg}(X_S), \Perf_{dg}(X^\prime_S)$
of perfect complexes are quasi-equivalent.
\end{cor}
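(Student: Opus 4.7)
The plan is to apply \pref{thm:inheritINTRO} at the level of $R_j$ and then propagate the resulting quasi-equivalence along the first order approximation $R_j \to S$. By \pref{thm:inheritINTRO} there exists $j_0 \in I$ such that for every $k \geq j_0$ the dg categories $\Perf_{dg}(X_{R_k}), \Perf_{dg}(X'_{R_k})$ are quasi-equivalent, via the integral functor $\Phi_{\cP_k}$ attached to the deformation $\cP_k$ of the Fourier--Mukai kernel $\cP_0$. Interpreting the ``sufficiently large $j$'' in the hypothesis as $j \geq j_0$, we may assume that $X_S, X'_S$ are the pullbacks of $X_{R_j}, X'_{R_j}$ along $R_j \to S$ with $j \geq j_0$, and that the Fourier--Mukai kernel $\cP_j$ realises a quasi-equivalence of $\Perf_{dg}(X_{R_j}), \Perf_{dg}(X'_{R_j})$.

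Next I would invoke the reduction recalled in the introduction (\cite[Theorem 1.2]{BFN} combined with \cite[Corollary 5.7]{Coh}) to identify
\begin{align*}
\Perf_{dg}(X_S) \simeq_{mo} \Perf_{dg}(X_{R_j}) \otimes^L_{R_j} S,
\qquad
\Perf_{dg}(X'_S) \simeq_{mo} \Perf_{dg}(X'_{R_j}) \otimes^L_{R_j} S.
\end{align*}
Since the derived pointwise tensor product $- \otimes^L_{R_j} S$ preserves Morita equivalences, the quasi-equivalence from the previous step descends to a Morita equivalence $\Perf_{dg}(X_S) \simeq_{mo} \Perf_{dg}(X'_S)$. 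Concretely, the base change of $\cP_j$ along $R_j \to S$ provides a perfect kernel $\cP_S \in \Perf(X_S \times_S X'_S)$ whose integral functor realises this Morita equivalence.

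To conclude both assertions, I would take $H^0$ to obtain an equivalence $\Perf(X_S) \simeq \Perf(X'_S)$ of triangulated categories of perfect complexes, which is the derived equivalence of $X_S, X'_S$ in the sense adopted in this paper; since $\Perf_{dg}$ is already Morita-fibrant (it is the full dg subcategory of compact objects of $D_{dg}(\Qch(-))$), the Morita equivalence itself then upgrades to the quasi-equivalence $\Perf_{dg}(X_S) \simeq_{qeq} \Perf_{dg}(X'_S)$ exactly as in the last sentence of \pref{thm:inheritINTRO}.

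The main technical point will be verifying that the categorical base change $- \otimes^L_{R_j} S$ of the dg category of perfect complexes agrees, up to Morita equivalence, with the geometric base change yielding $\Perf_{dg}(X_S)$, and that this identification carries the kernel $\cP_j$ to $\cP_S$ in a way that preserves the equivalence property of the integral functor. This should follow from the compatibility of \cite[Theorem 1.2]{BFN} with derived tensor products of dg categories, together with the flatness of $\cP_j$ over $R_j$ established in \cite{Mora}, but the bookkeeping between the two base-change operations, and in particular tracking the kernel through them, is the main subtlety.
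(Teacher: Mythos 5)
Your argument is correct in substance, but it takes a genuinely different route from the paper. The paper's proof of Corollary \pref{cor:inherit} does not pass through categorical base change at all: it simply reruns the geometric argument from the proof of \pref{thm:inherit} with $X_S \to X_{R_j}$ in place of $X_{R_k} \to X_{R_j}$. Concretely, since the cone $C(\eta_j(E_{R_j}))$ of the counit on a generator is already zero at the level of $R_j$ (for $j$ sufficiently large), its pullback to $X_S$ vanishes, and the fiberwise criterion of \cite[Proposition 1.3]{LST} applied to the cover of $X_S$ by preimages of closed fibers shows that $\Phi_{\cP_S}$ with $\cP_S = (f_{jS}\times f^\prime_{jS})^*\cP_j$ is an equivalence; the quasi-equivalence of the dg categories then follows from \cite[Theorem B]{CNS}. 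This has two advantages over your approach: it stays within elementary Fourier--Mukai technique, and it exhibits the equivalence explicitly as an integral functor with kernel the pullback of $\cP_j$, which is what the subsequent statement on dg categorical generic fibers actually uses (via \cite[Theorem 1.1]{Mora}). Your approach buys conceptual economy --- tensoring an $R_j$-linear equivalence of module categories with $\Perf_{dg}(S)$ over $\Perf_{dg}(R_j)$ --- but the compatibility you flag at the end is a real extra input: the paper only identifies the derived pointwise tensor product $-\otimes^L_B A$ with the relative tensor product $-\otimes^L_{\Perf_{dg}(B)}\Perf_{dg}(A)$ and with geometric base change in the artinian setting (Remark \pref{rmk:BFN1}), whereas you need it for the non-artinian map $R_j \to S$. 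That identification does follow from \cite[Theorem 1.2]{BFN} since smooth projective $R_j$-schemes and $\Spec S$ are perfect stacks, and if you work directly with $\otimes^L_{\Perf_{dg}(R_j)}\Perf_{dg}(S)$ rather than the pointwise tensor the bookkeeping disappears; but as written, the derived equivalence of $X_S, X^\prime_S$ is obtained only after the dg statement, which inverts the logical order of the paper (geometry first, then \cite[Theorem B]{CNS} for the enhancement).
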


The uniqueness result also holds for the dg categorical generic fiber.
Corollary
\pref{cor:inheritINTRO}
slightly improves
\cite[Theorem 1.1]{Mora},
which extends the derived equivalence from special to general fibers.
Here,
the advantage is that
we do not have to shrink the base
$\Spec S$
as long as the construction passes enough close to effectivizations.
In particular,
beginning with a pair of general fibers,
one obtains the derived equivalence of special fibers contained in the versal deformations.
Hence the above corollary partially provides a method for the opposite direction,
i.e.,
how to extend the derived equivalence from general to special fibers.

\begin{notations}
We work over an algebraically closed field $\bf{k}$ of characteristic $0$ throughout this paper.
For an augmented $\bf{k}$-algebra
$A$
by
$\frakm_A$
we denote its augmentation ideal.
All higher dimensional Calabi--Yau manifolds we treat
are
smooth projective $\bfk$-varieties
$X_0$
of dimension more than two with
$\omega_{X_0} \cong \scrO_{X_0}$
and
$H^i (\scrO_{X_0}) = 0$
for
$0 < i < \dim X_0$.
\end{notations}

\begin{acknowledgements}
The author is supported by SISSA PhD scholarships in Mathematics.
The author would like to thank Yukinobu Toda for pointing out mistakes in earlier version.
\end{acknowledgements}

\section{Hochschild cohomology of relatively smooth proper schemes} \label{sec:Hochsch}
In this section,
we review various kinds of complexes
whose cohomology controls deformations of associated mathematical objects,
mainly following the exposition from
\cite[Section 2, 3]{DLL}.
We always assume that
all algebras have units,
morphism of algebras preserve units,
and
modules are unital.
In the sequel,
we fix
a local artinian $\bfk$-algebra
$\bfR$
with residue field
$\bfk$
and
its square zero extension
\begin{align*}
0 \to \bfI \to \bfS \to \bfR \to 0,
\end{align*}
and
choose generators
$\epsilon = (\epsilon_1, \ldots, \epsilon_l)$    
of
$\bfI$
regarded as a free $\bfR$-module of rank
$l$.
For smooth proper $\bfR$-schemes,
we explain the correspondence between
its relative Hochschild cohomology
and
cohomology of the Gerstenharber--Shack complex
associated with its restricted structure sheaf. 
  
\subsection{Relative Hochschild cohomology of schemes}
Let
$X$
be a smooth proper $\bfR$-scheme.
We denote by
$\Delta_\bfR \colon X_\bfR \hookrightarrow X \times_\bfR X$
the relative diagonal embedding.
The
\emph{relative Hochschild cohomology}
is defined as the graded $\bfR$-algebra
\begin{align*}
HH^\bullet (X/\bfR)
=
\Ext^\bullet_{X \times_\bfR X}(\scrO_{\Delta_\bfR}, \scrO_{\Delta_\bfR})
\cong
\Ext^\bullet_X(\Delta^*_\bfR \scrO_{\Delta_\bfR}, \scrO_X).
\end{align*}
Here,
the multiplication in
$HH^\bullet (X/\bfR)$
is given
by the composition in
$D^b(X \times_\bfR X)$. 
Then the natural map
$\bfR \to \End_{X \times_\bfR X}(\scrO_{\Delta_\bfR})$
induces the $\bfR$-algebra structure.
There is a quasi-isomorphism
\begin{align*}
\Delta^*_\bfR \scrO_{\Delta_\bfR}
\cong
\bigoplus_i \Omega^i_{X/\bfR}[i]
\end{align*}
called the
\emph{relative Hochschild--Kostant--Rosenberg isomorphism},
which induces an isomorphism
\begin{align*}
\Ext^\bullet_X(\Delta^*_\bfR \scrO_{\Delta_\bfR}, \scrO_X)
\to
\Ext^\bullet_X(\bigoplus_i \Omega^i_{X/\bfR}[i], \scrO_X)
\to
\bigoplus_i H^{\bullet - i}(X, \wedge^i \scrT_{X/\bfR})
\end{align*}
where
$\scrT_{X/\bfR}$
is the relative tangent sheaf
and
$\Omega_{X/\bfR}$
is its dual.
We also call the compositions
\begin{align*}
\text{I}_{X/\bfR}^\text{HKR}
\colon
\Ext^n_{X \times_\bfR X}(\scrO_{\Delta_\bfR}, \scrO_{\Delta_\bfR})
=
HH^n (X/\bfR)
\to
HT^n (X/\bfR)
=
\bigoplus_{p+q=n} H^p(X, \wedge^q \scrT_{X/\bfR})
\end{align*}
the relative Hochschild--Kostant--Rosenberg isomorphisms.

\subsection{Hochschild cohomology of algebras}
Let
$A = (A, m)$
be an $\bfR$-algebra
and
$M$
an $A$-bimodule.
The  
\emph{Hochschild complex}
$\bfC(A, M)$  
has
$\bfC^n(A, M) = \Hom_\bfR (A^{\otimes n}, M)$
as its $n$-th term
and
$d^n_{Hoch}
\colon
\bfC^n(A, M) \to \bfC^{n+1}(A, M)$,
called the
\emph{Hochschild differential},
as its differential
which is given by
\begin{align*}
d^n_{Hoch}(\phi)(a_n, a_{n-1}, \ldots, a_0)
= \ 
&a_n \phi (a_{n-1}, \ldots, a_0) \\
&+ \sum^{n-1}_{i=0}(-1)^{i+1} \phi (a_n, \ldots, a_{n-i}a_{n-i-1}, \ldots, a_0) \\
&+ (-1)^{n+1} \phi (a_n, \ldots, a_1)a_0.
\end{align*} 

A cochain
$\phi \in \bfC^n(A, M)$
is
\emph{normalized}
if
$\phi (a_{n-1}, \ldots, a_0) = 0$
whenever
$a_i = 1$
for some
$0 \leq i \leq n-1$.
The normalized cochains form a subcomplex
$\bar{\bfC} (A, M)$
quasi-isomorphic to
$\bfC (A, M)$
via the inclusion.
When
$M = A$,
we call
$\bfC(A) = \bfC (A, A)$
the
\emph{Hochschild complex}
and
$H^n \bfC(A)$
the $n$-th
\emph{Hochschild cohomology}
of
$A$.
Note that
the multiplication
$m$
on
$A$
belongs to
$\bfC^2(A)$.

The direct sum of the second normalized Hochschild cohomology of
$A$
classifies $\bfS$-deformations of
$A$
up to equivalence.
Recall that
an
\emph{$\bfS$-deformation}
of
$A$
is an $\bfS$-algebra
$(\bar{A}, \bar{m})
=
(A[\epsilon] = A \otimes_\bfR \bfS, m + \bfm \epsilon)$
with
$\bfm \in \bfC^2(A)^{\oplus l}$
such that
the unit of
$\bar{A}$
is the same as that of $A$. 
Two deformations
$(\bar{A}, \bar{m}), (\bar{A}^\prime, \bar{m}^\prime)$
are
\emph{equivalent}
if there is an isomorphism of the form 
$1+ \bfg \epsilon \colon \bar{A} \to \bar{A}^\prime$
with
$\bfg \in \bfC^1(A)^{\oplus l}$.
We denote by
$\Def^{alg}_A(\bfS)$   
the set of equivalence classes of $\bfS$-deformations of
$A$.
It is known that
there is a bijection
\begin{align*}
H^2 \bar{\bfC}(A)^{\oplus l}
\to
\Def^{alg}_A(\bfS), \
\bfm \mapsto (A[\epsilon], m +\bfm \epsilon), \
\bfm \in Z^2 \bar{\bfC}(A)^{\oplus l}.
\end{align*}

\subsection{Simplicial cohomology of presheaves}
Let
$\frakU$
be a small category
and
$\cN(\frakU)$
its simplicial nerve.
We write 
\begin{align*}
\sigma
=
(d \sigma = U_0 \xrightarrow{u_1} U_1 \xrightarrow{u_2} \cdots \xrightarrow{u_p} U_p \xrightarrow{u_{p+1}} U_{p+1} = c \sigma)
\end{align*}
for a $(p+1)$-simplex
$\sigma \in \cN_{p+1}(\frakU)$.
Let
$(\scrF, f), (\scrG, g)$
be presheaves of $\bfR$-modules with restriction maps
$f^u \colon \scrF(U) \to \scrF(V), g^u \colon \scrG(U) \to \scrG(V)$
for
$u \colon V \to U$
in
$\frakU$.
We write
$f^\sigma$
for the map
$f^{u_{p+1} \ldots u_2 u_1}
\colon
\scrF(U_{p+1}) \to \scrF(U_0)$.
Consider a complex whose $p$-th term is
\begin{align*}
\bfC^p_{simp}(\scrG, \scrF)
=
\prod_{\tau \in \cN_p(\frakU)}
\Hom_\bfR \left( \scrG(c\tau), \scrF(d \tau) \right).
\end{align*}
and
whose differential
$d^p_{simp}$
is defined as follows.
Recall that
we have the maps
\begin{align*}
\partial_i \colon \cN_{p+1}(\frakU) \to \cN_p(\frakU), \
\sigma \mapsto \partial_i \sigma,
\end{align*}
for
$i = 0, 1, \ldots, p+1$
given by
\begin{align*}
\begin{gathered}
\partial_i \sigma
=
(U_0 \xrightarrow{u_1} \cdots U_{i-1} \xrightarrow{u_{i+1} u_i} U_{i+1} \xrightarrow{u_{i+1}} \cdots \xrightarrow{u_p} U_p \xrightarrow{u_{p+1}} U_{p+1}), \
i \neq 0, p+1, \\
\partial_0 \sigma
=
(U_1 \xrightarrow{u_2} U_2 \xrightarrow{u_3} \cdots \xrightarrow{u_p} U_p \xrightarrow{u_{p+1}} U_{p+1}), \\
\partial_{p+1} \sigma
=
(U_0 \xrightarrow{u_1} U_1 \xrightarrow{u_2} \cdots \xrightarrow{u_p} U_p).
\end{gathered}
\end{align*}
Each $\partial_i$ induces a map
\begin{align*}
d_i
\colon
\bfC^p_{simp}(\scrG, \scrF)
\to
\bfC^{p+1}_{simp}(\scrG, \scrF), \ 
\phi = (\phi^\tau)_\tau \mapsto d_i \phi = ((d_i \phi)^\sigma)_\sigma
\end{align*}
given by
\begin{align*}
\begin{gathered}
(d_i \phi)^\sigma
=
\phi^{\partial_i \sigma}, \ i \neq 0, p+1, \\
(d_0 \phi)^\sigma
=
f^{u_1} \circ \phi^{\partial_0 \sigma}, \\
(d_{p+1} \phi)^\sigma
=
\phi^{\partial_{p+1} \sigma} \circ g^{u_{p+1}}.\end{gathered}
\end{align*}
Then one defines
\begin{align*}
d_{simp}
=
\sum^{p+1}_ {i=0} (-1)^i d_i
\colon
\bfC^p_{simp}(\scrG, \scrF)
\to
\bfC^{p+1}_{simp}(\scrG, \scrF).
\end{align*}

When
$\scrG$
is the constant presheaf
$\bfR$,
we call
$H^p(\frakU, \scrF)
=
H^p\bfC_{simp}(\scrF)
=
H^p\bfC_{simp}(\bfR, \scrF)$ 
the
\emph{simplicial presheaf cohomology}
of $\scrF$.
A $(p+1)$-simplex
$\sigma \in \cN_{p+1} (\frakU)$
is
\emph{degenerate}
if
$u_i = 1_{U_i}$
for some
$1 \leq i \leq p+1$.
A $p$-cochain
$\phi = (\phi^\tau)_\tau \in \bfC^p(\scrG, \scrF)$
is
\emph{reduced}
if
$\phi^\tau = 0$
whenever $\tau$ is degenerate.
All $0$-cochains are reduced by convention.
The reduced cohains are preserved by $d_{simp}$
and
form a subcomplex
$\bfC^\prime_{simp}(\scrG, \scrF)$,
which is quasi-isomorphic to
$\bfC_{simp}(\scrG, \scrF)$
by
\cite[Proposition 2.9]{DLL}.

The direct sum of the first reduced simplicial presheaf cohomology of
$\scrF$
classifies $\bfS$-deformations of
$\scrF$
up to equivalence.
Recall that
an
\emph{$\bfS$-deformation}
of
$\scrF$
is a presheaf of $\bfS$-modules
$(\bar{\scrF}, \bar{f})
=
(\scrF[\epsilon], f + \bff \epsilon)$
with
$\bff \in \bfC^1_{simp}(\scrF, \scrF)^{\oplus l}$.
Two deformations
$(\bar{\scrF}, \bar{f}), (\bar{\scrF}^\prime, \bar{f}^\prime)$
are
\emph{equivalent}
if there is an isomorphism of the form
$1 + \bfg \epsilon$
with
$\bfg \in \bfC^0_{simp}(\scrF, \scrF)^{\oplus l}$.
We denote by
$\Def^{psh}_\scrF(\bfS)$
the set of equivalence classes of $\bfS$-deformations of
$\scrF$.

\begin{lem} {\rm{(}\cite[Proposition 2.11]{DLL}\rm{)}} 
Let
$(\scrF, f)$
be a presheaf of $\bfR$-modules.
Then there is a bijection
\begin{align*}
H^1 \bfC^\prime_{simp}(\scrF, \scrF)^{\oplus l}
\to
\Def^{psh}_\scrF(\bfS), \ 
\phi \mapsto (\scrF[\epsilon], f + \bff \epsilon), \
\bff \in Z^1 \bfC^\prime_{simp}(\scrF, \scrF)^{\oplus l}.
\end{align*}
Another cocycle
$\bff^\prime \in Z^1 \bfC^\prime_{simp}(\scrF, \scrF)^{\oplus l}$
maps to an equivalent deformation
if and only if
there is an element
$\bfg \in \bfC^{\prime 0}_{simp}(\scrF, \scrF)$
satisfying
$\bff^\prime - \bff = d_{simp} (\bfg)$.
\end{lem}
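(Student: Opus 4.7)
The plan is to unwind the presheaf-of-$\bfS$-modules axioms and to verify directly that, after reducing modulo $\epsilon^2 = 0$, they become precisely the reduced 1-cocycle condition on $\bff$; similarly, equivalences become coboundaries. The argument is the simplicial/presheaf analogue of the classification of algebra deformations by $H^2 \bar{\bfC}(A)^{\oplus l}$ recalled just above in the section, with the simplicial grading in $\cN(\frakU)$ playing the role that the multiplicative grading did in the algebra setting.

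First, given $\bff \in Z^1 \bfC^\prime_{simp}(\scrF, \scrF)^{\oplus l}$, I would construct a candidate deformation by $\bar\scrF(U) = \scrF(U)[\epsilon]$ with $\bar f^u = f^u + \bff^u \epsilon$, and then check the two presheaf axioms. Unitality $\bar f^{1_U} = 1$, together with $f^{1_U} = 1$, forces $\bff^{1_U} = 0$, which is exactly reducedness on degenerate 1-simplices. Functoriality $\bar f^v \bar f^u = \bar f^{uv}$ for composable morphisms $W \xrightarrow{v} V \xrightarrow{u} U$ expands modulo $\epsilon^2 = 0$ to $\bff^{uv} = f^v \bff^u + \bff^v f^u$; matching this against the face maps $d_0, d_1, d_2$ evaluated on the 2-simplex $\sigma = (W \xrightarrow{v} V \xrightarrow{u} U) \in \cN_2(\frakU)$ shows that it is exactly the cocycle relation $(d_{simp}\bff)^\sigma = 0$. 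Conversely, every $\bfS$-deformation has underlying module presheaf $\scrF[\epsilon]$ by definition, with restriction $\bar f^u$ reducing to $f^u$ modulo $\bfI$, so writing $\bar f^u = f^u + \bff^u\epsilon$ uniquely extracts a 1-cochain $\bff$, which is a reduced cocycle by reversing the computation above.

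Second, I would translate the equivalence relation. An equivalence $1 + \bfg\epsilon$ between two such deformations is an $\bfS$-module isomorphism commuting with restriction, so expanding the square $(1 + \bfg^V\epsilon)\bar f'^u = \bar f^u(1 + \bfg^U\epsilon)$ modulo $\epsilon^2 = 0$ yields $\bff'^u - \bff^u = f^u \bfg^U - \bfg^V f^u$. This is precisely $(d_{simp}\bfg)^\tau$ on the 1-simplex $\tau = (V \xrightarrow{u} U)$, by the definition of $d_0, d_1$ acting on a 0-cochain. Hence equivalence corresponds exactly to $\bff' - \bff = d_{simp}(\bfg)$, and passing to cohomology classes gives the claimed bijection $H^1 \bfC^\prime_{simp}(\scrF, \scrF)^{\oplus l} \to \Def^{psh}_\scrF(\bfS)$.

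The argument is essentially formal definition-chasing, and the main obstacle is purely bookkeeping: one must keep the direction of the equivalence isomorphism and the sign conventions in $d_{simp}$ aligned so that the 2-cocycle equation matches the presheaf composition law and the coboundary relation matches the specific form $1 + \bfg\epsilon$. With the opposite convention for the direction of the isomorphism one would pick up a global sign, which is harmless for the cohomological statement since $\bfg \mapsto -\bfg$ is an involution on 0-cochains.
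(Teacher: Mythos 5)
Your proof is correct: the paper itself gives no argument for this lemma (it is quoted from \cite[Proposition 2.11]{DLL}), and your definition-unwinding --- functoriality of $\bar f$ modulo $\epsilon^2=0$ giving the reduced $1$-cocycle condition via the three face maps, unitality giving reducedness, and naturality of $1+\bfg\epsilon$ giving exactly $(d_{simp}\bfg)^\tau = f^u\bfg^U - \bfg^V f^u$ --- is precisely the standard verification carried out in the cited reference. The sign/direction caveat you flag is handled correctly and is indeed immaterial for the cohomological statement.
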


\subsection{Gerstenharber--Schack complexes}
Let
$\frakU$
be a small category
and
$(\scrA, m, f)$
a presheaf of $\bfR$-algebras on
$\frakU$.
The 
\emph{Gerstenharber--Schack complex}
$\bfC_{GS}(\scrA)$
introduced in \cite{GS88}
is the total complex of the double complex
whose $(p,q)$-term
for
$p, q \geq 0$
is
\begin{align*}
\bfC^{p,q}_{GS}(\scrA)
=
\prod_{\tau \in \cN_p(\frakU)}\Hom_\bfR(\scrA(c\tau)^{\otimes q}, \scrA(d\tau)),
\end{align*}
where
we regard
$\scrA(d\tau)$
as an
$\scrA(c\tau)$-bimodule
via
$f^\tau \colon \scrA(c\tau) \to \scrA(d\tau)$.
When
$q$
is fixed,
we have
\begin{align*}
\bfC^{\bullet, q}_{GS}(\scrA)
=
\bfC_{simp}(\scrA^{\otimes q}, \scrA)
\end{align*}
endowed with the simplicial differential
$d_{simp}$
horizontally.
When
$p$
is fixed,
we have
\begin{align*}
\bfC^{p, \bullet}_{GS}
=
\prod_{\tau \in \cN_p(\frakU)} \bfC \left( \scrA(c\tau), \scrA(d\tau) \right)
\end{align*}
endowed with the product Hochschild differential
$d_{Hoch}$
vertically.
The differential 
\begin{align*}
d^n_{GS}(\scrA) \colon \bfC^n_{GS}(\scrA) \to \bfC^{n+1}_{GS}(\scrA)
\end{align*}
is defined as
$d^n_{GS}
=
(-1)^{n+1} d_{simp} + d_{Hoch}$.

A cochain
$\phi = (\phi^\tau)_\tau \in \bfC^{p,q}_{GS}(\scrA)$
is
\emph{normalized}
if
$\phi^\tau$
is normalized for each $p$-simplex
$\tau$,
and
it is
\emph{reduced}
if
$\phi^\tau = 0$
whenever
$\tau$
is degenerate.
The normalized cochains form a subcomplex
$\bar{\bfC}_{GS}(\scrA)$
of
$\bfC_{GS}(\scrA)$
called the
\emph{normalized Hochschild complex}
of
$\scrA$,
and
the normalized reduced cochains
form a subcomplex
$\bar{\bfC}^\prime_{GS}(\scrA)$
of
$\bar{\bfC}_{GS}(\scrA)$
called the
\emph{normalized reduced Hochschild complex}
of
$\scrA$.
These three complexes are quasi-isomorphic
via the inclusions.
Eliminating the bottom row from
$\bfC_{GS}(\scrA)$,
one obtains a subcomplex
$\bfC_{tGS}(\scrA)$
called the
\emph{truncated Hochschild complex}.
There is a short exact sequence
\begin{align*}
0 \to \bfC_{tGS}(\scrA) \to \bfC_{GS}(\scrA) \to \bfC_{simp}(\scrA) \to 0.
\end{align*}
Since
$\bfR$
is commutative,
one can apply
\cite[Proposition 2.14]{DLL}
to see that
the sequence splits
and
we have
\begin{align*}
\bfC_{GS}(\scrA)
=
\bfC_{tGS}(\scrA) \oplus \bfC_{simp}(\scrA).
\end{align*}
Similarly,
we have
\begin{align*}
\bar{\bfC}^\prime_{GS}(\scrA)
=
\bar{\bfC}^\prime_{tGS}(\scrA) \oplus \bfC^\prime_{simp}(\scrA).
\end{align*} 

The direct sum of the second normalized reduced Gerstenharber--Schack cohomology of
$\scrA$
classifies twisted $\bfS$-deformations of
$\scrA$
up to equivalence.
Recall that
a
\emph{twisted presheaf}
$\scrA = (\scrA, m, f, c, z)$
of $\bfR$-algebras on
$\frakU$
consists of the following data:
\begin{itemize}
\item
for each
$U \in \frakU$
an $\bfR$-algebra
$(\scrA(U), m^U)$,
\item
for each
$u \colon V \to U$
in
$\frakU$
a homomorphism of $\bfR$-algebras
$f^u \colon \scrA (U) \to \scrA (V)$,
\item
for each pair
$u \colon V \to U, \ v \colon W \to V$
in
$\frakU$
an invertible element
$c^{u,v} \in \scrA(W)$
satisfying for any
$a \in \scrA(U)$
\begin{align*}
c^{u,v} f^v (f^u (a)) = f^{uv} (a) c^{u,v}.
\end{align*}
\item
for each
$U \in \frakU$
an invertible element
$z^U \in \scrA(U)$
satisfying for any
$a \in \scrA(U)$
\begin{align*}
z^U a = f^{1_U}(a) z^U.
\end{align*}
\end{itemize}
Moreover, these data must satisfy 
\begin{align*}
\begin{gathered}
c^{u,vw} c^{v,w} = c^{uv, w} f^w (c^{u,v}), \\
c^{u, 1_V} z^V = 1, \ \ c^{1_U, u} f^u (z^U)= 1
\end{gathered}
\end{align*}
for each triple 
$u \colon V \to U, \ v \colon W \to V, \ w \colon T \to W$
in
$\frakU$.
When
$c^{u,v}, z^U$
are central for all
$u, v$
and
$U$,
we call
$\scrA$
a
\emph{twisted presheaf with central twists}
and
denote by
$|\scrA| = (\scrA, m, f)$
the underlying ordinary presheaf. 

For twisted sheaves   
$\scrA = (\scrA, m, f, c, z),
\scrA^\prime
=
(\scrA^\prime, m^\prime, f^\prime, c^\prime, z^\prime)$
of $\bfR$-algebras on
$\frakU$,
a
\emph{morphism}
$(g, h) \colon \scrA \to \scrA^\prime$
consists of the following data:
\begin{itemize}
\item
for each
$U \in \frakU$
a homomorphism of $\bfR$-algebras
$g^U \colon \scrA(U) \to \scrA^\prime(U)$,
\item
for each
$u \colon V \to U$
in
$\frakU$
an invertible element 
$h^u \in \scrA^\prime (V)$.
\end{itemize}
Moreover, these data must satisfy 
\begin{align*}
\begin{gathered}
m^{\prime V} (g^V f^u(a), h^u)
=
m^{\prime V}(h^u, f^{\prime u} (g^U (a))), \\
m^{\prime W}(h^{uv}, c^{\prime u, v})
=
m^{\prime W} (g^W (c^{u,v}), h^v, f^{\prime v} (h^u)), \\
m^{\prime U} (h^{1_U}, z^{\prime U})
=
g^U (z^U)
\end{gathered} 
\end{align*}
for all 
$u,  v$
and
$a \in \scrA(U)$.
Morphisms can be composed
and
the identity
$1_\scrA$
is given by
$g^U = 1_{\scrA(U)}$
and
$h^u =1 \in \scrA(V)$.
When
$g^U$
are isomorphisms of $\bfR$-algebras for all
$U$,
we call
$(g, h)$ an
\emph{isomorphism}.
Any twisted presheaf 
$(\scrA, m, f, c, z)$
is isomorphic to the one of the form
$(\scrA^\prime, m^\prime, f^\prime, c^\prime, 1)$.

Let 
$\scrA = (\scrA, m, f, c)$
be a twisted presheaf of $\bfR$-algebras on
$\frakU$.
A
\emph{twisted $\bfS$-deformation}
of
$\scrA$
is a twisted presheaf
\begin{align*}
\overline{\scrA}
=
(\overline{\scrA}, \overline{m}, \overline{f}, \overline{c})
=
(\scrA[\epsilon], m + \bfm \epsilon , f + \bff \epsilon, c + \bfc \epsilon)
\end{align*}
of $\bfS$-algebras 
such that
$(\overline{\scrA}(U), \overline{m}^U)$
is an $\bfS$-deformation of
$(\scrA(U), m^U)$
for each
$U \in \frakU$
with
\begin{align*}
(\bfm, \bff, \bfc)
\in
\bfC^2_{GS}(\scrA)^{\oplus l}
=
\bfC^{0,2}_{GS}(\scrA)^{\oplus l}
\oplus
\bfC^{1,1}_{GS}(\scrA)^{\oplus l}
\oplus
\bfC^{2,0}_{GS}(\scrA)^{\oplus l}.
\end{align*}
Two twisted deformations
$(\overline{\scrA}, \overline{m}, \overline{f}, \overline{c}),
(\overline{\scrA^\prime}, \overline{m^\prime}, \overline{f^\prime}, \overline{c^\prime})$
are
\emph{equivalent}
if there is an isomorphism of the form 
$(1 + \bfg \epsilon, 1 + \bfh \epsilon)$
with
\begin{align*}
(\bfg, \bfh)
\in
\bfC^1_{GS}(\scrA)^{\oplus l}
=
\bfC^{0,1}_{GS}(\scrA)^{\oplus l} \oplus \bfC^{1,0}_{GS}(\scrA)^{\oplus l}.
\end{align*}
We denote by
$\Def^{tw}_\scrA(\bfS)$
the set of equivalence classes of twisted $\bfS$-deformations of
$\scrA$.

When
$c = 1$,
A
\emph{presheaf $\bfS$-deformation}
of
$\scrA$
is a twisted $\bfS$-deformation with
$\bfc = 0$.
Two presheaf deformations
$(\overline{\scrA}, \overline{m}, \overline{f}),
(\overline{\scrA^\prime}, \overline{m^\prime}, \overline{f^\prime})$
are
\emph{equivalent}
if there is an isomorphism of the form 
$1 + \bfg \epsilon$
with
$\bfg \in \bfC^{0,1}_{GS} (\scrA)^{\oplus l}$.
We denote by
$\Def^{psh}_\scrA(\bfS)$
the set of equivalence classes of presheaf $\bfS$-deformations of
$\scrA$.

\begin{lem} {\rm{(}\cite[Theorem 2.21]{DLL}\rm{)}} \label{lem:CLASStw} 
Let
$(\scrA, m, f)$
be a presheaf of $\bfR$-algebras on
$\frakU$.
Then there is a bijection
\begin{align*}
H^2 \bar{\bfC^\prime}_{GS}(\scrA)^{\oplus l}
\to
\Def^{tw}_\scrA(\bfS), \ 
(\bfm, \bff, \bfc)
\mapsto
(\scrA[\epsilon], m + \bfm \epsilon , f + \bff \epsilon, c + \bfc \epsilon), \
(\bfm, \bff, \bfc) \in Z^2 \bar{\bfC}^\prime_{GS}(\scrA)^{\oplus l}.
\end{align*}
Another cocycle
$(\bfm^\prime, \bff^\prime, \bfc^\prime)
\in
Z^2 \bar{\bfC}^\prime_{GS}(\scrA)^{\oplus l}$
maps to an equivalent deformation
if and only if
there is an element
$(\bfg, \bfh) \in \bar{\bfC}^{\prime 1}_{GS}(\scrA)^{\oplus l}$
satisfying
$(\bfm^\prime, \bff^\prime, \bfc^\prime) - (\bfm, \bff, \bfc)
=
d_{GS}(\bfg, \bfh)$.
In particular,
there is a bijection
\begin{align*}
H^2 \bar{\bfC^\prime}_{tGS}(\scrA)^{\oplus l}
\to
\Def^{psh}_\scrA(\bfS), \ 
(\bfm, \bff)
\mapsto
(\scrA[\epsilon], m + \bfm \epsilon , f + \bff \epsilon).
\end{align*}
\end{lem}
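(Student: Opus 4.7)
The plan is to give a dictionary between the axioms defining a twisted $\bfS$-deformation and the Gerstenharber--Schack cocycle equation, adapting \cite[Theorem 2.21]{DLL}. The key observation is that $\bfI^2 = 0$, so every structural datum on $\scrA[\epsilon]$ splits into its classical part and a first-order correction in $\epsilon$, and every axiom reduces to its linearization. Given a cocycle $(\bfm, \bff, \bfc) \in Z^2 \bar{\bfC}^\prime_{GS}(\scrA)^{\oplus l}$, set $\bar{m} = m + \bfm\epsilon$, $\bar{f} = f + \bff\epsilon$, $\bar{c} = c + \bfc\epsilon$. The associativity of $\bar{m}$ on each $\scrA(U)[\epsilon]$ is equivalent to $d_{Hoch}(\bfm)^U = 0$ for every $U$, i.e.\ the vanishing of the $(0,3)$-component of $d_{GS}(\bfm, \bff, \bfc)$. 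The twist compatibility
\begin{align*}
\bar{c}^{u,v}\, \bar{f}^v(\bar{f}^u(a)) = \bar{f}^{uv}(a)\, \bar{c}^{u,v}
\end{align*}
and the twist cocycle relation $\bar{c}^{u,vw}\, \bar{c}^{v,w} = \bar{c}^{uv,w}\, \bar{f}^w(\bar{c}^{u,v})$, expanded to first order in $\epsilon$, correspond respectively to the $(1,2)$, $(2,1)$, and $(3,0)$-components of $d_{GS}(\bfm, \bff, \bfc)=0$. The normalization of $\bfm$ ensures $\bar{m}(1, a) = a$, while the reducedness of $(\bff, \bfc)$ ensures $\bar{f}^{1_U} = 1$ and $\bar{c}^{u,1_V} = \bar{c}^{1_U, u} = 1$ once we normalize to $z^U = 1$.

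Next I would translate equivalences. For two cocycles $(\bfm, \bff, \bfc)$ and $(\bfm^\prime, \bff^\prime, \bfc^\prime)$, a morphism of twisted deformations of the form $(1 + \bfg\epsilon, 1 + \bfh\epsilon)$ with $(\bfg, \bfh) \in \bar{\bfC}^{\prime 1}_{GS}(\scrA)^{\oplus l}$ exists if and only if, after expanding the three defining identities for a morphism $(g, h)$ modulo $\bfI^2$, one obtains the coboundary relation
\begin{align*}
(\bfm^\prime - \bfm,\ \bff^\prime - \bff,\ \bfc^\prime - \bfc) = d_{GS}(\bfg, \bfh)
\end{align*}
bidegreewise. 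Surjectivity onto $\Def^{tw}_\scrA(\bfS)$ is obtained by first normalizing any given twisted deformation so that $z^U = 1$, using the isomorphism mentioned in the paragraph preceding the lemma; the remaining structural data can then be written in the additive form $m + \bfm\epsilon,\ f + \bff\epsilon,\ c + \bfc\epsilon$, and the twisted presheaf axioms force $(\bfm, \bff, \bfc) \in Z^2 \bar{\bfC}^\prime_{GS}(\scrA)^{\oplus l}$.

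The final claim follows at once from the direct sum decomposition $\bar{\bfC}^\prime_{GS}(\scrA) = \bar{\bfC}^\prime_{tGS}(\scrA) \oplus \bfC^\prime_{simp}(\scrA)$: presheaf deformations are precisely the twisted deformations with $\bfc = 0$, and this condition cuts out the truncated summand. The main obstacle in carrying out the argument by hand is bookkeeping: tracking signs and face maps in $d_{simp}$ when expanding the twist cocycle identity to first order in $\epsilon$, where all three face maps $\partial_0, \partial_1, \partial_2$ of a $2$-simplex contribute with alternating signs and must be matched against the Hochschild contributions from $d_{Hoch}$ applied to $\bff$. This sign balancing is exactly what necessitates the sign $(-1)^{n+1}$ in $d_{GS} = (-1)^{n+1} d_{simp} + d_{Hoch}$.
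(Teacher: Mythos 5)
The paper does not prove this lemma at all --- it is quoted verbatim from \cite[Theorem 2.21]{DLL} --- and your sketch is precisely the standard linearization dictionary that the cited source carries out: since $\bfI^2=0$, each twisted-presheaf axiom for $(\scrA[\epsilon], m+\bfm\epsilon, f+\bff\epsilon, c+\bfc\epsilon)$ reduces to one bidegree component of $d_{GS}(\bfm,\bff,\bfc)=0$, morphisms of the form $(1+\bfg\epsilon,1+\bfh\epsilon)$ reduce to the coboundary relation, and the final claim is the truncated summand of the splitting $\bar{\bfC}^\prime_{GS}(\scrA)=\bar{\bfC}^\prime_{tGS}(\scrA)\oplus\bfC^\prime_{simp}(\scrA)$. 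Two small points: you match two identities (twist compatibility and the twist cocycle relation) to three components $(1,2),(2,1),(3,0)$; the $(1,2)$ component is actually the first-order multiplicativity of $\bar f^u$ (that it remains an algebra homomorphism), which you should list separately. Also, writing an arbitrary twisted deformation in the additive form $\scrA[\epsilon]$ in the surjectivity step uses the flatness of the deformation over $\bfS$ to split the underlying $\bfS$-modules; this hypothesis is implicit in the paper's conventions but should be invoked explicitly.
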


\subsection{Hodge decomposition}
In the group algebra
$\bQ S_n$
of the $n$-th symmetric group
$S_n$,
there is a collection of pairwise orthogonal idempotents
$e_n(r)$
for 
$1 \leq r \leq n$
such that
$\sum^n_{r=1} e_n(r) = 1$
\cite[Theorem 1.2]{GS87}.
Put
$e_n(0) = 0$,
$e_0(0) = 1 \in \bQ$,
and
$e_n(r) = 0$
for
$r > n$.
Let
$A$
be a commutative $\bfR$-algebra
and
$M$
a symmetric $A$-bimodule.
The subcomplex
$\bfC(A, M)_r \subset \bfC(A, M)$
whose $n$-th term is
$\bfC(A, M)e_n(r)$
gives rise to a Hodge decomposition
\begin{align*}
\bfC(A, M) = \bigoplus_{r \in \bN} \bfC(A, M)_r.
\end{align*}

Assume that
$\scrA$
is a presheaf of commutative $\bfR$-algebras.
Then the Hodge decomposition
\begin{align*}
\Hom(\scrA(c\tau)^{\otimes q}, \scrA(d\tau))
=
\bigoplus^q_{r=0} \Hom(\scrA(c\tau)^{\otimes q}, \scrA(d\tau))_r
\end{align*}
induces a decomposition of the double complex
$\bfC^{n-q, q}_{GS}(\scrA)$
preserved by
$d_{Hoch}$
and
$d_{simp}$.
Hence one obtains a
\emph{Hodge decomposition}
\begin{align*}
\bfC_{GS}(\scrA) = \bigoplus_{r \in \bN}\bfC_{GS}(\scrA)_r.
\end{align*} 
Taking cohomology yields a decomposition for
$H^\bullet \bfC_{GS}(\scrA)$.

Assume further the following.
\begin{itemize}
\item
The restriction map
$f^u \colon \scrA(U) \to \scrA(V)$
is a flat epimorphism of rings for each 
$u \colon V \to U$.
\item
The algebra
$\scrA(U)$
is essentially of finite type
and
smooth $\bfR$-algebra for each
$U$.
\end{itemize}
Recall that
a homomorphism of rings is called an epimorphism
if it is an epimorphism in the category of noncommutative rings.
For instance,
every surjective homomorphism of commutative rings is an epimorphism.
Then one obtains the
\emph{presheaf of differential}
$\Omega_\scrA \colon \frakU^{op} \to \Mod(\scrA)$
with
$\Omega_\scrA(U) = \Omega_{\scrA(U)/\bfR}$.
Since we have a canonical isomorphism
$\scrA(V) \otimes_{\scrA(U)}\Omega_{\scrA(U)}
\cong
\Omega_{\scrA(V)}$
by the first additional assumption,
the induced restriction maps
$\scrT_{\scrA(U)/\bfR} \to \scrT_{\scrA(V)/\bfR}$
yield the
\emph{tangent presheaf}
$\scrT_\scrA \colon \frakU^{op} \to \Mod(\scrA)$
with
$\scrT_\scrA(U) = \scrT_{\scrA(U)/\bfR}$.
From the second additional assumption it follows that
antisymmetrizations
$\wedge^n \scrT_{\scrA(U)} \to H^n \bfC(\scrA(U))$
are isomorphisms.

\begin{lem} {\rm{(}\cite[Theorem 3.3]{DLL}\rm{)}} \label{lem:GSsimp}
Let
$\frakU$
be a small category
and
$\scrA \colon \frakU^{op} \to \CAlg(\bfR)$
a presheaf of commutative algebras.
Assume that
the algebra
$\scrA(U)$
is essentially of finite type
and
smooth $\bfR$-algebra for each
$U$.
Assume further that
the restriction map
$f^u \colon \scrA(U) \to \scrA(V)$
is a flat epimorphism of rings for each 
$u \colon V \to U$.
Then there is a canonical bijection
\begin{align} \label{eq:GSsimp} 
H^n\bfC_{GS}(\scrA)
=
\bigoplus^n_{r=0} H^n\bfC_{GS}(\scrA)_r
\cong
\bigoplus_{p+q=n} H^p(\frakU, \wedge^q \scrT_\scrA).
\end{align}
\end{lem}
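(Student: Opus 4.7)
The plan is to exploit the double complex structure on $\bfC_{GS}(\scrA)$ with $d_{Hoch}$ vertical and $d_{simp}$ horizontal, and to run the spectral sequence obtained by first taking vertical (Hochschild) cohomology. The hypotheses are tailored precisely so that both Hochschild--Kostant--Rosenberg on each vertex and compatibility with restrictions go through.

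First I would fix a simplex $\tau \in \cN_p(\frakU)$ and analyze the column $\bfC^{p,\bullet}_{GS}(\scrA)$ one factor at a time. Each factor $\bfC(\scrA(c\tau), \scrA(d\tau))$ is the Hochschild complex of the commutative smooth $\bfR$-algebra $B = \scrA(c\tau)$ with coefficients in the symmetric bimodule $C = \scrA(d\tau)$, where the bimodule structure is induced by the flat epimorphism $f^\tau \colon B \to C$. By the classical Hochschild--Kostant--Rosenberg theorem for smooth commutative algebras and its Hodge-graded refinement, $H^q \bfC(B, C)_r = 0$ unless $q = r$, and in that case the antisymmetrization provides a canonical isomorphism $H^q \bfC(B, C)_q \cong \wedge^q \scrT_B \otimes_B C$. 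Because $f^\tau$ is a flat epimorphism, one has $C \otimes_B C \cong C$, hence $\Omega_B \otimes_B C \cong \Omega_C$ and consequently $\scrT_B \otimes_B C \cong \scrT_C = \scrT_\scrA(d\tau)$. Taking the product over $\tau \in \cN_p(\frakU)$ identifies the $E_1$ page of the vertical-then-horizontal spectral sequence, restricted to the Hodge piece $r$, with
\begin{equation*}
E_1^{p,q}(r) \;=\; \begin{cases} \bfC^p_{simp}(\wedge^q \scrT_\scrA) & q = r, \\ 0 & q \neq r. \end{cases}
\end{equation*}

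Next I would observe that the surviving row is precisely the simplicial presheaf complex of the tangent presheaf $\wedge^q \scrT_\scrA$, whose cohomology is $H^p(\frakU, \wedge^q \scrT_\scrA)$ by definition. Since the Hodge decomposition is preserved by $d_{simp}$ as well as by $d_{Hoch}$, the spectral sequence splits into its Hodge pieces, and within each piece only a single row survives on $E_1$. All higher differentials therefore vanish for degree reasons, so the spectral sequence degenerates and yields
\begin{equation*}
H^n \bfC_{GS}(\scrA)_r \;\cong\; H^{n-r}(\frakU, \wedge^r \scrT_\scrA).
\end{equation*}
Summing over $r$ and reindexing with $p = n-r$, $q = r$ gives exactly the decomposition claimed in \pref{eq:GSsimp}.

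The main obstacle will be verifying the compatibility of the HKR isomorphism with restrictions so that it really assembles into an isomorphism of presheaves and, in particular, that the identifications $\wedge^q \scrT_B \otimes_B C \cong \wedge^q \scrT_C$ are functorial under composition of flat epimorphisms. This is where the hypothesis that each $f^u$ is a flat epimorphism of essentially finite type smooth $\bfR$-algebras does all the work: flatness ensures that the base change commutes with exterior powers of the cotangent module, while the epimorphism property collapses $C \otimes_B C$ to $C$ and thereby makes the bimodule structure on $C$ symmetric and the antisymmetrization a bimodule map. A secondary technical point is to confirm that the antisymmetrization lands in the correct Hodge piece, which follows from \cite[Theorem 1.2]{GS87} together with functoriality of the idempotents $e_n(r)$; and finally, the canonicity of the resulting bijection \pref{eq:GSsimp} is inherited from the canonicity of HKR on each vertex and the canonicity of the simplicial presheaf cohomology.
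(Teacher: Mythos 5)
Your argument is correct; note, though, that the paper does not prove this lemma at all --- it quotes it verbatim as \cite[Theorem 3.3]{DLL} --- and your spectral-sequence argument (vertical HKR on each column of the double complex, using smoothness for the concentration of the Hodge pieces and the flat-epimorphism hypothesis for $\Omega_{\scrA(c\tau)}\otimes_{\scrA(c\tau)}\scrA(d\tau)\cong\Omega_{\scrA(d\tau)}$, then degeneration onto the single surviving row $\bfC^\bullet_{simp}(\wedge^q\scrT_\scrA)$) is essentially the proof given in that reference. The one point you flag as the ``main obstacle,'' compatibility of the columnwise HKR identifications with the three types of face maps, is indeed the only real verification needed, and it goes through exactly as you describe; your only slight misattribution is that the symmetry of the bimodule $\scrA(d\tau)$ is automatic from commutativity rather than a consequence of the epimorphism property, which is instead what makes the identification $\scrT_{\scrA(c\tau)}\otimes\scrA(d\tau)\cong\scrT_{\scrA(d\tau)}$ work.
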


From the proof,
one sees that
any Gerstenharber--Shack cohomology class
$\frakc_{GS}$
is represented by a normalized reduced
\emph{decomposable}
cocycle
$\theta_{0,n}, \theta_{1, n-1}, \ldots, \theta_{n,0}$
in the sense that
$\theta_{n-r, r}$
are reduced
and
belong to
$\bar{\bfC}^{n-r, r} (\scrA)_r$.
Each
$\theta_{n-r, r} = (\theta^\tau_{n-r, r})_{\tau \in \cN_{n-r}(\frakU)}$
lifts to a unique simplicial cocycle
$\Theta_{n-r, r}
=
(\Theta^\tau_{n-r, r})_{\tau \in \cN_{n-r}(\frakU)}
\in
\bfC^{\prime n-r}_{simp}(\wedge^r \scrT_\scrA)$.
The image of
$\frakc_{GS}$
under the bijection is the cohomology class
$\frakc_{simp}$
represented by
$\Theta_{0,n}, \Theta_{1, n-1}, \ldots, \Theta_{n,0}$.

\subsection{Comparison with relative Hochschild cohomology}
We describe the relation between
simplicial cohomology
and
$\mathrm{\breve{C}ech}$ cohomology
for a presheaf
$\scrF \colon \frakU^{op} \to \Mod(\bfR)$
in the case where
$\frakU$
is a poset with binary meets.
We use the symbol
$\cap$
to denote meets in
$\frakU$.
For a $p$-sequence
$\tau
=
(U^\tau_0, U^\tau_1, \ldots, U^\tau_p) \in \frakU^{p+1}$
we denote by
$\cap \tau$
the meet of all coordinates of
$\tau$.
The
$\mathrm{\breve{C}ech}$ complex
$\breve{\bfC}(\scrF)$
of
$\scrF$
has
\begin{align*}
\breve{\bfC}^p(\scrF)
=
\prod_{\tau \in \frakU^{p+1}} \scrF(\cap \tau)
\end{align*}
as the $p$-th term with the usual differentials.
A
$\mathrm{\breve{C}ech}$ cochain
$\psi = (\psi^\tau)_\tau$
is
\emph{alternating}
if
$\psi^\tau = 0$  
whenever two coordinates of
$\tau$
are equal,
and
$\psi^{\tau s} = (-1)^{\sign (s)} \psi^\tau$
for any permutation
$s$
of the set
$\{ 0,1, \ldots, p\}$.
Here,
we regard
$\tau$
as a set theoretic map
$\{0,1, \ldots, p\} \to \frakU$.
The alternating
$\mathrm{\breve{C}ech}$ cochains
form a subcomplex
$\breve{\bfC}^\prime(\scrF)$
which is quasi-isomorphic to
$\breve{\bfC}(\scrF)$
via the inclusion.

To a $p$-sequence
$\tau$,
one associates a $p$-simplex
\begin{align*}
\bar{\tau}
=
(d \bar{\tau} = \cap^p_{j=0}U^\tau_j \to \cap^p_{j=1}U^\tau_j \to \cdots \to \cap^p_{j=p-1}U^\tau_j \to U^\tau_p = c \bar{\tau}). 
\end{align*}  
Conversely,
any $p$-simplex
$\mu$
can be regarded as a $p$-sequence
$\tilde{\mu}$
by forgetting the inclusions.
Define a map
$\delta_i \colon \frakU^{p+1} \to \frakU^p$
for
$i = 1, \ldots, p$
as
\begin{align*}
\delta_i \tau
=
(U^\tau_0, \ldots, U^\tau_{i-2}, U^\tau_{i-1} \cap U^\tau_i, U^\tau_{i+1}, \ldots, U^\tau_p).
\end{align*}
There are morphisms
$\iota \colon \bfC^\prime_{simp} \to \breve{\bfC}^\prime (\scrF), \
\pi \colon \breve{\bfC}^\prime (\scrF) \to \bfC^\prime_{simp}$
of complexes defined as
\begin{align*}
\begin{gathered}
\iota(\phi)^\tau
=
\sum_{s \in \frakS_{p+1}} (-1)^{\sign(s)} \phi^{\overline{\tau s}}, \ 
\phi \in \bfC^\prime_{simp}(\scrF), \
\tau \in \frakU^{p+1}, \\
\pi(\psi)^\mu
=
\psi^{\tilde{\mu}}, \ 
\psi \in \breve{\bfC}^\prime(\scrF), \
\mu \in \cN_p(\frakU),
\end{gathered}
\end{align*}
which induce mutually inverse isomorphisms between
$H^\bullet (\frakU, \scrF)$
and
$\breve{H}^\bullet (\frakU, \scrF)$
\cite[Lemma 3.9]{DLL}.

Now,
for a smooth proper $\bfR$-scheme
$X$
we give an alternative description of the relative Hochschild cohomology.
As explained above,
we have
$HH^\bullet (X/\bfR) \cong HT^\bullet (X/\bfR)$.
Choose a finite affine open cover
$\frakU$
closed under intersections.
By definition
$\frakU$
is semi-separating,
i.e.,
$\frakU$
is closed under finite intersections. 
For every quasi-coherent sheaf 
$\scrF$
on
$X$,
one can apply
\cite[Lemma 3.9]{DLL}
and
Leray's theorem
\cite[Theorem 4.5]{Har}
to obtain
\begin{align} \label{eq:simpsheaf}
H^\bullet (\frakU, \scrF |_\frakU)
\cong
\breve{H}^\bullet (\frakU, \scrF |_\frakU)
\cong
\breve{H}^\bullet (\frakU, \scrF)
\cong
H^\bullet (X, \scrF)
\end{align}
with
$\scrF |_\frakU$
regarded as a presheaf on
$\frakU$.
Since
$X$
is smooth over
$\bfR$
and
open immersions
$V \hookrightarrow U$
in
$\frakU$
define flat epimorphisms
$\scrO_X(U) \to \scrO_X(V)$,
combining
\pref{eq:GSsimp}
with
\pref{eq:simpsheaf},
we obtain

\begin{lem} {\rm{(}\cite[Corollary 3.4]{DLL}\rm{)}} 
Let
$X$
be a smooth proper $\bfR$-scheme
with a finite affine open cover
$\frakU$
closed under intersections. 
Let
$\scrO_X|_\frakU, \scrT_{X/\bfR}|_\frakU$
be the restrictions of
$\scrO_X, \scrT_{X/\bfR}$
to
$\frakU$
respectively.
Then there are canonical isomorphisms
\begin{align} \label{eq:GSHT} 
H^n \bfC_{GS}(\scrO_X|_\frakU)
=
\bigoplus^n_{r=0} H^n\bfC_{GS}(\scrO_X|_\frakU)_r
\cong
\bigoplus_{p+q=n} H^p(\frakU, \wedge^q \scrT_{X/\bfR}|_\frakU)
\cong
HH^n (X/\bfR),
\end{align}
where the first isomorphism respects the Hodge decomposition. 
\end{lem}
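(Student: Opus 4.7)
The plan is to obtain the first isomorphism by applying \pref{lem:GSsimp} to the presheaf $\scrA = \scrO_X|_\frakU$, and then to derive the second isomorphism by identifying simplicial presheaf cohomology with sheaf cohomology and invoking the relative Hochschild--Kostant--Rosenberg isomorphism recalled at the beginning of the section.

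First I would verify the hypotheses of \pref{lem:GSsimp}. Since $X$ is smooth and proper over $\bfR$ and each $U \in \frakU$ is affine, every $\scrO_X(U)$ is an essentially finitely generated smooth $\bfR$-algebra. For each inclusion $V \hookrightarrow U$ in $\frakU$ the restriction map $\scrO_X(U) \to \scrO_X(V)$ is flat (open immersions are flat) and an epimorphism of rings (open immersions of affine schemes correspond to ring epimorphisms). The small poset $\frakU$ carries binary meets by intersection, thanks to our closure hypothesis. Hence \pref{lem:GSsimp} yields the Hodge decomposition together with a canonical isomorphism
\begin{align*}
H^n \bfC_{GS}(\scrO_X|_\frakU) = \bigoplus_{r=0}^n H^n \bfC_{GS}(\scrO_X|_\frakU)_r \cong \bigoplus_{p+q=n} H^p(\frakU, \wedge^q \scrT_{\scrO_X|_\frakU}),
\end{align*}
and by construction the tangent presheaf $\scrT_{\scrO_X|_\frakU}$ is precisely the restriction $\scrT_{X/\bfR}|_\frakU$.

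Next I would convert simplicial presheaf cohomology into sheaf cohomology. Since $\frakU$ is a finite affine open cover closed under intersections it is semi-separating, so the chain \pref{eq:simpsheaf} (combining \cite[Lemma 3.9]{DLL} with Leray's theorem applied to the quasi-coherent sheaf $\wedge^q \scrT_{X/\bfR}$ on the affine cover $\frakU$) yields
\begin{align*}
H^p(\frakU, \wedge^q \scrT_{X/\bfR}|_\frakU) \cong H^p(X, \wedge^q \scrT_{X/\bfR})
\end{align*}
for each pair $(p,q)$. Combining with the previous step produces a canonical isomorphism from $H^n \bfC_{GS}(\scrO_X|_\frakU)$ onto $\bigoplus_{p+q=n} H^p(X, \wedge^q \scrT_{X/\bfR})$.

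Finally I would invoke the relative Hochschild--Kostant--Rosenberg isomorphism $\mathrm{I}^{\mathrm{HKR}}_{X/\bfR}\colon HH^n(X/\bfR) \xrightarrow{\sim} \bigoplus_{p+q=n} H^p(X, \wedge^q \scrT_{X/\bfR})$ stated at the top of the section. Composing with its inverse produces the claimed isomorphism with $HH^n(X/\bfR)$; the assertion that the first isomorphism respects the Hodge decomposition is already encoded in \pref{lem:GSsimp}. Since the three ingredients (Hodge decomposition, simplicial/$\mathrm{\breve{C}ech}$/sheaf comparison, and HKR) are all available, the only real task is bookkeeping. The mildly delicate point, and the one I would check carefully, is verifying that restrictions between affine opens in $\frakU$ really are flat epimorphisms of rings, so that \pref{lem:GSsimp} genuinely applies to $\scrO_X|_\frakU$.
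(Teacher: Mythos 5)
Your proposal is correct and follows essentially the same route as the paper: the paper likewise obtains the first isomorphism by applying \pref{lem:GSsimp} to $\scrO_X|_\frakU$ (after noting that the affine restriction maps are flat ring epimorphisms and the coordinate rings are smooth), converts simplicial to sheaf cohomology via \pref{eq:simpsheaf} (the \v{C}ech comparison plus Leray), and concludes with the relative HKR isomorphism $HH^n(X/\bfR)\cong HT^n(X/\bfR)$. The flat-epimorphism point you flag is exactly the one the paper also isolates, justified by full faithfulness of pushforward along open immersions of affine schemes.
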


\section{Deformations of relatively smooth proper schemes}
In this section,
we review the classical deformation theory of schemes.
The main reference is
\cite{Har10}.
We explain how deformations of smooth proper $\bfk$-varieties extend to Toda's construction
\cite{Tod},
which can be adapted to deformations of relatively smooth proper schemes along square zero extensions in a straightforward way.
When the original scheme is a deformation of a higher dimensional Calabi--Yau manifold,
Toda's construction gives the category of quasi-coherent sheaves on deformations of the Calabi--Yau manifold.

\subsection{Deformations of schemes}
Let
$X$
be a
$\bfk$-scheme
and
$A$
a local artinian $\bfk$-algebra
with residue field
$\bfk$.
An
\emph{$A$-deformation}
of
$X$
is a pair
$(X_A, i_A)$,
where
$X_A$
is a scheme flat over
$A$
and
$i_A \colon X \hookrightarrow X_A$
is a closed immersion
such that
the induced map
$X \to X_A \times_A \bfk$
is an isomorphism.
Two deformations
$(X_A, i_A), (X^\prime_A, i^\prime_A)$
are
\emph{equivalent}
if there is an $A$-isomorphism
$X_A \to X^\prime_A$
compatible with
$i_A, i^\prime_A$.
The deformation functor
\begin{align*}
\Def_X \colon \Art \to \Set
\end{align*}
sends each
$A \in \Art$
to the set of equivalence classes of $A$-deformations of
$X$.

Assume that
$X$
is projective over
$\bfk$.
Then
$\Def_X$
satisfies Schlessinger's criterion
and
there exists
a
\emph{miniversal formal family}
$(R, \xi)$
for
$\Def_X$,
where
$R$
is a complete local noetherian $\bfk$-algebra with residue field
$\bfk$,
and
$\xi = \{ \xi_n \}_n$
belongs to the limit
\begin{align*}
\widehat{\Def}_X (R)
=
\displaystyle \lim_{\longleftarrow} \Def_X (R / \frakm^n_R)
\end{align*}
of the inverse system
\begin{align*}
\cdots \to
\Def_X (R/\frakm_R^{n+2})
\to \Def_X (R/\frakm_R^{n+1})
\to \Def_X (R/\frakm_R^n)
\to \cdots
\end{align*}
induced by the natural quotient maps
$R/\frakm^{n+1}_R \to R/\frakm^n_R$.
The formal family
$\xi$
corresponds to a natural transformation 
\begin{align*}
h_R = \Hom_{\operatorname{\bfk-alg}} (R, -) \to \Def_X,
\end{align*}
which sends each
$g \in h_R (A)$
factorizing through
$R \to R / \frakm^{n+1}_R \xrightarrow{g_n} A$
to
$\Def_X (g_n) (\xi_n)$.

Let
$X_n$
be the schemes which define
$\xi_n$.
There is a noetherian formal scheme
$\scrX$
over
$R$
such that
$X_n \cong \scrX \times_R R / \frakm^{n+1}_R$
for each
$n$.
By abuse of notation,
we use the same symbol
$\xi$
to denote
$\scrX$.
Thus any scheme which defines an equivalence class
$[X_A, i_A]$
can be obtained as the pullback of
$\xi$
along some morphism of noetherian formal schemes
$\Spec A \to \Spf R$.
If
$X$
has no infinitesimal automorphisms
which restrict to the identity of
$X$,
then every equivalence class
$[X_A, i_A]$
becomes just a deformation
$(X_A, i_A)$
and
we have a natural isomorphism
$h_R \cong \Def_X$.
In this case,
we call
$\Def_X$
\emph{prorepresentable}
and
$(R, \xi)$
a
\emph{universal formal family}
for
$\Def_X$.

\subsection{Algebraization}
Let
$X$
be a projective $\bfk$-variety.
We call a miniversal formal family
$(R, \xi)$
for
$\Def_X$
\emph{effective}  
when there exists a scheme
$X_R$
flat and of finite type over
$R$
whose formal completion along the closed fiber
$X$
is isomorphic to
$\xi$.
By
\cite[Theorem III5.4.5]{GD61}
the family
$(R, \xi)$
is effective
if deformations of any invertible sheaf on
$X$
are unobstructed.
Note that
this is the case,
for instance,
if we have
$H^2 (\scrO_X) = 0$.
From the proof,
one sees that
$X_R$
is projective over
$R$.
We will call such
$X_R$
an
\emph{effectivization}
of
$\xi$.

The deformation functor
$\Def_X$
can naturally be extended to a functor defined on the category
$\Alg^{aug}(\bfk)$
of augmented noetherian $\bfk$-algebras.
By abuse of notation,
we use the same symbol
$\Def_X$
to denote the extended functor,
which sends each
$(\bfP, \frakm_\bfP) \in \Alg^{aug}(\bfk)$
to the set of equivalence classes of deformations over
$(\bfP, \frakm_\bfP)$.
Since the functor
$\Def_X$
is locally of finite presentation,
by
\cite[Theorem 1.6]{Art69b}
the miniversal formal family is
\emph{algebraizable},
i.e.,
there exists a triple
$(S, s, X_S)$
where
$S$
is an algebraic $\bfk$-scheme
with a distinguished closed point
$s \in S$,
and
$X_S$
is a flat and of finite type $S$-scheme
whose formal completion along the closed fiber
$X$
over
$s$
is isomorphic to
$\xi$. 
We call the scheme
$X_S$
a
\emph{versal deformation}
over
$S$.
When there exists a versal deformation,
we say that
the miniversal formal family
$(R, \xi)$
is
\emph{algebraizable}.

\subsection{Deformations of higher dimensional Calabi--Yau manifolds}
\label{subsec:DefCY}
Here,
we focus on a special case
where
several interesting results hold.
Let
$X_0$
be a Calabi--Yau manifold of dimension more than two.
Then the deformation functor
$\Def_{X_0}$
has an effective universal formal family
$(R, \xi)$.
Since deformations of Calabi--Yau manifolds are unobstructed,
the complete local noetherian ring
$R$
is regular
and
we have
\begin{align*}
R \cong \bfk \llbracket t_1, \ldots, t_d \rrbracket
\end{align*}
with
$d =\dim_{\bfk} H^1 (\scrT_{X_0})$.
Every $A$-deformation of
$X_0$
is smooth projective over
$A$,
as we have

\begin{lem} {\rm{(}\cite[Lemma 2.4]{Mora}\rm{)}}
The effectivization
$X_R$
for
$(R, \xi)$
is regular
and
the morphism
$\pi_R \colon X_R \to \Spec R$
is smooth of relative dimension
$\dim X_0$.
\end{lem}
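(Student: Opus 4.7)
The goal is to prove smoothness of $\pi_R \colon X_R \to \Spec R$ everywhere on $X_R$ and then deduce regularity from regularity of $R$.

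First I would record the structural properties of the effectivization already in hand. By construction $X_R$ is flat and of finite type over $R$, and since the formal completion $\xi$ along the closed fiber comes from a universal formal family for $\Def_{X_0}$ of a projective variety, the effectivization is actually projective over $R$ (as noted in the discussion preceding Theorem~III.5.4.5 of \cite{GD61} cited in the text). In particular $\pi_R$ is proper. The closed fiber of $\pi_R$ is, by definition of effectivization and of a deformation, the original Calabi--Yau manifold $X_0$, which is smooth over $\bfk$ of dimension $\dim X_0$.

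Next I would apply the fibral criterion for smoothness. Since $\pi_R$ is flat and locally of finite presentation, and the closed fiber $X_R \times_R \bfk = X_0$ is smooth over $\bfk$ of relative dimension $\dim X_0$, the morphism $\pi_R$ is smooth at every point of $X_0$. Let $U \subset X_R$ be the smooth locus of $\pi_R$, which is open, and let $Z = X_R \setminus U$ be its closed complement; by the preceding observation $Z \cap X_0 = \emptyset$. The key step is then to extend smoothness from $X_0$ to all of $X_R$: since $\pi_R$ is proper, the image $\pi_R(Z) \subset \Spec R$ is closed, and $\pi_R(Z)$ does not contain the unique closed point of $\Spec R$ because $Z$ avoids $X_0$. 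But $R$ is local, so every nonempty closed subset of $\Spec R$ contains the closed point; therefore $\pi_R(Z) = \emptyset$, which forces $Z = \emptyset$. Hence $\pi_R$ is smooth on all of $X_R$, and since the relative dimension of a smooth morphism is locally constant and equals $\dim X_0$ on the closed fiber, it equals $\dim X_0$ everywhere.

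For the regularity of $X_R$ itself I would invoke the standard fact that a scheme smooth over a regular base is regular: indeed, smoothness of $\pi_R$ gives, for any point $x \in X_R$ lying over $\mathfrak{p} \in \Spec R$, a regular sequence in $\scrO_{X_R, x}$ lifting a regular system of parameters of $\scrO_{\Spec R, \mathfrak{p}}$ and completing to one of $\scrO_{X_R, x}$. Since $R \cong \bfk \llbracket t_1, \ldots, t_d \rrbracket$ is regular by the unobstructedness of Calabi--Yau deformations (already recorded in the subsection), each $\scrO_{X_R, x}$ is regular, so $X_R$ is regular. The only step with any subtlety is the extension of smoothness from the closed fiber to the whole $X_R$, and the properness of $\pi_R$ together with the local nature of $\Spec R$ handles it cleanly; I do not anticipate additional obstacles.
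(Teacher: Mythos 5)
Your proof is correct: the fibral criterion for smoothness gives smoothness of $\pi_R$ along the closed fiber $X_0$, properness of $\pi_R$ together with the fact that every nonempty closed subset of $\Spec R$ contains the closed point forces the non-smooth locus to be empty, and regularity of $X_R$ then follows from smoothness over the regular ring $R \cong \bfk \llbracket t_1, \ldots, t_d \rrbracket$. The paper does not reproduce an argument here but simply cites \cite[Lemma 2.4]{Mora}, and your argument is the standard one that such a reference carries out, so there is nothing to flag.
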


Now,
we briefly recall the construction of
$X_S$.
Consider the extended functor 
\begin{align*}
\Def_{X_0}
\colon
\Alg^{aug}(\bfk)
\to
\Set.
\end{align*}
Fix an isomorphism
$R \cong \bfk \llbracket t_1, \ldots, t_d \rrbracket$.
Let
$T = \bfk [ t_1, \ldots, t_d ]$
and
$t \in \Spec T$
be the closed point corresponding to maximal ideal
$(t_1, \ldots, t_d)$.
There is a filtered inductive system
$\{ R_i \}_{i \in I}$
of finitely generated $T$-subalgebras of
$R$
whose colimit is
$R$.
Since
$\Def_{X_0}$
is locally of finite presentation,
$[ X_R, i_R ]$
is the image of some element
$\zeta_i
\in
\Def_{X_0} \left( \left( R_i, \frakm_{R_i} \right) \right)$
by the canonical map
$\Def_{X_0} \left( \left( R_i, \frakm_{R_i} \right) \right)
\to
\Def_{X_0} (R)$.
By
\cite[Corollary 2.1]{Art69a}
there exists an \'etale neighborhood
$\Spec S$
of
$t$
in
$\Spec T$
with first order approximation
$\varphi \colon R_i \to S$
of
$R_i \hookrightarrow R$.
Let
$[ X_S, i_S ]$
be the image of
$\zeta_i$
by the map
$\Def_{X_0} (\varphi)$.
From miniversality of
$(R, \xi)$,
it follows that
the formal completion of
$X_S$
along the closed fiber
$X_0$
over
$s \in \Spec S$
is isomorphic to
$\xi$,
where
$s$
is the distinguished closed point mapping to
$t$.
By construction,
$\Spec S$
is a nonsingular affine $\bfk$-variety 
and
$X_S$
is flat of finite type over
$S$.
Exploiting inherited
smoothness
and
projectivity
of
$X_R$
by terms in the projective system
$\{ X_{R_i } \}_{i \in I}$
for sufficiently large indices,
one can show

\begin{lem} {\rm{(}\cite[Lemma 2.3]{Mora}\rm{)}} \label{lem:smprojVD}
Let
$X_0$
be a Calabi--Yau manifold of dimension more than two.
Then there exists a nonsingular affine $\bfk$-variety
$\Spec S$
with a versal deformation
$X_S$
which is smooth projective of relative dimension
$\dim X_0$
over
$S$.
\end{lem}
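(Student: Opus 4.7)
The plan is to resume the construction recalled just before the statement and then verify that, at the cost of enlarging the index $i \in I$, the intermediate scheme $X_{R_i}$ inherits the smoothness and projectivity of the effectivization $X_R$; these properties are then automatically preserved under the base change along the first order approximation $\varphi \colon R_i \to S$.

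The starting point is the effective universal formal family $(R, \xi)$ and its effectivization $\pi_R \colon X_R \to \Spec R$, which by the preceding lemma is smooth of relative dimension $\dim X_0$ and, being obtained through the Grothendieck existence theorem, is in fact projective over $R$. Fix an isomorphism $R \cong \bfk \llbracket t_1, \ldots, t_d \rrbracket$, the polynomial subring $T = \bfk [t_1, \ldots, t_d]$, and the filtered inductive system $\{ R_i \}_{i \in I}$ of finitely generated $T$-subalgebras of $R$ with $\operatorname{colim} R_i = R$. Since the extended functor $\Def_{X_0}$ is locally of finite presentation, the class $[X_R, i_R]$ is the image of some $\zeta_i \in \Def_{X_0}(R_i)$ under the canonical map $\Def_{X_0}(R_i) \to \Def_{X_0}(R)$. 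Artin approximation then supplies an \'etale neighborhood $\Spec S$ of $t \in \Spec T$ and a first order approximation $\varphi \colon R_i \to S$ of $R_i \hookrightarrow R$. Set $X_S = X_{R_i} \times_{R_i} S$; by miniversality of $(R, \xi)$ its formal completion along the closed fiber is isomorphic to $\xi$, so $X_S$ is a versal deformation of $X_0$ over the nonsingular affine $\bfk$-variety $\Spec S$.

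It remains to arrange that $X_{R_i}$ is smooth and projective over $R_i$ for sufficiently large $i \in I$; replacing $i$ by any cofinal index preserves the construction above. For smoothness, I would invoke the standard limit formalism for morphisms of finite presentation: given a projective system with affine transition maps and limit $\operatorname{Spec} R$, a morphism of finite presentation over the limit descends, together with smoothness, to some stage; applied to $\pi_R \colon X_R \to \Spec R$, this produces $j \geq i$ such that $X_{R_j} \to \Spec R_j$ is smooth of relative dimension $\dim X_0$. For projectivity, I would use the Calabi--Yau hypothesis $H^2(\scrO_{X_0}) = 0$ (which holds since $\dim X_0 > 2$), so that any ample line bundle on $X_0$ deforms unobstructedly and uniquely to an ample line bundle $L_R$ on $X_R$. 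By the same limit argument for invertible sheaves (Picard groups of projective systems) there exists $k \geq j$ and a line bundle $L_{R_k}$ on $X_{R_k}$ pulling back to $L_R$; ampleness is an open condition on the base preserved under limits, so after enlarging $k$ once more one may assume $L_{R_k}$ is relatively ample, whence $X_{R_k}$ is smooth projective over $R_k$ of the desired relative dimension. Pulling back along $\varphi \colon R_k \to S$ preserves both properties.

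The main obstacle I expect is precisely the descent step in the last paragraph: one must control the stage at which smoothness of $\pi_R$ and the ample line bundle on $X_R$ first appear on some $X_{R_k}$, and then guarantee that both features survive simultaneously. The smoothness descent is a clean application of the Grothendieck limit theorems, and projectivity reduces via $H^2(\scrO_{X_0}) = 0$ to a statement about line bundles in the limit, which is again a standard finite presentation argument; since only finitely many of these conditions are in play, one can cofinally select a single index $k$ at which everything holds, and no genuinely new phenomenon beyond the cited \cite[Lemma 2.3]{Mora} is needed.
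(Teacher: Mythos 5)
Your proposal is correct and follows essentially the same route as the paper, which itself only sketches this argument (the full proof being deferred to \cite[Lemma 2.3]{Mora}): construct $X_S$ by Artin approximation from a stage $X_{R_i}$ of the filtered system, and note that smoothness and projectivity of the effectivization $X_R$ are ``inherited'' by $X_{R_k}$ for sufficiently large $k$ via the standard finite-presentation limit theorems, after which they are preserved by the base change along $R_k \to S$. Your descent of the relatively ample line bundle (using $H^2(\scrO_{X_0})=0$ and openness of ampleness in the limit) is exactly the intended mechanism.
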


\subsection{$T^i$ functors}
Let 
$A \to B$
be a ring homomorphism
and
$M$
a $B$-module.
Define the groups
$T^i (B/A, M)$
for
$i = 0, 1, 2$
as the $i$-th cohomology of the complex
$\Hom_B(L_\bullet, M)$,
where
\begin{align*}
L_\bullet = L_2 \xrightarrow{d_2} L_1 \xrightarrow{d_1} L_0
\end{align*}
is the
\emph{cotangent complex}.
When the ring homomorphism
$A \to B$
is a surjection with kernel
$J$,
$L_\bullet$
is given as follows.
Choose a free $A$-module
$P$
and
a surjection
$j \colon P \to J$
with kernel
$Q$.
We have two short exaxt sequences
\begin{align*}
0 \to J \to A \to B \to 0, \
0 \to Q \to P \xrightarrow{j} J \to 0.
\end{align*}
Let
$P_0$
be the submodule of
$P$
generated by all relations of the form
$j(a)b - j(b)a$
for
$a,b \in P$.
From
$j(P_0)=0$
it follows
$P_0 \subset Q$. 
Take
$L_2 = Q / P_0$,
$L_1 = P \otimes_A B$,
and
$L_0 = 0$.
Note that
$L_2$
is a $B$-module.
Indeed,
for
$a \in J$
there is an element
$a^\prime \in P$
such that
$a = j(a^\prime)$.
Then we have
$ax \equiv j(x)a^\prime \equiv 0$
modulo
$P_0$
for
$x \in Q$.
The differential
$d_2 \colon L_2 \to L_1$
is the map induced by the inclusion
$Q \to P$
and
$d_1 = 0$.
By
\cite[Lemma 3.2]{Har10}
the $B$-modules
$T^i (B/A, M)$
do not depend on the choice of
$P$
up to isomorphism.

\begin{lem} {\rm{(}\cite[Theorem 3.4]{Har10}\rm{)}}
Let
$A \to B$
be a homomorphism of rings.
Then
\begin{align*}
T^i (B/A, -) \colon \Mod(B) \to \Mod(B), \ i = 0, 1, 2
\end{align*}
define covariant additive functor.
\end{lem}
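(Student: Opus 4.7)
The plan is to verify the three defining properties (well-definedness up to canonical identification, functoriality, additivity) in a uniform way by exhibiting $T^i(B/A,-)$ as the composition of two manifestly functorial operations: the internal hom $\Hom_B(L_\bullet,-)$ out of a fixed resolution, followed by passage to cohomology.

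First I would fix, once and for all, a specific cotangent complex $L_\bullet$ associated to the surjection-plus-localization data (or, in the general case, to a chosen factorization of $A \to B$ through a surjection onto $B$, reducing to the construction already described in the excerpt). The previous lemma guarantees that any two such choices give the same $T^i(B/A,M)$ up to canonical isomorphism, so the values of the functor on objects do not depend on this choice. Then for any morphism $f\colon M\to N$ in $\Mod(B)$, post-composition with $f$ defines an $A$-linear map
\begin{align*}
\Hom_B(L_\bullet, f)\colon \Hom_B(L_\bullet, M)\to \Hom_B(L_\bullet, N),\quad \phi\mapsto f\circ \phi,
\end{align*}
and the identity $(f\circ\phi)\circ d_i=f\circ(\phi\circ d_i)$ shows that $\Hom_B(L_\bullet,f)$ is a morphism of complexes. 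Since $L_\bullet$ is a complex of $B$-modules, the target is naturally a complex of $B$-modules and $\Hom_B(L_\bullet,f)$ is $B$-linear.

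Next I would pass to cohomology, which is functorial on the category of complexes of $B$-modules, to define
\begin{align*}
T^i(B/A,f):=H^i\!\left(\Hom_B(L_\bullet,f)\right)\colon T^i(B/A,M)\to T^i(B/A,N).
\end{align*}
The identities $T^i(B/A,\id_M)=\id$ and $T^i(B/A,g\circ f)=T^i(B/A,g)\circ T^i(B/A,f)$ then follow formally from $\id\circ\phi=\phi$ and the associativity of composition inside $\Hom_B(L_\bullet,-)$, combined with the functoriality of $H^i$. Additivity is proved the same way: bilinearity of composition gives $(f+g)\circ\phi=f\circ\phi+g\circ\phi$, so $\Hom_B(L_\bullet,f+g)=\Hom_B(L_\bullet,f)+\Hom_B(L_\bullet,g)$ as cochain maps, and $H^i$ is additive.

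The only non-formal point is checking that the identification of $T^i(B/A,M)$ for two different choices of $P$ (equivalently of $L_\bullet$) is natural in $M$, so that $T^i(B/A,-)$ is well-defined as a functor and not merely object-wise. This is the main obstacle and it is handled by revisiting the proof of the preceding independence lemma: given two choices $P,P'$ with associated complexes $L_\bullet,L'_\bullet$, one constructs a chain map $L_\bullet\to L'_\bullet$ lifting the identity of the pair $(A\to B, J)$, and the comparison isomorphism on $T^i$ is induced by pull-back along this chain map. Since pull-back commutes with post-composition by $f\colon M\to N$, the comparison square commutes, which is exactly the naturality required.
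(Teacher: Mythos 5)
Your proposal is correct and is essentially the standard argument (the paper itself only cites \cite[Theorem 3.4]{Har10} without reproducing a proof): once a cotangent complex $L_\bullet$ is fixed, $T^i(B/A,-)$ is the composite of the covariant additive functor $\Hom_B(L_\bullet,-)$ with the covariant additive functor $H^i$, and the independence of the choice of $P$ is a separate, object-wise canonical identification. Your extra remark that the comparison isomorphism between two choices of $L_\bullet$ is natural in $M$ (because it is induced by a chain map $L_\bullet \to L'_\bullet$, and pull-back along a chain map commutes with post-composition by $f$) is the right way to upgrade that identification to an isomorphism of functors, so nothing is missing.
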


The construction of
$T^i$
functors is compatible with localization
and
one obtains sheaves
$\scrT^i(X/Y, \scrF), \ i = 0, 1, 2$
for any morphism of $\bfk$-schemes
$f \colon X \to Y$
and
any quasi-coherent $\scrO_X$-module
$\scrF$
\cite[Exercise 3.5]{Har10}.
The sections of
$\scrT^i(X/Y, \scrF)$
over
$U = \Spec B \subset f^{-1}(V)$
give
$T^i(B/A, M)$,
where
$V = \Spec A \subset Y$
and
$\scrF |_U = \tilde{M}$
for some $B$-module
$M$.

\subsection{Infinitesimal extension of schemes}
Let
$X$
be a scheme of finite type over
$\bfR$
and
$\scrF$
a coherent sheaf on
$X$.
An
\emph{infinitesimal extension}
of
$X$
by
$\scrF$
is a pair
$(Y, \frakI)$,
where
$Y$
is a scheme of finite type over
$\bfS$
and
$\frakI \subset \scrO_Y$
is an ideal sheaf
such that
$\frakI^2 = 0$,
$(Y, \scrO_Y / \frakI) \cong (X, \scrO_X)$,
and
$\frakI \cong \scrF$
as an $\scrO_X$-module.
Two infinitesimal extensions
$(Y, \frakI), (Y^\prime, \frakI^\prime)$
are
\emph{equivalent}
if there is an isomorphism
$\scrO_Y \to \scrO_{Y^\prime}$
which makes the diagram
\begin{align*} 
\begin{gathered}
\xymatrix{
0 \ar[r] & \scrF \ar[r] \ar[d]^{\id} & \scrO_Y \ar[r] \ar[d] & \scrO_X \ar[r] \ar[d]^{\id} & 0 \\
0 \ar[r] & \scrF \ar[r] & \scrO_{Y^\prime} \ar[r] & \scrO_X \ar[r] & 0
}
\end{gathered}
\end{align*}
commute.
The trivial extension is a sheaf
$\scrO_X \oplus \scrF$
of abelian group endowed with the ring structure by
\begin{align*}
(a, f) \cdot (a^\prime, f^\prime)
=
(aa^\prime, af^\prime + a^\prime f).
\end{align*}
 
Assume that
$X$
is smooth proper over
$\bfR$.
Recall that
for the square zero extension
\begin{align} \label{eq:ses31}
0 \to \bfI \to \bfS \to \bfR \to 0
\end{align}
we have
$\bfI \cong \bfR^{\oplus l}$
as an $\bfR$-module.
Note that
given an infinitesimal extension
$(Y, \frakI)$
of
$X$
by
$\scrO^{\oplus l}_{X}$, 
$Y$
is flat over
$\bfS$
since
$\scrO_X$
is flat over
$\bfR$
and
$\scrO^{\oplus l}_X \to \scrO_Y$
is injective
\cite[Proposition 2.2]{Har10}.
Below,
we collect fundamental results necessary to describe the relation between
deformations
and
extensions
of schemes.

\begin{lem} {\rm{(}\cite[Exercise 4.7]{Har10}\rm{)}} \label{lem:relativeILP}
Let
$X$
be a smooth $\bfR$-scheme
and
$g \colon Y \to X$
a morphism from an affine $\bfR$-scheme
$Y$
to
$X$,
and
$i_\bfS \colon Y \hookrightarrow Y^\prime$
an $\bfS$-deformation of
$Y$. 
Then
$g$
lifts to a morphism
$h \colon Y^\prime \to X$
such that
$h \circ i_\bfS = g$.
\end{lem}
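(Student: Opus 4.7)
The plan is to exploit the formal smoothness of $X/\bfR$ together with the vanishing of coherent cohomology on the affine scheme $Y$.

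First, I would record the structure of the square-zero thickening. The closed immersion $i_\bfS \colon Y \hookrightarrow Y^\prime$ is cut out by an ideal sheaf $\scrI \subset \scrO_{Y^\prime}$ with $\scrI^2 = 0$, because $Y^\prime$ is flat over $\bfS$ and $\bfI^2 = 0$ in $\bfS$. Using flatness of $Y^\prime/\bfS$ together with $0 \to \bfI \to \bfS \to \bfR \to 0$ and $\bfI \cong \bfR^{\oplus l}$, we obtain $\scrI \cong \scrO_Y \otimes_\bfR \bfI \cong \scrO_Y^{\oplus l}$ as a quasi-coherent $\scrO_Y$-module, and in particular $Y$ and $Y^\prime$ share the same underlying topological space.

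Next, I would construct local lifts. Choose an affine open cover $\{ U_\alpha = \Spec A_\alpha \}$ of $X$; since $X/\bfR$ is smooth, each $A_\alpha$ is smooth over $\bfR$. For any affine open $W = \Spec C \subset g^{-1}(U_\alpha) \subset Y$, the corresponding open subset $W^\prime \subset Y^\prime$ is again affine, of the form $W^\prime = \Spec C^\prime$, where $C^\prime \to C$ is surjective with square-zero kernel $C^{\oplus l}$. The ring homomorphism $A_\alpha \to C$ corresponding to $g|_W$ then lifts to $A_\alpha \to C^\prime$ by the formal smoothness of $A_\alpha/\bfR$, hence yields a local morphism $W^\prime \to U_\alpha \subset X$ extending $g|_W$.

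Finally, I would glue. On each open $V \subset Y$, the set of local lifts $V^\prime \to X$ of $g|_V$ forms a torsor under $\Hom_{\scrO_V}(g^* \Omega_{X/\bfR}|_V, \scrI|_V) \cong (g^* \scrT_{X/\bfR})|_V \otimes_\bfR \bfI \cong \bigl((g^* \scrT_{X/\bfR})|_V\bigr)^{\oplus l}$, where the identification uses that $\Omega_{X/\bfR}$ is locally free by smoothness of $X/\bfR$. The obstruction to the existence of a global lift thus lies in $H^1\bigl(Y, (g^* \scrT_{X/\bfR})^{\oplus l}\bigr)$, which vanishes since $Y$ is affine and the sheaf is quasi-coherent. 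A global morphism $h \colon Y^\prime \to X$ with $h \circ i_\bfS = g$ therefore exists.

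The main technical obstacle lies in the torsor identification of the last step: one must check that the difference between two local lifts over a common open is encoded as an $\scrO_Y$-linear derivation of $g^* \Omega_{X/\bfR}$ valued in $\scrI$, and that this identification is compatible with restrictions, so that the obstruction really lies in the quasi-coherent cohomology group above. Once that bookkeeping is in place, the vanishing of $H^1$ on the affine scheme $Y$ closes the argument immediately.
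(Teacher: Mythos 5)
Your write-up is the standard infinitesimal-lifting argument, and since the paper offers no proof of this lemma (it is quoted from Hartshorne's Exercise 4.7), reconstructing that argument is the right instinct; the identification of the square-zero ideal with $\scrO_Y^{\oplus l}$ and the final torsor/$H^1$-vanishing step are fine. There is, however, a genuine gap at the local lifting step. You invoke ``formal smoothness of $A_\alpha/\bfR$'' to lift $A_\alpha \to C$ along $C' \to C$, but formal smoothness over $\bfR$ only provides lifts along square-zero extensions of \emph{$\bfR$-algebras}, and $C'$ is a priori only an $\bfS$-algebra: there need not exist any ring map $\bfR \to C'$ lifting the structure map $\bfR \to C$, because the square-zero extension $\bfS \to \bfR$ need not split as a map of rings. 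This is not a removable technicality---without such a splitting the statement itself can fail. Take $\bfR = \bfk[t]/(t^2)$ and $\bfS = \bfk[t]/(t^4)$, so that $\bfI = (t^2)$ is free of rank one over $\bfR$ and $\bfI^2 = 0$; with $X = Y = \Spec \bfR$, $g = \id$ and $Y' = \Spec \bfS$, a lift $h$ with $h \circ i_\bfS = g$ would be a ring-theoretic section of $\bfS \to \bfR$, and one checks directly that an element $t + bt^2 + ct^3$ can never square to zero in $\bfk[t]/(t^4)$.

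The argument (and the lemma) does go through essentially verbatim once one assumes that $\bfS \to \bfR$ admits a ring section, i.e.\ $\bfS \cong \bfR \oplus \bfI$ as an $\bfR$-algebra with $\bfI$ square-zero---which is what the paper tacitly assumes whenever it writes $A \otimes_\bfR \bfS$ or $\fraka[\epsilon] = \fraka \otimes_\bfR \bfS$. Under that hypothesis $C'$ becomes an $\bfR$-algebra compatibly with $C$, your local lifts exist and can be taken $\bfR$-linear, their differences are honest $\bfR$-derivations valued in the kernel, and the obstruction class in $H^1\bigl(Y, (g^* \scrT_{X/\bfR})^{\oplus l}\bigr)$ vanishes on the affine $Y$ exactly as you say. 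So you should either make this splitting hypothesis explicit, or reformulate the local step in terms of the existence and uniqueness up to isomorphism of flat $\bfS$-lifts of smooth affine $\bfR$-schemes (obstruction and ambiguity living in $H^2$ and $H^1$ of an affine scheme), which is what the application to local triviality of deformations actually requires.
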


\begin{lem} {\rm{(}\cite[Proposition 3.6, Exercise 5.2]{Har10}\rm{)}} \label{lem:AUTOext}
Let
$A \to B$
be a homomorphism of rings,
$M$
a $B$-module,
and
$B^\prime$
an extension of
$B$
by
$M$.
Then the automorphism group of
$B^\prime$
is given by 
\begin{align*}
T^0(B/A, M)
=
\Hom_B(\Omega_{B/A}, M)
=
\Der_A(B, M).
\end{align*}
\end{lem}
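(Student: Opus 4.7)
The plan is to construct an explicit bijection between automorphisms of $B^\prime$ compatible with the extension data and $A$-derivations $B \to M$, and then invoke the universal property of Kähler differentials together with the description of $T^0$ recalled above.

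First I would fix an automorphism $\phi \colon B^\prime \to B^\prime$ of $A$-algebras that restricts to $\id_M$ on the square-zero ideal $M \subset B^\prime$ and induces $\id_B$ on $B^\prime/M = B$. For every $b^\prime \in B^\prime$ this forces $\phi(b^\prime) - b^\prime \in M$, so I can define $d_\phi \colon B^\prime \to M$ by $d_\phi(b^\prime) = \phi(b^\prime) - b^\prime$. Since $\phi$ fixes $M$ pointwise, $d_\phi$ vanishes on $M$ and descends to a well-defined map $\bar d_\phi \colon B \to M$. Using that $M^2 = 0$, so that the $B^\prime$-module structure on $M$ factors through $B$, I would expand $\phi(b_1^\prime b_2^\prime) = \phi(b_1^\prime)\phi(b_2^\prime)$ and discard the term $d_\phi(b_1^\prime) d_\phi(b_2^\prime) \in M^2$ to verify that $\bar d_\phi$ is an $A$-derivation.

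Next I would construct the inverse. Given $\delta \in \Der_A(B, M)$, define $\phi_\delta(b^\prime) = b^\prime + \delta(\pi(b^\prime))$, where $\pi \colon B^\prime \to B$ is the canonical projection, and check that $\phi_\delta$ is an $A$-algebra homomorphism by expanding the product and invoking the Leibniz rule together with $M^2 = 0$; its inverse is $\phi_{-\delta}$, so $\phi_\delta$ is an automorphism of the required type. The two assignments $\phi \mapsto \bar d_\phi$ and $\delta \mapsto \phi_\delta$ are manifestly mutually inverse, and a direct computation shows that composition of automorphisms corresponds to addition of derivations, turning the bijection into a group isomorphism.

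Finally, to match this with the displayed expressions, I would invoke the universal property of Kähler differentials to obtain $\Der_A(B, M) \cong \Hom_B(\Omega_{B/A}, M)$, and then identify the latter with $T^0(B/A, M)$ straight from the definition of $T^i$ recalled above, since $H^0$ of the cotangent complex is $\Omega_{B/A}$. I expect the only real subtlety to be the systematic use of $M^2 = 0$ throughout: it is precisely this vanishing that both kills the quadratic correction term in the Leibniz computation and ensures that the $B^\prime$-action on $M$ descends to a $B$-action, so that every step lands where claimed.
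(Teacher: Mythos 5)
Your argument is correct and complete: the paper itself gives no proof of this lemma, citing Hartshorne directly, and your construction $\phi \mapsto \bar d_\phi$, $\delta \mapsto \phi_\delta$ is exactly the standard argument behind \cite[Exercise 5.2]{Har10}, with the identification $T^0(B/A,M)=\Hom_B(\Omega_{B/A},M)$ being \cite[Proposition 3.6]{Har10}. You correctly isolate the two places where $M^2=0$ is used (killing the quadratic term and making the $B^\prime$-action on $M$ factor through $B$), so nothing is missing.
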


\begin{lem} {\rm{(}\cite[Theorem 5.1]{Har10}\rm{)}} \label{lem:CLASSext}
Let
$A \to B$
be a homomorphism of rings
and
$M$
a $B$-module.
Then there is a bijection
between
the set of equivalence classes of
$B$
by
$M$
and
the group
$T^1(B/A, M)$.
The trivial extension corresponds to the zero element.
\end{lem}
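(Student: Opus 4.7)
The plan is to construct an explicit bijection. I would handle the case $A \to B$ surjective with kernel $J$ as presented in the paper (the general case follows by factoring $A \to B$ through a polynomial algebra surjecting onto $B$); then $L_0 = 0$ and $T^1(B/A, M) = \ker(d_2^* \colon \Hom_B(L_1, M) \to \Hom_B(L_2, M))$ with $L_1 = P \otimes_A B$ and $L_2 = Q/P_0$.

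From extensions to $T^1$: given an $A$-algebra extension $0 \to M \to B' \to B \to 0$ with $M^2 = 0$, the structure map $\phi \colon A \to B'$ carries $J$ into $M$ because $\phi$ lifts $A \to B$, and $\phi(j_1 j_2) = \phi(j_1)\phi(j_2) \in M \cdot M = 0$ forces $\phi|_J$ to descend to a $B$-linear $\overline{\phi|_J} \colon J/J^2 \to M$. Composing with $j \colon P \to J$, I get an $A$-linear map $\alpha_\phi \colon P \to M$ that vanishes on $Q$ by construction. Under the adjunction $\Hom_A(P, M) \cong \Hom_B(P \otimes_A B, M)$ this corresponds to a cocycle in $\ker d_2^* = T^1(B/A, M)$.

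From $T^1$ to extensions: given $\gamma \in \ker d_2^*$, I dualize to a $B$-linear map $\bar\gamma \colon J/J^2 \to M$, equivalently an $A$-linear $\tilde\gamma \colon J \to M$ with $\tilde\gamma|_{J^2} = 0$. I form the trivial $A$-algebra extension $A \oplus M$ with multiplication $(a, m)(a', m') = (aa', am' + a'm)$ using the $A$-action on $M$ via $A \to B$, and set $B' := (A \oplus M)/K$ where $K = \{(j, -\tilde\gamma(j)) : j \in J\}$. The $A$-submodule $K$ is an ideal thanks to $A$-linearity of $\tilde\gamma$, the annihilation $JM = 0$, and $\tilde\gamma|_{J^2} = 0$; the quotient then gives an $A$-algebra extension of $B$ by $M$ with $M^2 = 0$ inside $B'$.

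I will verify that these two maps are mutually inverse and descend to equivalence classes. Equivalent extensions produce the same cocycle since the structure map $\phi$ is intrinsic and an equivalence $\psi \colon B' \to B''$ fixes $M$ pointwise while satisfying $\psi \circ \phi = \phi'$, so $\phi'|_J = \phi|_J$. Starting from $B'$ and running the inverse construction recovers $B'$ via the identification $B' \cong (A \oplus M)/\{(j, -\phi(j)) : j \in J\}$ obtained from the splitting of the trivial extension $A \times_B B' \to A$ afforded by $a \mapsto (a, \phi(a))$. The main obstacle will be the bookkeeping for this recovery step, since one has to check carefully that the fiber-product splitting matches the presentation in terms of $K$. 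Finally, the trivial extension $B \oplus M$ has structure map $\phi(a) = (\bar a, 0)$, so $\phi|_J = 0$ and the associated cocycle vanishes, showing that the trivial extension corresponds to the zero class.
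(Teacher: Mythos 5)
The paper offers no proof of this statement—it is quoted from [Har10, Theorem 5.1]—so the comparison is with Hartshorne's argument, and for the surjective case your proposal reproduces it correctly. When $A \to B$ is surjective with kernel $J$ one indeed has $L_0 = 0$ and $\ker d_2^* \cong \Hom_A(J,M) \cong \Hom_B(J/J^2,M)$; the structure map $\phi \colon A \to B'$ of an extension restricts to such a homomorphism, your quotient $(A \oplus M)/K$ inverts the assignment, and the verifications you list (that $K$ is an ideal, that the splitting of $A \times_B B'$ recovers $B'$, that equivalences force $\phi'|_J = \phi|_J$, that the trivial extension gives $0$) all go through.

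The genuine gap is the parenthetical claim that "the general case follows by factoring." For non-surjective $A \to B$ one factors $A \to R \to B$ with $R$ a polynomial $A$-algebra surjecting onto $B$ with kernel $I$; then $L_0 = \Omega_{R/A} \otimes_R B \neq 0$ and $T^1(B/A,M) = \ker d_2^* / \operatorname{im} d_1^*$, which is a proper quotient of $\Hom_B(I/I^2,M)$ in general. Your surjective-case dictionary applied to $R \to B$ classifies \emph{$R$-algebra} extensions, not $A$-algebra extensions: an $A$-algebra extension $B'$ of $B$ carries no preferred $R$-algebra structure, so one must first lift $R \to B$ to a ring map $R \to B'$ (possible because $R$ is polynomial, hence the variables can be lifted arbitrarily along the square-zero surjection $B' \to B$), and two such lifts differ by an element of $\Der_A(R,M)$; passing to the quotient by $\operatorname{im} d_1^*$, whose image consists exactly of restrictions to $I$ of such derivations, is what makes the resulting class independent of this choice. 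As written, your recipe would yield a bijection with $\ker d_2^*$ rather than with $T^1$. This case cannot be discarded here: the paper invokes the lemma in Corollary \ref{cor:Bridge31} with $B$ a smooth $\bfR$-algebra, i.e.\ precisely in the non-surjective situation where the vanishing of $T^1(B/\bfR, M)$ is what forces all extensions to be trivial.
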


\begin{lem} {\rm{(}\cite[Theorem 4.11]{Har10}\rm{)}} \label{lem:smoothness}
Let
$f \colon X \to Y$
be an of finite type morphism of noetherian $\bfk$-schemes.
Then
$f$
is smooth
if and only if
it is flat
and
$\scrT^1(X/Y, \scrF) = 0$
for every coherent $\scrO_X$-module
$\scrF$.
\end{lem}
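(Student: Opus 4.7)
The plan is to localize to the affine case and apply the cotangent complex description of $\scrT^1$ given earlier in the section. Smoothness, flatness, and the sheaves $\scrT^i(X/Y,-)$ are all local on source and target, so we may assume $X=\Spec B$, $Y=\Spec A$ with $A\to B$ of finite type; factoring $A\to A[x_1,\dots,x_n]\twoheadrightarrow B$ with kernel $I$, the surjection from a polynomial ring makes the construction of $L_\bullet$ directly applicable, and $\scrT^i(X/Y,\scrF)$ reduces to the groups $T^i(B/A,M)$ for $M$ the module corresponding to $\scrF$.

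For the forward direction, smoothness already implies flatness by standard algebra, so only $T^1(B/A,M)=0$ for every $M$ must be verified. After shrinking to an open on which $B$ admits a standard smooth presentation, the conormal sequence
\begin{align*}
0\to I/I^2\to \Omega_{A[x]/A}\otimes_{A[x]}B\to \Omega_{B/A}\to 0
\end{align*}
is split exact with $\Omega_{B/A}$ and $I/I^2$ projective. Unwinding the definition of $L_\bullet$, this splitting makes $\Hom_B(L_\bullet,M)$ acyclic in positive degrees for every $B$-module $M$, so $T^1(B/A,M)=0$ and hence $\scrT^1(X/Y,\scrF)=0$ for every coherent $\scrF$.

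For the converse, assume $f$ is flat and $\scrT^1(X/Y,\scrF)=0$ for every coherent $\scrF$. The low-degree piece of the cotangent complex attached to $A[x]\twoheadrightarrow B$ yields the exact sequence
\begin{align*}
\Hom_B(\Omega_{A[x]/A}\otimes B,M)\to \Hom_B(I/I^2,M)\to T^1(B/A,M)\to 0
\end{align*}
for every $B$-module $M$. Taking $M=I/I^2$ and using the vanishing hypothesis produces a retraction of the inclusion $I/I^2\hookrightarrow \Omega_{A[x]/A}\otimes B$, exhibiting $I/I^2$ as a projective direct summand of a free module. Hence the conormal sequence splits and $\Omega_{B/A}$ is projective; combined with flatness, the Jacobian criterion then delivers smoothness of $f$. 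The main obstacle is calibrating the rank of $\Omega_{B/A}$ against the expected relative dimension at every closed point, so that the Jacobian criterion applies rather than merely an abstract projectivity statement. This is where the vanishing of $\scrT^1(X/Y,\scrF)$ for \emph{every} coherent $\scrF$ is essential, as varying $\scrF$ extracts the splitting stalkwise and matches the fiberwise dimension computation (available thanks to flatness) against the rank produced above, reducing the global statement to a pointwise verification that completes the converse.
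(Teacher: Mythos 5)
This lemma is imported verbatim from Hartshorne with no proof in the paper, so the only benchmark is the standard argument, and your overall route --- localize to $\Spec B \to \Spec A$, present $B$ as $A[x_1,\dots,x_n]/I$, and read $T^1$ off the exact sequence $\Hom_B(\Omega_{A[x]/A}\otimes B,M)\to\Hom_B(I/I^2,M)\to T^1(B/A,M)\to 0$ --- is exactly that argument. The reduction to affines, the forward direction, and the extraction of the retraction in the converse (taking $M=I/I^2$, which is indeed coherent, to split the conormal map and force it to be injective) are all correct. One small overclaim: the splitting gives $T^1=0$, which is all you need, but it does not by itself make $\Hom_B(L_\bullet,M)$ acyclic in degree $2$; vanishing of $T^2$ is a separate matter and irrelevant here.

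The genuine gap is the last paragraph of the converse. A split exact conormal sequence with $I/I^2$ and $\Omega_{B/A}$ locally free does not by itself yield smoothness, and the mechanism you offer --- that varying $\scrF$ "extracts the splitting stalkwise and matches the fiberwise dimension computation" --- is not an argument: the splitting was already obtained from the single choice $M=I/I^2$, and the hypothesis for varying $\scrF$ plays no further role after that. What actually closes the converse is: (i) flatness of $B$ over $A$ lets you tensor $0\to I\to A[x]\to B\to 0$ with each residue field $k(y)$ and identify the conormal module of the fiber $B_y=B\otimes_A k(y)$ with $(I/I^2)\otimes_A k(y)$, so split exactness descends to every fiber; (ii) on a fiber, if $I_y/I_y^2$ is free of rank $r$ near a point, Nakayama produces $r$ generators of $I_y$ whose differentials are linearly independent at that point, so the Jacobian has rank $r$ there and $\dim_{k(y)}\frakm/\frakm^2\le n-r$ for the local ring of the fiber, while Krull's height theorem gives $\dim (B_y)_\frakm\ge n-r$; hence that local ring is regular of dimension $n-r$ and, in characteristic $0$, the fiber is smooth. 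Flat of finite type with smooth fibers is smooth. This is precisely the "rank calibration" you flagged; it is settled by Krull plus the Jacobian criterion on the fibers, not by further use of the coefficient sheaf $\scrF$.
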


Now,
we are ready to show relevant results to our setting.

\begin{lem} \label{lem:LT}
Let
$X$
be a smooth separated $\bfR$-scheme.
Then every $\bfS$-deformation
$(Y, j)$
of
$X$
is locally trivial.
\end{lem}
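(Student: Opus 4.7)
The plan is to reduce to the affine smooth case, where the relative infinitesimal lifting property \pref{lem:relativeILP} furnishes a retraction, and then verify that the combined morphism to the trivial deformation is an isomorphism.

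First I would cover $X$ by affine opens $\{ U_i \}_{i \in I}$. Since the closed immersion $j \colon X \hookrightarrow Y$ induces a homeomorphism on underlying topological spaces (its defining ideal being nilpotent because $\bfI$ is), one defines $Y_i \subset Y$ as the open subscheme whose underlying topological space is that of $U_i$. Each $Y_i$ is flat over $\bfS$ with closed fiber $U_i$, so $(Y_i, j|_{U_i})$ is an $\bfS$-deformation of $U_i$. Moreover $Y_i$ is itself affine: using that $Y_i$ is flat over $\bfS$, the short exact sequence
\begin{align*}
0 \to \scrO_{U_i} \otimes_\bfR \bfI \to \scrO_{Y_i} \to \scrO_{U_i} \to 0
\end{align*}
together with vanishing of higher cohomology on the affine scheme $U_i$ forces the higher cohomology of $\scrO_{Y_i}$ to vanish, and similarly for any quasi-coherent sheaf on $Y_i$ by an analogous filtration argument; Serre's criterion then applies.

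Next I would apply \pref{lem:relativeILP} with the smooth $\bfR$-scheme taken to be $U_i$, the affine $\bfR$-scheme $Y$ of the lemma taken to be $U_i$ with $g = \id_{U_i}$, and the $\bfS$-deformation taken to be $j|_{U_i} \colon U_i \hookrightarrow Y_i$. This produces a retraction $h_i \colon Y_i \to U_i$ with $h_i \circ j|_{U_i} = \id_{U_i}$. Pairing $h_i$ with the structure morphism $Y_i \to \Spec \bfS$ yields a morphism $\varphi_i \colon Y_i \to U_i \times_\bfR \Spec \bfS$ of $\bfS$-schemes, tautologically compatible with the inclusions of the closed fiber $U_i$ on both sides.

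Finally I would check that $\varphi_i$ is an isomorphism. It is a morphism of $\bfS$-deformations of the same scheme $U_i$, which modulo $\bfI$ reduces to $\id_{U_i}$. Applying the five lemma to the comparison
\begin{align*}
\xymatrix{
0 \ar[r] & \scrO_{U_i} \otimes_\bfR \bfI \ar[r] \ar[d]^{\id} & \scrO_{Y_i} \ar[r] \ar[d]^{\varphi_i^{\sharp}} & \scrO_{U_i} \ar[r] \ar[d]^{\id} & 0 \\
0 \ar[r] & \scrO_{U_i} \otimes_\bfR \bfI \ar[r] & \scrO_{U_i \times_\bfR \bfS} \ar[r] & \scrO_{U_i} \ar[r] & 0
}
\end{align*}
of short exact sequences (both rows exact by flatness of $U_i$ over $\bfR$) shows that $\varphi_i^{\sharp}$ is an isomorphism of sheaves, hence $\varphi_i$ is an isomorphism of $\bfS$-schemes. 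Since $\{ U_i \}_{i \in I}$ covers $X$, this exhibits $(Y, j)$ as locally trivial. The only mildly nontrivial point is the affineness of $Y_i$, handled by the cohomological argument above; the remaining steps are a routine combination of the infinitesimal lifting property with the flatness-plus-five-lemma principle that a morphism of deformations reducing to the identity is automatically an isomorphism.
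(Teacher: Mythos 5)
Your proof is correct and follows essentially the same route as the paper: restrict to an affine open, use the infinitesimal lifting property of \pref{lem:relativeILP} to lift the identity to a retraction, assemble the comparison morphism to the trivial deformation, and conclude it is an isomorphism because it reduces to the identity modulo $\bfI$ (the paper delegates this last step to \cite[Exercise 4.2]{Har10}, which is exactly your flatness-plus-five-lemma argument). The only addition on your side is the explicit cohomological verification that the induced deformation of an affine open is affine, a point the paper passes over silently.
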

\begin{proof}
Since
$Y$
is flat over
$\bfS$,
\pref{eq:ses31}
induces a short exact sequence 
\begin{align*}
0 \to
\scrO^{\oplus l}_X
\to
\scrO_Y
\to
\scrO_X
\to 0,
\end{align*}
which defines
equivalence classes of infinitesimal extensions of coordinate rings on affine open subschemes of
$X$.
Let
$i_\bfS \colon \Spec B \hookrightarrow \Spec A$
be the induced deformation of any affine open subscheme.
Since
$\Spec B$
is smooth over
$\bfR$,
by
\pref{lem:relativeILP}
the identity
$\Spec B \to \Spec B$
lifts to a morphism
$h \colon \Spec A \to \Spec B$
such that
$h \circ i_\bfS = \id$.
The lift
$h$
induces a morphism
$\Spec A \to \Spec B \times_\bfR \bfS$
of schemes flat of finite type over
$\bfS$.
Now,
one can apply 
\cite[Exercise 4.2]{Har10}
to see that
the induced morphism is an isomorphism.
\end{proof}

\begin{prop} \label{prop:CLASSscheme}
Let
$X$
be a smooth separated $\bfR$-scheme.
Then there is a bijection
\begin{align*}
\Def_X(\bfS) \cong H^1(X, \scrT_{X/\bfR})^{\oplus l},
\end{align*}
where
$\scrT_{X/\bfR}$
is the relative tangent sheaf on
$X$.
\end{prop}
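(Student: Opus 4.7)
The plan is to classify $\bfS$-deformations of $X$ by Čech $1$-cocycles valued in $\scrT_{X/\bfR}^{\oplus l}$, using local triviality together with the classification of automorphisms of trivial infinitesimal extensions.

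Starting from a deformation $(Y,j) \in \Def_X(\bfS)$, by \pref{lem:LT} it is locally trivial. I would choose a finite affine open cover $\frakU = \{U_i\}$ of $X$ closed under finite intersections (available since $X$ is separated) and fix isomorphisms $\phi_i \colon Y|_{U_i} \xrightarrow{\sim} U_i \times_{\Spec \bfR} \Spec \bfS$ of $\bfS$-schemes restricting to the identity on the closed fiber $U_i$. On each overlap $U_{ij} = \Spec B_{ij}$, the transition $\phi_j \circ \phi_i^{-1}$ is an automorphism of $\Spec(B_{ij} \otimes_\bfR \bfS)$ inducing the identity of $B_{ij}$. By \pref{lem:AUTOext} applied with $M = B_{ij} \otimes_\bfR \bfI \cong B_{ij}^{\oplus l}$, such an automorphism corresponds to a derivation
\[
D_{ij} \in \Der_\bfR(B_{ij}, B_{ij})^{\oplus l} = H^0(U_{ij}, \scrT_{X/\bfR})^{\oplus l},
\]
and at the level of rings it takes the explicit shape $b \mapsto b + D_{ij}(b)\epsilon$.

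Next, I would verify that $(D_{ij})$ satisfies the Čech cocycle condition on triple intersections (an immediate computation from $\phi_k\phi_j^{-1} \circ \phi_j\phi_i^{-1} = \phi_k\phi_i^{-1}$ and the fact that $\bfI^2=0$), so $(D_{ij})$ represents a class in $\check{Z}^1(\frakU, \scrT_{X/\bfR})^{\oplus l}$. A change of trivializations $\phi_i \mapsto (\id + E_i\epsilon)\phi_i$ with $E_i \in H^0(U_i, \scrT_{X/\bfR})^{\oplus l}$ modifies the cocycle by the coboundary $E_j - E_i$; an equivalence of two deformations is realized precisely by such local adjustments. Hence the class is well defined and the assignment $(Y,j) \mapsto [(D_{ij})]$ yields a map $\Def_X(\bfS) \to \check{H}^1(\frakU, \scrT_{X/\bfR})^{\oplus l}$. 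For the inverse, given a cocycle $(D_{ij})$ I would glue the trivial pieces $U_i \times_{\Spec \bfR} \Spec \bfS$ along the automorphisms $\id + D_{ij}\epsilon$; the cocycle condition makes the gluing consistent on triple intersections, producing an $\bfS$-deformation of $X$. The two assignments are mutually inverse by construction.

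Finally, since $X$ is separated and $\frakU$ is an affine cover, Leray's theorem (as already invoked in \pref{eq:simpsheaf}) identifies $\check{H}^1(\frakU, \scrT_{X/\bfR}) \cong H^1(X, \scrT_{X/\bfR})$, giving the claimed bijection. The principal technical step is the middle paragraph: one must carefully track that composition of ring automorphisms of square-zero trivial extensions is additive in the associated derivations (thanks to $\bfI^2 = 0$), so that the Čech cocycle and coboundary relations correspond exactly to the equivalence relation on deformations, with the direct summand decomposition into $l$ copies reflecting the chosen basis $(\epsilon_1,\dots,\epsilon_l)$ of $\bfI$.
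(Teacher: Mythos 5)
Your proposal is correct and follows essentially the same route as the paper's proof: local triviality via \pref{lem:LT}, identification of transition automorphisms with sections of $\scrT_{X/\bfR}^{\oplus l}$ via \pref{lem:AUTOext}, the \v{C}ech cocycle/coboundary bookkeeping, and gluing for the inverse. The only cosmetic difference is that you make the passage from $\check{H}^1(\frakU,-)$ to $H^1(X,-)$ explicit via Leray's theorem, where the paper simply notes independence of the cover.
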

\begin{proof}
Let
$(Y, j)$
be an $\bfS$-deformation of
$X$.
Take an affine open cover
$\frakU = \{ U_i \}_{i \in I}$
of
$X$.
By
\pref{lem:LT}
we may assume that
the induced deformations
$U_i \hookrightarrow V_i \subset Y$
are trivial.
Choose isomorphisms
$\varphi_i \colon U_i \times_\bfR \bfS \to V_i$
and
write
$\varphi_{ij}$
for the composition
$\varphi^{-1}_j \circ \varphi_i$
on
$U_{ij} \times_\bfR \bfS$,
where the intersections
$U_{ij} = U_i \cap U_j $
are again affine
as
$X$
is separated over
$\bfR$.
Let
$\Spec B = U_{ij}$
and
$\Spec A = U_{ij} \times_\bfR \bfS$.
According to
\pref{lem:AUTOext},
the set of automorphisms of extensions
$\tilde{A}$
of
$\tilde{B}$
by
$\tilde{B}^{\oplus l}$
bijectively corresponds to
$T^0(B/\bfR, B^{\oplus l})
\cong
\Hom (\Omega_{B/\bfR}, B)^{\oplus l}$. 
Then
$\{ \varphi_{ij} \}_{i,j \in I}$
define a collection 
$\{ \theta_{ij} \}_{i,j \in I}$
of sections 
$\theta_{ij}
\in
H^0(U_{ij}, \scrT_{X/\bfR})^{\oplus l}$
on
$U_{ij}$.
One checks
$\theta_{ij} + \theta_{jk} + \theta_{ki} = 0$
and
$\{ \theta_{ij} \}_{i,j \in I}$
is a $\rm{\breve{C}ech}$ $1$-cocycle
with respect to
$\frakU$. 
Another choice of isomorphisms
$\varphi^\prime_i
\colon
U_i \times_\bfR \bfS \to V_i$
yields a collection
$\{ \varphi^\prime_{ij} \}_{i,j \in I}$
of automorphisms
such that
$\varphi^\prime_{ij}
=
(\varphi^{-1}_j \circ \varphi^\prime_j)^{-1}
\circ
\varphi_{ij}
\circ
(\varphi^{-1}_i \circ \varphi^\prime_i)$.
It follows 
$\theta^\prime_{ij}
=
\theta_{ij} + \alpha_i - \alpha_j$
for some sections
$\alpha_i
\in
H^0(U_i, \scrT_{X/\bfR})^{\oplus l}$.
Thus we obtain a well defined assignment 
\begin{align*}
\Def_X(\bfS) \to H^1(X, \scrT_{X/\bfR})^{\oplus l}, \
[Y, j]
\mapsto
\{ \theta_{ij} \}_{i,j \in I},
\end{align*}
as 
$\{ \theta_{ij} \}_{i,j \in I}$
does not depends on $\frakU$.

Conversely,
an element of
$H^1(X, \scrT_{X/\bfR})^{\oplus l}$
can be represented by $\rm{\breve{C}ech}$ $2$-cocycle
$\{ \theta_{ij} \}_{i,j \in I}$
with respect to
$\frakU$. 
As explained above,
the cocycle define automorphisms of the trivial deformations
$U_{ij} \times_\bfR \bfS$,
which glue to yield a global deformation
$(Y^\prime, j^\prime)$
of
$X$.
Clearly,
this construction gives the inverse assignment.
\end{proof}

\begin{cor} \label{cor:Bridge31}
There is a canonical bijection between
$\Def_X(\bfS)$
and
the set of equivalence classes of infinitesimal extensions of
$X$
by
$\scrO^{\oplus l}_X$.
\end{cor}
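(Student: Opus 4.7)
The plan is to construct explicit mutually inverse maps
$\Phi \colon \Def_X(\bfS) \to \{\text{infinitesimal extensions of } X \text{ by } \scrO_X^{\oplus l}\}/\sim$
and $\Psi$ in the opposite direction, then verify that each respects the stated equivalence relations.

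For $\Phi$, given $(Y, j) \in \Def_X(\bfS)$, the flatness of $\scrO_Y$ over $\bfS$ lets us tensor the square-zero sequence $0 \to \bfI \to \bfS \to \bfR \to 0$ with $\scrO_Y$, obtaining
\[
0 \to \bfI \otimes_\bfR \scrO_X \to \scrO_Y \to \scrO_X \to 0,
\]
where the rightmost identification comes from $\scrO_Y \otimes_\bfS \bfR \cong j_*\scrO_X$, which is the defining property of a deformation. The generators $\epsilon = (\epsilon_1, \ldots, \epsilon_l)$ of $\bfI \cong \bfR^{\oplus l}$ identify the leftmost term with $\scrO_X^{\oplus l}$. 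Setting $\frakI$ to be this kernel, we have $\frakI^2 \subset \bfI^2\cdot\scrO_Y = 0$, so $(Y,\frakI)$ is an infinitesimal extension of $X$ by $\scrO_X^{\oplus l}$.

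For $\Psi$, given an infinitesimal extension $(Y, \frakI)$, the surjection $\scrO_Y \twoheadrightarrow \scrO_Y/\frakI \cong \scrO_X$ produces a closed immersion $j \colon X \hookrightarrow Y$. Flatness of $Y$ over $\bfS$ is the observation preceding \pref{lem:relativeILP} (via \cite[Proposition 2.2]{Har10}): $\scrO_X$ is $\bfR$-flat and $\scrO_X^{\oplus l} \hookrightarrow \scrO_Y$ is injective. To conclude $(Y, j) \in \Def_X(\bfS)$, one must exhibit $X \xrightarrow{\sim} Y \times_\bfS \bfR$, which reduces to the equality $\bfI \cdot \scrO_Y = \frakI$. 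The inclusion $\bfI \cdot \scrO_Y \subset \frakI$ is immediate since $\bfI$ dies in $\scrO_Y/\frakI = \scrO_X$, and equality follows by examining the composition $\scrO_X^{\oplus l} \twoheadrightarrow \bfI \cdot \scrO_Y \hookrightarrow \frakI \cong \scrO_X^{\oplus l}$: a surjection of locally free sheaves of the same rank is an isomorphism.

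Compatibility with equivalences and mutual inversion are essentially formal. An $\bfS$-isomorphism $Y \to Y^\prime$ lifting the identity on $X$ automatically preserves $\bfI\cdot\scrO_Y = \frakI$ and induces the identity on $\scrO_Y/\frakI = \scrO_X$, giving the commutative diagram defining an equivalence of extensions. Conversely, an isomorphism $\scrO_Y \to \scrO_{Y^\prime}$ fitting into the extension diagram is forced to be $\bfS$-linear, since the $\bfS$-module structure on $\scrO_Y$ is determined by $\scrO_X$ together with the $\bfI$-action on $\frakI \cong \scrO_X^{\oplus l}$, both of which are fixed by the diagram. The two constructions are inverse by construction. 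The main technical obstacle is the identification $\bfI \cdot \scrO_Y = \frakI$ in $\Psi$, which is what makes the abstract $\scrO_X$-module structure in the definition of an infinitesimal extension compatible with the $\bfI$-action through the chosen generators $\epsilon_i$; this is precisely what renders the bijection canonical relative to that choice.
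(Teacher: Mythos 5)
Your map $\Phi$ is fine, and the overall strategy of a direct two-way construction is genuinely different from the paper's: the paper instead identifies both sides with $H^1(X,\scrT_{X/\bfR})^{\oplus l}$, showing first via \pref{lem:CLASSext} and \pref{lem:smoothness} that every extension is locally trivial because $T^1$ vanishes for smooth morphisms, and then rerunning the \v{C}ech cocycle argument of Proposition \pref{prop:CLASSscheme} using \pref{lem:AUTOext}. However, your inverse map $\Psi$ has a genuine gap at the very step you flag as the main technical obstacle, namely the identity $\bfI\cdot\scrO_Y = \frakI$. The argument you give is circular: the composite $\scrO_X^{\oplus l} \twoheadrightarrow \bfI\cdot\scrO_Y \hookrightarrow \frakI \cong \scrO_X^{\oplus l}$ is by construction surjective onto $\bfI\cdot\scrO_Y$, not onto $\frakI$, and $\bfI\cdot\scrO_Y$ is not known to be locally free of rank $l$; to invoke ``a surjective endomorphism of a locally free sheaf of finite rank is an isomorphism'' you would already need to know that the composite surjects onto $\frakI$, which is exactly the claim being proved.

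Moreover, the identity is actually false for the definition as literally stated. Take $\scrO_Y = \scrO_X \oplus \scrO_X^{\oplus l}$ with the trivial square-zero multiplication, but give it the $\bfS$-algebra structure obtained by composing $\bfS \to \bfR \to \scrO_X \to \scrO_Y$, $a \mapsto (a,0)$, so that every $\epsilon_i$ acts by zero. All conditions in the definition of an infinitesimal extension hold, yet $\bfI\cdot\scrO_Y = 0 \neq \frakI$, and $Y$ is not flat over $\bfS$ since the map $\bfI \otimes_\bfR \scrO_X \to \scrO_Y$ is zero and the criterion of \cite[Proposition 2.2]{Har10} fails. The root problem is that the definition does not tie the given $\bfS$-structure on $\scrO_Y$ to the chosen identification $\frakI \cong \scrO_X^{\oplus l}$. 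The paper sidesteps this by classifying $\bfR$-algebra extensions through $T^1(-/\bfR,-)$ and only afterwards endowing them with the canonical $\bfS$-structure sending $\epsilon_i$ to the $i$-th basis section of $\frakI$. To repair your proof, either impose that compatibility as part of the data, or let $\Psi$ first replace $(Y,\frakI)$ by the same ringed space with the $\bfS$-structure defined from the extension data in this canonical way; with that modification your construction goes through and yields a more explicit proof than the paper's, but as written $\Psi$ does not land in $\Def_X(\bfS)$.
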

\begin{proof}
By
\pref{lem:CLASSext}
and 
\pref{lem:smoothness}
any extension of
$X$
by
$\scrO^{\oplus l}_X$
is locally trivial.
Then
due to
\pref{lem:AUTOext}
the claim follows from the same argument as in the proof of
Proposition
\pref{prop:CLASSscheme}.
\end{proof}

\subsection{Toda's construction}
Let
$X_0$
be a smooth projective $\bfk$-variety.
In
\cite{Tod}
Toda constructed the category of
$\tilde{\alpha}$-twisted sheaves on the noncommutative scheme 
$(X_0, \scrO^{(\beta, \gamma)}_{X_0})$
over the ring of dual numbers for each
$[\phi_0] \in HT^2 (X_0) $
represented by a cocycle
\begin{align*}
(\alpha_0, \beta_0, \gamma_0)
\in
H^2(\scrO_{X_0}) \oplus H^1(\scrT_{X_0}) \oplus H^0 (\wedge^2 \scrT_{X_0}).
\end{align*}
Here,
we apply his idea to a smooth proper $\bfR$-scheme
$X$
and
$[\phi] \in HT^2 (X/\bfR)^{\oplus l}$
represented by
\begin{align*}
(\alpha, \beta, \gamma)
=
\left(
(\alpha^1, \ldots, \alpha^l),
(\beta^1, \ldots, \beta^l),
(\gamma^1, \ldots, \gamma^l)
\right)
\in
H^2(\scrO_X)^{\oplus l}
\oplus
H^1(\scrT_{X/\bfR})^{\oplus l}
\oplus
H^0 (\wedge^2 \scrT_{X/\bfR})^{\oplus l}.
\end{align*}
Take a finite affine open cover
$\frakU = \{ U_i \}^N_{i=1}$
of
$X$
and
let
$\frakU \times_\bfR \bfS
=
\{ U_i \times_\bfR \bfS \}^N_{i=1}$.
Consider the extension of
$X$
by
$\scrO^{\oplus l}_X$
whose equivalence class corresponds to
$\beta$,
giving rise to an classical $\bfS$-deformation
$X_\beta$
of
$X$
by Corollary
\pref{cor:Bridge31}.
We modify the multiplication on
$\scrO_X \oplus \cC(\frakU, \scrO^{\oplus l}_X)$
as 
\begin{align*}
&(a, \{ b^1_i \}, \ldots, \{ b^l_i \})
*_\gamma
(c, \{ d^1_i \}, \ldots, \{ d^l_i \}) \\
=
&(ac, \{ a d^1_i + b^1_i c + \gamma^1_i(a, c) \}, \ldots, \{ a d^l_i + b^l_i c + \gamma^l_i(a, c) \}),
\end{align*}
where
$\gamma^j
\colon
\scrO_X \times \scrO_X \to \scrO_X$
are regarded as bidifferential operators.  
We denote by
$X_{(\beta, \gamma)}
=
(X_\beta, \scrO^\gamma_{X_\beta})$
the resulting noncommutative $\bfS$-scheme.
By the standard argument,
one sees that
up to isomorphism
the scheme does not depend on the choice of
$\frakU$
and
$\mathrm{\breve{C}ech}$ representative of
$\gamma$.
From
$\alpha$
one obtains an element
\begin{align*}
\tilde{\alpha}
=
\{ 1 - \alpha^1_{ i_0 i_1 i_2} \epsilon_1 - \cdots - \alpha^l_{i_0 i_1 i_2} \epsilon_l \}_{i_0 i_1 i_2}
\in
\bfC^2(X_\beta, Z(\scrO^\gamma_{X_\beta})^*), 
\end{align*}
which is a cocycle.
Then $\tilde{\alpha}$-twisted sheaves on
$X_{(\beta, \gamma)}$
form a category
$\Mod(X_{(\beta, \gamma)}, \tilde{\alpha})$.
By the similar argument to
\cite[Lemma 1.2.3, 1.2.8]{Cal00},
one sees that
up to equivalence
the category does not depend on the choice of
$\frakU$
and
$\mathrm{\breve{C}ech}$
representative of
$\alpha$.
We denote by
$\Qch(X, \phi)$
the full abelian subcategory
spanned by $\tilde{\alpha}$-twisted quasi-coherent sheaves.

Assume that
$X$
is an $\bfR$-deformation of a higher dimensional Calabi--Yau manifold.
Then we have
\begin{align*}
HT^2(X/\bfR) = H^1(\scrT_{X/\bfR}).
\end{align*}
In this case,
Toda's construction yields nothing but  the category of quasi-coherent sheaves on the $\bfS$-deformation of
$X$
along
$\phi$.

\begin{prop} \label{prop:QchCYDef}
Let
$X_0$
be a Calabi--Yau manifold with
$\dim X_0 >2$
and
$X$
an $\bfR$-deformation of
$X_0$.
Then for every cocycle
$\phi \in HT^2(X/\bfR)^{\oplus l} = H^1(\scrT_{X/\bfR})^{\oplus l}$
we have
\begin{align*}
\Qch(X, \phi) = \Qch(X_\phi),
\end{align*}
where
$X_\phi$
is the $\bfS$-deformation of
$X$
along
$\phi$.
\end{prop}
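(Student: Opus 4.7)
The plan is to reduce Toda's construction to the classical deformation by using the higher-dimensional Calabi--Yau hypothesis to kill all Hodge components of $\phi$ except the $H^1(\scrT_{X/\bfR})$-piece. First I would justify the identification $HT^2(X/\bfR) = H^1(\scrT_{X/\bfR})$ appearing in the statement. Under the HKR isomorphism, $HT^2(X/\bfR) = H^2(\scrO_X) \oplus H^1(\scrT_{X/\bfR}) \oplus H^0(\wedge^2 \scrT_{X/\bfR})$, so I have to vanish the outer two summands. For $H^2(\scrO_X)$ I would argue by induction on the length of $\bfR$ along square-zero extensions, using the flat proper base change / cohomology and base change theorem to reduce to the vanishing $H^2(\scrO_{X_0}) = 0$, which holds because $0 < 2 < \dim X_0$. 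For $H^0(\wedge^2 \scrT_{X/\bfR})$, I would first lift the trivialization $\omega_{X_0} \cong \scrO_{X_0}$ to $\omega_{X/\bfR} \cong \scrO_X$ (possible because deformations of line bundles are unobstructed when $H^2(\scrO_{X_0}) = 0$), then use the induced isomorphism $\wedge^q \scrT_{X/\bfR} \cong \Omega^{\dim X_0 - q}_{X/\bfR}$, and finally conclude via base change and Hodge symmetry on the smooth projective $\bfk$-variety $X_0$: $H^0(\Omega^{n-2}_{X_0}) \cong H^{n-2}(\scrO_{X_0}) = 0$ since $0 < n-2 < n$.

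Next I would write $\phi$ as a cocycle of the form $(\alpha, \beta, \gamma) = (0, \beta, 0)$ with $\beta \in H^1(\scrT_{X/\bfR})^{\oplus l}$ and unwind Toda's construction in this situation. The noncommutative scheme $X_{(\beta,\gamma)} = (X_\beta, \scrO^\gamma_{X_\beta})$ is built from the classical infinitesimal extension class $\beta$ and a modified multiplication $*_\gamma$ on $\scrO_X \oplus \cC(\frakU, \scrO^{\oplus l}_X)$. With $\gamma = 0$ the bidifferential operator summand in $*_\gamma$ disappears, and the product becomes the ordinary one coming from the extension, so $\scrO^{0}_{X_\beta} = \scrO_{X_\beta}$. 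With $\alpha = 0$ the twisting cocycle $\tilde\alpha \in \bfC^2(X_\beta, Z(\scrO^\gamma_{X_\beta})^*)$ is identically $1$, hence $\tilde\alpha$-twisted quasi-coherent sheaves coincide with ordinary quasi-coherent sheaves on $X_\beta$. Therefore $\Qch(X, \phi) = \Qch(X_\beta)$.

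It remains to identify $X_\beta$ with $X_\phi$. By \pref{cor:Bridge31} together with \pref{prop:CLASSscheme}, the bijection between $H^1(\scrT_{X/\bfR})^{\oplus l}$ and $\Def_X(\bfS)$ sends the extension class $\beta$ underlying Toda's construction to exactly the $\bfS$-deformation $X_\beta = X_\phi$ obtained by gluing the locally trivial deformations via the cocycle $\beta$. Since $\phi = \beta$ in our normalization, this yields $\Qch(X, \phi) = \Qch(X_\phi)$.

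I expect the main obstacle to be the relative vanishings in the first paragraph. While the corresponding statements on $X_0$ are immediate from the strict Calabi--Yau hypothesis and Hodge symmetry, transferring them to $X$ over the artinian base $\bfR$ requires a careful base change argument along the filtration of $\bfR$ by powers of $\frakm_\bfR$, and similarly a lifting argument to obtain the relative trivialization $\omega_{X/\bfR} \cong \scrO_X$ before invoking the duality $\wedge^q \scrT_{X/\bfR} \cong \Omega^{n-q}_{X/\bfR}$. The subsequent reduction of Toda's construction to the classical deformation is essentially bookkeeping once the Hodge components of $\phi$ are controlled.
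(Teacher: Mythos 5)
Your proposal is correct and follows exactly the route the paper intends: the paper leaves this proposition without an explicit proof, merely asserting $HT^2(X/\bfR)=H^1(\scrT_{X/\bfR})$ and observing that Toda's construction with $\alpha=\gamma=0$ reduces to the ordinary structure sheaf of the classical deformation $X_\beta$, which is $X_\phi$ by Corollary \pref{cor:Bridge31}. Your filling-in of the relative vanishings of $H^2(\scrO_X)$ and $H^0(\wedge^2\scrT_{X/\bfR})$ by induction along the $\frakm_\bfR$-adic filtration, together with the lifted trivialization $\omega_{X/\bfR}\cong\scrO_X$ (which also uses $H^1(\scrO_{X_0})=0$ for uniqueness of the deformation of the trivial line bundle), supplies precisely the details the paper omits.
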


\section{Deformations of linear and abelian categories}
In this section,
we review the deformation theory of linear and abelian categories
developed by
Lowen
and
van den Bergh in
\cite{LV06},
introducing the fundamental notion of flatness.
As explained there,
when considering only flat nilpotent deformations over a certain class of rings,
one avoids any set theoretic issue by choosing sufficiently large universe.
Moreover,
both
linear
and
abelian deformations
reduce to strict linear deformations
without affecting the deformation theory up to equivalence.
Along square zero extensions,
flat deformations of
linear
and
abelian categories
are controlled by the second Hochschild cohomology of the corresponding linear categories.

\subsection{Universes}
First,
we need to extend the Zermelo--Fraenkel axioms of the set theory
to avoid foundational issues in the deformation theory of categories.
One solution is the theory of universes
introduced by Grothendieck  
with the axiom of choice
and
the universe axiom.
A
\emph{universe}
is a set
$\cU$
with the following properties:
\begin{itemize}
\item
if
$x \in \cU$
and   
$y \in x$
then
$y \in \cU$,
\item
if
$x, y \in \cU$
then
$\{ x, y \} \in \cU$, 
\item
if
$x \in \cU$
then
the powerset
$\cP(x)$
of $x$ is in $\cU$,
\item
if
$(x_i)_{i \in I}$
is a family of objects of
$\cU$
indexed by an element of
$\cU$
then
$\bigcup_{i \in I} x_i \in \cU$,
\item
if
$U \in U$
and
$f \colon U \to \cU$
is a function
then
$\{ f(x) \ | \ x \in U \} \in \cU$.
\end{itemize}
A universe
$\cU$
containing
$\bN$
is a model for the Zermelo--Fraenkel axioms of the set theory
with the axiom of choice.
Since the known nonempty universe only contains finite sets,
the
\emph{universe axiom}
is added,
which imposes every set to be an element of a universe. 

Consider the category
$\cU-\Set$
whose objects are elements of
$\cU$
and
whose morphisms are ordinary maps between sets in
$\cU$. 
The category
$\cU-\Cat$
consists of categories
whose objects and morphisms respectively form sets 
being an element of
$\cU$. 
Similarly,
by requiring the underlying sets to belong to
$\cU$,
we obtain categories with a structure
such as
abelian groups and rings.  
We call a category 
\emph{$\cU$-small}
when its objects and morphisms respectively form sets
with the same cardinality
as an element of
$\cU$,
and
\emph{essentially $\cU$-small}
when it is equivalent to a $\cU$-small category.
A
\emph{$\cU$-category}
is a category
whose $\Hom$-sets have the the same cardinality
as an element of
$\cU$.
The axiom of choice allows us
to replace a $\cU-$category
$\scrC$
by an equivalent category
$\scrC^\prime$
with
$\Ob(\scrC) = \Ob(\scrC^\prime)$
and
$\scrC^\prime (C, D) \in \cU$
for all
$C, D \in \Ob(\scrC^\prime)$.
When
$\scrC$
is abelian
with a generator,
we call
$\scrC$
\emph{$\cU$-Grothendieck}.
Every $\cU$-Grothendieck category
$\scrC$
admits $\cU$-small colimits
and
$\cU$-small filtered colimits are exact in
$\scrC$.

Throughout the paper,
we work with a fixed universe
$\cU$
containing
$\bN$.
All the notion based on universes will be with respect to
$\cU$
and
all the related symbols will be tacitly prefixed by
$\cU$.
By taking
$\cU$
sufficiently large,
we may assume all categories to be small.
Unless otherwise specified,
we will be free from any issue caused by the choice of universes.     

\subsection{Flatness}
The notion of flatness for abelian categories
was introduced in
\cite{LV06}. 
For a while,
we temporarily drop the assumption on
$\bfR$
and
$\bfS$
imposed at the beginning of
\pref{sec:Hochsch}.
Let
$\bfR$
be a commutative ring.
An
\emph{$\bfR$-linear category}
is a category
$\fraka$
enriched over the abelian category
$\Mod(\bfR)$
of $\bfR$-modules.
Namely,
$\fraka$
is a pre-additive category 
together with a ring map
$\rho \colon \bfR \to \Nat(1_\fraka, 1_\fraka)$
inducing a ring map
$\rho_A \colon \bfR \to \fraka(A,A)$
for each
$A \in \fraka$
and
an action of
$\bfR$
on each Hom-set.

Assume that
$\bfR$
is coherent,
i.e.,
any finitely generated ideal is finitely presented as an $\bfR$-module.
Typical examples are given by noetherian rings. 
We denote by
$\mmod(\bfR)$
the full abelian subcategory of finitely presented $\bfR$-modules.
Let
$\scrC$
be an $\bfR$-linear abelian category.
We call an object
$C \in \scrC$
\emph{flat}
if the natural finite colimit preserving functor
$(-) \otimes_\bfR C \colon \mmod (\bfR) \to \scrC$
is exact,
and
\emph{coflat}
if the natural finite limit preserving functor
$\Hom_\bfR (-, C) \colon \mmod (\bfR) \to \scrC$
is exact.

An $\bfR$-linear category
$\fraka$
is
\emph{flat} 
if its Hom-sets are flat $\bfR$-modules.
Namely,
the functors
$- \otimes_\bfR \fraka(A, A^\prime)
\colon
\mmod(\bfR)
\to
\Mod(\bfR)$
are exact for all
$A, A^\prime \in \fraka$.
An $\bfR$-linear abelian category
$\scrC$
is
\emph{flat}
if for each
$Y \in \mmod(\bfR)$
the functor
$\Tor^\bfR_1(Y, -) \colon \scrC \to \scrC$
is co-effaceble,
i.e.,
for each
$C \in \scrC$
there is an epimorphism
$f \colon C^\prime \to C$
with
$\Tor^\bfR_1(Y, f) = 0$
\cite[Proposition 3.1]{LV06}.
Here,
$\Tor^\bfR_i(Y, -)$
is the left derived functor of the finite colimit preserving functor
$Y \otimes_\bfR (-) \colon \scrC \to \scrC$.
The flatness has the following characterizations
\cite[Proposition 3.3, 3.4, 3.6, 3.7]{LV06}.
\begin{itemize}
\item
$\scrC$ is flat if and only if $\scrC^{op}$ is flat.
\item
$\scrC$ is flat if and only if all injectives in $\scrC$ are coflat.
\item
$\scrC$ is flat if and only if $\Ind(\scrC)$ is flat.
\item
$\fraka$ is flat if and only if the abelian category $\Mod(\fraka)$ is flat. 
\end{itemize}
Here,
$\Ind(\scrC)$
is the category of ind-objects,
i.e.,
the full subcategory of
$\Mod(\scrC)$
consisting of left exact functors,
where
$\Mod(\scrC)$
is the category of covariant additive functors from
$\scrC$
to the category
$\Ab$
of abelian groups.
Note that
we are assuming all categories to be small in our fixed universe
$\cU$.

\subsection{Base change} \label{subsec:BC}
We fix a homomorphism  
$\theta \colon \bfS \to \bfR$
of commutative rings.
For an $\bfR$-module
$M$,
by
$\overline{M}$
we denote
$M$
regarded as an $\bfS$-module via
$\theta$.
Let
$\fraka$
be an $\bfR$-linear category.
We have the category
$\overline{\fraka}$
with
$\Ob(\overline{\fraka}) = \Ob(\fraka)$
and
$\overline{\fraka}(A, A^\prime) = \overline{\fraka(A, A^\prime)}$.
For an $\bfS$-linear category
$\frakb$,
we denote by
$\frakb \otimes_\bfS \bfR$
the $\bfR$-linear category
with
$\Ob(\frakb \otimes_\bfS \bfR) = \Ob(\frakb)$
and
$(\frakb \otimes_\bfS \bfR) (B, B^\prime)
=
\frakb(B, B^\prime) \otimes_\bfS \bfR$.
The functor
$(-) \otimes_\bfS \bfR$
is left adjoint to
$\overline{(-)}$
in the sense that 
there is a natural isomorphism
\begin{align*}
\overline{\Add(\bfR)(\frakb \otimes_\bfS \bfR, \fraka)}
\simeq
\Add(\bfS)(\frakb, \overline{\fraka})
\end{align*}
of $\bfS$-linear categories,
where
$\Add(\bfS)$
is the category of $\bfS$-linear functors.

Let
$(\frakb, \rho)$
be an $\bfS$-linear category.
We have the category
$\frakb_\bfR$
of $\bfR$-linear objects
whose objects are pairs
$(B, \varphi)$
where
$B \in \frakb$
and
$\varphi \colon \bfR \to \frakb(B,B)$
is a ring map with
$\varphi \circ \theta = \rho_B$,
and
whose morphisms are those of
$\frakb$
compatible with the ring maps.
An object
$B \in \frakb$
belongs to
$\frakb_\bfR$
if and only if
$1_B$
is annihilated by the kernel of
$\theta$.
Taking $\bfR$-linear objects defines a functor
$(-)_\bfR \colon \frakb \to \frakb_\bfR$,
which is right adjoint to
$\overline{(-)}$
in the sense that
there is a natural isomorphism
\begin{align*}
\overline{\Add(\bfR)(\fraka, \frakb_\bfR)}
\simeq
\Add(\bfS)(\overline{\fraka}, \frakb)
\end{align*}
of $\bfS$-linear categories.
If
$\scrD$
is an $\bfS$-linear abelian category,
then
$\scrD_\bfR$
is also abelian
and
by
\cite[Proposition 4.2]{LV06}
the forgetful functor
$\scrD_\bfR \to \scrD$
is exact.
From
$\left( \Mod(\bfS) \right)_\bfR \simeq \Mod(\bfR)$,
it follows
\begin{align*}
\overline{\Add(\bfR)(\fraka, \Mod(\bfR))}
\simeq
\Mod(\fraka)
\end{align*}
for any $\bfR$-linear category
$\fraka$.

\begin{lem} {\rm{(}\cite[Proposition 4.4(1)]{LV06}\rm{)}} \label{lem:Modlinab}
Let
$\frakb$
be an $\bfS$-linear category.
Then there is an equivalence
$\Mod(\frakb \otimes_\bfS \bfR)
\to
\Mod(\frakb)_\bfR$
of $\bfR$-linear categories
which makes the diagram
\begin{align*} 
\begin{gathered}
\xymatrix{
\Mod(\frakb \otimes_\bfS \bfR) \ar[r]^{\simeq} \ar[d]_{} & \Mod(\frakb)_\bfR \ar[d]^{} \\
\Mod(\frakb) \ar[r]^{\id} & \Mod(\frakb)
}
\end{gathered}
\end{align*}
commutes,
where
the left vertical arrow is the dual to
$\frakb \to \frakb \otimes_\bfS \bfR$
and
the right vertical arrow is the forgetful functor.
\end{lem}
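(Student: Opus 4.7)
The plan is to realize the desired equivalence as a direct consequence of the two adjunctions already recorded in \pref{subsec:BC}, combined with the identification $\Mod(\bfS)_\bfR \simeq \Mod(\bfR)$. Unwinding definitions, $\Mod(\frakb \otimes_\bfS \bfR) = \Add(\bfR)(\frakb \otimes_\bfS \bfR, \Mod(\bfR))$ and $\Mod(\frakb) = \Add(\bfS)(\frakb, \Mod(\bfS))$, so I need to produce an $\bfR$-linear equivalence between $\Add(\bfR)(\frakb \otimes_\bfS \bfR, \Mod(\bfR))$ and the $\bfR$-linear objects of $\Add(\bfS)(\frakb, \Mod(\bfS))$.

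First I would apply the adjunction
\begin{align*}
\overline{\Add(\bfR)(\frakb \otimes_\bfS \bfR, \fraka)}
\simeq
\Add(\bfS)(\frakb, \overline{\fraka})
\end{align*}
with $\fraka = \Mod(\bfR)$. At the level of objects the equivalence sends an $\bfR$-linear functor $M$ to its restriction $M \circ \iota$ along the canonical $\bfS$-linear functor $\iota \colon \frakb \to \frakb \otimes_\bfS \bfR$, producing an $\bfR$-linear equivalence $\Mod(\frakb \otimes_\bfS \bfR) \simeq \Add(\bfS)(\frakb, \overline{\Mod(\bfR)})$, where the $\bfR$-linear structure on the right is induced by the scalar endomorphisms on $\overline{\Mod(\bfR)}$.

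Next I would identify $\Add(\bfS)(\frakb, \overline{\Mod(\bfR)})$ with $\Mod(\frakb)_\bfR$. Using $\Mod(\bfR) \simeq \Mod(\bfS)_\bfR$, an $\bfS$-linear functor $\frakb \to \overline{\Mod(\bfR)}$ amounts to an $\bfS$-linear functor $N \colon \frakb \to \Mod(\bfS)$ whose values carry compatible $\bfR$-module structures refining the $\bfS$-structure. Such data is in bijection with ring maps $\varphi \colon \bfR \to \Nat(N, N)$ lifting the $\bfS$-action via $\theta$: given $\varphi$, the pointwise components $\varphi_B \colon \bfR \to \fraka(N(B), N(B))$ endow each $N(B)$ with an $\bfR$-module structure, and conversely a choice of such pointwise structures, compatible with morphisms in $\frakb$, assembles into such a $\varphi$. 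This gives $\Add(\bfS)(\frakb, \overline{\Mod(\bfR)}) \simeq \Add(\bfS)(\frakb, \Mod(\bfS))_\bfR = \Mod(\frakb)_\bfR$, and composing with the previous step yields the desired equivalence.

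It remains to check that the composite equivalence fits in the stated square, but this is essentially tautological: on the left, the dual of $\iota$ is precisely restriction along $\iota$, which is how the first adjunction was described, while on the right the forgetful functor $\Mod(\frakb)_\bfR \to \Mod(\frakb)$ merely drops the $\bfR$-action and returns the underlying $\bfS$-linear functor. Under the equivalence both assign to $M$ the composition $M \circ \iota$. The main technical point in the argument is the second identification $\Add(\bfS)(\frakb, \overline{\Mod(\bfR)}) \simeq \Mod(\frakb)_\bfR$, i.e.\ that an $\bfR$-linear structure on an additive $\bfS$-linear functor is the same as pointwise $\bfR$-module structures on its values, compatible with the action of $\frakb$; once this is granted, the equivalence and the commutativity of the square follow formally from the adjunctions in \pref{subsec:BC}.
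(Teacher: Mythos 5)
Your proposal is correct. The paper does not actually prove this lemma --- it is quoted verbatim from \cite[Proposition 4.4(1)]{LV06} --- and your argument is a faithful reconstruction of the standard proof there: the first adjunction of \pref{subsec:BC} applied with $\fraka = \Mod(\bfR)$, followed by the identification $\Add(\bfS)(\frakb, \Mod(\bfS)_\bfR) \simeq \Add(\bfS)(\frakb, \Mod(\bfS))_\bfR$ (an $\bfR$-linear structure on a functor being the same as compatible pointwise $\bfR$-module structures), with the commutativity of the square then tautological since both composites send $M$ to $M \circ \iota$. The only blemish is the typo $\fraka(N(B),N(B))$ where $\End_\bfS(N(B))$ is meant.
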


\subsection{Deformations of linear categories}
Let
$\fraka$
be an $\bfR$-linear category.
A
\emph{linear $\bfS$-deformation}
of
$\fraka$
is an $\bfS$-linear category
$\frakb$
together with an $\bfS$-linear functor
$\frakb \to \overline{\fraka}$
inducing an equivalence
$\frakb \otimes_\bfS \bfR \to \fraka$.
Two deformations
$f \colon \frakb \to \overline{\fraka}, \ 
f^\prime \colon \frakb^\prime \to \overline{\fraka}$
are
\emph{equivalent}
if there is an equivalence
$\Phi \colon \frakb \to \frakb^\prime$
of $\bfS$-linear categories
such that
$f^\prime \circ \Phi$
is natural isomorphic to
$f$.
When
$\frakb$
is flat over
$\bfR$,
we call the deformation
$\frakb$
\emph{flat}.
We denote by
$\Def^{lin}_\fraka (\bfS)$   
the set of equivalence classes of flat linear $\bfS$-deformations of
$\fraka$.
The notation will be justified below with respect to the choice of universe.
When
$\frakb \otimes_\bfS \bfR \to \fraka$
is an isomorphism,
we call the deformation
$\frakb$
\emph{strict}.
Two strict linear deformations
$f \colon \frakb \to \overline{\fraka}, \ 
f^\prime \colon \frakb^\prime \to \overline{\fraka}$
are
\emph{equivalent}
if there is an isomorphism
$\Phi \colon \frakb \to \frakb^\prime$
of $\bfS$-linear categories
such that
$f^\prime \circ \Phi = f$.
We denote by
$\Def^{s-lin}_\fraka(\bfS)$   
the set of equivalence classes of strict flat linear $\bfS$-deformations of
$\fraka$.
Also this notation will be justified below.

\subsection{Deformations of abelian categories}
Let
$\scrC$
be an $\bfR$-linear abelian category.
An
\emph{abelian $\bfS$-deformation}
of
$\scrC$
is an $\bfS$-linear abelian category
$\scrD$
together with an $\bfS$-linear functor
$\overline{\scrC} \to \scrD$
inducing an equivalence
$\scrC \to \scrD_\bfR$.
When
$\scrD$
is flat over
$\bfR$,
we call the deformation
$\scrD$
flat.
Two deformations
$g \colon \overline{\scrC} \to \scrD,
g^\prime \colon \overline{\scrC} \to \scrD^\prime$
are
\emph{equivalent}
if there is an equivalence
$\Psi \colon \scrD \to \scrD^\prime$
of $\bfS$-linear abelian categories
such that
$\Psi \circ g^\prime$
is natural isomorphic to
$g$.
We denote by
$\Def^{ab}_\scrC(\bfS)$   
the set of equivalence classes of flat abelian $\bfS$-deformations of
$\scrC$.
The notation will be justified below
with respect to the choice of universe.
When
$\scrC \to \scrD_\bfR$
is an isomorphism,
we call the deformation
$\scrD$
\emph{strict}.
Two strict abelian deformations
$g \colon \overline{\scrC} \to \scrD, 
g^\prime \colon \overline{\scrC} \to \scrD^\prime$
are
\emph{equivalent}
if there is an isomorphism
$\Psi \colon \scrD \to \scrD^\prime$
of $\bfS$-linear abelian categories
such that
$\Psi \circ g^\prime = g$.
We denote by
$\Def^{s-ab}_\scrC(\bfS)$   
the set of equivalence classes of strict flat abelian $\bfS$-deformations of
$\scrC$.
Also this notation will be justified below.

Assume that
$\theta \colon \bfS \to \bfR$
is a homomorphism of coherent commutative rings
with $\bfR$ being finitely presented as an $\bfS$-module.
Then the bifunctors
\begin{align*}
(-) \otimes_\bfS (-) \colon \scrD \times \mmod (\bfS) \to \scrD, \ 
\Hom_\bfS (-, -) \colon \mmod (\bfS) \times \scrD \to \scrD
\end{align*}
yield respectively left and right adjoint
\begin{align*}
(-) \otimes_\bfS \bfR \colon \scrD \to \scrD_\bfR \simeq \scrC, \ 
\Hom_\bfS(\bfR, -) \colon \scrD \to \scrD_\bfR \simeq \scrC
\end{align*}
to the natural inclusion functor
$\scrC \simeq \scrD_\bfR \hookrightarrow \scrD$
\cite[Proposition 4.3]{LV06}.
They agree with the adjoints in the Section
\pref{subsec:BC}.

\subsection{Flat nilpotent deformations of categories}
Assume further that
$\theta$
is surjective.
Then for an $\bfS$-linear abelian category
$\scrD$
the forgetful functor
$\scrD_\bfR \to \scrD$
is fully faithful.
When the kernel 
$I = \ker \theta$
is nilpotent,
we call both linear and abelian $\bfS$-deformations
\emph{nilpotent}.
From now on, 
we restrict our attention to flat nilpotent deformations.
The following properties of $\bfR$-linear category
$\fraka$
and
$\bfR$-linear abelian category
$\scrC$
are respectively preserved under flat nilpotent linear and abelian deformations
\cite[Proposition 6.7, 6.9, Theorem 6.16, 6.29, 6.36]{LV06}.
\begin{itemize}
\item
$\fraka, \scrC$
are essentially small.
\item
$\scrC$
has enough injectives.
\item
$\scrC$
is a Grothendieck category.
\item
$\scrC$
is a locally coherent Grothendieck category.
\end{itemize}
Here,
we call
$\scrC$
\emph{locally coherent Grothendieck}
when it is Grothendieck
and
generated by a small abelian subcategory of finitely presented objects.

\subsection{Deformation pseudofunctors}
In order to be careful about our choices of universes,
we temporarily make them explicit in the notation.
Let
$\cU$
be a universe containing the field
$\bfk$.
We denote by
$\cU-\Rng^0$
the category whose objects are coherent commutative $\cU$-rings
and
whose morphisms are surjective ring maps
with finitely generated nilpotent kernels. 
We are interested in the category
$\cU-\Rng^0 / \bfk$.
Fix some other universe $\cW$.
A
\emph{deformation pseudofunctor}
is a pseudofunctor
$D \colon \cU-\Rng^0 / \bfk \to \cW-\Gd$.
Two deformation pseudofunctors
$D, D^\prime$
are
\emph{equivalent}
if there is a pseudonatural transformation
$\mu \colon D \to D^\prime$
such that
for each
$\bfR \in \cU-\Rng^0 / \bfk$
we have an equivalence
$D(\bfR) \to D^\prime(\bfR)$
of categories. 
For any enlargement
$\cU^\prime$
of
$\cU$,
the canonical functor
\begin{align*}
\cU-\Rng^0 / \bfk \to \cU^\prime-\Rng^0 / \bfk
\end{align*}
is an equivalence of categories
\cite[Proposition 8.1]{LV06}.
Thus the deformation pseudofunctor is independent of the choice of
$\cU$ up to equivalence.

Let 
$\fraka$
be a flat $\bfk$-linear $\cU$-category
and
$\scrC$
a flat $\bfk$-linear abelian $\cU$-category.
Fix a universe
$\cV$
such  that
$\fraka, \scrC$
are essentially $\cV$-small
and
$\cU \in \cV$.
For 
$\bfR \in \cU-\Rng^0 / \bfk$
we consider the groupoid
$\cV-\ddef^{lin}_\fraka(\bfR)$ 
whose objects are flat linear $\bfR$-deformations of $\fraka$
belonging to
$\cV$,
and
whose morphisms are equivalences of deformations
up to natural isomorphism of functors.
Also we consider the groupoid
$\cV-\ddef^{ab}_\scrC(\bfR)$ 
whose objects are flat abelian $\bfR$-deformations of
$\scrC$
belonging to $\cV$,
and
whose morphisms are equivalences of deformations
up to natural isomorphism of functors.
Enlarging the universe
$\cW$
if necessary,
we may assume that   
$\cV \in \cW$
and
we obtain deformation pseudofunctors
\begin{align*}
\cV-\ddef^{lin}_\fraka, \cV-\ddef^{ab}_\scrC
\colon
\cU-\Rng^0 / \bfk
\to
\cW-\Gd. 
\end{align*}
The universe
$\cW$
is a purely technical device
which guarantees
$\cV-\ddef^{lin}_\fraka, \cV-\ddef^{ab}_\scrC$
taking values in categories.
Moreover,
whether two deformation pseudofunctors are equivalent
is preserved under enlargement of
$\cW$. 
On the other hand,
by
\cite[Proposition 8.3]{LV06}
for any enlargement
$\cV^\prime \in \cW$
of
$\cV$,
the canonical pseudonatural transformations
\begin{align*}
\cV-\ddef^{lin}_\fraka \to \cV^\prime-\ddef^{lin}_\fraka, \
\cV-\ddef^{ab}_\scrC \to \cV^\prime-\ddef^{ab}_\scrC
\end{align*}
define equivalences of deformation pseudofunctors.

In summary,
as long as we consider flat nilpotent deformations,
the choice of universe does not affect deformation pseudofunctors
up to equivalence.
Thus we simply write
$\ddef^{lin}_\fraka, \ddef^{ab}_\scrC$ 
for deformation pseudofunctors.
Since they have small skeletons
\cite[Theorem 8.4, 8.5]{LV06},
we also write 
$\Def^{lin}_\fraka, \Def^{ab}_\scrC$
for deformation functors
\begin{align*}
\cV-\Def^{lin}_\fraka, \cV-\Def^{ab}_\scrC
\colon
\cU-\Rng^0 / \bfk
\to
\cW-\Set 
\end{align*}
which take values in sets.

Finally,
we collect relevant results on deformations of linear and abelian categories.

\begin{lem} {\rm{(}\cite[Theorem 8.16]{LV06}\rm{)}} \label{lem:linab1}
Let
$\bfS \to \bfR$
be a morphism in
$\cU-\Rng^0 / \bfk$
and
$\fraka$
an essentially small flat $\bfR$-linear category.
Then there is a bijection
\begin{align*}
\Def^{lin}_\fraka(\bfS) \to \Def^{ab}_{\Mod(\fraka)}(\bfS), \
\frakb \mapsto \Mod(\frakb).
\end{align*}
In particular,
deformations of a module category are module categories.
\end{lem}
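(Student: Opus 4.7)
The plan is to verify that $\frakb \mapsto \Mod(\frakb)$ is well-defined at the level of equivalence classes and then construct an inverse by extracting from a given abelian deformation of $\Mod(\fraka)$ the full subcategory of lifts of representable functors.

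First I would check well-definedness. Given a flat linear $\bfS$-deformation $\frakb$ of $\fraka$, the equivalence $\frakb \otimes_\bfS \bfR \xrightarrow{\sim} \fraka$ combined with \pref{lem:Modlinab} yields a commutative triangle of $\bfR$-linear equivalences
\[
\Mod(\fraka) \simeq \Mod(\frakb \otimes_\bfS \bfR) \simeq \Mod(\frakb)_\bfR,
\]
compatible with the forgetful functor to $\Mod(\frakb)$. Flatness of $\Mod(\frakb)$ follows from flatness of $\frakb$ via the characterization listed after the definition of flat abelian categories. Hence $\Mod(\frakb)$ is a flat abelian $\bfS$-deformation of $\Mod(\fraka)$, and an equivalence of linear deformations $\frakb \simeq \frakb'$ induces an equivalence of abelian deformations $\Mod(\frakb) \simeq \Mod(\frakb')$ by functoriality.

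For the inverse, fix a flat abelian $\bfS$-deformation $\scrD$ of $\Mod(\fraka)$. Choose a small skeleton of $\fraka$ and, for each of its objects $A$, select a lift $P_A \in \scrD$ of the representable functor $\fraka(-,A) \in \Mod(\fraka) \simeq \scrD_\bfR$ under the natural inclusion $\scrD_\bfR \hookrightarrow \scrD$. The main obstacle is the existence of these lifts: because the kernel of $\bfS \to \bfR$ is nilpotent and $\scrD$ is flat, one reduces by devissage along successive square-zero extensions to the square-zero case, where the obstruction to lifting an object $F \in \Mod(\fraka)$ lies in a second $\Ext$-group with coefficients in a flat module. For the projective objects $\fraka(-,A)$ this obstruction vanishes, so iterated lifts exist. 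Let $\frakb \subset \scrD$ denote the full $\bfS$-linear subcategory spanned by $\{P_A\}$. Flatness of the hom-modules $\scrD(P_A, P_{A'})$ follows from flatness of $\scrD$ applied to the resolution of finitely presented $\bfS$-modules, and the canonical comparison $\frakb \otimes_\bfS \bfR \to \fraka$ sends $P_A$ to $\fraka(-,A)$; its fully faithfulness reduces by Yoneda to the identification $\scrD(P_A, P_{A'}) \otimes_\bfS \bfR \cong \fraka(A,A')$, which is a formal consequence of the adjunction between $(-) \otimes_\bfS \bfR$ and the forgetful functor together with flatness.

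Finally I would check the two compositions. Starting from a linear deformation $\frakb$, the representables $\frakb(-,B)$ in $\Mod(\frakb)$ lift $\fraka(-,B)$ and span a full subcategory canonically isomorphic to $\frakb$, giving $\frakb \mapsto \Mod(\frakb) \mapsto \frakb$ up to equivalence. Conversely, given $\scrD$ with extracted $\frakb$, the restricted Yoneda functor $\scrD \to \Mod(\frakb)$, $M \mapsto \scrD(-, M)|_\frakb$, is an equivalence: after base change to $\bfR$ it recovers the Yoneda equivalence $\Mod(\fraka) \simeq \Mod(\fraka)$, and flatness together with nilpotence of $\ker(\bfS \to \bfR)$ upgrades a fibrewise equivalence between flat deformations to a global equivalence. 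The ambiguity in the choice of lifts $P_A$ only changes $\frakb$ up to linear equivalence, so the assignment descends to a bijection between equivalence classes, as desired.
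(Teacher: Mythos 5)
This lemma is quoted in the paper directly from \cite[Theorem 8.16]{LV06} with no proof supplied, so there is no internal argument to compare against; what you have written is essentially a reconstruction of the Lowen--Van den Bergh proof, and its overall architecture (send $\frakb$ to $\Mod(\frakb)$ using Lemma~\ref{lem:Modlinab} and the equivalence ``$\fraka$ flat iff $\Mod(\fraka)$ flat''; invert by lifting the representable projective generators $\fraka(-,A)$ to a flat abelian deformation $\scrD$ and taking the full subcategory $\frakb$ they span) is the correct one. Two steps, however, are asserted where the real content of the argument lives. First, it is not enough that the lifts $P_A$ exist: you must show that they are again finitely generated projective objects of $\scrD$ and that they generate $\scrD$. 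Without projectivity the restricted Yoneda functor $M \mapsto \scrD(-,M)|_{\frakb}$ is only left exact, and your final step --- upgrading a ``fibrewise'' equivalence between flat nilpotent deformations to a global one --- is a statement about \emph{exact} functors, so it cannot be invoked before projectivity of the $P_A$ is established; this preservation of projectivity under flat nilpotent abelian deformation is a separate lemma in \cite{LV06}, not a formality. Second, the well-definedness of the inverse rests on the lifts of projectives being unique up to isomorphism; this does follow from the same obstruction calculus (the isomorphism classes of lifts form a torsor under $\Ext^1_{\Mod(\fraka)}(P, P\otimes_\bfR \bfI)$, which vanishes for $P$ projective), but it should be said explicitly, since for general objects lifts are far from unique and the full subcategory $\frakb$ would otherwise not be determined up to equivalence. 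With these two points supplied --- both available in \cite{LV06} --- your outline closes up into a complete proof.
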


\begin{lem} {\rm{(}\cite[Theorem 8.17]{LV06}\rm{)}} \label{lem:linab2}
Let
$\bfS \to \bfR$
be a morphism in
$\cU-\Rng^0 / \bfk$
and
$\scrC$
an essentially small flat $\bfR$-linear abelian category with enough injectives.
Then there is a bijection
\begin{align*}
\Def^{lin}_{\Inj(\scrC)}(\bfS) \to \Def^{ab}_\scrC(\bfS), \
\frakj \mapsto (\mmod(\frakj))^{op}.
\end{align*}
\end{lem}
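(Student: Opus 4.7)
The plan is to construct mutually inverse maps using the classical reconstruction principle that a Grothendieck abelian category $\scrC$ with enough injectives is recovered from its full subcategory $\Inj(\scrC)$ of injectives via the equivalence $\scrC \simeq (\mmod(\Inj(\scrC)))^{op}$. The forward map $\frakj \mapsto (\mmod(\frakj))^{op}$ is already specified in the statement, so I would define the inverse by $\scrD \mapsto \Inj(\scrD)$, the linear category of injective objects of the abelian deformation.

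First, I would verify the forward map is well-defined. Given a flat linear $\bfS$-deformation $\frakj$ of $\Inj(\scrC)$, finite products (which $\Inj(\scrC)$ possesses since injectives are closed under finite products in $\scrC$) lift uniquely up to isomorphism along the deformation, so $\frakj$ has finite products and $\mmod(\frakj)$ is an abelian category of finitely presented functors. Flatness of $(\mmod(\frakj))^{op}$ is then reduced to flatness of $\frakj$ via the characterization that flatness of a linear category is equivalent to flatness of its module category, combined with passage to finitely presented objects and to the opposite. To identify the reduction modulo $\bfI$, I would use that $\mmod$ commutes with the base change $\frakj \mapsto \frakj \otimes_\bfS \bfR \simeq \Inj(\scrC)$, yielding $((\mmod(\frakj))^{op})_\bfR \simeq (\mmod(\Inj(\scrC)))^{op} \simeq \scrC$ by the reconstruction equivalence.

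Second, for the inverse map, I would appeal to the preservation properties listed just before the lemma: under a flat nilpotent deformation, $\scrD$ remains a Grothendieck category with enough injectives. Flatness of $\scrD$ forces its injectives to be coflat over $\bfS$, and I would then show that every injective of $\scrC$ admits a lift to $\scrD$ (unique up to isomorphism), so the assignment defines a full $\bfS$-linear subcategory $\Inj(\scrD) \subset \scrD$ whose reduction recovers $\Inj(\scrC)$. Coflatness of the injectives translates, via the characterizations of flatness recalled in the excerpt, into flatness of the $\bfS$-modules $\Inj(\scrD)(I, I')$ of morphisms, making $\Inj(\scrD)$ a flat linear $\bfS$-deformation of $\Inj(\scrC)$. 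Finally, I would verify the two compositions are identities: applying the reconstruction equivalence to the deformed abelian category $\scrD$ itself gives $(\mmod(\Inj(\scrD)))^{op} \simeq \scrD$, while identifying the injectives of $(\mmod(\frakj))^{op}$ with the representables $J \mapsto \frakj(-, J)$ (projective in $\mmod(\frakj)$, hence injective after taking the opposite) yields $\Inj((\mmod(\frakj))^{op}) \simeq \frakj$.

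The main obstacle is the deformation-theoretic analysis of injectives. The delicate point is showing that injectives of $\scrC$ lift uniquely to injectives of $\scrD$, and that the resulting category $\Inj(\scrD)$ genuinely forms a flat linear deformation: this requires controlling $\Ext^1_\scrD$-obstructions to lifting, which in turn relies crucially on coflatness of injectives (a consequence of flatness of $\scrD$ highlighted in the characterizations $\scrC$ flat $\Leftrightarrow$ all injectives in $\scrC$ coflat). Once the lifting and flatness of $\Inj(\scrD)$ are in hand, the remaining compatibilities with base change and the identifications of reconstructed categories reduce to standard categorical manipulations, so the real work lies in this single step about lifting and flatness of injectives under abelian deformation.
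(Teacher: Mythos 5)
This lemma is imported verbatim from \cite[Theorem 8.17]{LV06} and the paper gives no proof of its own, so there is nothing internal to compare against; your outline reproduces the standard strategy of that reference (forward map $\frakj \mapsto (\mmod(\frakj))^{op}$ via the Freyd-type reconstruction $\scrC \simeq (\mmod(\Inj(\scrC)))^{op}$, inverse via lifting injectives, with the real content concentrated in the existence, uniqueness and coflatness of injective lifts along a flat nilpotent deformation). The sketch is correct in substance; the only elided points worth naming are that abelianness of $\mmod(\frakj)$ needs weak kernels (not just finite products) to lift to $\frakj$, and that recovering $\frakj$ from $\Inj((\mmod(\frakj))^{op})$ uses idempotent completeness of $\frakj$, which holds because idempotents lift uniquely along the nilpotent kernel.
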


\begin{lem} {\rm{(}\cite[Proposition B.3]{LV06}\rm{)}} \label{lem:s-linlin}
Let
$\bfS \to \bfR$
be a morphism in
$\cU-\Rng^0 / \bfk$
and
$\fraka$
an essentially small flat $\bfR$-linear category.
Then the map
\begin{align*}
\Def^{s-lin}_\fraka(\bfS) \to \Def^{lin}_\fraka(\bfS)
\end{align*}
induced by the canonical pseudofunctor
\begin{align*}
\ddef^{s-lin}_\fraka \to \ddef^{lin}_\fraka
\end{align*}
is bijective.
\end{lem}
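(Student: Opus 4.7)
The plan is to establish surjectivity and injectivity of the map separately, exploiting that $\frakb$ is flat over $\bfS$ together with nilpotency of the kernel $I = \ker(\bfS \to \bfR)$.

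For surjectivity, start with a flat linear $\bfS$-deformation $f \colon \frakb \to \overline{\fraka}$ whose induced $\bfR$-linear functor $F \colon \frakb \otimes_\bfS \bfR \to \fraka$ is an equivalence but not necessarily an isomorphism. For each $A \in \fraka$, essential surjectivity of $F$ provides an object $\tilde{A} \in \frakb$ together with an isomorphism $\alpha_A \colon F(\tilde{A}) \xrightarrow{\sim} A$; we take $\tilde{A} = A$ whenever $A$ already lies in the image of $F$. Let $\frakb^\prime \subset \frakb$ be the full $\bfS$-linear subcategory on $\{ \tilde{A} \}_{A \in \fraka}$, which is canonically equivalent to $\frakb$ since the inclusion is fully faithful and essentially surjective. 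Using the $\alpha_A$'s, full faithfulness of $F$ yields $\bfR$-linear isomorphisms $\frakb^\prime(\tilde{A}, \tilde{B}) \otimes_\bfS \bfR \xrightarrow{\sim} \fraka(A, B)$. Now define an $\bfS$-linear category $\frakb^{\prime \prime}$ with $\Ob(\frakb^{\prime \prime}) = \Ob(\fraka)$ by transporting the $\bfS$-module structure on $\frakb^\prime(\tilde{A}, \tilde{B})$ along these isomorphisms to obtain $\frakb^{\prime \prime}(A, B)$ with a distinguished surjection onto $\fraka(A, B)$; define composition in $\frakb^{\prime \prime}$ by transporting composition in $\frakb^\prime$. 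Then $\frakb^{\prime \prime}$ is a strict deformation, canonically equivalent to $\frakb$ as a flat linear deformation via the $\bfS$-linear equivalence $\frakb^{\prime \prime} \to \frakb^\prime \hookrightarrow \frakb$.

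For injectivity, let $f \colon \frakb \to \overline{\fraka}$ and $f^\prime \colon \frakb^\prime \to \overline{\fraka}$ be two strict flat deformations that are equivalent in $\Def^{lin}_\fraka(\bfS)$, witnessed by an $\bfS$-linear equivalence $\Phi \colon \frakb \to \frakb^\prime$ and a natural isomorphism $\eta \colon f^\prime \circ \Phi \Rightarrow f$. By strictness $\Ob(\frakb) = \Ob(\frakb^\prime) = \Ob(\fraka)$, and the reduction $\Phi \otimes_\bfS \bfR$ agrees with $\id_\fraka$ up to $\eta$. For each $A$ the component $\eta_A \colon \Phi(A) \to A$ is an isomorphism in $\fraka = \frakb^\prime \otimes_\bfS \bfR$; full faithfulness after base change together with flatness provides a lift $\tilde{\eta}_A \colon \Phi(A) \to A$ in $\frakb^\prime$, which is automatically an isomorphism by a Nakayama-type argument: its reduction is invertible, and since $\frakb^\prime(A, A), \frakb^\prime(\Phi(A), \Phi(A))$ are flat $\bfS$-modules with $I$ nilpotent, any inverse modulo $I$ can be bootstrapped to a genuine inverse. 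Conjugating $\Phi$ by $\{ \tilde{\eta}_A \}$ yields an $\bfS$-linear functor $\Phi^\prime \colon \frakb \to \frakb^\prime$ that is the identity on objects and satisfies $f^\prime \circ \Phi^\prime = f$ on the nose by naturality. Its maps on morphism groups reduce modulo $I$ to $\id_{\fraka(A, B)}$, and the same flatness-plus-nilpotency argument applied to morphism modules forces each to be an $\bfS$-linear isomorphism. Hence $\Phi^\prime$ is an isomorphism of strict deformations.

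The main obstacle is the construction in the surjectivity step, where one must verify that the transported $\bfS$-module structures and composition law on $\frakb^{\prime \prime}$ are well defined, associative, and unital, and that different choices of the lifts $(\tilde{A}, \alpha_A)$ produce strict deformations related by an isomorphism in $\Def^{s-lin}_\fraka(\bfS)$. The flatness hypothesis guarantees that $\bfS$-bilinearity of composition is preserved under transport and that the formation of $- \otimes_\bfS \bfR$ behaves exactly; the remaining coherence reduces to a diagram chase involving the $\alpha_A$. The injectivity argument is then subordinate to the standard fact that an $\bfS$-linear map between flat $\bfS$-modules whose reduction modulo a nilpotent ideal is invertible is itself invertible, which also underlies the lifting of invertible morphisms used to define the conjugate functor $\Phi^\prime$.
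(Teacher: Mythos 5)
The paper does not actually prove this lemma; it is quoted verbatim from \cite[Proposition B.3]{LV06}, so there is no internal argument to compare against. Your direct proof is essentially correct and is the standard strictification argument. For surjectivity, replacing $(\frakb,f)$ by the full subcategory $\frakb'$ on chosen lifts $\tilde A$ and then relabelling objects and structure maps via the isomorphisms $\alpha_A$ does produce a strict flat deformation equivalent to $(\frakb,f)$; note that essential surjectivity of $\frakb'\hookrightarrow\frakb$ itself uses the lifting of isomorphisms along the nilpotent surjection $\frakb(B,B')\to\frakb(B,B')\otimes_\bfS\bfR$, which you only invoke later, and that your phrase ``transporting the $\bfS$-module structure along these isomorphisms'' should really just read: set $\frakb''(A,B)=\frakb'(\tilde A,\tilde B)$ and take as structure map the composite of the reduction with $\alpha_B\circ F(-)\circ\alpha_A^{-1}$. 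Also, the well-definedness under different choices of $(\tilde A,\alpha_A)$ that you flag as the main obstacle is not needed for surjectivity of the map in the statement. For injectivity, your argument is right, and the one place where flatness is genuinely used is exactly where you say it is: a morphism of flat $\bfS$-modules whose reduction modulo the nilpotent kernel $I$ is invertible is itself invertible (surjectivity by nilpotent Nakayama, injectivity because flatness of the target keeps the kernel exact under $-\otimes_\bfS\bfR$); the lifting and invertibility of the $\tilde\eta_A$ only needs nilpotency. So the proposal stands as a correct, self-contained replacement for the external citation.
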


Now,
let again
$\bfR$
be the fixed local artinian $\bfk$-algebra
with residue field
$\bfk$,
the square zero extension
\begin{align*}
0 \to \bfI \to \bfS \to \bfR \to 0,
\end{align*}
and
the chosen generators
$\epsilon = (\epsilon_1, \ldots, \epsilon_l)$    
of
$\bfI$
regarded as a free $\bfR$-module of rank
$l$.

\begin{lem} {\rm{(}\cite[Proposition 4.2]{Low08}\rm{)}} \label{lem:CLASSlin}
Let
$(\fraka, m)$
be a flat $\bfR$-linear category with compositions  
$m$.
Then there is a bijection
\begin{align} \label{eq:CLASSlin}
H^2 \bfC(\fraka)^{\oplus l}
\to
\Def^{lin}_\fraka(\bfS), \
\phi
\mapsto
(\fraka[\epsilon], m + \phi \epsilon), \
\phi \in Z^2 \bfC(\fraka)^{\oplus l}.
\end{align}
Another cocycle
$\phi^\prime \in Z^2 \bfC(\fraka)^{\oplus l}$
maps to an isomorphic linear deformation
if and only if
there is an element
$h \in \bfC^1(\fraka)$
satisfying
$\phi^\prime - \phi = d_m (h)$.
\end{lem}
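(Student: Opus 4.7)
The plan is to prove this by directly constructing the bijection and its inverse, working first with strict deformations and then invoking \pref{lem:s-linlin} to transfer the result. The underlying idea is the classical Gerstenhaber picture: the cocycle condition on $\phi$ is exactly the associativity for the perturbed composition modulo $\bfI^2 = 0$, and coboundaries correspond to infinitesimal reparametrizations of the Hom-spaces.

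First I would verify that the assignment in \pref{eq:CLASSlin} is well defined. Given $\phi = (\phi_1,\dots,\phi_l) \in Z^2 \bfC(\fraka)^{\oplus l}$, define $\fraka[\epsilon]$ to have the same objects as $\fraka$ with
\begin{align*}
\fraka[\epsilon](A,A') = \fraka(A,A') \oplus \bigoplus_{i=1}^{l} \fraka(A,A')\epsilon_i,
\end{align*}
and composition $(m + \phi\epsilon)(a,b) = m(a,b) + \sum_i \phi_i(a,b)\epsilon_i$, extended $\bfS$-linearly using $\epsilon_i\epsilon_j = 0$. Associativity of $m + \phi\epsilon$ unfolds, modulo terms in $\bfI^2$, to $d_m(\phi_i) = 0$ for each $i$, so the cocycle condition gives exactly what is needed. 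Since $\bfI \cong \bfR^{\oplus l}$ as $\bfR$-modules, $\fraka[\epsilon](A,A')$ is flat over $\bfS$ whenever $\fraka(A,A')$ is flat over $\bfR$ (use the square-zero short exact sequence $0 \to \bfI \otimes_\bfR \fraka(A,A') \to \fraka[\epsilon](A,A') \to \fraka(A,A') \to 0$ and the characterization of flatness along a square-zero thickening). The natural quotient $\fraka[\epsilon] \to \fraka$ is strict, so we obtain a class in $\Def^{s\text{-}lin}_\fraka(\bfS)$, which then descends to $\Def^{lin}_\fraka(\bfS)$ via \pref{lem:s-linlin}. Checking that cohomologous cocycles yield equivalent deformations reduces to the computation that a candidate $\bfS$-linear functor of the form $1 + h\epsilon \colon (\fraka[\epsilon], m+\phi\epsilon) \to (\fraka[\epsilon], m+\phi'\epsilon)$, with $h \in \bfC^1(\fraka)^{\oplus l}$, is a well defined functor exactly when $\phi' - \phi = d_m(h)$.

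For injectivity, I run the last computation backwards: any strict isomorphism $\Phi \colon (\fraka[\epsilon], m+\phi\epsilon) \to (\fraka[\epsilon], m+\phi'\epsilon)$ which is the identity modulo $\bfI$ must be of the form $1 + h\epsilon$ (identity on objects, and on morphisms an $\bfS$-linear automorphism reducing to the identity over $\bfR$), and compatibility with compositions forces $\phi' - \phi = d_m(h)$. For surjectivity, let $\frakb$ be a strict flat $\bfS$-deformation of $\fraka$. Flatness along the square-zero extension forces the short exact sequence
\begin{align*}
0 \to \bfI \otimes_\bfR \fraka(A,A') \to \frakb(A,A') \to \fraka(A,A') \to 0
\end{align*}
to be $\bfS$-split (the middle term is flat and both outer terms are $\bfR$-modules killed by $\bfI$), so after choosing $\bfR$-linear splittings we may identify each $\frakb(A,A')$ with $\fraka(A,A')[\epsilon]$. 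The composition on $\frakb$ then takes the form $m + \phi\epsilon$ for some $\phi \in \bfC^2(\fraka)^{\oplus l}$, and associativity forces $\phi \in Z^2 \bfC(\fraka)^{\oplus l}$; different choices of the splittings differ by an element of $\bfC^1(\fraka)^{\oplus l}$ and hence change $\phi$ by a coboundary.

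The main obstacle is the surjectivity step: the need to globally choose coherent $\bfR$-linear splittings of all Hom-space extensions at once. This is where the flat hypothesis on $\fraka$ is essential, because it guarantees the Hom-extensions are (objectwise) split and that the $\bfS$-module structure on $\frakb(A,A')$ is determined, up to isomorphism, by the reduction $\fraka(A,A')$. Once the splittings are chosen, the entire categorical data of $\frakb$ is captured by a single $\bfC^2$-cochain whose cocycle property and equivalence class are forced by the axioms of an $\bfS$-linear category and a strict deformation respectively. Finally, \pref{lem:s-linlin} transports the resulting bijection from strict to general linear deformations, yielding the stated classification.
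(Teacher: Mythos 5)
Your argument is correct and is essentially the standard Gerstenhaber-style proof underlying the cited result (\cite[Proposition 4.2]{Low08}), which the paper itself quotes without proof; the reduction to strict deformations via \pref{lm:s-linlin}, the identification of isomorphisms with maps of the form $1+h\epsilon$, and the splitting-of-Hom-sets argument for surjectivity are exactly the expected ingredients. The one soft spot is your parenthetical justification that the sequence $0 \to \bfI \otimes_\bfR \fraka(A,A') \to \frakb(A,A') \to \fraka(A,A') \to 0$ splits: this follows not from the outer terms being killed by $\bfI$ but from the fact that a flat module over the artinian local ring $\bfS$ is free (so a basis of the quotient lifts), and note also that $h$ should live in $\bfC^1(\fraka)^{\oplus l}$ as you use it, not $\bfC^1(\fraka)$ as in the lemma's statement.
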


\begin{lem} {\rm{(}\cite[Theorem 3.1]{LV06a}\rm{)}} \label{lem:CLASSab}
Let
$\scrC$
be a flat $\bfR$-linear abelian category.
Then there is a bijection
\begin{align} \label{eq:CLASSab}
H^2 \bfC_{ab}(\scrC)^{\oplus l}
\to
\Def^{ab}_\scrC(\bfS).
\end{align}
\end{lem}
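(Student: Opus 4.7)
The plan is to reduce the problem to the already-established classification of flat linear deformations provided by Lemma~\ref{lem:CLASSlin}, using injective objects as the bridge. First I would assume that $\scrC$ has enough injectives; the general case can then be reduced to this one by passing from $\scrC$ to $\Ind(\scrC)$, which has enough injectives, remains flat over $\bfR$, and carries an equivalent deformation theory.

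Under this assumption, I set $\frakj = \Inj(\scrC)$, the full $\bfR$-linear subcategory of injective objects. Since $\scrC$ is flat, every injective is coflat and hence the Hom-sets of $\frakj$ are flat $\bfR$-modules, so $\frakj$ is a flat $\bfR$-linear category. Applying Lemma~\ref{lem:CLASSlin} yields a bijection
\begin{align*}
H^2 \bfC(\frakj)^{\oplus l} \to \Def^{lin}_\frakj(\bfS), \quad \phi \mapsto (\frakj[\epsilon], m + \phi\epsilon).
\end{align*}
Composing with the bijection $\Def^{lin}_\frakj(\bfS) \to \Def^{ab}_\scrC(\bfS)$ of Lemma~\ref{lem:linab2}, $\frakj' \mapsto (\mmod(\frakj'))^{op}$, produces a bijection
\begin{align*}
H^2 \bfC(\Inj(\scrC))^{\oplus l} \to \Def^{ab}_\scrC(\bfS).
\end{align*}

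To reach $H^2 \bfC_{ab}(\scrC)^{\oplus l}$, the remaining step is to supply a canonical isomorphism $H^2 \bfC_{ab}(\scrC)^{\oplus l} \cong H^2 \bfC(\Inj(\scrC))^{\oplus l}$. The abelian Hochschild complex is defined so that restriction of a cochain on $\scrC$ to the subcategory of injectives produces a natural map $\bfC_{ab}(\scrC) \to \bfC(\Inj(\scrC))$, and for a flat abelian category with enough injectives this restriction is a quasi-isomorphism; its inverse on $H^2$ comes from extending a Hochschild $2$-cocycle on $\frakj$ to a square-zero curvature-type perturbation of $\scrC$ via functoriality and injective resolutions. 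Taking $H^2$ and summing $l$ copies gives the required isomorphism, and precomposing with the bijection above finishes the construction.

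The main obstacle is verifying this comparison isomorphism between the abelian and linear Hochschild complexes, and checking compatibility with the deformations it parametrizes (so that a class represented by $\phi \in Z^2 \bfC_{ab}(\scrC)^{\oplus l}$ maps to the abelian deformation whose induced linear deformation on $\Inj$ is the one attached to the restricted cocycle). This is essentially the content of the main theorem of Lowen--Van den Bergh in \cite{LV06a}, and requires careful bookkeeping with bar-type resolutions for abelian categories. Once the comparison is in place, the whole statement becomes a concatenation of Lemma~\ref{lem:CLASSlin}, Lemma~\ref{lem:linab2}, and the restriction quasi-isomorphism, and the independence of the choice of $\Ind$-enlargement follows from Lemma~\ref{lem:s-linlin} together with the fact that abelian deformation pseudofunctors are invariant under enlargement of the universe.
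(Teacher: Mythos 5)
The paper offers no proof of this lemma: it is imported verbatim as \cite[Theorem 3.1]{LV06a}, so there is nothing internal to compare against. Your reconstruction via the chain $H^2\bfC(\Inj(\scrC))^{\oplus l} \to \Def^{lin}_{\Inj(\scrC)}(\bfS) \to \Def^{ab}_{\scrC}(\bfS)$, i.e.\ Lemma~\ref{lem:CLASSlin} composed with Lemma~\ref{lem:linab2}, is exactly the intended logical content — the paper itself says as much in the ``Examples'' subsection, where it observes that flat abelian deformations of $\Qch(X)$ are classified by $H^2\bfC(\fraki)^{\oplus l}$ ``by Lemma~\ref{lem:CLASSlin} and Lemma~\ref{lem:CLASSab} or Lemma~\ref{lem:linab2}.''

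Two points in your write-up are off, though neither is fatal. First, the step you flag as the main obstacle — a restriction quasi-isomorphism from an ``abelian Hochschild complex of cochains on $\scrC$'' down to $\bfC(\Inj(\scrC))$ — is not how $\bfC_{ab}(\scrC)$ is set up here: the paper \emph{defines} $\bfC_{ab}(\scrC)=\bfC_{sh}(\Ind(\Inj(\scrC)))$, the Shukla complex of the linear category $\Ind(\Inj(\scrC))$, and records $H^*\bfC_{sh}(\fraka)=H^*\bfC(\fraka)$. So what you actually need is not a cochain-restriction argument but the invariance of Hochschild cohomology under $\Inj(\scrC)\hookrightarrow\Ind(\Inj(\scrC))$ (a limited-functoriality statement from \cite{LV06a}), after which the comparison is essentially definitional; your proposed ``extension of a $2$-cocycle on $\frakj$ to a curvature-type perturbation of $\scrC$'' is not a construction that appears in this framework. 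Second, the independence of the $\Ind$-enlargement is not what Lemma~\ref{lem:s-linlin} provides (that lemma compares strict with non-strict \emph{linear} deformations); the relevant fact is the equivalence $\Def^{ab}_{\scrC}\cong\Def^{ab}_{\Ind(\scrC)}$ from \cite{LV06}, which you should cite rather than derive from universe considerations. With those substitutions your argument is the standard Lowen--Van den Bergh proof.
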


Here,
$\bfC(\fraka)$
is the Hochschild object associated with
$\fraka$.
The compositions
$m$
is an element of
\begin{align*}
\prod_{A_0, A_1, A_2 \in \fraka} [\fraka(A_1, A_2) \otimes_\bfR \fraka(A_0, A_1), \fraka(A_0, A_2)]^0
\end{align*}
rather than the ring map
$\rho$
defining the $\bfR$-linear structure. 
For an $\bfR$-linear abelian category
$\scrC$,
the associated Hochschild object is defined as
$\bfC_{ab}(\scrC) = \bfC_{sh}(\Ind(\Inj(\scrC)))$,
where
$\bfC_{sh}(\Ind(\Inj(\scrC)))$
is the Shukla complex associated with
$\Ind(\Inj(\scrC))$.
Note that
we have
$H^* \bfC_{sh}(\fraka) = H^* \bfC(\fraka)$.
We will review the definitions in \pref{sec:dgDefPerf}.

\subsection{Examples}
Let
$X$
be a smooth proper $\bfR$-scheme.
Since it is noetherian,
the category
$\Qch(X)$
of quasi-coherent sheaves on
$X$
has enough injectives.
We denote by
$\fraki = \Inj(\Qch(X))$
the full $\bfR$-linear subcategory of injective objects.
Since
$X$
is flat separated,
by
\cite[Proposition 4.28, 4.30(2)]{DLL}
the $\bfR$-linear abelian category
$\Qch(X)$
is flat.
From
\cite[Proposition 2.9(6)]{LV06}
it follows that
the $\bfR$-linear category 
$\fraki$
is flat.
Then by
\pref{lem:CLASSlin}
and
\pref{lem:CLASSab}
or
\pref{lem:linab2}
both
flat linear $\bfS$-deformations of
$\fraki$
and
flat abelian $\bfS$-deformations of
$\Qch(X)$
are classified by
$H^2 \bfC(\fraki)^{\oplus l}$.

\section{The category of quasi-coherent sheaves}
In this section,
we review an alternative description of Toda's construction
in terms of
the descent category of the category of twisted quasi-coherent presheaves
over
the restricted structure sheaf,
following the exposition from
\cite[Section 4,5]{DLL}.
It follows that,
for square zero extension of relatively smooth $\bfR$-schemes,
Toda's construction coincides with the deformation of the category of quasi-coherent sheaves along the corresponding Hochschild cocycle.
As a consequence,
deforming the category of the quasi-coherent sheaves
is equivalent to
deforming the complex structure
for higher dimensional Calabi--Yau manifolds.
In particular,
deformations of the category of quasi-coherent sheaves are given by the category of quasi-coherent sheaves on deformations. 

\subsection{Descent categories}
Let
$\frakU$
be a small category
and
$\Cat(\bfR)$
the category of small $\bfR$-linear categories and $\bfR$-linear functors.
A
\emph{prestack}
$\scrA$
is a pseudofunctor
$\frakU^{op} \to \Cat(\bfR)$
consists of the following data:
\begin{itemize}
\item
for each
$U \in \frakU$
an $\bfR$-linear category
$\scrA(U)$,
\item
for each
$u \colon V \to U$
in
$\frakU$
an $\bfR$-linear functor
$f^u \colon \scrA (U) \to \scrA (V)$,
\item
for each pair
$u \colon V \to U, \ v \colon W \to V$
in
$\frakU$
a natural isomorphism
$c^{u,v} \colon f^v f^u \to f^{uv}$,
\item
for each
$U \in \frakU$
a natural isomorphism
$z^U \colon 1_{\scrA(U)} \to f^{1_U}$.
\end{itemize}
Moreover, these data must satisfy 
\begin{align*}
\begin{gathered}
c^{u,vw}(c^{v,w} \circ f^u) = c^{uv, w} (f^w \circ c^{u,v}), \\
c^{u, 1_V} (z^V \circ f^u) = 1, \ \ c^{1_U, u} (f^u \circ z^U) = 1
\end{gathered}
\end{align*}
for each triple 
$u \colon V \to U, \ v \colon W \to V, \ w \colon T \to W$
in
$\frakU$.
With
$\scrA(U)$
regarded as one-objected categories,
a twisted presheaf
$\scrA$
of $\bfR$-algebras provides an example of a prestack of $\bfR$-linear category. 

For prestacks   
$\scrA = (\scrA, m, f, c, z),
\scrA^\prime
=
(\scrA^\prime, m^\prime, f^\prime, c^\prime, z^\prime)$
of
$\bfR$-linear category on
$\frakU$,
a
\emph{morphism}
$(g, h) \colon \scrA \to \scrA^\prime$
is a pseudonatural transformation
which consists of the following data:
\begin{itemize}
\item
for each
$U \in \frakU$
an $\bfR$-linear functor
$g^U \colon \scrA(U) \to \scrA^\prime(U)$,
\item
for each
$u \colon V \to U$
in
$\frakU$
a natural isomorphism 
$h^u \colon f^{\prime u} g^U \to g^V f^u$.
\end{itemize}
Moreover, these data must satisfy 
\begin{align*}
\begin{gathered}
h^{uv} (c^{\prime u,v} \circ g^U)
=
(g^W \circ c^{u,v})(h^v \circ f^u)(f^{\prime v} \circ h^u), \\
h^{1_U} (z^{\prime U} \circ g^U)
=
g^U \circ z^U.
\end{gathered} 
\end{align*}
for each pair 
$u \colon V \to U, \ v \colon W \to V$
in
$\frakU$.
When
$\scrA$
is a twisted presheaf of $\bfR$-algebras,
morphisms of twisted presheaves of $\bfR$-algebras
coincide with
morphisms of prestacks.

A
\emph{pre-descent datum}
in a prestack
$\scrA$
is a collection
$(A_U)_U$
of objects
$A_U \in \scrA(U)$
with a morphism
$\varphi_u \colon f^u A_U \to A_V$
in
$\scrA(V)$
for each
$u \colon V \to U$
in
$\frakU$,
which satisfies
\begin{align*}
\varphi_v f^v \varphi_u = \varphi_{uv} c^{u,v, A_U} 
\end{align*}
given an additional
$v \colon W \to V$
in
$\frakU$.   
A
\emph{morphism}
of pre-descent data
$g \colon (A_U)_U \to (A^\prime_U)_U$
is a collection
$(g_U)_U$
of compatible morphisms
$g_U \colon A_U \to A^\prime_U$.
Pre-descent data and their morphisms form a category
$\PDes (\scrA)$
equipped with a canonical functor
\begin{align*}
\pi_V \colon \PDes (\scrA) \to \scrA(V), \
(A_U)_U \mapsto A_V.
\end{align*}
When all
$\varphi_u$
are isomorphisms,
$(A_U)_U$
is called a
\emph{descent datum}
and
we denote by
$\Des (\scrA)$
the full subcategory of descent data.
Given limits and colimits in each
$\scrA(U)$ 
preserved by all
$f^u \colon \scrA(U) \to \scrA(V)$,
there exist ones in
$\Des (\scrA)$
preserved by all
$\pi_V \colon \Des(\scrA) \to \scrA(V)$
\cite[Proposition 4.5(3)]{DLL}.
In particular,
if each category
$\scrA(U)$
is abelian
and
all
$f^u$
are exact, 
then
$\Des(\scrA)$
is abelian
and
$\pi_V$
are exact. 

\subsection{The category of quasi-coherent modules over a prestack}
The
\emph{category of right quasi-coherent modules}
over a prestack
$\scrA$
is defined as
\begin{align*}
\Qch (\scrA) = \Qch^r (\scrA) = \Des (\Mod_\scrA),
\end{align*}
where
$\Mod_\scrA$
is the
\emph{associated prestack}
with a prestack
$\scrA$
given by
\begin{align*}
\Mod_\scrA
=
\Mod^r_\scrA
\colon
\frakU^{op} \to \Cat(\bfR), \
U \mapsto \Mod_\scrA (U) = \Mod(\scrA(U)),
\end{align*}
whose restriction functor
\begin{align*}
-\otimes_u \scrA(V) \colon \Mod (\scrA(U)) \to \Mod (\scrA(V))
\end{align*}
is the unique colimit preserving functor extending
$f^u \colon \scrA(U) \to \scrA(V)$.
The functor sends each
$F \in \Mod (\scrA(U))$
to an $\bfR$-linear functor
$F \otimes_u \scrA(V) \colon \scrA(V)^{op} \to \Mod(\bfR)$
such that
\begin{align*}
F \otimes_u \scrA(V) (B)
=
\bigoplus_{A \in \scrA(U)} F(A) \otimes_\bfR \scrA(V) (B, f^u A) / \sim
\end{align*}
for each
$B \in \scrA(V)$.
Here,
$\sim$
denotes the equivalence relation defined as
\begin{align*}
F(a) (x) \otimes y \sim x \otimes f^u (a) y 
\end{align*}
for
$x \in F(A^\prime), \ y \in \scrA(V)(B, f^u A)$,
and
$a \colon A \to A^\prime$
in
$\scrA(U)$.

In the case
where
$F = \scrA(U)(-, A^\prime)$
for some
$A^\prime \in \scrA(U)$,
$f^u$
induces an isomorphism
\begin{align*}
\theta^u_{A^\prime} \colon \scrA(U)(-, A^\prime) \otimes_u \scrA(V)
\to
\scrA(V)(-, f^u A^\prime).
\end{align*}
If
$u = 1_U$,
then
$z^U \colon 1_{\scrA(U)} \to f^{1_U}$
induces an isomorphism 
\begin{align*}
\Mod(z)^U \colon 1_{\Mod_\scrA(U)} \to -\otimes_{1_U}\scrA(U). 
\end{align*}
Since we have 
\begin{align*}
F \otimes_u \scrA(V) \otimes_v \scrA(W)(C)
=
\bigoplus_{A \in \scrA(U), B \in \scrA(V)} F(A) \otimes_\bfR \scrA(V)(B, f^u A) \otimes_\bfR \scrA(W)(C, f^v B) / \sim,
\end{align*}
$\theta^v_{f^u A}$
and
$c^{u,v}$
induce another isomorphism
\begin{align*}
\Mod(c^{u,v}) \colon -\otimes_u \scrA(V) \otimes_v \scrA(W) \to -\otimes_{uv} \scrA(W).
\end{align*}
When
$\scrA$
is a twisted presheaf of $\bfR$-algebras,
$\Mod (\scrA(U))$
coincides with the category of right $\scrA(U)$-modules
whose restriction functor is the ordinary tensor product
and
$\Mod(c)^{u,v}, \Mod (z)^U$
are respectively given by
\begin{align*}
\begin{gathered}
\Mod(c)^{u,v}_M \colon M \otimes_u \scrA(V) \otimes_v \scrA(W) \to M \otimes_{uv} \scrA(W), \
m \otimes a \otimes b \mapsto m \otimes c^{u,v} f^v (a)b, \\
\Mod (z)^U_M \colon M \to M \otimes_{1_U} \scrA(U), \
m \mapsto m \otimes z^U
\end{gathered}
\end{align*}
for any right
$\scrA(U)$-module
$M$.

\subsection{The category of twisted quasi-coherent presheaves over a twisted presheaf}
Let
$\scrA$
be a presheaf of $\bfR$-algebras on
$\frakU$.
We denote by
$\Pr(\scrA|_U)$
the category of presheaves of right $\scrA|_U$-modules on
$\frakU / U$,
where
$\scrA|_U$
is the induced presheaf on
$\frakU / U$
with
$\scrA|_U (V \to U) = \scrA (V)$
for
$U \in \frakU$
and
$u \colon V \to U$
in
$\frakU$.
Each
$u \colon V \to U$
in
$\frakU$
induces a functor
$u^*_{\Pr} \colon \Pr(\scrA|_U) \to \Pr(\scrA|_V)$.
Since we have
$v^*_{\Pr} u^*_{\Pr} = (uv)^*_{\Pr}$
and
$(1_U)^*_{\Pr} = 1_{\Pr(\scrA|_U)}$
given an additional
$v \colon W \to V$
in
$\frakU$,
the assignments
$U \mapsto \Pr(\scrA|_U)$
and
$u \mapsto u^*_{\Pr}$
define a functor
\begin{align*}
\Pr(\scrA) \colon \frakU^{op} \to \Cat (\bfR).
\end{align*}

Let
$M$
be a right $\scrA(U)$-module.
Then
$\tilde{M}(u)
\coloneqq
M \otimes_u \scrA(V)
=
M \otimes_{\scrA(U)} \scrA(V)$
is a right $\scrA(V)$-module
with 
$\scrA(V)$
regarded as a left $\scrA(U)$-module via
$f^u$.
Suppose that
$u^\prime \colon V^\prime \to U$
satisfies
$uv^\prime = u^\prime$
for
$v^\prime \colon V^\prime \to V$.
We have the right $\scrA(U)$-module homomorphism
$1_M \otimes f^{v^{\prime}} \colon \tilde{M}(u) \to \tilde{M}(u^\prime)$. 
The assignments
$u \mapsto \tilde{M}(u)$
and
$f^{v^{\prime}} \mapsto 1_M \otimes f^{v^{\prime}}$
define a presheaf
$\tilde{M}$
of right $\scrA(U)$-modules on
$\frakU/U$.
Any $\scrA(U)$-module homomorphism
$g \colon M \to N$
induces a natural transform
$\tilde{g} = \{ \tilde{g}^u \coloneqq g \otimes 1_{\scrA(V)} \}_u$.
Thus the assignments
$M \mapsto \tilde{M}$
and
$g \mapsto \tilde{g}$
define a functor
\begin{align} \label{eq:Q}
Q^U \colon \Mod(\scrA(U)) \to \Pr(\scrA|_U).
\end{align}
We have the canonical isomorphism
\begin{align*}
\can^{u,v}_M
\colon
M \otimes_u \scrA(V) \otimes_v \scrA(W))
\to
M \otimes_{uv} \scrA(W), \
m \otimes a \otimes b \mapsto m \otimes f^v (a)b.
\end{align*}
By
\cite[Lemma 4.10]{DLL}
the functor
$Q^U$
is fully faithful
and
there is a natural isomorphism
\begin{align} \label{eq:tau}
\tau^u \colon u^*_{\Pr} Q^U \to Q^V (- \otimes_u \scrA(V))
\end{align}
induced by
$(\can^{u,v}_M)^{-1}$.
A
\emph{quasi-coherent presheaf}
over
$\scrA_U$
is defined as the essential image of some
$\scrA(U)$-module
$M$
by
$Q^U$.
We denote by
$\QPr (\scrA|_U)$
the category of quasi-coherent presheaves over
$\scrA|_U$.

When
$\scrA$
is a twisted presheaf with central twists
$c$,
one can adapt
$\Mod(c)^{u,v}$
to
$\Pr(c)^{u,v}$
as follows.
For
$\scrF \in \Pr(|\scrA||_U)$
and
$w \colon T \to W$
in
$\frakU/W$
the central invertible element
$f^w(c^{u,v})$
in
$\scrA(T)$
gives an automorphism
\begin{align*}
f^w(c^{u,v})_r \colon \scrF(uvw) \to \scrF(uvw), \
m \mapsto m f^w(c^{u,v})
\end{align*}
inducing an isomorphism
\begin{align*}
\Pr(c)^{u,v}_\scrF
\colon
v^*_{\Pr} u^*_{\Pr} (\scrF)
\to (uv)^*_{\Pr} (\scrF)
\end{align*}
in
$\Pr(|\scrA||_U)$.
Since we have
\begin{align*}
\Pr(c)^{u, vw} \Pr (c)^{v,w}
=
\Pr(c)^{uv, w} w^*_ {\Pr} (\Pr (c)^{u,v}),
\end{align*}
the assignments
$U \mapsto \Pr(|\scrA||_U)$
and
$u \mapsto u^*_{\Pr}$
define a prestack
\begin{align*}
{\Pr}_\scrA \colon \frakU^{op} \to \Cat (\bfR)
\end{align*}
whose twist functor is given by
$\Pr(c)$
and
$z$
is given by the identity.
Restricting to the essential images
$\QPr(|\scrA||_U)$,
we obtain another prestack
$\QPr_\scrA$.
The
\emph{category of right twisted quasi-coherent presheaves}
over a twisted presheaf
$\scrA \colon \frakU^{op} \to \Alg(\bfR)$
with central twists is defined as
\begin{align*}
\QPr (\scrA) = \Des (\QPr_\scrA).
\end{align*}

\begin{lem} {\rm{(}\cite[Theorem 4.12]{DLL}\rm{)}} \label{lem:Bridge1}
Let
$\scrA \colon \frakU^{op} \to \Alg( \bfR)$
be a twisted presheaf with central twists.
Then
$Q= (Q^U, \tau^u)_{U, u}$
defines an equivalence
\begin{align*}
\Qch (\scrA) \simeq \QPr (\scrA)
\end{align*}
of $\bfR$-linear categories,
where
$Q^U, \tau^u$
are given by
\pref{eq:Q},
\pref{eq:tau}
respectively.
\end{lem}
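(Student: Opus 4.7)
The plan is to argue in two stages: first upgrade the pointwise fully faithful functors $Q^U$ (already known from \cite[Lemma 4.10]{DLL}) to a morphism of prestacks $Q\colon \Mod_\scrA \to \QPr_\scrA$, and then invoke the general principle that a levelwise equivalence of prestacks induces an equivalence of their descent categories.

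First I would observe that by \cite[Lemma 4.10]{DLL}, each $Q^U\colon \Mod(\scrA(U)) \to \Pr(\scrA|_U)$ is fully faithful, and by the very definition of $\QPr(\scrA|_U)$ as the essential image of $Q^U$, restricting codomains gives an equivalence $Q^U\colon \Mod(\scrA(U)) \simeq \QPr(\scrA|_U)$ of $\bfR$-linear categories. The nontrivial content therefore lies in assembling these fiberwise equivalences coherently across the indexing category $\frakU$. Concretely, I need to check that the natural isomorphisms $\tau^u\colon u^*_{\Pr} Q^U \to Q^V(- \otimes_u \scrA(V))$ of \pref{eq:tau}, together with the data $Q^U$, define a pseudonatural transformation from $\Mod_\scrA$ to $\QPr_\scrA$. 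This amounts to verifying the two coherence axioms: compatibility with the twists $\Mod(c)^{u,v}$ and $\Pr(c)^{u,v}$ over every composable pair $u\colon V\to U$, $v\colon W\to V$, and compatibility with the units $\Mod(z)^U$ and the identity $z$ on $\QPr_\scrA$ over every $U \in \frakU$.

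Unwinding the definitions, the twist compatibility reduces to the following assertion: for any $M \in \Mod(\scrA(U))$, the two routes in
\begin{align*}
v^*_{\Pr}u^*_{\Pr}Q^U(M)
\xrightarrow{v^*_{\Pr}\tau^u}
v^*_{\Pr}Q^V(M \otimes_u \scrA(V))
\xrightarrow{\tau^v}
Q^W(M \otimes_u \scrA(V) \otimes_v \scrA(W))
\end{align*}
and
\begin{align*}
v^*_{\Pr}u^*_{\Pr}Q^U(M)
\xrightarrow{\Pr(c)^{u,v}}
(uv)^*_{\Pr}Q^U(M)
\xrightarrow{\tau^{uv}}
Q^W(M \otimes_{uv} \scrA(W))
\end{align*}
agree after post-composition with the induced map $Q^W(\Mod(c)^{u,v}_M)$ coming from the canonical isomorphism $M \otimes_u \scrA(V)\otimes_v \scrA(W) \to M \otimes_{uv}\scrA(W)$. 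Since $\tau^u$ is induced by the elementary isomorphism $(\can^{u,v}_M)^{-1}$, and since the twist $\Pr(c)^{u,v}$ is right multiplication by $f^w(c^{u,v})$, the required identity becomes the defining formula $c^{u,v}f^v(f^u(a)) = f^{uv}(a)c^{u,v}$ for the twist $c$ of $\scrA$ read off on generators $m \otimes a \otimes b$. Similarly, the unit compatibility reduces to the identity $z^U a = f^{1_U}(a)z^U$ for all $a \in \scrA(U)$, which is exactly built into the definition of a twisted presheaf.

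Once $Q$ is established as a morphism of prestacks whose components are equivalences of $\bfR$-linear categories, I would conclude as follows. A morphism of prestacks sends (pre-)descent data to (pre-)descent data functorially, and on the descent categories the induced functor $\Des(Q)\colon \Des(\Mod_\scrA) \to \Des(\QPr_\scrA)$ is again fully faithful because fullness and faithfulness can be tested componentwise using the canonical projections $\pi_V$, and is essentially surjective because each $Q^V$ is, allowing one to lift a given descent datum $(\scrF_U)_U$ in $\QPr_\scrA$ to a descent datum in $\Mod_\scrA$ by choosing preimages $M_U \in \Mod(\scrA(U))$ with $Q^U(M_U) \cong \scrF_U$ and transporting the gluing isomorphisms along $\tau^u$. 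The main obstacle is the coherence check in the previous paragraph; once it is in place, passing to descent is formal and yields the desired equivalence $\Qch(\scrA) \simeq \QPr(\scrA)$.
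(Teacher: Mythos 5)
This lemma is imported verbatim from \cite[Theorem 4.12]{DLL} and the paper supplies no proof of its own, but your argument is correct and is the expected one: restrict the fully faithful $Q^U$ of \cite[Lemma 4.10]{DLL} to equivalences onto their essential images $\QPr(\scrA|_U)$, check that $(Q^U,\tau^u)_{U,u}$ satisfies the coherence axioms of a morphism of prestacks (which, on generators $m\otimes a\otimes b$, comes down to the twisted-presheaf identities $c^{u,v}f^v(f^u(a))=f^{uv}(a)c^{u,v}$ and $z^Ua=f^{1_U}(a)z^U$ together with centrality of the twists), and then pass to descent categories, where full faithfulness is tested componentwise via the projections $\pi_V$ and essential surjectivity follows by lifting a descent datum along the levelwise equivalences and transporting the gluing isomorphisms through $\tau^u$. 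The one loose phrase is calling $Q^W(\Mod(c)^{u,v}_M)$ the map ``coming from the canonical isomorphism'': it is $\can^{u,v}_M$ corrected by multiplication by the twist $c^{u,v}$, not $\can^{u,v}_M$ itself, but the displayed coherence identity you actually verify is the correct one, so this is only a wording issue.
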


\subsection{Deformations of the restricted structure sheaves}
Let
$\scrA \colon \frakU^{op} \to \Cat (\bfR)$
be a flat prestack.
Recall that
$\scrA$
is
\emph{flat}
if $\bfR$-modules
$\scrA(U)(A, A^\prime)$
are flat for all
$U \in \frakU$
and
$A, A^\prime \in \scrA(U)$.
An
\emph{$\bfS$-deformation}
of
$\scrA$
is a flat prestack
$\scrB \colon \frakU^{op} \to \Cat (\bfS)$
together with an equivalence of prestacks
$\scrB \otimes_\bfS \bfR \to \scrA$,
i.e.,
for each
$U$
there is a morphism of prestacks inducing an equivalence
$\scrB(U) \otimes_\bfS \bfR \to \scrA(U)$
of $\bfR$-linear categories
\cite[Proposition 4.7]{DLL}.
Two deformations
$\scrB, \scrB^\prime$
are
\emph{equivalent}
if there is an equivalence
$\scrB \to \scrB^\prime$
of prestacks compatible with equivalences
$\scrB \otimes_\bfS \bfR \to \scrA, \
\scrB^\prime \otimes_\bfS \bfR \to \scrA$.
We denote by
$\Def^{tw}_\scrA(\bfS)$
the set of equivalence classes of $\bfS$-deformations of
$\scrA$.
When
$\scrB \otimes_\bfS \bfR \to \scrA$
is an isomorphism of prestacks,
i.e.,
for each
$U$
there is a morphism of prestacks inducing an isomorphism
$\scrB(U) \otimes_\bfS \bfR \to \scrA(U)$
of $\bfR$-linear categories,
we call the deformation
$\scrB$
\emph{strict}.
Two strict deformations
$\scrB, \scrB^\prime$
are
\emph{equivalent}
if there is an isomorphism
$\scrB \to \scrB^\prime$
of prestacks inducing the identity on
$\scrA$.
We denote by
$\Def^{s-tw}_\scrA(\bfS)$
the set of equivalence classes of strict $\bfS$-deformations of
$\scrA$.
Recall that
twised presheaves of $\bfR$-algebras can be regarded as a prestack.
Due to the lemma below,
as long as we consider equivalence classes of twisted deformations of flat presheaves,
we may restrict our attention to strict twisted deformations.

\begin{lem} {\rm{(}\cite[Proposition 5.9]{DLL}\rm{)}}
Let
$(\scrA, m, f, c, z)$
be a flat prestack of $\bfR$-linear categories on
$\frakU$.
Then the canonical map
\begin{align*}
\Def^{s-tw}_\scrA(\bfS) \to \Def^{tw}_\scrA(\bfS)
\end{align*}
is bijective.
\end{lem}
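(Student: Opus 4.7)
The plan is to mimic the strategy of \cite[Proposition B.3]{LV06} (quoted as \pref{lem:s-linlin}), which proves the analogous statement for linear categories, and to promote it to the prestack setting by carefully transporting coherence data.

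First I would prove surjectivity by a strictification construction. Start with a twisted deformation $(\scrB, \eta)$ where $\eta \colon \scrB \otimes_\bfS \bfR \to \scrA$ is an equivalence of prestacks, so for every $U \in \frakU$ the $\bfR$-linear functor $\eta_U \colon \scrB(U) \otimes_\bfS \bfR \to \scrA(U)$ is an equivalence of categories. Using the axiom of choice, for each $A \in \scrA(U)$ pick an object $\widetilde{A} \in \scrB(U)$ together with an isomorphism $\alpha_A^U \colon \eta_U(\widetilde{A}) \xrightarrow{\sim} A$ (and $\widetilde{A} = A$, $\alpha_A^U = \id$ whenever $A$ already lies in the image of $\eta_U$ on objects, which we can arrange). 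Define a new prestack $\scrB'$ by $\Ob(\scrB'(U)) = \Ob(\scrA(U))$ and $\scrB'(U)(A, A') = \scrB(U)(\widetilde{A}, \widetilde{A'})$, with composition inherited from $\scrB(U)$. The restriction functor $(f')^u \colon \scrB'(U) \to \scrB'(V)$ is defined on objects by $(f')^u(A) = f^u_\scrA(A)$ and on morphisms by pre- and post-composing $f^u_\scrB$ with the transport isomorphisms built from $\eta$ and the $\alpha_A^U$; the twists $c'$ and $z'$ are transported by the same recipe. By construction the functors $\eta'_U \colon \scrB'(U) \otimes_\bfS \bfR \to \scrA(U)$ become identities, so $\scrB'$ is strict, and the $\alpha_A^U$ assemble into a pseudonatural equivalence $\scrB' \to \scrB$ compatible with the identifications to $\scrA$.

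Second I would prove injectivity. Suppose $\scrB$ and $\scrB'$ are two strict deformations and $(g,h) \colon \scrB \to \scrB'$ is an equivalence of prestacks that becomes the identity after $- \otimes_\bfS \bfR$. For each $U$ the functor $g^U \colon \scrB(U) \to \scrB'(U)$ is an equivalence of $\bfS$-linear categories that reduces to the identity on $\scrA(U)$. Using flatness of $\bfS \to \bfR$ along the square-zero filtration obtained by the nilpotence of $\ker(\bfS \to \bfk)$ together with Nakayama-type lifting, one can inductively alter $g^U$ by natural isomorphisms (given by units $1 + \text{nilpotent}$) so that it becomes the identity on objects and an isomorphism on hom-sets, without changing its reduction to $\scrA(U)$. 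The coherence isomorphisms $h^u$ can simultaneously be absorbed into this modification, yielding a strict isomorphism $\scrB \to \scrB'$ of prestacks. This is exactly the argument of \pref{lem:s-linlin} applied locally at each $U$, but the non-trivial point is global compatibility.

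The main obstacle is exactly this global compatibility: the pointwise strictifications at each $U$ must be assembled into a coherent morphism of prestacks, so the chosen lifts $\widetilde{A}$ and adjusting natural isomorphisms must be compatible with the restriction functors $f^u$ and with the twists $c^{u,v}$, $z^U$. In the surjectivity direction this is automatic once the transport of $c, z$ is defined canonically by the $\alpha_A^U$, but in the injectivity direction it requires a simultaneous induction over the nilpotent filtration of $\ker(\bfS \to \bfk)$: at each stage the obstruction to promoting the natural isomorphism $h$ into an identity lives in a Hochschild-type cochain that, because both deformations are flat and strict with the same reduction, can be trivialized. Modulo this bookkeeping, the argument reduces to the one-object (linear) case already handled by \pref{lem:s-linlin}.
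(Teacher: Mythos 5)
The paper does not prove this lemma: it is quoted verbatim as \cite[Proposition 5.9]{DLL}, so there is no in-text argument to compare yours against. Judged on its own, your strictification sketch is the standard and essentially correct route (it is how the analogous linear statement \pref{lem:s-linlin}, i.e.\ \cite[Proposition B.3]{LV06}, is proved, and how one expects the cited result to go). For surjectivity, the transport-of-structure construction is right; the one ingredient you leave implicit is that the comparison isomorphisms between $f^u_{\scrB}(\widetilde{A})$ and $\widetilde{f^u_{\scrA}(A)}$ must first be produced in $\scrA(V)$ (using the coherence cells of the pseudonatural equivalence $\eta$) and then \emph{lifted} to $\scrB(V)$ along the surjection $\scrB(V)(-,-) \to \scrB(V)(-,-)\otimes_{\bfS}\bfR$, the lift being invertible because $\bfI=\ker(\bfS\to\bfR)$ is square-zero; once $c'$ and $z'$ are defined by conjugation the cocycle identities are formal. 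Flatness of $\scrB$ passes to $\scrB'$ since the hom-modules are unchanged.

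Two small corrections on the injectivity half. First, the relevant ideal is the square-zero $\bfI=\ker(\bfS\to\bfR)$, not $\ker(\bfS\to\bfk)$, so no induction over a nilpotent filtration is needed in the setting of this paper (it would be needed only for the more general nilpotent version in \cite{DLL}); likewise ``flatness of $\bfS\to\bfR$'' is not what you use --- you use surjectivity of $\bfS\to\bfR$ plus nilpotence of the kernel, together with flatness of the deformations over $\bfS$. Second, the step you flag as the main obstacle (absorbing the $h^u$) is a non-issue: an isomorphism of prestacks in the sense of this paper only requires each $g^U$ to be an isomorphism of categories, not that the coherence cells $h^u$ be identities. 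So it suffices to choose lifts $\gamma_A\colon g^U(A)\to A$ of the given natural isomorphism $g^U\otimes_{\bfS}\bfR\cong\id_{\scrA(U)}$, conjugate $g^U$ by them to get an identity-on-objects fully faithful functor (hence an isomorphism, since iso-mod-$\bfI$ implies iso), and conjugate the $h^u$ along; the pseudonaturality identities are preserved automatically. With these adjustments the argument closes and there is no genuine gap.
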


Let
$\frakU$
be a finite poset with binary meets.
Then any prestack on
$\frakU$
is
\emph{quasi-compact}
since
$\frakU$
is finite.
A prestack
$\scrA \colon \frakU^{op} \to \Cat(\bfR)$
is
\emph{right semi-separated}
if the associated prestack
$\Mod_\scrA$
is of affine localizations.
Namely,
for all
$U, V, W \in U$
with
$v \colon V \to U, w \colon W \to U$
in
$\frakU$
and the pullback diagram
\begin{align*} 
\begin{gathered}
\xymatrix{
V \cap W \ar[r]^{\bar{w}} \ar[d]_{\bar{v}} & V \ar[d]^{v} \\
W \ar[r]^{w} & U
}
\end{gathered}
\end{align*}
the following conditions are satisfied.
\begin{itemize}
\item
The category
$\Mod_\scrA(U)$
is Grothendieck abelian. 
\item
The functor
$v^* \colon \Mod_\scrA(U) \to \Mod_\scrA(V)$
is exact.
\item
The functor
$v^*$
admits a fully faithful exact right adjoint
$v_* \colon \Mod_\scrA(V) \to \Mod_\scrA(U)$.
\item
There are natural isomorphisms
\begin{align*}
(v_*v^*)(w_*w^*)
\cong
(v \bar{w})_*(v \bar{w})^*
\cong
(w_*w^*)(v_*v^*).
\end{align*}
\end{itemize}
A presheaf  
$\scrA \colon \frakU^{op} \to \Alg(\bfR)$
is
\emph{right semi-separated}
if so is
$\scrA$
with
$\scrA(U)$
regarded as one-objected categories.  
Every right semi-separated prestack is
\emph{geometric},
i.e.,
the restriction functor
\begin{align*}
- \otimes_u \scrA(V) \colon \Mod(\scrA(U)) \to \Mod(\scrA(V)) 
\end{align*}
is exact.
Note that
for any geometric prestack
$\scrA \colon \frakU^{op} \to \Cat(\bfR)$
on a small category
$\Qch(\scrA)$
is a Grothendieck abelian category
\cite[Theorem 4.14]{DLL}.

Let
$X$
be a smooth proper $\bfR$-scheme.
Choose a finite affine open cover
$\frakU$
closed under intersections.
We denote by
$\scrO_X|_\frakU$
the restricted structure sheaf to
$\frakU$.
Since
$U \cap V$
is affine
as
$X$
is separated,
we have isomorphisms of $\scrO_X (U)$-modules
\begin{align*}
\scrO_X (V) \otimes_{\scrO_X (U)} \scrO_X (W)
\cong
\scrO_X (U \cap V)
\cong
\scrO_X (W) \otimes_{\scrO_X (U)} \scrO_X (V)
\end{align*}
for all
$U, V, W \in U$
with
$V, W \subset U$.
Since pushforwards along open immersions
$V \hookrightarrow U$
of affine schemes are fully faithful,
by
\cite[Lemma 3.1]{DLL} 
the restriction maps
$\scrO_X (U) \to \scrO_X (V)$
are flat epimorphism of rings.
Then one can apply
\cite[Proposition 4.28]{DLL}
to see that
the presheaf
$\scrO_X |_\frakU \colon \frakU^{op} \to \Alg(\bfR)$
is right semi-separated.
Since
$\scrO_X(U)$
are flat $\bfR$-modules,
the category
\begin{align*}
\Qch(\scrO_X |_\frakU)
\simeq
\QPr(\scrO_X |_\frakU)
\simeq
\Qch(X)
\end{align*}
is flat over
$\bfR$
and
Grothendieck abelian
\cite[Proposition 4.30]{DLL}.  

\begin{lem} {\rm{(}\cite[Theorem 5.10]{DLL}\rm{)}} \label{lem:Bridge2}
Let
$X$
be a smooth proper $\bfR$-scheme
with a finite affine open cover
$\frakU$
closed under intersections.
Then
every twisted $\bfS$-deformations of the restricted structure sheaf
$\scrO_X |_\frakU$
is a quasi-compact semi-separated presheaf on
$\frakU$
and
there is a bijection
\begin{align*}
\Def^{tw}_{\scrO_X |_\frakU}(\bfS)
\to
\Def^{ab}_{\Qch(X)}(\bfS), \
(\scrO_X |_\frakU)_\phi \mapsto \Qch((\scrO_X |_\frakU)_\phi),
\end{align*}
where
$\phi \in H^2 \bfC_{GS}(\scrO_X |_\frakU)^{\oplus l}$
is a cocycle
and
$(\scrO_X |_\frakU)_\phi$
is the twisted $\bfS$-deformation of
$\scrO_X |_\frakU$
along
$\phi$.
In particular,
the category of right quasi-coherent modules over a twisted deformation of
$\scrO_X |_\frakU$
is given by an abelian deformation of the category
$\Qch(X)$
of quasi-coherent sheaves.   
\end{lem}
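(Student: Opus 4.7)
The plan is to reduce to strict twisted deformations, construct the functor $\Qch$ on such deformations, and then verify bijectivity through the cohomological classifications on both sides, all of which canonically identify with $HH^2(X/\bfR)^{\oplus l}$.

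First I would verify the structural claim that every twisted deformation is a quasi-compact semi-separated \emph{presheaf}. By the lemma immediately preceding the statement, every twisted deformation is equivalent to a strict one, which is automatically a presheaf rather than a genuine prestack. Quasi-compactness follows from the finiteness of $\frakU$. For semi-separatedness, I would invoke the criterion of \cite[Proposition~4.28]{DLL} (used earlier in the section to establish semi-separatedness of $\scrO_X|_\frakU$ itself): it suffices to show that the deformed restriction maps $f^u + \bff^u \epsilon$ are flat epimorphisms of rings. Flatness is preserved by square-zero nilpotent extensions since $\scrO_X(U)$ is flat over $\bfR$ and $\bfI$ is a nilpotent $\bfR$-module, while the epimorphism property lifts from $\bfR$ to $\bfS$ because it can be tested by the vanishing of the relevant cotensor, a condition that is preserved under such extensions.

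Next I would construct the map on objects. Given a strict twisted deformation $\scrA_\phi = (\scrO_X|_\frakU)_\phi$, the prestack of right modules $\Mod_{\scrA_\phi}$ is geometric by Step~1, so $\Qch(\scrA_\phi) = \Des(\Mod_{\scrA_\phi})$ is Grothendieck abelian by \cite[Theorem~4.14]{DLL}. Flatness over $\bfS$ is inherited locally from flatness of $\scrA_\phi(U)$ over $\bfS$. The key identification $\Qch(\scrA_\phi)_\bfR \simeq \Qch(X)$ comes from applying \pref{lem:Modlinab} pointwise: for each $U \in \frakU$, taking $\bfR$-linear objects in $\Mod(\scrA_\phi(U))$ recovers $\Mod(\scrO_X(U))$, and these base changes commute with the descent data. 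Well-definedness on equivalence classes follows from the functoriality of $\Des$ and $\Mod_{(-)}$.

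Finally, for bijectivity I would leverage the two parameterizations: the domain by $H^2 \bar{\bfC}^\prime_{GS}(\scrO_X|_\frakU)^{\oplus l}$ through \pref{lem:CLASStw}, and the codomain by $H^2 \bfC_{ab}(\Qch(X))^{\oplus l}$ through \pref{lem:CLASSab}. Both groups identify canonically with $HH^2(X/\bfR)^{\oplus l}$: the former by the isomorphism combining the Hodge decomposition with HKR shown in the preceding subsection, and the latter because $\bfC_{ab}(\Qch(X)) = \bfC_{sh}(\Ind(\Inj(\Qch(X))))$ computes the Hochschild cohomology of $X$. The construction $\phi \mapsto \Qch(\scrA_\phi)$ then corresponds at the cohomological level to the identity on $HH^2(X/\bfR)^{\oplus l}$, yielding the desired bijection.

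The hard part will be the last step: verifying that the map $\scrA_\phi \mapsto \Qch(\scrA_\phi)$ does realize the canonical identification between the two cohomological parameterizations, and not merely some other isomorphism. This requires a chain-level comparison: starting from a normalized reduced decomposable cocycle $(\alpha, \beta, \gamma)$ representing $\phi$, one must trace how the deformed multiplications, restriction maps, and twists on $\scrO_X|_\frakU$ induce the corresponding deformed compositions in the category of injective quasi-coherent sheaves on $X$, and show that the resulting Hochschild $2$-cocycle in $\bfC_{ab}(\Qch(X))^{\oplus l}$ represents the same class in $HH^2(X/\bfR)^{\oplus l}$. This naturality is expected from the philosophy that deforming sheaves and deforming their module categories are governed by the same invariant, but executing it cleanly requires passing through explicit $B_\infty$-comparisons between the Gerstenhaber--Schack complex, the Shukla complex of $\Ind(\Inj(\Qch(X)))$, and the relative Hochschild complex of $X$.
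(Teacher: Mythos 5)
The paper offers no proof of this lemma at all: it is imported verbatim from \cite[Theorem 5.10]{DLL}, so there is no internal argument to measure yours against. On its own terms, your outline follows the right general route through the Dinh Van--Liu--Lowen machinery: reduce to strict twisted deformations via the preceding proposition, lift quasi-compactness (trivial, $\frakU$ finite) and semi-separatedness along the square-zero extension by the local criterion of flatness and stability of ring epimorphisms, and identify $\Qch((\scrO_X|_\frakU)_\phi)_\bfR$ with $\Qch(X)$ by applying \pref{lem:Modlinab} pointwise compatibly with descent data. These steps are plausible, if somewhat hand-waved at the point where the epimorphism property is lifted to $\bfS$.

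The genuine gap is in your last step, and you have located it but not closed it. Exhibiting bijections $\Def^{tw}_{\scrO_X |_\frakU}(\bfS) \cong H^2\bar{\bfC}^\prime_{GS}(\scrO_X|_\frakU)^{\oplus l} \cong HH^2(X/\bfR)^{\oplus l}$ and $\Def^{ab}_{\Qch(X)}(\bfS) \cong H^2\bfC_{ab}(\Qch(X))^{\oplus l} \cong HH^2(X/\bfR)^{\oplus l}$ shows only that the two sets are equipotent; it proves nothing about the specific assignment $(\scrO_X|_\frakU)_\phi \mapsto \Qch((\scrO_X|_\frakU)_\phi)$ unless you verify that this map intertwines the two classifications, which is precisely the chain-level comparison you defer. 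That comparison is the entire content of the theorem, not a finishing touch. There is also a circularity hazard: in the present paper the identification $HH^2_{ab}(\Qch(X))^{\oplus l} \cong HH^2(X/\bfR)^{\oplus l}$ of \pref{eq:HTab} is itself deduced by combining \pref{eq:GSHT} and \pref{lem:CLASStw} with the very lemma you are proving, so you must source that isomorphism independently (from the Lowen--Van den Bergh computation of abelian Hochschild cohomology of $\Qch(X)$) or avoid it altogether. A cleaner way to finish would be structural rather than cardinality-based: factor the map as a composite of bijections, e.g.\ twisted deformations of the presheaf $\to$ linear deformations of an associated linear category $\to$ abelian deformations of its module/descent category, using \pref{lem:linab1} and \pref{lem:linab2} at each stage, so that bijectivity of the given map is never in doubt.
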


\subsection{Toda's construction revisited}
Let
$\frakU$
be a small category
and
$(\scrA, m, f)$
a presheaf of $\bfR$-algebras on
$\frakU$.
The
\emph{simplicial complex of presheaves}
associated with
$\scrA$
is the complex
$(\scrA^\bullet, \varphi^\bullet)$
defined as follows.
Consider the presheaf of algebras 
$\scrA^n = (\scrA^n, m^n, f^n)$
for
$n \geq 0$
given by
\begin{align*}
\scrA^n(U)
=
\prod_{\tau \in \cN_n(\frakU / U)} \scrA|_U(\tau)
\end{align*}
endowed with the product algebra structure
$m^{n,U}$.
Here,
$\tau \in \cN_n(\frakU / U)$
is identified with the object
$d \tau \to U \in \frakU / U$
by composing all morphisms of $\tau$,
and 
the restriction map
\begin{align*}
f^{n,U} \colon \scrA^n(U) \to \scrA^n(V), \ 
(a^\tau)_\tau \mapsto (a^{u \sigma})_\sigma
\end{align*}
is induced by the natural map
$\cN_n(\frakU / V) \to \cN_n(\frakU / U), \ 
\sigma \to  u \sigma$.
Define morphisms of presheaves
$\varphi^n \colon \scrA^n \to \scrA^{n+1}$
as
\begin{align*}
\varphi^{n,U}
\colon
\prod_{\tau \in \cN_n(\frakU / U)} \scrA|_U (\tau)
\to
\prod_{\sigma \in \cN_{n+1}(\frakU / U)} \scrA|_U (\sigma), \
(a^\tau)_\tau
\mapsto
\left( f^{u^\sigma_1} (a^{\partial_0 \sigma}) + \sum^{n+1}_{i = 1} (-1)^i a^{\partial_i \sigma} \right)_\sigma 
\end{align*}
which specialize to
\begin{align*}
\varphi^{0,U}
\colon
\prod_{u \colon V \to U} \scrA(V)
\to
\prod_{u \colon V \to U, v \colon W \to V} \scrA(W), \
(a^u)_u
\mapsto
f^v (a^u) - a^{uv}. 
\end{align*}
Then one obtains the complex
$(\scrA^\bullet, \varphi^\bullet)$
with
$\ker(\varphi^0) \cong \scrA$
\cite[Lemma 2.12]{DLL}. 

Using a part of the simplicial complex of presheaves,
one can give an alternative description of Toda's construction.
Since
$\bfR$
is commutative,
by
\cite[Proposition 2.14]{DLL}
every normalized reduced cocycle
\begin{align*}
\phi
=
(\bfm, \bff, \bfc)
=
(m_1, \ldots, m_l, f_1, \ldots, f_l, c_1, \ldots, c_l)
\in
\bar{\bfC}^{\prime 0,2}_{GS}(\scrA)^{\oplus l}
\oplus
\bar{\bfC}^{\prime 1,1}_{GS}(\scrA)^{\oplus l}
\oplus
\bar{\bfC}^{\prime 2,0}_{GS}(\scrA)^{\oplus l}.
\end{align*}
admits a weak decomposition
\begin{align*}
(\bfm, \bff, \bfc)
=
(\bfm, \bff, 0)
+
(0, 0, \bfc)
\in
\bar{\bfC}^{\prime 2}_{tGS}(\scrA)^{\oplus l}
\oplus
\bar{\bfC}^{\prime 2}_{simp}(\scrA)^{\oplus l}.
\end{align*}
From
\cite[Proposition 2.24]{DLL}
it follows that
the twisted $\bfS$-deformation
$\scrA_\phi$
of
$\scrA$
along
$\phi$
has central twists
and
the underlying presheaf
$|\scrA_\phi|$
is the presheaf $\bfS$-deformation of
$\scrA$
along
$|\phi| = (\bfm, \bff, 0)$.
Consider the morphism
$F \colon \scrA \oplus (\scrA^0)^{\oplus l} \to (\scrA^1)^{\oplus l}$
of presheaves defined as
\begin{align*}
\begin{gathered}
F^U
\colon
\scrA(U) \oplus \prod_{u \colon V \to U} \scrA^0 (V)^{\oplus l}
\to
\prod_{u \colon V \to U, v \colon W \to V} \scrA^1 (W)^{\oplus l}, \\
(a, (b^u_1, \ldots, b^u_l)_u)
\mapsto
(f^v_1 u^*(a) + v^*(b^u_1) - b^{uv}_1, \ldots, f^v_l u^*(a) + v^*(b^u_l) - b^{uv}_l)_{u, v}, 
\end{gathered}
\end{align*}
where we denote
$f^u$
by
$u^*$
for clarity. 
Define the multiplication on
$\scrA \oplus (\scrA^0)^{\oplus l}$
as
\begin{align*}
&(a, (b^u_1, \ldots, b^u_l)_u)
\cdot
(a^\prime, (b^{\prime u}_1, \ldots, b^{\prime u}_l)_u) \\
=
&(aa^\prime, (u^*(a)b^{\prime u}_1 + b^u_1 u^*(a^\prime) + m_1(u^*(a), u^*(a^\prime)), \ldots, u^*(a)b^{\prime u}_l + b^u_l u^*(a^\prime) + m_l(u^*(a), u^*(a^\prime)))_u). 
\end{align*}
With the scalar given by
\begin{align*}
(\lambda + \kappa_1 \epsilon_1 + \ldots + \kappa_l \epsilon_l)(a, (b^u_1, \ldots, b^u_l)_u)
=
(\lambda a, (\kappa_1 u^*(a) + \lambda b^u_1, \ldots, \kappa_l u^*(a) + \lambda b^u_l)_u),
\end{align*}
$\scrA \oplus (\scrA^0)^{\oplus l}$
becomes an $\bfS$-algebra.
Then the morphism
$G \colon |\scrA_\phi| \to \scrA \oplus (\scrA^0)^{\oplus l}$
of presheaves of $\bfS$-algebras defined as
\begin{align*}
\begin{gathered}
G^U
\colon
|\scrA_\phi|(U) \to \scrA(U) \oplus \scrA^0(U)^{\oplus l}, \\
a + b^u_1 \epsilon_1 + \cdots + b^u_l \epsilon_l
\mapsto
(a, (f^u_1(a) + u^*(b_1), \ldots, f^u_l(a) + u^*(b_l))_u)
\end{gathered}
\end{align*}
yields an exact sequence
\begin{align*}
0 
\to
|\scrA_\phi|
\xrightarrow{G}
\scrA \oplus (\scrA^0)^{\oplus l}
\xrightarrow{F}
(\scrA^1)^{\oplus l}.
\end{align*}

Consider the case
where
$\scrA$
is the restricted structure sheaf
$\scrO_X |_\frakU$ 
of a smooth proper $\bfR$-scheme
$X$.
Fix a finite affine open cover
$\frakU$
of
$X$
closed under intersections.
As explained above,
$\scrO_X |_\frakU$
gives a quasi-compact right semi-separated presheaf of $\bfR$-algebras.
Since
$\scrO_X(U)$
is smooth $\bfR$-algebra for each
$U \in \frakU$,
we may assume further that
$\phi = (\bfm, \bff, \bfc)$
is decomposable.
We use the same symbol
$\phi$
to denote the cocycle
\begin{align*}
(\alpha, \beta, \gamma)
\in
H^2(\scrO_X)
\oplus
H^1(\scrT_{X/\bfR})
\oplus
H^0(\wedge^2 \scrT_{X/\bfR})
\end{align*}
which is the image of
$(\bfm, \bff, \bfc)$
under the bijection
\pref{eq:GSHT}.
Then
$\phi$
defines the $\bfS$-linear abelian category
$\Qch(X, \phi)$
obtained by Toda's construction.

\begin{lem} {\rm{(}\cite[Theorem 5.12]{DLL}\rm{)}} \label{lem:Bridge3}
For a smooth proper $\bfR$-scheme
$X$
with a finite affine open cover
$\frakU$
closed under intersections,
let
$(\scrO_X |_\frakU)_\phi$
be the twisted $\bfS$-deformation of the restricted structure sheaf
$\scrO_X |_\frakU$
along a normalized reduced decomposable cocycle
\begin{align*}
\phi
=
(\bfm, \bff, \bfc)
\in
\bar{\bfC}^{\prime 0,2}_{GS}(\scrO_X |_\frakU)^{\oplus l}
\oplus
\bar{\bfC}^{\prime 1,1}_{GS}(\scrO_X |_\frakU)^{\oplus l}
\oplus
\bar{\bfC}^{\prime 2,0}_{GS}(\scrO_X |_\frakU)^{\oplus l},
\end{align*}
which maps to a cocycle
\begin{align*}
(\alpha, \beta, \gamma)
\in
H^2(\scrO_X)
\oplus
H^1(\scrT_{X/\bfR})
\oplus
H^0(\wedge^2 \scrT_{X/\bfR})
\end{align*}
under the bijection
\pref{eq:GSHT}.
Then there is an equivalence
\begin{align*}
\Qch((\scrO_X |_\frakU)_\phi) \simeq \Qch(X, \phi)
\end{align*}
of $\bfS$-linear Grothendieck abelian categories,
where
$\Qch(X, \phi)$
is the abelian category obtained by Toda's construction from
$\Qch(X)$
along
$(\alpha, \beta, \gamma)$.
\end{lem}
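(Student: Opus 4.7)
The plan is to exploit the weak decomposition
\begin{align*}
\phi = (\bfm, \bff, 0) + (0, 0, \bfc)
\end{align*}
recalled from \cite[Proposition 2.14, 2.24]{DLL}, which exhibits $(\scrO_X|_\frakU)_\phi$ as a twisted presheaf with central twists whose underlying presheaf $|(\scrO_X|_\frakU)_\phi|$ is the ordinary presheaf $\bfS$-deformation of $\scrO_X|_\frakU$ along $(\bfm, \bff, 0)$. By \pref{lem:Bridge1} it suffices to compare $\QPr((\scrO_X|_\frakU)_\phi)$ with Toda's category $\Qch(X, \phi)$ as $\bfS$-linear Grothendieck abelian categories, and this comparison naturally splits into an underlying presheaf part controlled by $(\bfm, \bff)$ and a twist part controlled by $\bfc$.

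For the underlying presheaf part, I would use the decomposability assumption together with \pref{eq:GSHT}: the tGS-summand $(\bfm, \bff)$ corresponds to $(\gamma, \beta) \in H^0(\wedge^2 \scrT_{X/\bfR}) \oplus H^1(\scrT_{X/\bfR})$. The exact sequence
\begin{align*}
0 \to |\scrA_\phi| \xrightarrow{G} \scrA \oplus (\scrA^0)^{\oplus l} \xrightarrow{F} (\scrA^1)^{\oplus l}
\end{align*}
recalled just before the statement realizes sections of $|(\scrO_X|_\frakU)_\phi|(U)$ as compatible families that match, affine-locally, sections of the structure sheaf $\scrO^\gamma_{X_\beta}$ of Toda's noncommutative scheme $X_{(\beta, \gamma)}$: the multiplication twist $\bfm$, being bidifferential on affine opens, translates to the star product $*_\gamma$, while the restriction-map twist $\bff$ assembles via Corollary \pref{cor:Bridge31} into the classical $\bfS$-deformation $X_\beta$ of $X$ by $\scrO_X^{\oplus l}$. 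This yields an isomorphism $|(\scrO_X|_\frakU)_\phi| \simeq \scrO^\gamma_{X_\beta}|_{\frakU \times_\bfR \bfS}$ of presheaves of $\bfS$-algebras on $\frakU$.

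Next I would treat the remaining twist $(0, 0, \bfc)$. It lies in $\bar{\bfC}^{\prime 2,0}_{GS}(\scrO_X|_\frakU)^{\oplus l}$, i.e.\ it is a reduced simplicial $2$-cocycle valued in $\scrO_X^{\oplus l}$; through the alternating-$\mathrm{\breve{C}ech}$ equivalence $\iota, \pi$ recalled before \pref{eq:simpsheaf} together with \pref{eq:GSHT}, it is precisely Toda's class $\alpha \in H^2(\scrO_X)^{\oplus l}$. Hence the central twist $c + \bfc \epsilon$ of the prestack $(\scrO_X|_\frakU)_\phi$ matches the twisted $\mathrm{\breve{C}ech}$ $2$-cocycle $\tilde{\alpha} \in \bfC^2(X_\beta, Z(\scrO^\gamma_{X_\beta})^*)$. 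A right twisted quasi-coherent presheaf over $(\scrO_X|_\frakU)_\phi$ is by definition a descent datum in $\QPr_{(\scrO_X|_\frakU)_\phi}$: an assignment $U \mapsto \scrF_U$ with transition maps $\varphi_u$ satisfying the cocycle identity twisted by $c + \bfc \epsilon$; under the above identifications this is exactly the datum of an $\tilde{\alpha}$-twisted quasi-coherent sheaf on $X_{(\beta, \gamma)}$. The resulting assignment is functorial and induces an $\bfS$-linear equivalence, and the Grothendieck abelian structure is preserved because both sides are flat abelian $\bfS$-deformations of $\Qch(X)$ via \pref{lem:Bridge2}.

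The main obstacle will be simultaneously tracking three conventions: Toda's $\mathrm{\breve{C}ech}$ description, the simplicial Gerstenhaber--Schack description of the prestack deformation, and the descent-theoretic description of $\Qch$ through $\QPr$. In particular, one must verify on triple intersections that the single cocycle condition $d_{GS}(\bfm, \bff, \bfc) = 0$ unwinds into exactly the associativity of $*_\gamma$, the cocycle condition for the classical deformation $X_\beta$, and the $2$-cocycle condition of $\tilde{\alpha}$ over $X_\beta$, and that an equivalence by a coboundary $d_{GS}(\bfg, \bfh)$ corresponds to an isomorphism of the resulting gerby noncommutative schemes. Once this dictionary is established the desired equivalence follows by the descent-data interpretation of both categories.
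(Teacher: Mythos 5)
Your proposal is correct and follows essentially the route the paper intends: the paper itself defers the proof to \cite[Theorem 5.12]{DLL} and only reproduces the key ingredients — the weak decomposition $(\bfm,\bff,\bfc)=(\bfm,\bff,0)+(0,0,\bfc)$, the exact sequence $0\to|\scrA_\phi|\xrightarrow{G}\scrA\oplus(\scrA^0)^{\oplus l}\xrightarrow{F}(\scrA^1)^{\oplus l}$, and the descent-theoretic identification $\Qch(\scrA)\simeq\QPr(\scrA)$ — which are exactly the pieces you assemble, with the correct matching $\bfm\leftrightarrow\gamma$, $\bff\leftrightarrow\beta$, $\bfc\leftrightarrow\alpha$. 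The bookkeeping you flag (unwinding $d_{GS}(\bfm,\bff,\bfc)=0$ into the associativity of $*_\gamma$, the gluing cocycle for $X_\beta$, and the $2$-cocycle condition for $\tilde{\alpha}$) is indeed where the remaining work lies, and your plan handles it in the same way as the cited source.
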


By
\pref{eq:GSHT},
\pref{lem:CLASStw},
and
\pref{lem:Bridge2}
we obtain a bijection
\begin{align} \label{eq:HTab}
HT^2(X/\bfR)^{\oplus l}
\cong
HH^2(X/\bfR)^{\oplus l}
\cong
HH^2_{ab}(\Qch(X))^{\oplus l}.
\end{align}
Let
$\Qch(X)_\phi$
the flat abelian $\bfS$-deformation of
$\Qch(X)$
along the image of
$(\alpha, \beta, \gamma)$
under
\pref{eq:HTab}.
Combining
\pref{lem:Bridge2}
and
\pref{lem:Bridge3},
we obtain

\begin{prop} 
For a smooth proper $\bfR$-scheme
$X$,
let
$\Qch(X)_\phi$
be the flat abelian $\bfS$-deformation of
$\Qch(X)$
and
$\Qch(X, \phi)$
the abelian category obtained by Toda's construction from
$\Qch(X)$
corresponding to
$[\phi] \in HH^2(X/\bfR)^{\oplus l}$
via the isomorphism
\pref{eq:HTab}.
Then there is an equivalence
\begin{align*}
\Qch(X)_\phi \simeq \Qch(X, \phi)
\end{align*}
of $\bfS$-linear Grothendieck abelian categories.
\end{prop}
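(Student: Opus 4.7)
The plan is to assemble the stated equivalence by chaining together the three previously established comparison results, namely \pref{lem:CLASStw}, \pref{lem:Bridge2}, and \pref{lem:Bridge3}, once a suitable cocycle representative has been chosen.

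First, I would fix a finite affine open cover $\frakU$ of $X$ that is closed under intersections; such a cover exists because $X$ is smooth proper over $\bfR$ and hence separated. Via the Hodge decomposition combined with \pref{eq:GSHT}, I represent the given class $[\phi] \in HH^2(X/\bfR)^{\oplus l} \cong HT^2(X/\bfR)^{\oplus l}$ by a normalized reduced decomposable cocycle
\begin{align*}
(\bfm, \bff, \bfc)
\in
\bar{\bfC}^{\prime 0,2}_{GS}(\scrO_X|_\frakU)^{\oplus l}
\oplus
\bar{\bfC}^{\prime 1,1}_{GS}(\scrO_X|_\frakU)^{\oplus l}
\oplus
\bar{\bfC}^{\prime 2,0}_{GS}(\scrO_X|_\frakU)^{\oplus l}
\end{align*}
whose image under \pref{eq:GSHT} is a cocycle of the form $(\alpha, \beta, \gamma) \in H^2(\scrO_X) \oplus H^1(\scrT_{X/\bfR}) \oplus H^0(\wedge^2 \scrT_{X/\bfR})$, all summed with coefficients in $\bfR^{\oplus l}$. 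This is exactly the representative required to apply Toda's construction on the one hand and the twisted deformation formalism on the other.

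Second, I would invoke \pref{lem:CLASStw} to produce the twisted $\bfS$-deformation $(\scrO_X|_\frakU)_\phi$ of the restricted structure sheaf along $(\bfm, \bff, \bfc)$, and then apply \pref{lem:Bridge2} to obtain a flat abelian $\bfS$-deformation $\Qch((\scrO_X|_\frakU)_\phi)$ of $\Qch(X)$. By the very construction of the bijection \pref{eq:HTab}, which is the composite of the bijections in \pref{eq:GSHT}, \pref{lem:CLASStw}, and \pref{lem:Bridge2}, this flat abelian deformation realizes $\Qch(X)_\phi$ up to equivalence in $\Def^{ab}_{\Qch(X)}(\bfS)$.

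Third, I would apply \pref{lem:Bridge3} to the same cocycle representative to obtain an equivalence
\begin{align*}
\Qch((\scrO_X|_\frakU)_\phi) \simeq \Qch(X, \phi)
\end{align*}
of $\bfS$-linear Grothendieck abelian categories. Composing with the equivalence $\Qch(X)_\phi \simeq \Qch((\scrO_X|_\frakU)_\phi)$ from the previous step yields the desired equivalence $\Qch(X)_\phi \simeq \Qch(X, \phi)$.

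The only nontrivial point, and the step I expect to require the most care, is verifying the compatibility of the three cocycle representatives used along the way: the class $[\phi] \in HH^2(X/\bfR)^{\oplus l}$ defining $\Qch(X)_\phi$ via \pref{lem:CLASSab}, the normalized reduced Gerstenhaber--Schack cocycle $(\bfm, \bff, \bfc)$ used in \pref{lem:Bridge2}, and the decomposable cocycle $(\alpha, \beta, \gamma)$ feeding Toda's construction in \pref{lem:Bridge3}. All three are, however, tied together through the commutative diagram of classifying bijections provided by \pref{eq:HTab}, so independence from the choice of representative follows from the standard arguments already used in \cite{DLL}, and no further verification is needed beyond citing these lemmas in the correct order.
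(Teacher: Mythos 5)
Your proposal is correct and follows essentially the same route as the paper, which obtains the proposition precisely by combining \pref{lem:Bridge2} and \pref{lem:Bridge3} through the chain of bijections defining \pref{eq:HTab}. Your additional care about choosing a normalized reduced decomposable representative matches the paper's remark that such a representative exists because the $\scrO_X(U)$ are smooth $\bfR$-algebras, so nothing is missing.
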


Now,
we return to our setting.
Let
$X_0$
be a Calabi--Yau manifold with
$\dim X_0 > 2$
and
$(X, i_\bfR)$
an $\bfR$-deformation of
$X_0$.
Since we have
\begin{align*}
HT^2(X/\bfR)
=
H^2(\scrO_X/\bfR) \oplus H^1(\scrT_X/\bfR) \oplus H^0(\wedge^2 \scrT_X/\bfR)
\cong
H^1(\scrT_X/\bfR),
\end{align*}
every cocycle
$\phi \in HH^2(X/\bfR)^{\oplus l}$
defines an $\bfS$-deformation
$(X_\phi, i_\bfS)$
of
$(X, i_\bfR)$.
By Proposition
\pref{prop:QchCYDef}
we have
$\Qch(X, \phi) \simeq \Qch(X_\phi)$.
Along square zero extensions,
deforming Calabi--Yau manifolds
and
taking the category of quasi-coherent sheaves
are compatible in the following sense.

\begin{cor} \label{cor:Intertwine1}
Let
$X_0$
be a Calabi--Yau manifold with
$\dim X_0 > 2$,
$(X, i_\bfR)$
an $\bfR$-deformation of
$X_0$,
and
$(X_\phi, i_\bfS)$
the $\bfS$-deformation of
$(X, i_\bfR)$
corresponding to
$[\phi] \in HH^2(X/\bfR)^{\oplus l}$.
Then there is an equivalence
\begin{align*}
\Qch(X)_\phi \simeq \Qch(X_\phi)
\end{align*}
of $\bfS$-linear Grothendieck abelian categories,
where
$\Qch(X)_\phi$
is the flat abelian $\bfS$-deformation of
$\Qch(X)$
corresponding to
$[\phi]$
via the isomorphism
\pref{eq:HTab}.
\end{cor}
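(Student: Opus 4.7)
The plan is to chain two results already established in the excerpt: the preceding Proposition identifying $\Qch(X)_\phi$ with Toda's twisted category $\Qch(X, \phi)$, and Proposition \pref{prop:QchCYDef} identifying $\Qch(X, \phi)$ with $\Qch(X_\phi)$ under the Calabi--Yau hypothesis.

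First I would pin down what $X_\phi$ means. Since $X_0$ is a strict Calabi--Yau with $\dim X_0 > 2$, the assumptions $\omega_{X_0} \cong \scrO_{X_0}$ and $H^i(\scrO_{X_0}) = 0$ for $0 < i < \dim X_0$, together with Hodge symmetry, force $H^2(\scrO_{X_0}) = 0$ and $H^0(\wedge^2 \scrT_{X_0}) \cong H^{\dim X_0 - 2}(\scrO_{X_0}) = 0$. These vanishings persist relatively on the $\bfR$-deformation $X$ by cohomology and base change combined with Nakayama, so the HKR decomposition collapses to $HT^2(X/\bfR) = H^1(\scrT_{X/\bfR})$. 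Every Hochschild class $[\phi]$ is therefore represented by a decomposable cocycle whose only nonvanishing component is $\beta \in H^1(\scrT_{X/\bfR})^{\oplus l}$, and by \pref{prop:CLASSscheme} together with \pref{cor:Bridge31} this component determines a classical $\bfS$-deformation $(X_\phi, i_\bfS)$ of $(X, i_\bfR)$.

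Next I would apply the preceding Proposition to obtain an equivalence $\Qch(X)_\phi \simeq \Qch(X, \phi)$ of $\bfS$-linear Grothendieck abelian categories, where $\Qch(X, \phi)$ is Toda's twisted category constructed from any decomposable representative of $[\phi]$. Finally, Proposition \pref{prop:QchCYDef} tells us that in the strict Calabi--Yau case $\Qch(X, \phi) = \Qch(X_\phi)$, because the only surviving component of $\phi$ is $\beta$ and Toda's recipe then reduces to forming quasi-coherent sheaves on the classical deformation along $\beta$. Composing the two equivalences yields the claim.

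The delicate point is to check that the identification $HH^2(X/\bfR)^{\oplus l} \cong HH^2_{ab}(\Qch(X))^{\oplus l}$ used to define $\Qch(X)_\phi$ sends the same class $[\phi]$ to the cocycle in the Gerstenhaber--Schack complex whose HKR representative is the very $\beta$ used to build $X_\phi$. This is precisely the coherence recorded by the isomorphism chain \pref{eq:GSHT} together with \pref{eq:HTab}, which respects the Hodge decomposition by construction, so no extra calculation is required beyond invoking them in the right order.
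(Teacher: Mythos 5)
Your proposal is correct and follows essentially the same route as the paper: the corollary is obtained by composing the preceding Proposition ($\Qch(X)_\phi \simeq \Qch(X,\phi)$, itself a combination of \pref{lem:Bridge2} and \pref{lem:Bridge3}) with Proposition \pref{prop:QchCYDef}, after noting that the Calabi--Yau hypothesis collapses $HT^2(X/\bfR)$ to $H^1(\scrT_{X/\bfR})$. Your explicit justification of the vanishing of $H^2(\scrO_X)$ and $H^0(\wedge^2\scrT_{X/\bfR})$ via Hodge symmetry and base change is merely spelling out what the paper leaves implicit.
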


\begin{rmk}
Since we have
$HT^2(X_\phi/\bfS)
\cong
H^1(\scrT_{X_\phi}/\bfS)$
by Calabi--Yau condition
and
the finite affine open cover
$\frakU = \{ U_i \}^N_{i=1}$
of
$X$
closed under intersections canonically lifts to the locally trivial deformation
$\frakU \times_\bfR \bfS
=
\{ U_i \times_\bfR \bfS \}^N_{i=1}$,
one may iteratively use Corollary
\pref{cor:Intertwine1}
along a sequence of square zero extensions.  
\end{rmk}

\section{Deformations of the dg category of perfect complexes} \label{sec:dgDefPerf}
In this section,
we review the deformation theory of dg category following the exposition from
\cite{Low08}
and
\cite{KL}.
Based on the ideas thereof,
for a higher dimensional Calabi--Yau manifold
we prove the compatibility of deformations with taking the dg category of perfect complexes.
Namely,
the dg category of perfect complexes on a deformation
is Morita equivalent to
the corresponding dg deformation of a certain full dg category determined by the direction of the deformation.

\subsection{Curved $A_\infty$-categories}
In the sequel,
by a
\emph{quiver}
we will mean a $\bZ$-graded quiver. 
We choose shift functors
$\Sigma^k$
on the category
$G(\bfR)$
of $\bZ$-graded $\bfR$-modules.
Let
$\fraka$
be an $\bfR$-linear quiver.
Namely,
$\fraka$
consists of a set
$\Ob(\fraka)$
of objects
and
a $\bZ$-graded $\bfR$-module
$\fraka(A, A^\prime)$
for each pair
$A, A^\prime \in \Ob(\fraka)$.
The category of quivers with a fixed set of objects admits a tensor product
\begin{align*}
\fraka \otimes \frakb (A, A^\prime)
=
\bigoplus_{A^{\prime \prime}} \fraka (A^{\prime \prime}, A^{\prime}) \otimes_\bfR \frakb (A, A^{\prime \prime})
\end{align*}
and
an internal hom
\begin{align*}
[\fraka, \frakb] (A, A^\prime)
=
[\fraka (A, A^{\prime}), \frakb (A, A^{\prime})].
\end{align*}
Morphisms of degree
$k$
are elements of
$[\fraka, \frakb]^k = \prod_{A, A^\prime} [\fraka, \frakb] (A, A^\prime)^k$.

The
\emph{tensor cocategory}
$T(\fraka)$
of
$\fraka$
is the quiver
\begin{align*}
T(\fraka) = \bigoplus_{n \geq 0} \fraka^{\otimes n}
\end{align*}
equipped with the comultiplication
which separates tensors.
There is a natural brace algebra structure on
$[T(\fraka), \fraka] = \prod_{n \geq 0} [T(\fraka), \fraka]_n$,
where
\begin{align*}
[T(\fraka), \fraka]_n
=
[\fraka^{\otimes n}, \fraka]
=
\prod_{A_0, \ldots, A_n \in \fraka} [\fraka (A_{n-1}, A_n) \otimes_\bfR \cdots \otimes_\bfR \fraka (A_0, A_1), \fraka (A_0, A_n)].
\end{align*}
It is given by the operations
\begin{align*}
\begin{gathered}
{}[T(\fraka), \fraka]_n \otimes_\bfR [T(\fraka), \fraka]_{n_1} \otimes_\bfR \cdots \otimes_\bfR [T(\fraka), \fraka]_{n_i}
\to
[T(\fraka), \fraka]_{n-i+n_1+ \cdots + n_i}, \\
(\phi, \phi_1, \ldots, \phi_i) \mapsto \phi \{ \phi_1, \ldots, \phi_i \}
\end{gathered}
\end{align*}
with
\begin{align*}
\phi \{ \phi_1, \ldots, \phi_i \}
=
\sum \phi(1 \otimes \cdots \otimes \phi_1 \otimes 1 \otimes \cdots \otimes \phi_i \otimes 1 \otimes \cdots \otimes 1)
\end{align*}
satisfying
\begin{align*}
\phi \{ \phi_1, \ldots, \phi_i \} \{ \psi_1, \ldots, \psi_j \}
=
\sum (-1)^\alpha \phi \{\psi_1, \ldots, \phi_1 \{ \psi_{m_1}, \ldots  \}, \psi_{n_1}, \ldots, \phi_i \{ \psi_{m_i} \ldots \}, \psi_{n_i}, \ldots, \psi_j \},
\end{align*}
where
$\alpha = \sum^i_{k=1} |\phi_k| \sum^{m_k-1}_{l=1} |\psi_l|$.
We denote by
$B \fraka$
the Bar cocategory
$T(\Sigma \fraka)$
and
by
$\bfC_{br}(\fraka)$
the brace algebra
$[B \fraka, \Sigma \fraka]$.
The
\emph{associated Hochschild object}
is defined as
$\bfC(\fraka) = \Sigma^{-1} \bfC_{br}(\fraka)$.
By
\cite[Proposition 2.2]{Low08}
the tensor coalgebra
$T(\bfC_{br}(\fraka)) = B \bfC(\fraka)$
becomes a graded bialgebra with the associative multiplication defined by the composition.

A
\emph{curved $A_\infty$-structure}
on
$\fraka$
is an element
$b \in \bfC^1_{br}(\fraka)$
satisfying
$b \{ b \} = 0$.
The pair
$(\fraka, b)$
is called a
\emph{curved $A_\infty$-category}.
When the defining morphisms
$b_n \colon \Sigma \fraka^{\otimes n} \to \Sigma \fraka$
vanish for
$n \geq 3$,
we call 
$(\fraka, b)$
a
\emph{cdg category}.
The
\emph{curvature elements}
of
$(\fraka, b)$
is the morphism
$b_0$.
When it vanishes,
we drop ``curved'' and ``c'' from the notation.

\begin{dfn} {\rm{(}\cite[Definition 2.5]{Low08}\rm{)}}
For curved $A_\infty$-categories
$(\fraka, b), (\fraka^\prime, b^\prime)$
with
$\Ob(\fraka) = \Ob(\fraka^\prime)$
a
\emph{morphism}
is a fixed object morphism of quivers
$f \colon B \fraka \to B \fraka^\prime$,
which is determined by morphisms
$f_n \colon (\Sigma \fraka)^{\otimes n} \to \Sigma \fraka^\prime$
and
respects the comultiplications and the curved $A_\infty$-structures.
\end{dfn}

\subsection{Hochschild complexes of curved $A_\infty$-categories}
The
\emph{associated Lie bracket} 
with the brace algebra
$\bfC_{br}(\fraka)$
is defined as
\begin{align*}
\langle \phi, \psi \rangle
=
\phi \{ \psi \} - (-1)^{|\phi| |\psi|} \psi \{ \phi \}.
\end{align*}
Via an isomorphism
\begin{align*}
\bfC_{br}(\fraka) \cong \Coder(B\fraka, B \fraka) 
\end{align*}
of $\bZ$-graded $\bfR$-modules to coderivations between cocategories,
it corresponds to the commutator of coderivations.
For a curved $A_\infty$-structure
$b$
on
$\fraka$
the
\emph{Hochschild differential}
on
$\bfC_{br}(\fraka)$ 
is defined as
\begin{align*}
d_b
=
\langle b, - \rangle
\in
[\bfC_{br}(\fraka), \bfC_{br}(\fraka)]^1, \
\phi \mapsto \langle b, \phi \rangle.
\end{align*}
In particular,
$\bfC_{br}(\fraka)$
can be regarded as a dg Lie algebra.
Then
$\bfC(\fraka)$
is known to be isomorphic to the classical Hochschild complex of
$\fraka$,
whose definition we will review later.
Since
$b$
naturally belongs to
$B \bfC (\fraka)^1$,
it induces a differential
\begin{align*}
D_b
=
[b, - ]
\in
[B \bfC (\fraka), B \bfC (\fraka)]^1, \
\phi \mapsto [b, \phi],
\end{align*}
where
$[-, -]$
is the commutator of the multiplication given by
\cite[Proposition 2.2]{Low08}.
As
$D_b$
belongs to
$\Coder(B \bfC (\fraka), B \bfC (\fraka))$,
it defines a curved $A_\infty$- structure on
$\bfC(\fraka)$.
The differential
$D_b$
gives a dg bialgebra structure on
$B \bfC (\fraka)$
and
$\bfC(\fraka), \bfC_{br}(\fraka)$
become $B_\infty$-algebras
\cite[Definition 5.2]{GJ}.

We will use the same symbol
$\bfC(\fraka)$
to denote the bigraded object with
\begin{align*}
\bfC^{p, q}(\fraka)
=
\prod_{A_0, \ldots, A_q \in \fraka}[\fraka(A_{q-1}, A_q) \otimes_\bfR \cdots \otimes_\bfR \fraka(A_0, A_1), \fraka(A_0, A_q)]^p.
\end{align*}
An element
$\phi \in \bfC^{p,q}(\fraka)$
is said to have the
\emph{degree}
$|\phi| = p$,
the
\emph{arity}
$\AR(\phi) = q$,
and the
\emph{Hochschild degree}
$\deg(\phi) = n = p + q$.
The total complex of Hochschild degree
$n$
is defined as
$\bfC^n(\fraka) = \prod_{p+q=n}\bfC^{p,q}(\fraka)$.
Via the canonical isomorphisms
\begin{align*}
&\Sigma^{1-q} [\fraka(A_{q-1}, A_q) \otimes_\bfR \cdots \otimes_\bfR \fraka(A_0, A_1), \fraka(A_0, A_q)] \\
\to
[&\Sigma \fraka(A_{q-1}, A_q) \otimes_\bfR \cdots \otimes_\bfR \Sigma \fraka(A_0, A_1), \Sigma \fraka(A_0, A_q)],
\end{align*}
the $B_\infty$-structure on
$\bfC_{br}(\fraka)$
is translated in terms of
$\fraka$.
For instance,
the operation
\begin{align*}
\DOT
\colon
\bfC_{br}(\fraka)_q \otimes \bfC_{br}(\fraka)_s
\to
\bfC_{br}(\fraka)_{q+s-1}, \
(\phi, \psi)
\mapsto
\phi \{ \psi \}
\end{align*}
induces the classical ``dot product''
\begin{align*}
\bullet
\colon
\bfC^{p,q}(\fraka) \otimes \bfC^{r,s}(\fraka)
\to
\bfC^{p+r, q+s-1}(\fraka)
\end{align*}
on
$\bfC(\fraka)$
given by
\begin{align*}
\phi \bullet \psi
=
\sum^{q-1}_{k=0} (-1)^\beta \phi(1^{\otimes q-k-1} \otimes \psi \otimes 1^{\otimes k}),
\end{align*}
where
$\beta = (\deg(\phi) + k +1)(\AR(\psi) + 1)$.
We also call the bigraded object
$\bfC(\fraka)$
the Hochschild complex of
$\fraka$
and
its elements Hochschild cochains.
In the sequel,
curved $A_\infty$-structure on
$\fraka$
will often be translated into an element of
$\bfC^2(\fraka)$
without further comments.

\subsection{Curved dg category of precomplexes}
Let
$\fraka$
be an $\bfR$-linear category.
Consider the category
$\PCom (\fraka)$
of precomplexes of $\fraka$-objects.
A
\emph{precomplex}
of $\fraka$-objects is a $\bZ$-graded $\fraka$-objects
$C$
with
$C^i \in \fraka$
together with a
\emph{predifferential},
a $\bZ$-graded $\fraka$-morphism
$\delta_C \colon C \to C$
of degree 1.
For any
$C, D \in \PCom(\fraka)$
the Hom-set
$\PCom(\fraka)(C,D)$
is a $\bZ$-graded $\bfR$-module with
\begin{align*}
\PCom(\fraka)(C,D)^k = \prod_{i \in \bZ}\fraka(C^i ,D^{i + k}).
\end{align*}
The cdg structure
$\mu \in \bfC(\fraka)^2$
on
$\PCom(\fraka)$
consists of compositions
$m$,
differentials
$d$,
and
curvature elements
$c$,
where
\begin{align*}
\begin{gathered}
m
=
\mu_2 \in \prod_{C_0, C_1, C_2 \in \PCom(\fraka)}[\PCom(\fraka)(C_1, C_2) \otimes_\bfR \PCom(\fraka)(C_0, C_1), \PCom(\fraka)(C_0, C_2)]^0  ,\\
d
=
\mu_1 \in \prod_{C_0, C_1 \in \PCom(\fraka)}[\PCom(\fraka)(C_0, C_1), \PCom(\fraka)(C_0, C_1)]^1  ,\\
c
=
\mu_0 \in \prod_{C \in \PCom(\fraka)} \PCom(\fraka)(C, C)^2
\end{gathered}
\end{align*}
are given by
\begin{align*}
\begin{gathered}
m(g, f)_i = (gf)_i = g_{i + |f|}f_i \colon C^i_0 \to C^{i + |f| + |g|}_2, \\
d(f) = \delta_{C_1} f - (-1)^{|f|} f \delta_{C_0}, \\
c_C = - \delta^2_C
\end{gathered}
\end{align*}
for morphisms
$f \colon C_0 \to C_1, g \colon C_1 \to C_2$
in
$\PCom(\fraka)$.
One can check that
$m$,
$d$,
and
$c$
satisfy
\begin{align*}
\begin{gathered}
d(c) = 0, \\
d^2 = -m(c \otimes 1 - 1 \otimes c), \\
dm = m(d \otimes 1 + 1 \otimes d), \\
m(m \otimes 1) = m(1 \otimes m).
\end{gathered}
\end{align*}
We denote by
$\Com(\fraka)$
the full dg subcategoy of complexes of $\fraka$-objects,
where
$\delta_C$
become differentials.

Here,
we demonstrate
how the cdg structure is translated.
The differential
\begin{align*}
d_b
=
\langle b, - \rangle
=
\langle \Sigma c + d + \Sigma^{-1} m, - \rangle
\in
[\bfC_{br}(\PCom(\fraka)), \bfC_{br}(\PCom(\fraka))]^1
\end{align*}
on
$\bfC_{br}(\PCom(\fraka))$
sends
$\Sigma^{1-q} \phi \in \bfC_{br}(\PCom(\fraka))$
with
$\phi \in \bfC^{p,q}(\PCom(\fraka))$
to
\begin{align*}
\DOT(\Sigma c + d + \sigma^{-1} m, \Sigma^{1-q} \phi)
-
(-1)^{1-q+p}\DOT(\Sigma^{1-q} \phi, \Sigma c + d + \Sigma^{-1} m).
\end{align*}
In terms of
$\bfC(\fraka)$
the image corresponds to
$[c + d + 
m, \phi]$,
where
\begin{align*}
\begin{gathered}
{}[c, \phi]
=
\sum^{q-1}_{k=0}(-1)^{k+1} \phi(1^{\otimes q-k-1} \otimes c \otimes 1^{\otimes k}), \\
[d, \phi]
=
(-1)^{\AR(\phi) + 1} d(\phi)
+
\sum^{q-1}_{k=0}(-1)^{\deg(\phi)} \phi(1^{\otimes q-k-1} \otimes c \otimes 1^{\otimes k}), \\
[m, \phi]
=
m(\phi \otimes 1)
+
(-1)^{\AR(\phi) + 1} m(1 \otimes \phi)
+
\sum^{q-1}_{k=0}(-1)^{k+1} \phi(1^{\otimes q-k-1} \otimes c \otimes 1^{\otimes k}).
\end{gathered}
\end{align*}

\subsection{Curved dg deformations of dg categories}
Assume that
$\fraka$
is an $\bfR$-linear cdg category.
A
\emph{cdg $\bfS$-deformation}
of
$\fraka$
is an $\bfS$-linear cdg structure on an $\bfS$-linear quiver
$\frakb$
together with an isomorphism
$\frakb \to \fraka[\epsilon] = \fraka \otimes_\bfR \bfS$
of $\bfS$-linear quivers
whose reduction
$\frakb \otimes_\bfS \bfR \to \fraka$
induces an isomorphism of cdg categories.
Two cdg deformations
$\frakb, \frakb^\prime$
are
\emph{isomorphic}
if there is an isomorphism
$\frakb \to \frakb^\prime$
of cdg categories inducing the identity on
$\fraka$.
We denote by
$\Def^{cdg}_\fraka(\bfS)$
the set of isomorphism classes of cdg $\bfS$-deformations of
$\fraka$.

\begin{thm} {\rm{(}\cite[Theorem 4.11]{Low08}\rm{)}}
Let
$(\fraka, \mu)$
be an $\bfR$-linear cdg category.
Then there is a bijection
\begin{align} \label{eq:classifying}
H^2 \bfC(\fraka)^{\oplus l}
\to
\Def^{cdg}_\fraka(\bfS), \
\phi
\mapsto
(\fraka[\epsilon], \mu + \phi \epsilon), \
\phi \in Z^2 \bfC(\fraka)^{\oplus l}.
\end{align}
Another cocycle
$\phi^\prime \in Z^2 \bfC(\fraka)^{\oplus l}$
maps to an isomorphic cdg deformation
if and only if
there is an element
$h \in \bfC^1(\fraka)$
satisfying
$\phi^\prime - \phi = d_\mu (h)$.
\end{thm}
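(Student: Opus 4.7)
The plan is to use the dictionary between curved $A_\infty$-structures and Maurer--Cartan elements in the DG Lie algebra underlying $\bfC_{br}(\fraka)$, then exploit the square-zero condition $\bfI^2 = 0$ to linearize both the structure equation and the isomorphism equation modulo $\epsilon^2$. The bijection is then obtained by identifying the two sides of the classical ``obstruction lies in $H^2$, automorphisms live in $H^1$'' pattern, translated to the cdg setting.

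First I would rigidify a cdg $\bfS$-deformation. Any $(\frakb, \bar\mu)$ can be transported along the quiver isomorphism $\frakb \cong \fraka[\epsilon]$ built into the definition to a cdg structure on $\fraka[\epsilon]$ whose reduction modulo $\epsilon$ equals $\mu$. Writing this structure as $\bar\mu = \mu + \phi\epsilon$ with $\phi \in \bfC^2(\fraka)^{\oplus l}$, the defining equation of a curved $A_\infty$-structure $\bar b\{\bar b\} = 0$ in $\bfC_{br}(\fraka[\epsilon])$ expands, using $\bfR$-bilinearity of braces, into
\begin{align*}
b\{b\} + \bigl(b\{\phi\} + \phi\{b\}\bigr)\epsilon + \phi\{\phi\}\,\epsilon^{2} = 0.
\end{align*}
The first summand vanishes because $\mu$ is already cdg, the last summand is killed by $\epsilon_i\epsilon_j = 0$, and the middle summand is the Gerstenhaber bracket $[b,\phi]$, which under the shift $\bfC = \Sigma^{-1}\bfC_{br}$ translates to the classical Hochschild differential $d_\mu(\phi)$. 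Hence $\bar\mu$ is cdg if and only if $\phi \in Z^2\bfC(\fraka)^{\oplus l}$, which defines the assignment $\phi \mapsto (\fraka[\epsilon], \mu+\phi\epsilon)$ and, combined with the rigidification argument, gives surjectivity.

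For the equivalence relation I would argue that an isomorphism $f\colon(\fraka[\epsilon], \mu+\phi\epsilon) \to (\fraka[\epsilon], \mu+\phi'\epsilon)$ of cdg categories inducing the identity on $\fraka$ consists of a degree-zero $\bfS$-linear quiver automorphism $f_1 = \id + h^{0,1}\epsilon$ together with a curvature-shifting term $f_0 = h^{1,0}\epsilon$, i.e., an element $h = h^{1,0} + h^{0,1} \in \bfC^1(\fraka)^{\oplus l}$. The condition that $f$ intertwines the two cdg structures, written as the equality of coderivations on $B(\fraka[\epsilon])$, reduces modulo $\epsilon^2$ to
\begin{align*}
\phi' - \phi = [b,h] = d_\mu(h).
\end{align*}
Conversely, any $h$ realizing such a coboundary produces an isomorphism by the same formula, so the assignment descends to a well-defined injective map $H^2\bfC(\fraka)^{\oplus l} \to \Def^{cdg}_\fraka(\bfS)$.

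The main technical obstacle is bookkeeping the sign conventions and the grading shift between $\bfC_{br}$ (where the brace algebra and Maurer--Cartan equation naturally live) and $\bfC$ (where the cochain description used in the statement lives), and verifying that the $(1,0)$ and $(0,1)$ bigraded pieces of $h \in \bfC^1(\fraka)$ indeed exhaust the tangent directions to the group of strict cdg automorphisms of $\fraka[\epsilon]$ reducing to the identity on $\fraka$. Once the signs and shifts are fixed, the whole argument is a straightforward linearization of the Maurer--Cartan formalism along the square-zero extension $\bfI \to \bfS \to \bfR$, yielding the bijection claimed in the theorem.
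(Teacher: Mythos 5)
The paper offers no proof of this statement at all --- it is quoted directly from \cite[Theorem 4.11]{Low08} --- and your argument (expanding $\bar b\{\bar b\}=0$ over the square-zero ideal so that the $\epsilon^{2}$-term dies and the $\epsilon$-linear term becomes $\langle b,\phi\rangle=d_\mu(\phi)=0$, then linearizing strict isomorphisms $\id+h\epsilon$ to the coboundary condition $\phi'-\phi=d_\mu(h)$) is precisely the standard proof of the cited result, so the proposal is correct and takes the same route. The one issue you flag but leave open --- matching the bigraded pieces of $h\in\bfC^{1}(\fraka)$ (and, dually, the arity $\geq 3$ components that a general $\phi\in Z^{2}\bfC(\fraka)^{\oplus l}$ may carry, which a priori produce curved $A_\infty$ rather than cdg deformations) against the strict cdg automorphisms and structures --- is indeed where the residual care lies, and it is resolved in Lowen's treatment exactly along the lines you anticipate.
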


A
\emph{partial cdg $\bfS$-deformation}
of
$\fraka$
is a cdg $\bfS$-deformation of some full cdg subcategory
$\fraka^\prime$.
Two partial cdg deformations
$\frakb, \frakb^\prime$
are
\emph{isomorphic}
if there is an isomorphism
$\frakb \to \frakb^\prime$
of cdg categories inducing the identity on
$\fraka^\prime$.
A
\emph{morphism}
of partial cdg deformations
$\frakb, \frakb^\prime$
is an isomorphism of cdg deformations between
$\frakb$
and
a full cdg subcategory of
$\frakb^\prime$.
When every morphism of
$\frakb \to \frakb^\prime$
of partial cdg deformations is an isomorphism,
we call
$\frakb$
\emph{maximal}.
We denote by
$\Def^{p-cdg}_\fraka(\bfS)$
the set of morphism classes of partial cdg $\bfS$-deformations of
$\fraka$
and
by
$\Def^{mp-cdg}_\fraka(\bfS)$
the set of isomorphism classes of maximal partial cdg $\bfS$-deformations of
$\fraka$.

Assume further that
$\fraka$
is a dg category.
For
$\phi \in Z^2 \bfC(\fraka)$
the
\emph{$[\phi]-\infty$ part}
of
$\fraka$
is the full dg subcategory
$\fraka_{[\phi]-\infty}$
spanned by objects
$A \in \fraka$
satisfying
\begin{align*}
H^2(\pi_0)([\phi])_A = 0 \in H^2( \fraka(A,A) ),
\end{align*}
where
$\pi_0 \colon \bfC(\fraka) \to \bfC(\fraka)_0$
is the projection onto the
\emph{zero part}
\begin{align*}
\bfC(\fraka)_0
=
\Sigma^{-1}[T(\Sigma \fraka), \Sigma \fraka]_0
=
\prod_{A \in \fraka} \fraka(A,A).
\end{align*}
For a cdg $\bfS$-deformation
$\frakb = (\bar{\fraka}, \mu + \phi \epsilon)$
of
$\fraka$, 
the
\emph{$\infty$-part}
$\frakb_\infty$
is the full cdg subcategory
spanned by objects
$B \in \frakb$
satisfying
\begin{align*}
(\mu + \phi \epsilon)_{0,B} = 0 \in \frakb(B,B)^2.
\end{align*}
It is a partial dg deformation of
$\fraka$
and
a dg deformation of
$\fraka_{[\phi] - \infty}$.
More explicitly,
if we restrict
$\phi$
to
$\fraka_{[\phi]-\infty}$,
then 
$\phi_0$
becomes a coboundary
and
there is an element
\begin{align*}
h
\in
\prod_{A \in \fraka_{[\phi]-\infty}} \fraka(A, A)^1
\subset
\bfC^1(\fraka)
\end{align*}
with
$d_\mu(h) = (\phi|_{\fraka_{[\phi]-\infty}})_0$.
Thus
the cocycle
$\phi|_{\fraka_{[\phi]-\infty}} - d_\mu (h)$
has trivial curvature elements.

\begin{prop} {\rm{(}\cite[Proposition 4.14]{Low08}\rm{)}} 
Let
$(\fraka, \mu)$
be an $\bfR$-linear dg category.
Then there is a map
\begin{align} \label{eq:Hoch-pdg}
H^2 \bfC(\fraka)^{\oplus l}
\to
\Def^{p-dg}_\fraka(\bfS), \ 
\phi
\mapsto
(\fraka_{[\phi]-\infty}[\epsilon], \mu + (\phi|_{\fraka_{[\phi]-\infty}} - d_\mu (h)) \epsilon).
\end{align}
\end{prop}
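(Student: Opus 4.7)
The plan is to show that the rule given by the formula is well defined on cohomology classes with values in morphism classes of partial cdg $\bfS$-deformations, and that its image actually lies in the partial \emph{dg} deformations.

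First, I would unfold the definitions to justify the construction on a fixed cocycle representative $\phi \in Z^2 \bfC(\fraka)^{\oplus l}$. The full dg subcategory $\fraka_{[\phi]-\infty}$ depends only on the cohomology class, since the projection onto the zero part $\pi_0 \colon \bfC(\fraka) \to \bfC(\fraka)_0$ is a chain map and the defining condition $H^2(\pi_0)([\phi])_A = 0$ is a statement in cohomology. Restricting $\phi$ to $\fraka_{[\phi]-\infty}$ yields a cocycle whose curvature part $(\phi|_{\fraka_{[\phi]-\infty}})_0 \in \prod_{A} \fraka(A,A)^2$ is, object by object, a coboundary in $\fraka(A,A)$; patching these local primitives together produces $h \in \bfC^1(\fraka_{[\phi]-\infty})^{\oplus l}$ with $d_\mu(h) = (\phi|_{\fraka_{[\phi]-\infty}})_0$. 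Consequently $\phi|_{\fraka_{[\phi]-\infty}} - d_\mu(h)$ is a degree-two cocycle on $\fraka_{[\phi]-\infty}$ with vanishing zero-arity part.

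Next I would apply \pref{eq:classifying} to $\fraka_{[\phi]-\infty}$: the modified cocycle defines a cdg $\bfS$-deformation whose curvature vanishes identically, hence a genuine dg deformation of $\fraka_{[\phi]-\infty}$. Viewed through the inclusion $\fraka_{[\phi]-\infty} \hookrightarrow \fraka$, it is a partial dg $\bfS$-deformation of $\fraka$ and thus represents a class in $\Def^{p-dg}_\fraka(\bfS)$.

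It then remains to verify that the resulting morphism class is independent of the auxiliary choices. If $h'$ is another lift of $(\phi|_{\fraka_{[\phi]-\infty}})_0$, then $d_\mu(h-h')=0$ and $(\phi|_{\fraka_{[\phi]-\infty}} - d_\mu(h')) - (\phi|_{\fraka_{[\phi]-\infty}} - d_\mu(h)) = d_\mu(h-h')$ is a coboundary on $\fraka_{[\phi]-\infty}$, so \pref{eq:classifying} furnishes an isomorphism of cdg deformations over the same subcategory, which is in particular a morphism of partial cdg deformations. If $\phi' = \phi + d_\mu(g)$ for $g \in \bfC^1(\fraka)^{\oplus l}$, then $\fraka_{[\phi']-\infty} = \fraka_{[\phi]-\infty}$ and $(\phi'|_{\fraka_{[\phi]-\infty}})_0 = (\phi|_{\fraka_{[\phi]-\infty}})_0 + d_\mu(g|_{\fraka_{[\phi]-\infty}})_0$, so an admissible lift is $h' = h + (g|_{\fraka_{[\phi]-\infty}})_0$, and the two modified cocycles again differ by a coboundary in $\bfC^1(\fraka_{[\phi]-\infty})^{\oplus l}$.

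The main obstacle in the plan is the bookkeeping in the last step: one must verify that the ambient coboundary witnessing $[\phi] = [\phi']$ in $\bfC(\fraka)$ restricts compatibly with the curvature-killing $h, h'$ so that the difference of outputs is a coboundary in the restricted Hochschild complex, not merely in the ambient one. This is where the intrinsic definition of $\fraka_{[\phi]-\infty}$ in terms of the cohomology class is essential, because it guarantees that restriction commutes with $d_\mu$ and that $h$ can always be chosen inside $\bfC^1(\fraka_{[\phi]-\infty})^{\oplus l}$. Once this compatibility is recorded, the remaining verifications are a direct invocation of \pref{eq:classifying} applied to $\fraka_{[\phi]-\infty}$.
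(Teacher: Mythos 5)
Your proposal is correct and follows essentially the same route as the paper, which does not give a separate proof but explains exactly this construction in the paragraph preceding the statement (citing Lowen): restrict $\phi$ to the $[\phi]$-$\infty$ part, kill the curvature component by subtracting $d_\mu(h)$ for $h \in \prod_{A \in \fraka_{[\phi]-\infty}} \fraka(A,A)^1$, and invoke the classification \pref{eq:classifying} on the subcategory. Your additional well-definedness checks (independence of $h$ and of the cocycle representative, using that restriction to a full dg subcategory commutes with $d_\mu$) are the right ones and complete the argument.
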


\subsection{The characteristic morphism}
Let
$\fraki$
be an $\bfR$-linear category.
The canonical projection
\begin{align*} 
\pi
\colon
\bfC (\PCom(\fraki))
\to
\bfC (\fraki)
\end{align*}
has a $B_\infty$-section
\begin{align} \label{eq:embr}
\embr_\delta
\colon
\bfC(\fraki)
\to
\bfC(\PCom(\fraki)),
\end{align}
whose restriction to the full dg subcategory
$\Com^+(\fraki)$
of bounded below complexes of $\fraki$-objects
is an inverse in the homotopy category
$\Ho(B_\infty)$
of $B_\infty$-algebras
\cite[Theorem 3.21]{Low08}.
Consider the projection onto the zero part
\begin{align} \label{eq:proj-0}
\pi_0
\colon
\bfC (\Com(\fraki))
\to
\bfC (\Com(\fraki))_0
=
\prod_{C \in \Com(\frak i)} \Com(\fraki)(C,C).
\end{align}
Since
$\Com(\fraki)$
is uncurved,
$\pi_0$
induces a morphism of dg algebras 
\cite[Proposition 2.7]{Low08}.
Composing 
\pref{eq:embr}
and
\pref{eq:proj-0},
one obtains the
\emph{characteristic dg morphism}
\begin{align*}
\bfC (\fraki)
\to
\prod_{C \in \Com(\fraki)} \Com(\fraki)(C, C).
\end{align*}
On cohomology it induces the
\emph{characteristic morphism}
\begin{align*}
\chi_\fraki
\colon
H^\bullet \bfC(\fraki)
\to
\frakZ^\bullet K(\fraki)
\end{align*}
for a linear category
$\fraki$.
Here,
$\frakZ^\bullet K(\fraki)$
is the
\emph{graded center}
of
$K(\fraki)$,
i.e.,
the center
\begin{align*}
\frakZ (\Com (\fraki))
=
\Hom(1_{\Com (\fraki)}, 1_{\Com (\fraki)})
\end{align*}
of the graded category
$\Com (\fraki)$,
where
$\Hom$
denotes the graded $\bfR$-module of graded natural transformations
\cite[Remark 4.6]{Low08}.
The characteristic morphism can be interpreted
in terms of deformations of categories.

\begin{thm} {\rm{(}\cite[Theorem 4.8]{Low08}\rm{)}} \label{thm:obst} 
Let
$\fraki$
be an $\bfR$-linear category
and
$\fraki_\phi$
its $\bfS$-deformation along
$\phi \in Z^2 \bfC (\fraki)^{\oplus l}$.
Then
for each
$C \in K (\fraki)$
the element
$\chi^{\oplus l}_\fraki (\phi)_C \in K(\fraki)(C,C)[2]^{\oplus l}$
is the obstruction against deforming
$C$
to an object of
$K(\fraki_\phi)$.
\end{thm}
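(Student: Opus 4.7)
The plan is to unpack the deformation-theoretic obstruction explicitly, and then identify it with the characteristic morphism by tracing through the definition of $\embr_\delta$ followed by projection to the zero part.

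First, I would set up what a lift means in concrete terms. A complex $C = (C^i, d^i)_{i\in\bZ}$ in $\Com(\fraki)$ has the same underlying objects in $\fraki_\phi$ since $\Ob(\fraki_\phi)=\Ob(\fraki)$. A lift to $\Com(\fraki_\phi)$ therefore amounts to choosing, for each $i$, a cochain $\tilde d^i = d^i + \delta^i \epsilon$ with $\delta^i \in \fraki(C^i, C^{i+1})^{\oplus l}$ such that $\tilde d \circ_\phi \tilde d = 0$, where $\circ_\phi$ is the deformed composition $m + \phi\epsilon$. Expanding this equation modulo the relation $\epsilon_j\epsilon_k = 0$ coming from $\bfI^2=0$ gives, component by component,
\begin{equation*}
d^{i+1}\delta^i + \delta^{i+1} d^i + \phi(d^{i+1}, d^i) = 0 \quad \text{in } \fraki(C^i, C^{i+2})^{\oplus l}.
\end{equation*}

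Next I would interpret this system cohomologically. Viewed as an element of the Hom complex $\Hom^\bullet(C,C)^{\oplus l}$, the cochain $\eta_C(\phi) := \bigl(\phi(d^{i+1},d^i)\bigr)_i$ sits in degree $2$, and a lift exists if and only if $\eta_C(\phi)$ is a coboundary of $(\delta^i)_i$ under the differential $[d,-]$ on $\Hom^\bullet(C,C)$. A routine computation (using $d^2=0$ and the Hochschild cocycle condition $d^2_{Hoch}\phi = 0$) confirms that $\eta_C(\phi)$ is already a $2$-cocycle, so this produces a well-defined class $[\eta_C(\phi)] \in H^2 \Hom^\bullet(C,C)^{\oplus l} = K(\fraki)(C,C)[2]^{\oplus l}$ whose vanishing is equivalent to the existence of a lift.

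The remaining step, and the main obstacle, is to identify $[\eta_C(\phi)]$ with the characteristic class $\chi^{\oplus l}_\fraki(\phi)_C$. Here I would use the explicit formula for the $B_\infty$-section $\embr_\delta$ from \cite[Theorem 3.21]{Low08}: at the level of Hochschild cochains, $\embr_\delta$ sends a cochain $\phi \in \bfC^2(\fraki)$ to a cochain on $\PCom(\fraki)$ whose action on a string of precomplex morphisms is obtained by inserting the predifferentials $\delta$ into the empty tensor slots. Composing with the projection $\pi_0$ to the zero part $\prod_C \Com(\fraki)(C,C)$, and restricting to a complex $C$ where the predifferentials are genuine differentials, the only contribution that survives is the term that absorbs two copies of $d$, producing precisely the cocycle $\phi(d,d) \in \Com(\fraki)(C,C)^2$. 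This identifies the characteristic dg morphism evaluated at $C$ with $\eta_C(\phi)$ on the nose, and hence $\chi^{\oplus l}_\fraki(\phi)_C = [\eta_C(\phi)]$ after passing to cohomology.

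Finally, I would check compatibility with homotopies and the free choice of $\delta^\bullet$: two lifts differ by a degree $1$ endomorphism of $C$, and the ambiguity in choosing the cochains representing $\phi$ (up to $d_m(h)$ for $h \in \bfC^1(\fraki)$) changes $\eta_C(\phi)$ by a coboundary, so everything is well-defined on $H^2\bfC(\fraki)^{\oplus l}$ and on $K(\fraki)(C,C)[2]^{\oplus l}$. The combinatorial description of $\embr_\delta$ is the delicate point because one must verify that the higher-order terms arising from its $B_\infty$-nature either vanish or contribute only coboundaries when pulled back to the zero part on an honest complex; once that is in place, the equivalence of lifting and vanishing of $\chi^{\oplus l}_\fraki(\phi)_C$ is immediate from the preceding steps.
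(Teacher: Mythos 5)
Your argument is correct and reconstructs the standard proof of this result: the paper itself gives no proof, importing the statement directly from \cite[Theorem 4.8]{Low08}, and your computation --- identifying the curvature/zero-part component of $\embr_\delta(\phi)$ on a complex $C$ with the cocycle $\phi(d,d)\in\Hom^2(C,C)^{\oplus l}$ and matching its vanishing up to the coboundary $[d,\delta]$ with the existence of a deformed differential $d+\delta\epsilon$ --- is exactly the mechanism behind Lowen's proof and behind the paper's later remark that $\chi^{\oplus l}_{\fraki}(\phi)_C=0$ iff the curvature element $(\embr_\delta(\mu+\phi\epsilon))_{0,C}$ vanishes. The only point worth making explicit is that $\phi(d,d)$ defines an element of the graded center (it commutes with chain maps up to homotopy, by two further applications of the cocycle identity), which is what lets you pass from strict lifts in $\Com(\fraki_\phi)$ to the statement about objects of $K(\fraki_\phi)$ whose reduction is merely homotopy equivalent to $C$.
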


Let
$\scrC$
be an $\bfR$-linear abelian category with enough injectives.
Assume that
$\fraki$
is the subcategory
$\Inj(\scrC)$
of injective objects.
Taking cohomology of
\pref{eq:proj-0}
restricted to
$\Com^+(\fraki)$
and
composing with the isomorphism
$HH^\bullet_{ab}(\scrC)
\cong
H \bfC^\bullet (\Com^+(\fraki))$
induced by
\pref{eq:embr},
one obtains the
\emph{characteristic morphism}
\begin{align*}
\chi_\scrC
\colon
HH^\bullet_{ab}(\scrC)
\to
\frakZ^\bullet (D^+(\scrC))
\end{align*}
for an abelian category
$\scrC$.
Here,
we use the isomorphism
$HH^\bullet_{ab}(\scrC) \cong H\bfC^\bullet (\fraki)$
obtained from
\cite[Theorem 6.6]{LV06}.
Note that
the graded center
$\frakZ^\bullet (D^+(\scrC))$
of
$D^+(\scrC) \simeq K^+(\fraki)$
is given by the center
$\frakZ(\Com^+(\fraki))$.

\begin{cor} {\rm{(}\cite[Corollary 4.9]{Low08}\rm{)}} \label{cor:obst} 
Let
$\scrC$
be an $\bfR$-linear abelian category with enough injectives
and
$\scrC_\phi$
its abelian $\bfS$-deformation along a cocycle
$\phi \in HH^2_{ab}(\scrC)^{\oplus l}$.
Then
for each
$C \in D^+(\scrC)$
the element
$\chi^{\oplus l}_\scrC (\phi)_C \in \Ext^2_\scrC(C,C)^{\oplus l}$
is the obstruction against deforming
$C$
to an object of
$D^+(\scrC_\phi)$.
\end{cor}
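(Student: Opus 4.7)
The plan is to reduce Corollary \pref{cor:obst} to its linear analogue \pref{thm:obst} via the correspondence between abelian deformations of $\scrC$ and linear deformations of $\fraki = \Inj(\scrC)$. First, I would invoke \pref{lem:linab2} to produce a linear $\bfS$-deformation $\fraki_\phi$ of $\fraki$ with $\scrC_\phi \simeq (\mmod(\fraki_\phi))^{op}$. Under this equivalence, injective objects of $\scrC_\phi$ are identified with $\fraki_\phi$-objects, extending the classical equivalence $D^+(\scrC) \simeq K^+(\fraki)$ to an equivalence $D^+(\scrC_\phi) \simeq K^+(\fraki_\phi)$. Deforming $C \in D^+(\scrC)$ to an object of $D^+(\scrC_\phi)$ therefore amounts to deforming an injective resolution of $C$, viewed in $K^+(\fraki)$, to an object of $K^+(\fraki_\phi)$.

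Second, I would identify $\chi_\scrC$ with the restriction of $\chi_\fraki$ to $K^+(\fraki)$. By definition, $\chi_\scrC$ is the composition of the $B_\infty$-section $\embr_\delta$ of \pref{eq:embr} (restricted to $\Com^+(\fraki)$) with the projection onto the zero part \pref{eq:proj-0}, followed by cohomology, and then transported across the isomorphism $HH^\bullet_{ab}(\scrC) \cong H^\bullet \bfC(\fraki)$ of \cite[Theorem 6.6]{LV06} together with $\frakZ^\bullet(D^+(\scrC)) = \frakZ(\Com^+(\fraki))$. This is, tautologically, the same recipe that defines $\chi_\fraki$ on bounded-below complexes. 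Hence for $[\phi] \in HH^2_{ab}(\scrC)^{\oplus l}$ and $C \in D^+(\scrC) \simeq K^+(\fraki)$, one has the equality $\chi^{\oplus l}_\scrC([\phi])_C = \chi^{\oplus l}_\fraki([\phi])_C$ in $\Ext^2_\scrC(C,C)^{\oplus l}$.

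Third, I would apply \pref{thm:obst} to $\fraki$ and $\fraki_\phi$: for each $C \in K^+(\fraki)$ the element $\chi^{\oplus l}_\fraki([\phi])_C$ is the obstruction against lifting $C$ to an object of $K^+(\fraki_\phi)$. Combining this with the two identifications $D^+(\scrC_\phi) \simeq K^+(\fraki_\phi)$ and $\chi^{\oplus l}_\scrC = \chi^{\oplus l}_\fraki$ yields the desired statement for $\scrC_\phi$.

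The main obstacle is the compatibility check in the second step, namely that the deformation $\fraki_\phi$ supplied by \pref{lem:linab2} matches the linear deformation classified by the image of $[\phi]$ under the isomorphism $HH^2_{ab}(\scrC)^{\oplus l} \cong H^2 \bfC(\fraki)^{\oplus l}$, and that $\chi_\scrC$ genuinely computes obstructions through this image. This amounts to tracking the $B_\infty$-quasi-isomorphism between the Shukla complex $\bfC_{sh}(\Ind(\fraki))$ (used to define $HH^\bullet_{ab}(\scrC)$) and $\bfC(\fraki)$ through the classification bijections of \pref{lem:CLASSlin} and \pref{lem:CLASSab}. Once this naturality is established, the rest of the proof is purely formal.
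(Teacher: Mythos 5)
The paper does not prove this statement; it is quoted as \cite[Corollary 4.9]{Low08} without argument. Your reduction --- passing from the abelian deformation $\scrC_\phi$ to the linear deformation $\fraki_\phi$ of $\fraki=\Inj(\scrC)$ via \pref{lem:linab2}, identifying $\chi_\scrC$ with $\chi_\fraki$ under $D^+(\scrC)\simeq K^+(\fraki)$ and $D^+(\scrC_\phi)\simeq K^+(\fraki_\phi)$, and then invoking \pref{thm:obst} --- is exactly how the cited source derives the corollary from its linear analogue, and the compatibility you flag (that the classification bijections of \pref{lem:CLASSlin} and \pref{lem:CLASSab} intertwine the isomorphism $HH^2_{ab}(\scrC)\cong H^2\bfC(\fraki)$) is indeed the one substantive point that must be checked.
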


\subsection{Maximal partial dg deformations of the dg category of bounded below complexes}
Consider the map
\begin{align} \label{eq:rho}
\rho^\prime
\colon
H^2 \bfC(\fraki)^{\oplus l}
\to
\Def^{p-dg}_{\Com^+(\fraki)}(\bfS), \
\phi
\mapsto
(\Com^+(\fraki)_{\embr_\delta(\phi)})_\infty
\end{align}
obtained from
\pref{eq:Hoch-pdg}
and
\pref{eq:embr}.
The partial dg deformation
$(\Com^+(\fraki)_{\embr_\delta(\phi)})_\infty$
of
$\Com^+(\fraka)$
coincides with a dg deformation
$(\Com^+(\fraki)_{[\embr_\delta(\phi)]-\infty})_{\embr_\delta(\phi)}$
of
$(\Com^+(\fraki)_{[\embr_\delta(\phi)]-\infty})$,
where
the cdg structure
$\embr_\delta(\phi)$
restricted to the
$[\embr_\delta(\phi)]-\infty$
part.

\begin{thm} {\rm{(}\cite[Theorem 4.15(iii)]{Low08}\rm{)}} \label{thm:lift} 
Let
$\fraki$
be an $\bfR$-linear category
and
$\fraki_\phi$
its $\bfS$-deformation along
$\phi \in Z^2 \bfC (\fraki)^{\oplus l}$.
Then,
for every collection of complexes
$\Gamma = \{ \bar{C} \}_{C \in \fraki_{[\phi]-\infty}}$
with
$\bar{C} \otimes_\bfS \bfR = C$,
the full dg subcategory
$\Com^+_\Gamma (\fraki_\phi)
\subset
\Com^+ (\fraki_\phi)$
spanned by
$\Gamma$
is a maximal partial dg $\bfS$-deformation of
$\Com^+(\fraki)$
representing
$\rho^\prime(\phi)$.
\end{thm}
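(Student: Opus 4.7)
The plan is to verify three things in order: that $\Com^+_\Gamma(\fraki_\phi)$ is a flat partial dg $\bfS$-deformation of $\Com^+(\fraki)$, that its class in $\Def^{p-dg}_{\Com^+(\fraki)}(\bfS)$ equals $\rho^\prime(\phi)$, and that it is maximal. I would begin with the reduction modulo $\bfI$. Since $\fraki_\phi$ is a flat $\bfS$-deformation of $\fraki$, each Hom-set $\fraki_\phi(\bar C^i, \bar D^j)$ is flat, and so each Hom-complex $\Com^+(\fraki_\phi)(\bar C, \bar D) = \prod_i \fraki_\phi(\bar C^i, \bar D^{i+\bullet})$ is flat over $\bfS$; hence so is $\Com^+_\Gamma(\fraki_\phi)$. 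By construction the reduction $\Com^+_\Gamma(\fraki_\phi) \otimes_\bfS \bfR$ is the full dg subcategory of $\Com^+(\fraki)$ spanned by the underlying complexes $C = \bar C \otimes_\bfS \bfR$. By \pref{thm:obst} applied (in its cdg analogue) to $\Com^+(\fraki)$ with the curved dg deformation $\Com^+(\fraki)_{\embr_\delta(\phi)}$, a complex $C$ lifts to $\Com^+(\fraki_\phi)$ precisely when the curvature element $(\mu_{\Com^+(\fraki)} + \embr_\delta(\phi)\epsilon)_{0,C}$ vanishes, i.e.\ when $C$ belongs to the $[\embr_\delta(\phi)]$-$\infty$ part. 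Thus the reduction is precisely this full dg subcategory, and $\Com^+_\Gamma(\fraki_\phi)$ is a partial dg $\bfS$-deformation of $\Com^+(\fraki)$.

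Next, to match the class of $\Com^+_\Gamma(\fraki_\phi)$ with $\rho^\prime(\phi)$, I would compare dg structures via the classifying bijection \pref{eq:classifying}. On the side of $\rho^\prime(\phi) = (\Com^+(\fraki)_{\embr_\delta(\phi)})_\infty$, the dg structure is the perturbation $\mu_{\Com^+(\fraki)} + \embr_\delta(\phi)\epsilon$ restricted to the locus of vanishing curvature; after subtracting a coboundary $d_\mu(h)$ this becomes an uncurved cocycle producing an honest dg deformation. On the side of $\Com^+_\Gamma(\fraki_\phi)$, the dg structure is the one inherited from $\Com^+(\fraki_\phi)$, namely the differential $f \mapsto \delta_{\bar D} f - (-1)^{|f|} f\delta_{\bar C}$ together with composition in $\fraki_\phi$. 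Writing each lift $\bar C = (C[\epsilon], \delta_C + h_C\epsilon)$ for some $h_C \in \Com^+(\fraki)(C,C)^1$, the Hom-differential acquires a first-order $\epsilon$-correction determined by the $h_C$; tracing the brace-algebra formula for the $B_\infty$-section $\embr_\delta$ shows that this correction realizes exactly the trivialization of the curvature component of $\embr_\delta(\phi)$ used in \pref{eq:Hoch-pdg}. This supplies an isomorphism of dg $\bfS$-deformations between $\Com^+_\Gamma(\fraki_\phi)$ and $(\Com^+(\fraki)_{\embr_\delta(\phi)})_\infty$.

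Finally, maximality follows from the obstruction principle once more: if $\frakb \to \frakb^\prime$ is a morphism of partial dg $\bfS$-deformations with source $\Com^+_\Gamma(\fraki_\phi)$, then any object $D \in \frakb^\prime$ has vanishing curvature (as $\frakb^\prime$ is uncurved), so its reduction $D_0 \in \Com^+(\fraki)$ satisfies $\chi^{\oplus l}_{\Com^+(\fraki)}(\embr_\delta(\phi))_{D_0} = 0$ and thus lies in $\Com^+(\fraki)_{[\embr_\delta(\phi)]-\infty}$. Hence $D$ is isomorphic to a lift $\bar D_0 \in \Gamma$ already in $\Com^+_\Gamma(\fraki_\phi)$, so the embedding is an equivalence. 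The main obstacle is the middle step: one must unwind how $\embr_\delta$ translates a Hochschild cocycle on $\fraki$ into a cdg structure on $\Com^+(\fraki)$, and exhibit the chosen predifferentials $\delta_{\bar C}$ as precisely the data that trivializes the curvature. This is the heart of the argument and mirrors the strategy used in the proof of \cite[Theorem 4.15]{Low08}.
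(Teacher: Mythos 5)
The paper does not actually prove this statement: it is quoted from \cite[Theorem 4.15(iii)]{Low08}, and the only trace of its proof in the present text is the identity
$\embr_{\delta + \delta^\prime \epsilon}(m + \phi \epsilon) = \embr_\delta(m) + \embr_\delta(\phi)\epsilon + d_{\embr_\delta(m)}(\delta^\prime_1)\epsilon_1 + \cdots + d_{\embr_\delta(m)}(\delta^\prime_l)\epsilon_l$
invoked in the proof of \pref{thm:Intertwine2} with the phrase ``the same argument as in the proof of \cite[Theorem 4.15]{Low08}''. Your reconstruction follows exactly that strategy: identify the reduction of $\Com^+_\Gamma(\fraki_\phi)$ with the $[\embr_\delta(\phi)]$-$\infty$ part via the obstruction-theoretic reading of the curvature element (\pref{thm:obst}), then show that the predifferentials $\delta + \delta^\prime\epsilon$ of the chosen lifts produce precisely the coboundary correction that trivializes the curvature component of $\embr_\delta(\phi)$, matching the normalization in \pref{eq:Hoch-pdg}. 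The first two steps are the right argument, with the middle computation correctly identified as the crux (though left as a sketch, which is where essentially all the work lies).

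The maximality step has a genuine soft spot. You assert that any object $D \in \frakb^\prime$ ``is isomorphic to a lift $\bar D_0 \in \Gamma$''; this is false in general and also not what is needed. Two lifts of the same complex $D_0$ differ by a degree-one element, and they are isomorphic in $\Com^+(\fraki_\phi)$ only when that difference is a coboundary, so $D$ need not be isomorphic to the particular representative chosen in $\Gamma$. What maximality actually requires is an argument at the level of object sets: a strict dg deformation of a full dg subcategory $\fraka^\prime \subset \Com^+(\fraki)$ has, by definition, the same objects as $\fraka^\prime$, so a morphism $\Com^+_\Gamma(\fraki_\phi) \to \frakb^\prime$ fails to be an isomorphism exactly when $\fraka^\prime$ strictly contains the $\infty$-part. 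One must then show that every object of $\fraka^\prime$ has vanishing obstruction class against $\embr_\delta(\phi)$; your inference ``$\frakb^\prime$ is uncurved, hence $\chi^{\oplus l}_{\Com^+(\fraki)}(\embr_\delta(\phi))_{D_0} = 0$'' presupposes that the cocycle classifying $\frakb^\prime$ restricts, up to coboundary, to $\embr_\delta(\phi)$ on $\fraka^\prime$ --- a compatibility you use implicitly but never establish. That identification is the missing link before your sketch counts as a proof of maximality.
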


From the proof,
one sees that
$\Com^+_\Gamma (\fraki_\phi)$
is a dg deformation of
$\Com^+ (\fraki)_{[\embr(\phi)] - \infty}$.
According to
\cite[Example 4.13]{Low08},
an object
$C \in \Com^+(\fraki)$
belongs to
$\Com^+ (\fraki)_{[\embr(\phi)] - \infty}$
if and only if
\begin{align*}
\chi^{\oplus l}_\fraki (\phi)_C = 0 \in K(\fraki)(C, C[2])^{\oplus l}.
\end{align*}
Since
$\chi^{\oplus l}_\fraki (\phi)_C$
is the obstruction against deforming
$C$
to an object of
$\Com^+(\fraki_\phi)$,
we may take
a collection of one chosen lift of each unobsteucted complexe in
$\Com^+(\fraki)$
as
$\Gamma$.
Clearly,
any dg subcategory of
$\Com^+ (\fraki)_{[\embr(\phi)] - \infty}$
dg deforms along the restriction of
$\embr_\delta(\phi)$.

\subsection{Hochschild cohomology of the dg category of perfect complexes}
We review the definition of the classical Hochschild complex of dg categories.
Assume that
$\fraka$
is a small $\bfR$-cofibrant dg category,
i.e.,
all Hom-sets are cofibrant in the dg category
$\Mod_{dg}(\bfR) = \Com(\Mod(\bfR))$
of complexes of $\bfR$-modules.
Recall that
$N \in \Mod_{dg}(\bfR)$
is cofibrant
if its terms are projective.
For an $\fraka$-bimodule
$M \colon \fraka^{op} \otimes \fraka \to \Mod_{dg}(\bfR)$,
the
\emph{Hochschild complex}
$\bfC(\fraka, M)$
of
$\fraka$
with coefficients in
$M$
is the total complex of the double complex
whose $q$-th columns are given by
\begin{align*}
\prod_{A_0, \ldots, A_q \in \fraka} \Hom(\fraka(A_{q-1}, A_q) \otimes_\bfR \cdots \otimes_\bfR \fraka(A_1, A_0), M(A_0, A_q))
\end{align*}
with horizontal differentials
$d^q_{Hoch}$.
When
$M = \fraka$
we call
$\bfC(\fraka) = \bfC(\fraka, \fraka)$
the
\emph{Hochschild complex}
of
$\fraka$.
The Hochschild complex satisfies a ``limited functoriality'' property.
Namely,
if
$j \colon \fraka \hookrightarrow \frakb$
is a fully faithful dg functor to a small $\bfR$-cofibrant dg category
$\frakb$,
then there is an associated map between Hochschild complexes
\begin{align*}
j^* \colon \bfC(\frakb) \to \bfC(\fraka)
\end{align*} 
given by restriction.
As mentioned above,
the Hochschild complex is isomorphic to the associated Hochschild object
$\bfC(\fraka) = \Sigma^{-1} \bfC_{br}(\fraka)$.
In particular,
the Hochschild complex
$\bfC(\fraka)$
has a $B_\infty$-algebra structure compatible with the map
$j^*$.

The definition of the Hochschild complex was modified by Shukla and Quillen
\cite{Shu, Qui}
to general dg categories.
Now,
we drop the assumption on
$\fraka$
to be $\bfR$-cofibrant
and
fix a good $\bfR$-cofibrant resolution
$\bar{\fraka} \to \fraka$,
which is a quasi-equivalence with an $\bfR$-cofibrant dg category
$\bar{\fraka}$
inducing surjection of Hom-sets in the graded category
\cite[Proposition-Definition 2.3.2]{LV06a}. 
The
\emph{Shukla complex}
of
$\fraka$
with coefficient
$M$
is defined as
\begin{align*}
\bfC_{sh}(\fraka, M) = \bfC(\bar{\fraka}, M).
\end{align*} 
According to
\cite[Section 4.2]{LV06a},
which in turn is attributed to
\cite{Kel},
the assignment
\begin{align*}
\bfC_{sh} \colon \fraka \mapsto \bfC_{sh}(\fraka)
\end{align*}
defines up to canonical natural isomorphism a contravariant functor 
on a suitable category of small dg categories
with values in
$\Ho (B_\infty)$.
In particular,
$\bfC_{sh}(\fraka)$
does not depend on the choice of good $\bfR$-cofibrant resolutions of
$\fraka$
up to canonical isomorphism.
The functor
$\bfC_{sh}$
satisfies some extended ``limited functoriality''.
Let
$j \colon \fraka \hookrightarrow \frakb$
be a fully faithful dg functor to a small dg category
$\frakb$
with a good $\bfR$-cofibrant resolution
$\bar{\frakb} \to \frakb$.
One may restrict the resolution to a good $\bfR$-cofibrant resolution
$\bar{\fraka} \to \fraka$
of
$\fraka$.
Then the restriction along the extended fully faithful dg functor
$\bar{\fraka} \hookrightarrow \bar{\frakb}$
defines a morphism of Shukla complexes
\begin{align*}
\bfC_{sh}(\frakb) \to \bfC_{sh}(\fraka)
\end{align*}
still denoted by
$j^*$.
In the sequel,
we write
$\bfC(\fraka)$
for  
$\bfC_{sh}(\fraka)$.

Now,
we return to our setting.
Let
$X_0$
be a Calabi--Yau manifold with
$\dim X_0 > 2$.
We denote by
$D_{dg}(\Qch (X_0))$
the dg category of unbounded complexes of quasi-coherent sheaves on
$X_0$.
In our setting,
the full dg subcategory
$\Perf_{dg}(X_0)$
of compact objects consists of perfect complexes on
$X_0$.
The canonical embedding
$\Perf_{dg}(X_0) \hookrightarrow D_{dg}(\Qch (X_0))$
factorizes through the dg category
$D^+_{dg}(\Qch (X_0))$
of bounded below complexes of quasi-coherent sheaves on
$X_0$.
Let
$(X, i_\bfR)$
be an $\bfR$-deformation of
$X_0$
and
$\fraki = \Inj(\Qch(X))$.
As explained above,
$X$
is smooth projective over
$\bfR$.
The Hochschild cohomology of
$\Perf_{dg}(X)$
can be expressed in terms of
$\fraki$.

\begin{lem} \label{lem:Hochschild}
There is an isomorphism
\begin{align} \label{eq:Hochschild}
H \bfC^\bullet (\Com^+ (\fraki)) \to H \bfC^\bullet (\Perf_{dg}(X)).
\end{align}
\end{lem}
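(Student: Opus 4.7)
The plan is to identify both sides with the relative Hochschild cohomology $HH^\bullet(X/\bfR)$ of the smooth projective $\bfR$-scheme $X$, and then compose the two identifications.

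For the left hand side, the $B_\infty$-section $\embr_\delta \colon \bfC(\fraki) \to \bfC(\Com^+(\fraki))$ cited earlier from \cite{Low08} is a quasi-isomorphism, and therefore induces on cohomology an isomorphism $H\bfC^\bullet(\fraki) \xrightarrow{\sim} H\bfC^\bullet(\Com^+(\fraki))$. By \cite[Theorem 6.6]{LV06} applied to $\Qch(X)$ and its full subcategory of injectives $\fraki = \Inj(\Qch(X))$, one has $H\bfC^\bullet(\fraki) \cong HH^\bullet_{ab}(\Qch(X))$ in all cohomological degrees. Finally, the comparison results \pref{eq:GSHT}, \pref{lem:Bridge2}, and \pref{lem:Bridge3} underlying the degree-two bijection \pref{eq:HTab} come from quasi-isomorphisms of complexes, and therefore upgrade to isomorphisms $HH^\bullet_{ab}(\Qch(X)) \cong HH^\bullet(X/\bfR)$ in every cohomological degree.

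For the right hand side, since $X$ is smooth projective over $\bfR$ one has the standard identification
\begin{align*}
H \bfC^\bullet (\Perf_{dg}(X))
\cong
\Ext^\bullet_{X \times_\bfR X} (\scrO_{\Delta_\bfR}, \scrO_{\Delta_\bfR})
=
HH^\bullet (X/\bfR),
\end{align*}
which follows from Morita invariance of Hochschild cohomology of pretriangulated dg categories together with the fact that the sheaf $\scrO_{\Delta_\bfR}$ represents the identity bimodule of $\Perf_{dg}(X)$ in the Fourier--Mukai formalism. Composing this isomorphism with the chain of identifications from the previous paragraph produces the desired isomorphism $H\bfC^\bullet(\Com^+(\fraki)) \cong H\bfC^\bullet(\Perf_{dg}(X))$.

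The most delicate point is verifying that the degree-$2$ bijection \pref{eq:HTab} genuinely lifts to a cohomological isomorphism in every degree. This requires observing that the comparison in \pref{lem:GSsimp}, the passage to \v{C}ech cohomology in \pref{eq:simpsheaf}, the relative Hochschild--Kostant--Rosenberg isomorphism, and the equivalences of \pref{lem:Bridge2} and \pref{lem:Bridge3} are all quasi-isomorphisms of the underlying complexes rather than merely bijections on $H^2$; once this is checked, the remaining assembly is purely formal, since the various intermediate Hochschild-type complexes are built from the same inputs.
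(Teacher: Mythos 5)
Your route is genuinely different from the paper's, and as written it has a gap on the left-hand side of your chain. The paper does not pass through $HH^\bullet(X/\bfR)$ at all: it considers the quasi-fully faithful composite $\Perf_{dg}(X)\hookrightarrow D^+_{dg}(\Qch(X))\to\Com^+(\fraki)$ and shows directly that the induced restriction of Hochschild complexes is an isomorphism in $\Ho(B_\infty)$, by lifting $D(\Qch(X))\to\Mod(\Perf(X))$ to a localization via \cite[Theorem 1.2]{Por} and then invoking \cite[Proposition 5.1]{DL} for both restrictions $\bfC(D_{dg}(\Qch(X)))\to\bfC(\Perf_{dg}(X))$ and $\bfC(D_{dg}(\Qch(X)))\to\bfC(D^+_{dg}(\Qch(X)))$. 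This has the added benefit that the isomorphism produced is the specific restriction map, which is what is actually used later (e.g.\ in the proof of \pref{thm:mocdg}, where classes must be represented by $\embr_\delta(\phi)$ compatibly with the characteristic morphism diagrams); your construction yields an a priori unrelated abstract isomorphism obtained by composing two identifications with $HH^\bullet(X/\bfR)$, and you would still need to check it agrees with the restriction map.

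The concrete gap is your claim that the degree-two bijection \pref{eq:HTab} ``upgrades'' to all degrees because \pref{lem:Bridge2} and \pref{lem:Bridge3} are ``quasi-isomorphisms of the underlying complexes.'' They are not: \pref{lem:CLASStw} and \pref{lem:Bridge2} are bijections between $H^2$ and sets of equivalence classes of (twisted, resp.\ abelian) deformations, and \pref{lem:Bridge3} is an equivalence of abelian categories; none of these is induced by a chain map between Hochschild-type complexes, so there is nothing to upgrade. An all-degree isomorphism $HH^\bullet_{ab}(\Qch(X))\cong HH^\bullet(X/\bfR)$ does exist in the literature (it is the main comparison theorem of \cite{LV06a} for ringed spaces/schemes), but that is a separate input you would have to cite, not a consequence of the deformation-classification results you point to. Your identification of the right-hand side, $H\bfC^\bullet(\Perf_{dg}(X))\cong\Ext^\bullet_{X\times_\bfR X}(\scrO_{\Delta_\bfR},\scrO_{\Delta_\bfR})$ via derived Morita theory over the artinian base $\bfR$, is plausible but is likewise a substantial external input (essentially \cite{Toe}) that needs to be stated with its flatness/properness hypotheses rather than asserted as ``standard.'' If you repair the left-hand chain by citing the actual comparison theorem and then verify compatibility with the restriction map, your argument goes through; otherwise the paper's direct $\Ho(B_\infty)$ argument is both shorter and better adapted to the later uses of the lemma.
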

\begin{proof}
Consider the quasi-fully faithful functor
\begin{align} \label{eq:qff1}
\Perf_{dg}(X)
\hookrightarrow
D^+_{dg}(\Qch(X))
\to
\Com^+(\fraki),
\end{align}
where
the first functor is the canonical embedding
and
the second functor is a quasi-equivalence induced by the canonical equivalence
\begin{align*}
D^+(\Qch(X)) \to K^+(\fraki)
\end{align*}
of their homotopy categories
\cite[Theorem A]{CNS}.
The functor
\pref{eq:qff1}
induces a morphism
\begin{align} \label{eq:iqff1}
\bfC(\Com^+(\fraki))
\to
\bfC(D^+_{dg}(\Qch(X)))
\to
\bfC(\Perf_{dg} (X))
\end{align}
of $B_\infty$-algebra,
which in turn induces a morphism
\begin{align} \label{eq:iiqff1}
H^\bullet \bfC(\Com^+(\fraki))
\to
H^\bullet \bfC(D^+_{dg}(\Qch(X)))
\to
H^\bullet \bfC(\Perf_{dg} (X))
\end{align}
of Hochschild cohomology.
Since quasi-equivalences preserve Hochschld cohomology,
the first arrow in
\pref{eq:iiqff1}
is an isomorphism.

It remains to show that
the second arrow in
\pref{eq:iiqff1}
is an isomorphism.
We claim that
the restriction
\begin{align*}
\bfC (D_{dg}(\Qch(X))) \to \bfC (\Perf_{dg}(X))
\end{align*}
is an isomorphism in
$\Ho (B_\infty)$. 
Fix a good $\bfR$-cofibrant resolution
\begin{align*}
\overline{D_{dg}(\Qch(X))} \to D_{dg}(\Qch(X)).
\end{align*}
Via the canonical embedding
$\Perf_{dg}(X) \hookrightarrow D_{dg}(\Qch(X))$
it induces a good $\bfR$-cofibrant resolution
$\overline{\Perf_{dg}(X)} \to \Perf_{dg}(X)$
and
the fully faithful embedding
\begin{align*}
\bar{j}
\colon
\overline{\Perf_{dg}(X)}
\hookrightarrow
\overline{D_{dg}(\Qch(X))}.
\end{align*}
We denote by
$j$
the induced fully faithful functor on the homotopy categories.
One can apply
\cite[Theorem 1.2]{Por}
to see that
the functor
\begin{align*}
D(\Qch(X)) \to \Mod(\Perf(X)), \
F \mapsto \Hom_{D(\Qch(X))}(j(-), F)
\end{align*}
lifts to a localization
$D(\Qch(X)) \to \bfD (\Mod_{dg}(\Perf_{dg}(X))$,
where
$\bfD (\Mod_{dg}(\Perf_{dg}(X))$
is the derived category of right dg modules over the dg category 
$\Mod_{dg}(\Perf_{dg}(X))$
of right dg modules over
$\Perf_{dg}(X)$.
In particular,
the lift 
is fully faithful.
Then the claim follows from
\cite[Proposition 5.1]{DL}.
Similarly,
one can show that
the restriction
\begin{align*}
\bfC (D_{dg}(\Qch(X))) \to \bfC (D^+_{dg}(\Qch(X)))
\end{align*}
is an isomorphism in
$\Ho (B_\infty)$. 
Hence the restriction
\begin{align*}
\bfC (D^+_{dg}(\Qch(X))) \to \bfC (\Perf_{dg}(X))
\end{align*}
is an isomorphism in
$\Ho (B_\infty)$,
which induces the desired isomorphism on Hochschild cohomology.
\end{proof}

\subsection{Morita deformations of the dg category of perfect complexes}
Let
$\dgCat_\bfR$
be the category of
small $\bfR$-linear dg categories
and
dg functors.
The category
$\dgCat_\bfR$
has two model structures,
so called
the Dwyer--Kan model structure
and
the Morita model structure,
constructed by Tabuada  respectively in
\cite{Tab}
and
\cite{Tab05}.
On the Dwyer--Kan model structure,
weak equivalences are given by quasi-equivalences of dg categories.
On the Morita model structure,
weak equivalences are given by Morita morphisms. 
Recall that
a dg functor in
$\dgCat_\bfR$
is a
\emph{Morita morphism}
if it induces an derived equivalence.
Also recall that
for each object
$\fraka \in \dgCat_\bfR$
the derived category
$\bfD(\fraka)$
of right dg modules over
$\fraka$ 
is defined as the Verdier quotient
\begin{align*}
[\Mod_{dg}(\fraka)] / [\Acycl(\fraka)]
\end{align*}
of the homotopy category of the dg category
$\Mod_{dg}(\fraka)$
of right dg modules over
$\fraka$
by the homotopy category of the full dg subcategory
$\Acycl(\fraka)$
of acyclic right dg modules.

We denote by
$\Ho_\bfR$
the localization of
$\dgCat_\bfR$
by weak equivalences in the Dwyer--Kan model structure
and
by
$\Hmo_\bfR$
the localization of
$\dgCat_\bfR$
by weak equivalences in the Morita model structure.
Passing to
$\Ho_\bfR$,
two quasi-equivalent $\bfR$-linear dg categories
$\fraka, \frakb$
get identified.
Passing to
$\Hmo_\bfR$,
two Morita equivalent $\bfR$-linear dg categories
$\fraka, \frakb$
get identified.
Recall that
two $\bfR$-linear dg categories
$\fraka, \frakb$
are said to be
\emph{Morita equivalent} 
if they are connected by a Morita morphism.
By
\cite[Proposition 7]{Toe}
or
\cite[Exercise 28]{Toe}
for $\bfR$-linear triangulated dg categories
Morita equivalences
coincide with
quasi-equivalences.
The dg category
$\Perf_{dg}(X)$
is triangulated,
as it is pretriangulated
and
closed under homotopy direct summands.

Let
$\fraka$
be a small $\bfR$-linear dg category.
A
\emph{Morita $\bfS$-deformation}
of
$\fraka$
is an $\bfS$-linear dg category
$\frakb$
together with a Morita equivalence
$\frakb \otimes^L_\bfS \bfR \to \fraka$,
where
$- \otimes^L_\bfS \bfR \colon \Hmo_\bfS \to \Hmo_\bfR$
is the derived functor of the base change
$- \otimes_\bfS \bfR \colon \dgCat_\bfS \to \dgCat_\bfR$.
Two Morita deformations
$\frakb, \frakb^\prime$
are
\emph{isomorphic}
if there is a Morita equivalence
$\frakb \to \frakb^\prime$
inducing the identity on
$\fraka$.
We denote by
$\Def^{mo}_\fraka(\bfS)$
the set of isomorphism classes of Morita $\bfS$-deformations of
$\fraka$.
By
\cite[Proposition 3.3]{KL}
there is a canonical map
\begin{align} \label{eq:Morita}
\Def^{mo}_\fraka(\bfS) \to H^2\bfC(\fraka)^{\oplus l}
\end{align}
defined as follows.
Any Morita $\bfS$-deformation
$\frakb$
of a small $\bfR$-linear dg category
$\fraka$
can be represented by a $h$-flat resolution
$\overline{\frakb} \to \frakb$,
which defines a dg $\bfS$-deformation of 
$\bar{\frakb} \otimes_\bfS \bfR$.
Let
$\phi_{\overline{\frakb}} \in Z^2 \bfC(\bar{\frakb} \otimes_\bfS \bfR)^{\oplus l}$
be a Hochschild cocycle representing
$\overline{\frakb}$
via the bijection
\pref{eq:classifying}.
The map
\pref{eq:Morita}
sends
$\frakb$
to the image
$\phi_\frakb$
of
$\phi_{\overline{\frakb}}$
under the isomorphism
$H^2 \bfC(\bar{\frakb} \otimes_\bfS \bfR)^{\oplus l}
\to
H^2 \bfC(\fraka)^{\oplus l}$
induced by the Morita equivalence
$\bar{\frakb} \otimes_\bfS \bfR \to \fraka$.

\begin{thm} \label{thm:mocdg}
The composition
\begin{align} \label{eq:mocdg}
\Def^{mo}_{\Perf_{dg}(X)}(\bfS) \to \Def^{cdg}_{\Perf_{dg}(X)}(\bfS)
\end{align}
of
\pref{eq:Morita}
with the inverse of
\pref{eq:classifying}
is bijective.
\end{thm}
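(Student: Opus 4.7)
The plan is to show the map \pref{eq:Morita} sending a Morita $\bfS$-deformation $\frakb$ of $\Perf_{dg}(X)$ to its associated Hochschild class $\phi_\frakb \in H^2 \bfC(\Perf_{dg}(X))^{\oplus l}$ is bijective; composing with the inverse of \pref{eq:classifying} then produces \pref{eq:mocdg}. This should essentially be an application of \cite[Proposition 3.12]{KL} to $\fraka = \Perf_{dg}(X)$.

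First I would verify the hypotheses of \cite[Proposition 3.12]{KL}. Since $X_0$ is a higher dimensional Calabi--Yau manifold, any $\bfR$-deformation $X$ is smooth projective over $\bfR$, so $\Perf_{dg}(X)$ is smooth and proper as an $\bfR$-linear dg category and admits a compact generator. The base change compatibility needed to invoke \cite[Proposition 3.12]{KL} will be provided by \cite[Corollary 5.7]{Coh} together with \cite[Theorem 1.2]{BFN}, as sketched in the introduction.

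The extra work hinted at as ``a little more effort'' in the introduction will amount to matching Hochschild cohomologies: via the $B_\infty$-section $\embr_\delta$ from \pref{eq:embr} one identifies $H^\bullet \bfC(\fraki) \cong H^\bullet \bfC(\Com^+(\fraki))$, and via \pref{lem:Hochschild} one further identifies $H^\bullet \bfC(\Com^+(\fraki)) \cong H^\bullet \bfC(\Perf_{dg}(X))$. These isomorphisms should ensure that the Hochschild cocycle read off an $h$-flat resolution of a Morita deformation parameterizes the expected cdg deformation under \pref{eq:classifying}.

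For injectivity, two Morita deformations mapping to the same Hochschild class yield isomorphic cdg structures after passage to $h$-flat resolutions, hence quasi-equivalent $h$-flat resolutions, and therefore Morita equivalent resolutions by triangulatedness of $\Perf_{dg}(X)$ combined with \cite[Proposition 7]{Toe}. The hard part will be surjectivity: given $[\phi] \in H^2 \bfC(\Perf_{dg}(X))^{\oplus l}$, one must exhibit a Morita deformation whose Hochschild class is $[\phi]$. The natural candidate is the maximal partial dg deformation associated to $\phi$ through the $[\phi]-\infty$ construction and \pref{thm:lift}, enlarged along a compact generator so as to define a genuine, rather than merely partial, Morita deformation of $\Perf_{dg}(X)$. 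Checking globality of this candidate and that it indeed recovers $[\phi]$ on Hochschild cohomology will be the crux of the argument.
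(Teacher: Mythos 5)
Your proposal correctly identifies \cite[Proposition 3.12]{KL} as the engine for surjectivity, but both halves of the argument have gaps. For injectivity, the paper does not argue via ``isomorphic cdg structures on $h$-flat resolutions''; it invokes \cite[Proposition 3.7]{KL}, whose hypothesis is that $\Perf_{dg}(X)$ has \emph{bounded above cohomology}, verified by the local-to-global spectral sequence $E_2^{p,q} = H^p(X, \underline{\Ext}^q_X(E,F)) \Rightarrow \Ext^{p+q}_X(E,F)$. Your sketch glosses over the real difficulty: the two $h$-flat resolutions $\overline{\frakb}$, $\overline{\frakb}'$ are dg deformations of two \emph{different} full subcategories $\overline{\frakb}\otimes_\bfS\bfR$ and $\overline{\frakb}'\otimes_\bfS\bfR$ of (something Morita equivalent to) $\Perf_{dg}(X)$. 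Equality of the transported Hochschild classes in $H^2\bfC(\Perf_{dg}(X))^{\oplus l}$ does not by itself produce an isomorphism of deformations of a common category, so the bijection \pref{eq:classifying} cannot be applied directly; that transport problem is exactly what the bounded-above-cohomology condition in Keller--Lowen resolves.

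For surjectivity, the missing idea is geometric, and you have not supplied it. Given $[\phi]$, Proposition 3.12 of \cite{KL} requires a full dg subcategory $\frakm(\phi)\subset\Perf_{dg}(X)$ that is \emph{Morita equivalent to all of} $\Perf_{dg}(X)$ and on which the characteristic class $\chi^{\oplus l}_{\Perf_{dg}(X)}(\embr_\delta(\phi))$ vanishes. Taking the $[\phi]$-$\infty$ part ``enlarged along a compact generator'' begs the question: whether a compact generator is unobstructed is precisely what $\chi(\phi)$ measures, and for a general dg category the unobstructed locus need not be Morita equivalent to the whole category. The paper's resolution uses that $X$ deforms a strict Calabi--Yau manifold of dimension $>2$, so $HT^2(X/\bfR)^{\oplus l}=H^1(\scrT_{X/\bfR})^{\oplus l}$ and every class $\phi$ is realized by an honest scheme deformation $X_\phi$; one then takes $\frakm(\phi)=\overline{\Perf_{dg}(X_\phi)}\otimes_\bfS\bfR$, which is Morita equivalent to $\Perf_{dg}(X)$ by \cite[Theorem 1.2]{BFN} (via \cite[Corollary 5.7]{Coh}), and whose objects lift to $\Perf(X_\phi)$ by construction, so the obstruction vanishes by Corollary \pref{cor:obst} after chasing the characteristic-morphism diagrams \pref{eq:Keller1} and \pref{eq:Keller2}. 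Without this geometric realization your candidate has no reason to satisfy the Morita-equivalence hypothesis, so the crux you flag remains open in your write-up.
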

\begin{proof}
To show the injectivity,
by
\cite[Proposition 3.7]{KL}
it suffices to check that
$\Perf_{dg}(X)$
has bounded above cohomology,
i.e.,
the dg module
$\Perf_{dg}(X)(E, F)$
has bounded above cohomology for each
$E, F \in \Perf_{dg}(X)$.
Consider the spectral sequence
\begin{align*}
E^{p,q}_2
=
H^p(X, \underline{\Ext}^q_X(E, F))
\Rightarrow
\Ext^{p+q}_X(E, F).
\end{align*}
Since we have
$\underline{\Ext}^q_X(E, F) \cong \cH^q(E^\vee \otimes_{\scrO_X} F)$,
the cohomology
\begin{align*}
\cH^{p+q}(\Perf_{dg}(X)(E, F))
\cong
\Ext^{p+q}_X(E, F)
\end{align*}
vanishes
whenever
$p, q$
are sufficiently large.

To show the surjectivity,
consider the characteristic dg morphism
\begin{align} \label{eq:kaibar}
\bfC (\Perf_{dg}(X))
\to
\bfC (\Tw(\Perf_{dg}(X)))
\to
\prod_{E \in \Tw(\Perf_{dg}(X))} \Tw(\Perf_{dg}(X))(E, E)
\end{align}
for the dg category
$\Perf_{dg}(X)$
\cite[Theorem 3.19]{Low08},
where
the first arrow is given by the $B_\infty$-section of the canonical projection
$\bfC (\Tw(\Perf_{dg}(X))) \to \bfC (\Perf_{dg}(X))$.
Here,
\begin{align*}
\Tw(\Perf_{dg}(X)) = \Tw_{\text{ilnil}}(\Perf_{dg}(X))_\infty
\end{align*}
is the $\infty$-part of the dg category of locally nilpotent twisted object over
$\Perf_{dg}(X)$.
It is a dg enhancement of the derived category
$\bfD(\Perf_{dg}(X))$
of right dg modules over
$\Perf_{dg}(X)$.
We denote by
$\Tw(\Perf_{dg}(X))^c$
the full dg subcategory of
$\Tw(\Perf_{dg}(X))$
spanned by compact objects.
Its homotopy category
$\bfD(\Perf_{dg}(X))^c$,
the full triangulated subcategory of
$\bfD(\Perf_{dg}(X))$
spanned by compact objects,
get identified with
$\Perf_{dg}(X)$
via the Yoneda embedding
as
$\Perf_{dg}(X)$
is pretriangulated
and
closed under homotopy direct summands.
When restricted to
$\Tw(\Perf_{dg}(X))^c$,
on cohomology
\pref{eq:kaibar}
induces the characteristic morphism
\begin{align*}
\chi_{\Perf_{dg}(X), E}
\colon
H^\bullet \bfC(\Perf_{dg}(X))
\to
\Ext^\bullet_X(E, E).
\end{align*}

Consider the quasi-fully faithful functor
\pref{eq:qff1}.
One can apply
\cite[Proposition 2.3]{KL}
to obtain a commutative diagram
\begin{align} \label{eq:Keller1} 
\begin{gathered}
\xymatrix{
H^\bullet \bfC(\Perf_{dg}(X)) \ar[r]^{\chi_{\Perf_{dg}(X), E}} \ar[d] & \Ext^\bullet_X(E, E) \ar[d] \\
H^\bullet \bfC(D^+_{dg}(\Qch(X)) \ar[r]^{\chi_{D^+_{dg}(\Qch(X), E}} \ar[d] & \Ext^\bullet_X(E, E) \ar[d] \\
H^\bullet \bfC(\Com^+(\fraki)) \ar[r]^{\chi_{\Com^+(\fraki), E}} & \Ext^\bullet_X(E, E),
}
\end{gathered}
\end{align}
whose horizontal arrows are the characteristic morphisms
and
whose vertical arrows are the canonical isomorphisms induced by quasi-equivalences.
Applying
\cite[Proposition 2.3]{KL}
to the quasi-fully faithful functor,
\begin{align} \label{eq:qff2}
\fraki \to \Com^+(\fraki)
\end{align}
we obtain another commutative diagram
\begin{align} \label{eq:Keller2} 
\begin{gathered}
\xymatrix{
H^\bullet \bfC(\Com^+(\fraki)) \ar[r]^{\chi_{\Com^+(\fraki), E}} \ar[d] & \Ext^\bullet_X(E, E) \ar[d] \\
H^\bullet \bfC(\fraki) \ar[r]^{\chi_{\fraki, E}} & \Ext^\bullet_X(E, E)
}
\end{gathered}
\end{align}
whose horizontal arrows are the characteristic morphisms
and
whose vertical arrows are the canonical isomorphisms,
as the morphism
$\bfC(\Com^+(\fraki)) \to \bfC(\fraki)$
induced by the quasi-fully faithful functor
\pref{eq:qff2}
coincides with the canonical projection
and
by
\cite[Proposition 3.20]{Low08}
its inverse in
$\Ho(B_\infty)$
is the $B_\infty$-section
\pref{eq:embr},
which induces an isomorphism on cohomology.
Note that
$\Tw(\fraki)$
is precisely
$\Com(\fraki)$.

By
\pref{lem:Hochschild}
any element of
$H^2 \bfC(\Perf_{dg}(X))^{\oplus l}$
can be represented by the image
$\embr_\delta(\phi)$
of
$\phi \in Z^2 \bfC(\fraki)^{\oplus l}$
under the $B_\infty$-section
\pref{eq:embr}.
We use the same symbol to denote the image under the direct sum of
\pref{eq:Hochschild}.
Since
$\Perf_{dg}(X)$
has bounded above cohomology,
by
\cite[Proposition 3.12]{KL}
the map
\pref{eq:mocdg}
is surjective
if
there exists a full dg subcategory
$\frakm(\phi) \subset \Perf_{dg}(X)$
which is Morita equivalent to
$\Perf_{dg}(X)$
such that
$\chi^{\oplus l}_{\Perf_{dg}(X)} (\embr_\delta(\phi))_E = 0$
for any
$E \in \frakm(\phi)$
and
cocycle
$\embr_\delta(\phi) \in Z^2 \bfC (\Perf_{dg}(X))^{\oplus l}$.
The composition of vertical arrows in
\pref{eq:Keller1}
and
\pref{eq:Keller2}
maps
$\embr_\delta(\phi)$
to
$\phi$
up to coboundary.
By Corollary
\pref{cor:obst}
the image of
$\phi$
under
$\chi^{\oplus l}_{\fraki, E}$
is the obstruction against deforming
$E$
to an object of
$D^+(\Qch(X_\phi))$.
Here,
for a cocycle
$\phi \in HH^2_{ab}(\Qch(X))^{\oplus l}$
we denote by
$(X_\phi, i_\bfS)$
the $\bfS$-deformation of
$(X, i_\bfR)$
along
$\phi$
via the bijection
\pref{eq:HTab}.
Let
$\overline{\Perf_{dg}(X_\phi)}$
be an $h$-flat resolution of
$\Perf_{dg}(X_\phi)$.
Then the dg category
$\frakm(\phi) = \overline{\Perf_{dg}(X_\phi)} \otimes_\bfS \bfR$
is a full dg subcategory of
$\Perf_{dg}(X)$
with a Morita equivalence
\begin{align*}
\frakm(\phi)
\simeq_{mo}
\Perf_{dg}(X_\phi) \otimes^L_\bfS \bfR
\simeq_{mo}
\Perf_{dg}(X)
\end{align*}
from
\cite[Theorem 1.2]{BFN}.
As each object
$E \in \frakm(\phi)$
lifts to an object of
$\Perf(X_\phi)$,
we obtain
$\chi^{\oplus l}_{\fraki}(\phi)_E = 0$.
Chasing the diagrams
\pref{eq:Keller1}
and
\pref{eq:Keller2},
we obtain
$\chi^{\oplus l}_{\Perf_{dg}(X)} (\embr_\delta(\phi))_E = 0$
which completes the proof.
\end{proof}

\begin{rmk} \label{rmk:BFN1}
The canonical equivalence
\begin{align*}
\Perf_{\infty}(X_\phi) \otimes_{\Perf_{\infty}(\bfS)} \Perf_{\infty}(\bfR)
\simeq_{\infty}
\Perf_{\infty}(X)
\end{align*}
of corresponding $\infty$-categories from
\cite[Theorem 1.2]{BFN}
translates via
\cite[Corollary 5.7]{Coh}
into a Morita equivalence
\begin{align} \label{eq:dgbasechange}
\Perf_{dg}(X_\phi) \otimes^L_{\Perf_{dg}(\bfS)} \Perf_{dg}(\bfR)
\simeq_{mo}
\Perf_{dg}(X)
\end{align}
of dg categories,
where
$- \otimes^L - \colon \Hmo_\bfS \times \Hmo_\bfS \to \Hmo_\bfS$
is the derived pointwise tensor product of dg categories.
The left hand side of
\pref{eq:dgbasechange}
is a triangulated dg category split-generated by objects of the form
$E_\phi \boxtimes M$
for
$E_\phi \in \Perf_{dg}(X_\phi)$
and
$M \in \Perf_{dg}(\bfR)$,
which maps to
$E \otimes_{\scrO_X} \pi^*_\bfR M \in \Perf_{dg}(X)$
via the Morita equivalence.
Here,
$E = E_\phi \otimes_\bfS \bfR$
and
$\pi_\bfR \colon X \to \Spec \bfR$
is the structure morphism.
Hence we obtain a Morita equivalence 
\begin{align*}
\Perf_{dg}(X_\phi) \otimes^L_\bfS \bfR
\simeq_{mo}
\Perf_{dg}(X_\phi) \otimes^L_{\Perf_{dg}(\bfS)} \Perf_{dg}(\bfR)
\simeq_{mo}
\Perf_{dg}(X)
\end{align*}
used in the above proof.
\end{rmk}

\subsection{Maximal partial dg deformations of the dg category of perfect complexes}
As explained above,
the category
$\Perf_{dg}(X_\phi)$
defines a Morita $\bfS$-deformation of
$\Perf_{dg}(X)$.
Let
$\overline{\Perf_{dg} (X_\phi)}$
be the $h$-flat resolution of
$\Perf_{dg} (X_\phi)$
from
\cite[Proposition 3.10]{CNS}.
Choose one lift for each object of
$\frakm(\phi)
=
\overline{\Perf_{dg} (X_\phi)} \otimes_\bfS \bfR$
to 
$\overline{\Perf_{dg}(X_\phi)}$.
Let
$\Gamma$
be the collection of the choices
and
$\overline{\Perf_{dg, \Gamma}(X_\phi)}$ 
the full dg subcategory spanned by objects belonging to 
$\Gamma$.
It gives a dg $\bfS$-deformation of
$\frakm(\phi)$
representing the Morita $\bfS$-deformation
$\Perf_{dg}(X_\phi)$.
On the other hand,
$\frakm(\phi)$
admits a dg deformation
\begin{align*}
\frakm(\phi)_{\embr_\delta(\phi)}
=
\left( \frakm(\phi)[\epsilon] = \frakm(\phi) \otimes_\bfR \bfS, \embr_\delta(m) + \embr_\delta(\phi) \epsilon \right),
\end{align*}
where
$\embr_\delta (m) \in  Z^2 \bfC(\overline{\Perf_{dg}(X)})$
and
$\embr_\delta (\phi) \in Z^2 \bfC(\overline{\Perf_{dg}(X)})^{\oplus l}$
are respectively the images
under the $B_\infty$-section
\pref{eq:embr}
and
its direct sum 
of
the compositions
$m$
in
$\fraki$
and
the cocycle
$\phi \in Z\bfC^2(\fraki)^{\oplus l}$
corresponding to
$\phi \in HH^2_{ab}(\Qch(X))^{\oplus l}$
via the isomorphism obtained from
\cite[Theorem 6.6]{LV06}.
Here,
as above
we use the same symbol to denote the images under the compositions of
the bijections induced by the quasi-equivalences with
\pref{eq:Hochschild}
and
its direct sum respectively.
Also,
we use the same symbol to denote the images under the morphism
$\bfC(\Perf_{dg}(X)) \to \bfC(\frakm(\phi))$
of $B_\infty$-algebras induced by the Morita equivalence
$\frakm(\phi) \to \Perf_{dg}(X)$.

\begin{thm} \label{thm:Intertwine2}
There is an isomorphism
\begin{align*}
\overline{\Perf_{dg, \Gamma} (X_\phi)}
\simeq
\frakm(\phi)_{\embr_\delta(\phi)}
\end{align*}
of dg $\bfS$-deformations of
$\frakm(\phi)$.
In particular,
the Morita $\bfS$-deformations
$\Perf_{dg} (X_\phi)$
defines a maximal partial dg $\bfS$-deformaiton of
$\Perf_{dg} (X)$
along
$\embr_\delta(\phi)$.
\end{thm}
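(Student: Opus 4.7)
My plan is to combine Lowen's description of maximal partial dg deformations via $\embr_\delta$ (\pref{thm:lift}) with the Morita reduction $\frakm(\phi) \simeq_{mo} \Perf_{dg}(X)$ of \cite[Theorem 1.2]{BFN} recalled in \pref{rmk:BFN1}. Throughout, I would identify $\Qch(X_\phi)$ with $\Qch(X)_\phi$ via \pref{cor:Intertwine1}; this induces a quasi-equivalence $D^+_{dg}(\Qch(X_\phi)) \simeq_{qeq} \Com^+(\fraki_\phi)$, where $\fraki_\phi = \Inj(\Qch(X)_\phi)$ is the flat linear $\bfS$-deformation of $\fraki = \Inj(\Qch(X))$ along the cocycle corresponding to $\phi$ under \pref{eq:HTab}.

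The first observation is that, as a quiver, $\overline{\Perf_{dg,\Gamma}(X_\phi)}$ is canonically identified with $\frakm(\phi)[\epsilon]$: $h$-flatness of $\overline{\Perf_{dg}(X_\phi)}$ over $\bfS$ makes each Hom-complex flat, so the reduction modulo $\bfI$ splits and the choice $\Gamma$ realizes a splitting. The content of the theorem is therefore that the transferred dg structure equals $\embr_\delta(m) + \embr_\delta(\phi)\epsilon$.

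Next, I would invoke \pref{thm:lift} at the level of linear categories: the full dg subcategory $\Com^+_\Gamma(\fraki_\phi) \subset \Com^+(\fraki_\phi)$ spanned by chosen lifts is a maximal partial dg $\bfS$-deformation of $\Com^+(\fraki)$, and on its $\infty$-part it carries precisely the dg structure $\embr_\delta(m) + \embr_\delta(\phi)\epsilon$. I would transport this through the quasi-equivalence above to $D^+_{dg}(\Qch(X_\phi))$, restrict to the full dg subcategory of perfect (compact) objects along the canonical embedding $\Perf_{dg}(X_\phi) \hookrightarrow D^+_{dg}(\Qch(X_\phi))$, and pass to the $h$-flat resolution spanned by $\Gamma$ to recover $\overline{\Perf_{dg,\Gamma}(X_\phi)}$. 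The identification of the dg structure follows by tracing through the diagrams \pref{eq:Keller1} and \pref{eq:Keller2}, which already exhibit that the Hochschild class attached to the Morita deformation agrees with $\embr_\delta(\phi)$ after restriction along $\frakm(\phi) \hookrightarrow \Perf_{dg}(X)$.

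The main obstacle is verifying that the explicit dg structure, not merely its cohomology class, transports correctly through these reductions. I would reduce to the cohomological statement using \pref{eq:classifying}: both $\overline{\Perf_{dg,\Gamma}(X_\phi)}$ and $\frakm(\phi)_{\embr_\delta(\phi)}$ are dg $\bfS$-deformations of the same dg category $\frakm(\phi)$, and the map \pref{eq:Morita} sends each to the class of $\embr_\delta(\phi)$ in $H^2 \bfC(\frakm(\phi))^{\oplus l}$; the first by construction of the map in \pref{thm:mocdg} composed with the restriction isomorphism induced by the Morita equivalence, and the second tautologically. Hence they are isomorphic as dg deformations. The maximality assertion then transports from \pref{thm:lift} through the same chain of equivalences and restrictions, since the $\infty$-part construction commutes with passing to perfect complexes via the quasi-fully faithful embeddings \pref{eq:qff1} and \pref{eq:qff2}.
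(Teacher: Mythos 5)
Your outline of the first half (identifying the underlying quiver $\frakm(\phi)[\epsilon]$ via $h$-flatness, then transporting the dg structure of $\Com^+_\Gamma(\fraki_\phi)$ through the chain $\Com^+(\fraki_\phi) \to D^+_{dg}(\Qch(X_\phi)) \to \Perf_{dg}(X_\phi)$ and its $h$-flat resolutions) is essentially the paper's strategy. But the step you lean on to close the argument is circular. You claim that the map \pref{eq:Morita} sends the Morita deformation $\Perf_{dg}(X_\phi)$ to the class $[\embr_\delta(\phi)]$ ``by construction of the map in \pref{thm:mocdg}.'' It does not: the surjectivity argument in \pref{thm:mocdg} (via \cite[Proposition 3.12]{KL}) only produces \emph{some} Morita deformation hitting $[\embr_\delta(\phi)]$ --- namely the dg deformation of $\frakm(\phi)$ along the restricted cocycle, i.e.\ $\frakm(\phi)_{\embr_\delta(\phi)}$ itself --- and says nothing about which class the geometrically defined deformation $\Perf_{dg}(X_\phi)$ maps to. That identification is precisely the content of \pref{thm:Intertwine2}; the paper even records it as a remark \emph{after} the theorem. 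The diagrams \pref{eq:Keller1} and \pref{eq:Keller2} cannot substitute here either: they compare characteristic morphisms (obstruction classes at objects), not the classifying cocycle of the deformation.

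What is actually needed, and what the paper supplies, is a direct computation of the transferred dg structure. Writing $\delta + \delta'\epsilon$ for the (pre)differentials of objects of $\Com^+(\fraki_\phi)$, the dg structure is $\embr_{\delta+\delta'\epsilon}(m+\phi\epsilon)$; one checks that the $B_\infty$-morphism induced by the resolved quasi-fully-faithful functors carries $\delta$ and $\delta+\delta'\epsilon$ to the differentials of objects of $\overline{\Perf_{dg}(X)}$ and $\overline{\Perf_{dg}(X_\phi)}$ respectively, so the brace-algebra expansion from the proof of \cite[Theorem 4.15]{Low08} applies verbatim and yields
\begin{align*}
\embr_{\delta+\delta'\epsilon}(m+\phi\epsilon)
=
\embr_\delta(m) + \embr_\delta(\phi)\epsilon
+
d_{\embr_\delta(m)}(\delta'_1)\epsilon_1 + \cdots + d_{\embr_\delta(m)}(\delta'_l)\epsilon_l
\end{align*}
on $\overline{\Perf_{dg,\Gamma}(X_\phi)}$. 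The discrepancy is an explicit coboundary, and only then does \pref{eq:classifying} give the isomorphism of dg deformations. Your appeal to \pref{thm:lift} is the right instinct --- note, though, that \pref{thm:lift} asserts that $\Com^+_\Gamma(\fraki_\phi)$ \emph{represents} $\rho'(\phi)$, i.e.\ is isomorphic to the deformation with structure $\embr_\delta(m)+\embr_\delta(\phi)\epsilon$ up to exactly this coboundary, not that it literally carries that structure --- but you must carry the computation through the resolutions rather than fall back on the cohomological shortcut, which presupposes the theorem.
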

\begin{proof}
Since both dg deformations share their underlying quiver
$\frakm(\phi)[\epsilon]$,
it suffices to show the coincidence of their dg structures up to coboundary.
The dg structure on
$\frakm(\phi)_{\embr_\delta(\phi)}$
is
$\embr_\delta(m) + \embr_\delta (\phi) \epsilon$.
Let
$\overline{D^+_{dg}(\Qch(X_\phi))}$
be the $h$-flat resolution of
$D^+_{dg}(\Qch(X_\phi))$
from
\cite[Proposition 3.10]{CNS}.
There is a canonical dg functor
\begin{align*}
\overline{\Perf_{dg}(X_\phi)}
\hookrightarrow
\overline{D_{dg}(\Qch(X_\phi))}
\end{align*}
extending the canonical embedding
$\Perf_{dg}(X_\phi) \hookrightarrow D_{dg}(\Qch(X_\phi))$.
The dg structure on
$\overline{\Perf_{dg, \Gamma} (X_\phi)}$
is the restriction of that on
$\overline{D^+_{dg}(\Qch(X_\phi))}$
by
\cite[Proposition 2.6]{Low08}.

We compute the dg structure on
$\overline{\Perf_{dg}(X_\phi)}$.
Consider the quasi-fully faithful functor
\begin{align} \label{eq:qff}
\Perf_{dg} (X_\phi)
\hookrightarrow
D^+_{dg}(\Qch(X_\phi))
\to
\Com^+(\fraki_\phi)
\end{align}
where
the first functor is the canonical embedding
and
the second functor is a quasi-equivalence induced by the canonical equivalence
\begin{align} \label{eq:canonical}
D^+(\Qch(X_\phi)) \to K^+(\fraki_\phi)
\end{align}
by
\cite[Theorem A]{CNS}.
The functor
\pref{eq:qff}
canonically extends to that
\begin{align} \label{eq:resolvedqff}
\overline{\Perf_{dg}(X_\phi)}
\hookrightarrow
\overline{D^+_{dg}(\Qch(X_\phi))}
\to
\overline{\Com^+(\fraki_\phi)}
\end{align}
of the $h$-flat resolutions from
\cite[Proposition 3.10]{CNS}.
It induces a morphism
\begin{align} \label{eq:iqff}
\bfC(\overline{\Com^+(\fraki_\phi)})
\to
\bfC(\overline{D^+_{dg}(\Qch(X_\phi))})
\to
\bfC(\overline{\Perf_{dg}(X_\phi)})
\end{align}
of $B_\infty$-algebras,
which in turn induces an isomorphism
\begin{align} \label{eq:iiqff}
H^\bullet \bfC(\overline{\Com^+(\fraki_\phi)})
\to
H^\bullet \bfC(D^+_{dg}(\overline{\Qch(X_\phi))})
\to
H^\bullet \bfC(\overline{\Perf_{dg}(X_\phi)})
\end{align}
of Hochschild cohomology by
\pref{lem:Hochschild}.

Recall that
$\delta \in \bfC^1(\Com^+(\fraki))$
be the differentials of objects in
$\Com^+(\fraki)$.
Let
$\delta + \delta^\prime \epsilon \in \bfC^1(\Com^+(\fraki_\phi))$
be the differentials of objects in
$\Com^+(\fraki_\phi)$
with
$\delta^\prime
=
(\delta^\prime_1, \ldots, \delta^\prime_l)
\in
\bfC^1(\Com(\fraki))^{\oplus l}$.
Then the dg structure on
$\Com^+(\fraki_\phi)$
is
\begin{align*}
\embr_{\delta + \delta^\prime \epsilon}(m + \phi \epsilon)
=
(m + \phi \epsilon) \{ \delta + \delta^\prime \epsilon, \delta + \delta^\prime \epsilon \}
+
(m + \phi \epsilon) \{ \delta + \delta^\prime \epsilon \}
+
(m + \phi \epsilon).
\end{align*}
We use the same symbol to denote the images under the composition of
\pref{eq:iqff}
with the morphism
$\bfC(\Com^+(\fraki_\phi)) \to \bfC(\overline{\Com^+(\fraki_\phi)})$
induced by the $h$-flat resolution.
Note that
$\delta \in \bfC^1(\overline{\Perf_{dg}(X)})$
is the differentials of objects in
$\overline{\Perf_{dg}(X)}$
and
$\delta + \delta^\prime \epsilon \in \bfC^1(\overline{\Perf_{dg}(X_\phi)})$
is the differentials of objects in
$\overline{\Perf_{dg}(X_\phi)}$
with
$\delta^\prime
=
(\delta^\prime_1, \ldots, \delta^\prime_l)
\in
\bfC^1(\overline{\Perf_{dg}(X)})^{\oplus l}$,
as
\pref{eq:iqff}
is a morphism of $B_\infty$-algebras induced by the canonical equivalence
\pref{eq:canonical}.
One can apply the same argument as in the proof of
\cite[Theorem 4.15]{Low08}
to obtain
\begin{align*}
\embr_{\delta + \delta^\prime \epsilon}(m + \phi \epsilon)
=
\embr_\delta(m) + \embr_\delta (\phi) \epsilon
+
d_{\embr_\delta(m)}(\delta^\prime_1) \epsilon_1
+
\cdots
+
d_{\embr_\delta(m)}(\delta^\prime_l) \epsilon_l
\end{align*} 
on
$\overline{\Perf_{dg, \Gamma}(X_\phi)}$.
Note that
up to coboundary
the image of
\begin{align*}
d_{\embr_\delta(m)}(\delta^\prime_1) \epsilon_1
+
\cdots
+
d_{\embr_\delta(m)}(\delta^\prime_l) \epsilon_l
\in
\bfC(\overline{\Com^+(\fraki_\phi)})^{\oplus l}
\end{align*}
under
\pref{eq:iqff}
coincide with the images of 
\begin{align*}
\delta^\prime_1 \epsilon_1
+
\cdots
+
\delta^\prime_l \epsilon_l \in \bfC(\Perf_{dg}(X_\phi))^{\oplus l}
\end{align*}
under Hochschild differential
$d_{\embr_\delta(m)}$
on
$\bfC(\Perf_{dg}(X))$.
Hence we obtain an isomorphism
\begin{align*}
\overline{\Perf_{dg, \Gamma}(X_\phi)}
&=
(\frakm(\phi)[\epsilon], \embr_{\delta + \delta^\prime \epsilon}(m + \phi \epsilon)) \\
&\simeq
(\frakm(\phi)[\epsilon], \embr_\delta(m) + \embr_\delta (\phi) \epsilon) \\
&=
\frakm(\phi)_{\embr_\delta(\phi)}.
\end{align*}
of dg deformaitons of
$\frakm(\phi)$
from
\pref{eq:classifying}.
\end{proof}

\begin{rmk}
From the above theorem it follows that
the image of
$\Perf_{dg}(X_\phi)$
under the map
\pref{eq:Morita}
is represented by
$\embr_\delta(\phi)$.
\end{rmk}

\begin{rmk}
In general,
$\frakm(\phi)$
is strictly smaller than
$\Perf_{dg}(X)$.
For instance,
let
$X_0$
be a quintic $3$-fold of Fermat type.
By
\cite[Proposition 2.1]{AK}
for any general first order deformation
$X_1$
of
$X_0$,
there is no line in
$X_0$
which lifts to a closed subvariety of
$X_1$.
Hence deformations of any perfect complex quasi-isomorphic to the pushforward of the structure sheaf of a line in
$X_0$
is obstructed.
This example was informed to the author by Yukinobu Toda.
\end{rmk}

\section{Deformations of higher dimensional Calabi--Yau manifolds revisited}
Now,
we are ready to prove our first main result.
Consider the functor
\begin{align*}
\Def^{mo}_{\Perf_{dg}(X_0)}
\colon
\Art
\to
\Set
\end{align*}
which sends each
$A \in \Art$
to the set of isomorphism classes of Morita $A$-deformations of
$\Perf_{dg}(X_0)$
and each morphism
$B \to A$
in
$\Art$
to the map
$\Def^{mo}_{\Perf_{dg}(X_0)}(B) \to \Def^{mo}_{\Perf_{dg}(X_0)}(A)$
induced by
$- \otimes^L_\bfS \bfR$.
The deformation theory for
$X_0$
is equivalent to
that for
$\Perf_{dg}(X_0)$
in the following sense.

\begin{thm} \label{thm:natisom}
There is a natural isomorphism
\begin{align} \label{eq:schmo}
\zeta \colon \Def_{X_0} \to \Def^{mo}_{\Perf_{dg}(X_0)}
\end{align}
of deformation functors.
\end{thm}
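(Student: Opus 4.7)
The plan is to define $\zeta_A\colon [X_A, i_A] \mapsto [\Perf_{dg}(X_A)]$ and verify naturality and bijectivity by reducing to square zero extensions. Well-definedness of $\zeta_A$ uses the Morita equivalence $\Perf_{dg}(X_A) \otimes^L_A \bfk \simeq_{mo} \Perf_{dg}(X_0)$ coming from \cite[Theorem 1.2]{BFN} together with \cite[Corollary 5.7]{Coh}, as recalled in Remark \pref{rmk:BFN1}. For a morphism $B \to A$ in $\Art$, the same result yields $\Perf_{dg}(X_B) \otimes^L_B A \simeq_{mo} \Perf_{dg}(X_A)$, which supplies naturality; equivalent deformations are sent to Morita equivalent dg categories by functoriality of $\Perf_{dg}$.

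For bijectivity of each $\zeta_A$, I would proceed by induction on $\dim_\bfk A$, factoring $A \to \bfk$ through a chain of square zero extensions $0 \to \bfI \to \bfS \to \bfR \to 0$. The base case $A = \bfk$ is trivial. For the inductive step, fix a class $[X_\bfR] \in \Def_{X_0}(\bfR)$ with inductive image $[\Perf_{dg}(X_\bfR)] \in \Def^{mo}_{\Perf_{dg}(X_0)}(\bfR)$ and compare the fibers over these classes. On the scheme side, Proposition \pref{prop:CLASSscheme} identifies the fiber with $H^1(X_\bfR, \scrT_{X_\bfR/\bfR})^{\oplus l}$. On the Morita side, composing \pref{thm:mocdg} with the classification \pref{eq:classifying} identifies the fiber with $H^2 \bfC(\Perf_{dg}(X_\bfR))^{\oplus l}$.

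These two classifying groups are canonically identified through the sequence
\begin{align*}
H^1(\scrT_{X_\bfR/\bfR})^{\oplus l}
&\cong HT^2(X_\bfR/\bfR)^{\oplus l} \cong HH^2(X_\bfR/\bfR)^{\oplus l} \\
&\cong HH^2_{ab}(\Qch(X_\bfR))^{\oplus l} \cong H^2\bfC(\Perf_{dg}(X_\bfR))^{\oplus l},
\end{align*}
where the first step uses the strict Calabi--Yau condition in dimension greater than two (forcing $H^2(\scrO_{X_\bfR}) = 0$ and $H^0(\wedge^2 \scrT_{X_\bfR/\bfR}) = 0$), the second is the relative HKR isomorphism, the third is \pref{eq:HTab}, and the last follows from \pref{eq:Hochschild} combined with \cite[Theorem 6.6]{LV06}. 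The key point is that $\zeta_\bfS$ restricted to the fiber coincides with this sequence: given $\phi \in H^1(\scrT_{X_\bfR/\bfR})^{\oplus l}$, the associated scheme deformation $X_\phi$ is sent by $\zeta_\bfS$ to $[\Perf_{dg}(X_\phi)]$, which by \pref{thm:Intertwine2} represents the cocycle $\embr_\delta(\phi) \in H^2\bfC(\Perf_{dg}(X_\bfR))^{\oplus l}$ under the bijection of \pref{eq:classifying}, i.e. precisely the image of $\phi$ along the chain. Bijectivity of $\zeta_\bfS$ on each fiber then follows, closing the induction.

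The main obstacle is threading these cocycle-level identifications cleanly enough that the single geometric assignment $\phi \mapsto [\Perf_{dg}(X_\phi)]$ is recognized as the composite of the various classification bijections; this compatibility, whose essential content is encoded in \pref{thm:Intertwine2}, is exactly what lifts the pointwise torsor-theoretic bijection of fibers to a natural isomorphism of deformation functors on all of $\Art$. A secondary technical point is to verify that the classification \pref{prop:CLASSscheme} and the identification \pref{thm:mocdg} extend to a small extension $\bfS \to \bfR$ with $\bfR \neq \bfk$, which is standard once one works relative to $\bfR$ and replaces $l$ by the $\bfk$-dimension of $\bfI$.
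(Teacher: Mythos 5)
Your proposal is correct and follows essentially the same route as the paper: define $\zeta_A$ by $[X_A,i_A]\mapsto[\Perf_{dg}(X_A),i_A^*]$, get naturality from the base-change Morita equivalences of \cite[Theorem 1.2]{BFN}, and prove bijectivity by induction along a chain of square zero extensions, using \pref{thm:mocdg} together with \pref{thm:Intertwine2} to match the fiber $H^1(\scrT_{X_\bfR/\bfR})^{\oplus l}$ on the geometric side with $H^2\bfC(\Perf_{dg}(X_\bfR))^{\oplus l}$ on the Morita side via the HKR and abelian-to-dg Hochschild identifications. The paper phrases the inductive step with explicit cocycle representatives rather than your torsor-of-fibers language, but the content is the same.
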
  
\begin{proof}
We show that
the assignment
\begin{align*}
(X_A, i_A) \mapsto (\Perf_{dg}(X_A), i^*_A)
\end{align*}
for each
$A \in \Art$
defines a natural transformation.
Here,
we use the same symbol
$i^*_A$
to denote both the derived pullback functor
\begin{align*}
i^*_A \colon \Perf_{dg}(X_A) \to \Perf_{dg}(X_0)
\end{align*}
and
the induced Morita equivalence
\begin{align*}
\Perf_{dg}(X_A) \otimes^L_A \bfk \simeq_{mo} \Perf_{dg}(X_0).
\end{align*}
The surjection
$A \to \bfk$
factorizes through a sequence
\begin{align*}
A = A_m \to A_{m-1} \to \cdots \to A_1 \to \bfk 
\end{align*}
of small extensions.
Pullback of
$X_A$
yields a sequence
\begin{align*}
(X_{A_m}, i_{A_m})
\mapsto
(X_{A_{m-1}}, i_{A_{m-1}})
\mapsto
\cdots
\mapsto
(X_{A_1}, i_{A_1})
\mapsto
X_0 
\end{align*}
of deformations of
$X_0$.
Let
$\phi_{A_1} \in HH^2(X_0) = \text{H}^1(\scrT_{X_0})$
be the cocycle representing
$(X_{A_1}, i_{A_1})$.

By 
\pref{thm:Intertwine2}
the Morita deformation
$\Perf_{dg}(X_{A_1})$
of
$\Perf_{dg}(X)$
corresponds to
$\embr_{\delta_0}(\phi_{A_1})$
via the bijection
\pref{eq:mocdg}.
Here,
$\embr_{\delta_0}(\phi_{A_1})$
denotes the image under the composition of
\begin{align*}
H^2\bfC(\Com^+(\Inj(\Qch(X_0))))
\cong
H^2\bfC(\Perf_{dg}(X_0))
\end{align*}
with the induced isomorphism
\begin{align*}
HH^2(X_0)
\cong
H^2\bfC(\Inj(\Qch(X_0)))
\cong
H^2\bfC(\Com^+(\Inj(\Qch(X_0))))
\end{align*}
by the $B_\infty$-section of the canonical projection
$\bfC(\Inj(\Qch(X_0))) \to \bfC(\Com^+(\Inj(\Qch(X_0))))$.
Induction yields a sequence
\begin{align*}
[\Perf_{dg}(X_{A_m}), i^*_{A_m}]
\mapsto
[\Perf_{dg}(X_{A_{m-1}}), i^*_{A_{m-1}}]
\mapsto
\cdots
\mapsto
[\Perf_{dg}(X_{A_1}), i^*_{A_1}]
\mapsto
\Perf_{dg}(X_0) 
\end{align*}
of Morita deformations of
$\Perf_{dg}(X_0)$.
In particular,
$(\Perf_{dg}(X_{A_m}), i^*_{A_m})$
is a Morita deformation of
$\Perf_{dg}(X_0)$
corresponding to the collection
$\{ \embr_{\delta_{n-1}}(\phi_{A_n}) \}^m_{n = 1}$.
Here,
each
$\embr_{\delta_{n-1}}(\phi_{A_n})$
denotes the image of
$\phi_{A_n} \in H^1(\scrT_{X_{A_{n-1}}} / A_{n-1})^{\oplus l_{n-1}}$,
where
$l_{n-1}$
is the rank of the kernel of square zero extension
$A_n \to A_{n-1}$
as a free $A_{n-1}$-module,
under the composition of
\begin{align*}
H^2\bfC(\Com^+(\Inj(\Qch(X_{A_{n-1}}))))^{\oplus l_{n-1}}
\cong
H^2\bfC(\Perf_{dg}(X_{A_{n-1}}))^{\oplus l_{n-1}}
\end{align*}
with the induced isomorphism
\begin{align*}
HH^2(X_{A_{n-1}} / A_{n-1})^{\oplus l_{n-1}}
\cong
H^2\bfC(\Inj(\Qch(X_{A_{n-1}})))^{\oplus l_{n-1}}
\cong
H^2\bfC(\Com^+(\Inj(\Qch(X_{A_{n-1}}))))^{\oplus l_{n-1}}
\end{align*}
by the $B_\infty$-section of the canonical projection
$\bfC(\Inj(\Qch(X_{A_{n-1}}))) \to \bfC(\Com^+(\Inj(\Qch(X_{A_{n-1}}))))$.

It follows that
the assignment defines a map
\begin{align*}
\zeta_A \colon \Def_{X_0}(A) \to \Def^{mo}_{\Perf_{dg}(X_0)}(A), \
(X_A, i_A) \mapsto [\Perf_{dg}(X_A), i^*_A].
\end{align*}
For each morphism
$f \colon B \to A$
in
$\Art$
the diagram
\begin{align} \label{eq:commutativity} 
\begin{gathered}
\xymatrix{
\Def_{X_0}(B) \ar[r] \ar[d]_{\Def_{X_0}(f)} & \Def^{mo}_{\Perf_{dg}(X_0)}(B) \ar[d]^{\Def^{mo}_{\Perf_{dg}(X_0)}(f)} \\
\Def_{X_0}(A) \ar[r] & \Def^{mo}_{\Perf_{dg}(X_0)}(A)
}
\end{gathered}
\end{align}
commutes.
To see this,
we may assume that
$f$
is a square zero extension
$B = \bfS$
of
$A = \bfR$.
Then
for any
$(X_\bfS, i_\bfS) \in \Def_{X_0}(\bfS)$
with
$X_\bfS \times_\bfS \bfR \cong X_\bfR$
we have
$X_\bfS \cong (X_\bfR)_\phi$
for some cocycle
$\phi \in H^1 (\scrT_{X_\bfR} / \bfR)^{\oplus l}$.
We already know that
$(\Perf_{dg}(X_\bfS), i^*_\bfS)$
is the Morita deformation of
$\Perf_{dg}(X_\bfR)$.
Since
$(X_\bfS, i_\bfS)$
maps to
$(X_\bfR, i_\bfR)$,
the derived pullback functor
$i^*_\bfS$
factorizes through
$i^*_\bfR$.
Thus the assignments
$\{ \zeta_A \}_{A \in \Art}$
defines a natural transformation
$\zeta \colon \Def_{X_0} \to \Def^{mo}_{\Perf_{dg}(X_0)}$.

It remains to show that
$\zeta_A$
is bijective for each
$A \in \Art$.
We will proceed by induction.
Now,
assume that
$\zeta_{A_i}$
are bijective for all 
$1 \leq i \leq n$.
In order to show the surjectivity of
$\zeta_{A_{n+1}}$,
take any element
$[\fraka_{A_{n+1}}, u^*_{A_{n+1}}] \in \Def^{mo}_{\Perf_{dg}(X_0)}(A_{n+1})$. 
By the assumption of induction,
the reduction
$[\fraka_{A_n}, u^*_{A_n}] \in \Def^{mo}_{\Perf_{dg}(X_0)}(A_n)$
is equal to
$[\Perf_{dg}(Y_{A_n}), j^*_{A_n}]$
for some
$(Y_{A_n}, j_{A_n}) \in \Def_{X_0}(A_n)$.
Combining
\pref{thm:mocdg}
with
\pref{thm:Intertwine2},
one sees that
the Morita $A_{n+1}$-deformation
$(\fraka_{A_{n+1}}, u^*_{A_{n+1}})$
of
$\Perf_{dg}(Y_{A_n})$
is represented by
$\embr_{\delta_n}(\phi_{A_{n+1}})$
for some cocycle
$\phi_{A_{n+1}} \in H^1(\scrT_{Y_{A_n} / A_n})^{\oplus l_n}$.
Then we have
\begin{align*}
[\fraka_{A_{n+1}}, u^*_{A_{n+1}}]
=
[\Perf_{dg}(Y_{A_n, \phi_{A_{n+1}}}), j^*_{A_{n+1}}].
\end{align*}

In order to show the injectivity,
suppose that we have
\begin{align} \label{eq:equality}
[\Perf_{dg}(X_{A_{n+1}}), i^*_{A_{n+1}}]
=
[\Perf_{dg}(Y_{A_{n+1}}), j^*_{A_{n+1}}],
\end{align}
i.e.,
there is a Morita equivalence
$\Perf_{dg}(X_{A_{n+1}}) \simeq_{mo} \Perf_{dg}(Y_{A_{n+1}})$
reducing to the identity on
$\Perf_{dg}(X_0)$.
Combining the above argument with the commutative diagram
\pref{eq:commutativity},
we have
\begin{align*}
[\Perf_{dg}(X_{A_{n+1}}), i^*_{A_{n+1}}]
=
[\Perf_{dg}(X_{{A_n}, \phi_{A_n}}), i^*_{A_{n+1}}], \
[\Perf_{dg}(Y_{A_{n+1}}), j^*_{A_{n+1}}]
=
[\Perf_{dg}(Y_{{A_n}, \psi_{A_n}}), j^*_{A_{n+1}}]
\end{align*}
for some elements
\begin{align*}
(X_{A_n}, i_{A_n}), (Y_{A_n}, j_{A_n}) \in \Def_{X_0}(A_n)
\end{align*}
and
cocycles
\begin{align*}
\phi_{A_n} \in H^1(\scrT_{X_{A_n} / A_n})^{\oplus l_n}, \
\psi_{A_n} \in H^1(\scrT_{Y_{A_n} / A_n})^{\oplus l_n}.
\end{align*}
Applying
$- \otimes^L_{A_{n+1}} A_n$,
we obtain a Morita equivalence
$\Perf_{dg}(X_{A_n}) \simeq_{mo} \Perf_{dg}(Y_{A_n})$
reducing to the identity on
$\Perf_{dg}(X_0)$.
By the assumption of induction
there is an isomorphim
$X_{A_n} \cong Y_{A_n}$
reducing to the identity on
$X_0$.
Then 
\pref{eq:equality}
implies
$[\embr_{\delta_{n-1}}(\phi_{A_n})] = [\embr_{\delta_{n-1}}(\psi_{A_n})]$,
which in turn implies
$[\phi_{A_n}] = [\psi_{A_n}]$.
Thus we obtain an isomorphism
$X_{A_{n+1}} \cong Y_{A_{n+1}}$
reducing to the identity on
$X_0$. 
\end{proof}

\begin{rmk} \label{rmk:BFN2}
Consider the functor
\begin{align*}
\widetilde{\Def}^{mo}_{\Perf_{dg}(X_0)}
\colon
\Art
\to
\Set
\end{align*}
which sends each
$A \in \Art$
to the set of isomorphism classes of Morita $A$-deformations of
$\Perf_{dg}(X_0)$
and each morphism
$B \to A$
in
$\Art$
to the map
$\Def^{mo}_{\Perf_{dg}(X_0)}(B) \to \Def^{mo}_{\Perf_{dg}(X_0)}(A)$
induced by the derived pointwise tensor product with
$\Perf_{dg}(A)$
over
$\Perf_{dg}(B)$.
Based on Remark
\pref{rmk:BFN1},
one can rewrite the proof of
\pref{thm:natisom}
in terms of
$\widetilde{\Def}^{mo}_{\Perf_{dg}(X_0)}$
to obtain a natural isomorphism
\begin{align*}
\tilde{\zeta} \colon \Def_{X_0} \to \widetilde{\Def}^{mo}_{\Perf_{dg}(X_0)}
\end{align*}
of deformation functors.
In the sequel,
we will identify the deformation functors
\begin{align*}
\Def^{mo}_{\Perf_{dg}(X_0)},
\widetilde{\Def}^{mo}_{\Perf_{dg}(X_0)}
\colon
\Art
\to
\Set
\end{align*}
without further comments.
\end{rmk}

\begin{rmk}
\pref{thm:natisom}
tells us that
infinitesimal deformations of
$\Perf_{dg}(X_0)$
is controlled by the Kodaira--Spencer differential graded Lie algebra
$\KS_{X_0}$
of
$X_0$.
Consider the functor
$\Def_{\KS_{X_0}} \colon \Art \to \Set$
defined as
\begin{align}
\Def_{\KS_{X_0}} (A) = \frac{\MC_{\KS_{X_0}} (A)}{gauge \ equivalence}
\end{align}
for each
$A \in \Art$,
where
\begin{align}
\MC_{\KS_{X_0}} (A)
=
\Bigl\{
x \in \KS^1_{X_0} \otimes_{\bfk} \frakm_A \ | \ dx + \frac{1}{2} [x, x] = 0 
\Bigr\}.
\end{align}
Recall that
given a differential graded Lie algebra
$L$
and
a commutative $\bfk$-algebra
$\frakm$
there exists a natural structure of differential graded Lie algebra on the tensor product
$L \otimes_\bfk \frakm$
given by
\begin{align*}
d( x \otimes_\bfk r) = dx \otimes_\bfk r, \
[x \otimes_\bfk r, y \otimes_\bfk s] = [x,y] \otimes_ \bfk rs, \
x,y \in L, \
r,s \in \frakm.
\end{align*}
For every surjection
$A \to \bfk [t] / t^2$
in
$\Art$
the set
$\Def_{\KS_{X_0}} (A)$
consists of solutions of the extended Maurer--Cartan equation to
$\frakm_A$.
Giving higher order deformations of
$X_0$
is equivalent to giving solutions of the extended Maurer--Cartan equation. 
Indeed,
we have
$\Def_{\KS_{X_0}} \simeq \Def_{X_0}$
by
\cite[Example 2.3]{Man}.
\end{rmk}

\begin{cor} \label{cor:prorep}
The functor
$\Def^{mo}_{\Perf_{dg}(X_0)}$
is prorepresented by
$R$.
\end{cor}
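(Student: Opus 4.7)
The plan is to combine the prorepresentability of $\Def_{X_0}$ with the natural isomorphism of \pref{thm:natisom}. First I recall from \pref{subsec:DefCY} that for a Calabi--Yau manifold $X_0$ of dimension more than two the deformation functor $\Def_{X_0}$ admits an effective universal formal family $(R, \xi)$ with $R \cong \bfk \llbracket t_1, \ldots, t_d \rrbracket$, $d = \dim_\bfk H^1(\scrT_{X_0})$. The regularity of $R$ reflects the Bogomolov--Tian--Todorov unobstructedness of Calabi--Yau deformations. The \emph{universality} of the family, as opposed to mere miniversality, amounts to the absence of nontrivial infinitesimal automorphisms of $X_0$: from $\omega_{X_0} \cong \scrO_{X_0}$ one has $\scrT_{X_0} \cong \Omega^{\dim X_0 - 1}_{X_0}$, so by Hodge symmetry $H^0(\scrT_{X_0}) \cong H^{\dim X_0 - 1}(\scrO_{X_0}) = 0$ under the strict Calabi--Yau assumption $H^i(\scrO_{X_0}) = 0$ for $0 < i < \dim X_0$. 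Hence there is a natural isomorphism $h_R \cong \Def_{X_0}$ of functors on $\Art$.

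Second, \pref{thm:natisom} provides a natural isomorphism $\zeta \colon \Def_{X_0} \to \Def^{mo}_{\Perf_{dg}(X_0)}$ of deformation functors on $\Art$. Composing yields a natural isomorphism $h_R \cong \Def^{mo}_{\Perf_{dg}(X_0)}$, which is exactly the statement that $\Def^{mo}_{\Perf_{dg}(X_0)}$ is prorepresented by $R$. Concretely, the prorepresenting formal family transports along $\zeta$: from $\xi = \{(X_n, i_n)\}_n$ one obtains $\{[\Perf_{dg}(X_n), i_n^*]\}_n \in \displaystyle \lim_{\longleftarrow} \Def^{mo}_{\Perf_{dg}(X_0)}(R/\frakm_R^{n+1})$ as the tautological element corresponding to the identity of $R$.

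Since both ingredients are in place, this corollary is a purely formal consequence; the entire nontrivial content has already been absorbed into \pref{thm:natisom}, and there is no substantive obstacle at this stage.
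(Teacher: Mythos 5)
Your proposal is correct and follows the same route as the paper: the paper's proof is the one-line observation that the claim follows immediately from the prorepresentability of $\Def_{X_0}$ by $R$ together with the natural isomorphism $\zeta$ of \pref{thm:natisom}. Your additional justification of why $\Def_{X_0}$ is universal (vanishing of $H^0(\scrT_{X_0})$ via the Calabi--Yau condition) merely makes explicit what the paper already assumes in \pref{subsec:DefCY}.
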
  
\begin{proof}
This follows immediately as
$\Def_{X_0}$
is prorepresented by
$R$.
\end{proof}

\begin{cor} \label{cor:effective}
The functor
$\Def^{mo}_{\Perf_{dg}(X_0)}$
has an effective universal formal family.
\end{cor}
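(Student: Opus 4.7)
The plan is to exhibit $\Perf_{dg}(X_R)$ as the effectivization, where $X_R$ is the effectivization of the universal formal family $(R, \xi)$ for $\Def_{X_0}$ constructed in Subsection \pref{subsec:DefCY}. By Corollary \pref{cor:prorep}, $\Def^{mo}_{\Perf_{dg}(X_0)}$ is prorepresented by $R \cong \bfk \llbracket t_1, \ldots, t_d \rrbracket$, and under the natural isomorphism $\zeta$ of \pref{thm:natisom} the universal formal family for $\Def^{mo}_{\Perf_{dg}(X_0)}$ is identified with the compatible system $\{[\Perf_{dg}(X_n), i_n^*]\}_n$, where $X_n = \scrX \times_R R/\frakm_R^{n+1}$. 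Recall that the Calabi--Yau condition together with $\dim X_0 > 2$ forces $H^2(\scrO_{X_0}) = 0$, so $(R, \xi)$ is effective by \cite[Theorem III5.4.5]{GD61}; moreover $X_R$ is smooth projective over $R$ and satisfies $X_R \times_R R/\frakm_R^{n+1} \cong X_n$ canonically.

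The proposal is then to check that $\Perf_{dg}(X_R)$ realizes this formal family, i.e.\ that
\begin{align*}
\Perf_{dg}(X_R) \otimes^L_R R/\frakm_R^{n+1}
\simeq_{mo}
\Perf_{dg}(X_n)
\end{align*}
as Morita $R/\frakm_R^{n+1}$-deformations of $\Perf_{dg}(X_0)$, and that these equivalences are compatible with the transition maps induced by the natural quotients $R/\frakm_R^{n+2} \to R/\frakm_R^{n+1}$. This is exactly the Morita base-change statement of Remark \pref{rmk:BFN1}: invoking \cite[Corollary 5.7]{Coh} to pass between the dg and $\infty$-categorical presentations of perfect complexes, and then applying \cite[Theorem 1.2]{BFN} along the affine morphism $\Spec R/\frakm_R^{n+1} \to \Spec R$, one obtains the desired equivalence. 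Specializing further to $n = 0$ yields $\Perf_{dg}(X_R) \otimes^L_R \bfk \simeq_{mo} \Perf_{dg}(X_0)$, so $\Perf_{dg}(X_R)$ is a genuine Morita $R$-deformation in the extended sense needed to speak of effectivity.

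The main obstacle, such as there is one, is less about a hard calculation and more about setting up the notion of effective formal family for $\Def^{mo}_{\Perf_{dg}(X_0)}$ coherently: the functor is defined on $\Art$, so one has to spell out that ``effectivity'' amounts to producing a single dg category over $R$ whose base changes along $R \to R/\frakm_R^{n+1}$ recover the formal family as an inverse system in $\widehat{\Def}^{mo}_{\Perf_{dg}(X_0)}(R)$. Once this bookkeeping is in place, the compatibility of the BFN base-change isomorphisms with the transition maps is automatic from the functoriality in \cite[Theorem 1.2]{BFN} and \cite[Corollary 5.7]{Coh}, and \pref{thm:natisom} together with the established prorepresentability finishes the proof.
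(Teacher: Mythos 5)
Your proposal is correct and follows essentially the same route as the paper: both identify the universal formal family for $\Def^{mo}_{\Perf_{dg}(X_0)}$ with the system $\{[\Perf_{dg}(X_n), i^*_n]\}_n$ via \pref{thm:natisom}, take the effectivization $X_R$ of $(R,\xi)$ furnished by Grothendieck's existence theorem, and invoke \cite[Theorem 1.2]{BFN} (through \cite[Corollary 5.7]{Coh}) to see that $\Perf_{dg}(X_R)$ reduces to this compatible system along the quotients $R/\frakm_R^{n+1} \to R/\frakm_R^n$. The extra bookkeeping you flag about what ``effective'' means for the extended functor matches the paper's implicit setup, so there is nothing to add.
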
  
\begin{proof}
Let
$(R, \tilde{\xi})$
be a universal formal family for
$\Def^{mo}_{\Perf_{dg}(X_0)}$,
where
$\tilde{\xi} = \{ \tilde{\xi}_n \}_n$
belongs to the limit 
\begin{align*}
\widehat{\Def}^{mo}_{\Perf_{dg}(X_0)} (R)
=
\displaystyle \lim_{\longleftarrow} \Def^{mo}_{\Perf_{dg}(X_0)} (R / \frakm^n_R)
\end{align*}
of the inverse system
\begin{align*}
\cdots
\to
\Def^{mo}_{\Perf_{dg}(X_0)} (R/\frakm_R^{n+2})
\to
\Def^{mo}_{\Perf_{dg}(X_0)} (R/\frakm_R^{n+1})
\to
\Def^{mo}_{\Perf_{dg}(X_0)} (R/\frakm_R^n)
\to
\cdots
\end{align*}
induced by the natural quotient maps
$R/\frakm^{n+1}_R \to R/\frakm^n_R$.
Recall that
for the universal formal family
$(R, \xi)$
there is a noetherian formal scheme
$\scrX$
over
$R$
such that
$X_n \cong \scrX \times_R R / \frakm^{n+1}_R$
for each
$n$,
where 
$(X_n, i_n)$
are $R_n$-deformations of
$X_0$
defining
$\xi_n$.
By
\cite[Theorem III5.4.5]{GD61}
there exists a scheme
$X_R$
flat projective over
$R$
whose formal completion along the closed fiber
$X_0$
is isomorphic to
$\scrX$.
From the proof of
\pref{thm:natisom}
it follows that
$(\Perf_{dg}(X_n), i^*_n)$
defines
$\tilde{\xi_n}$.
Then by
\cite[Theorem 1.2]{BFN}
the $R$-linear dg category
$\Perf_{dg}(X_R)$
yields the compatible system
$\{ \tilde{\xi}_n\}_n$
via reduction along the natural quotient maps
$R/\frakm^{n+1}_R \to R/\frakm^n_R$,
which means
$\tilde{\xi}$
is effective.
\end{proof}

\begin{rmk}
Recall that
the Dwyer--Kan model structure on
$\dgCat_\bfk$
has a natural simplicial enrichment
\cite[Section 5]{Toe}.
We denote by
$\dgCat^\infty_\bfk$
the underlying $\infty$-category.
There is a notion of limits in $\infty$-categories
that behaves similarly to the classical one
\cite[Chapter 4]{Lur}.
As
$\dgCat^\infty_\bfk$
is the underlying $\infty$-category of a simplicial model category,
it admits limits
\cite[Corollary 4.2.4.8]{Lur}.
Hence we obtain a limit
\begin{align*}
\widehat{\Perf}_{dg}(X_R)
=
\displaystyle \lim_{\longleftarrow} \Perf_{dg}(X_n)
\end{align*}
of the inverse system
\begin{align*}
\cdots
\to
\Perf_{dg}(X_{n+2})
\to
\Perf_{dg}(X_{n+1})
\to
\Perf_{dg}(X_n)
\to
\cdots
\end{align*}
of small $\bfk$-linear dg categories induced by the natural quotient maps
$R/\frakm^{n+1}_R \to R/\frakm^n_R$.

We claim that
the limit is quasi-equivalent to
$\Perf_{dg}(\scrX)$.
By
\cite[Corollary 5.1.3]{GD61}
the canonical map
\begin{align*}
\Hom_{X_R}(\scrE, \scrF)
\to
\Hom_{\scrX}(\hat{\scrE}, \hat{\scrF})
\end{align*}
defined by taking the formal completion of each morphism along the closed fiber 
is an isomorphism for all coherent sheaves
$\scrE, \scrF$
on
$X_R$.
In particular,
we may write
\begin{align*}
\widehat{\Hom_{X_R}(\scrE, \scrF)}
=
\Hom_{\scrX}(\hat{\scrE}, \hat{\scrF}).
\end{align*}
Since
$X_R$
is projective over a complete local noetherian ring
$R$,
by
\cite[Corollary III5.1.6]{GD61}
the functor
\begin{align*}
\coh (X_R) \to \coh (\scrX),
\end{align*}
which sends each coherent sheaf
$\scrF$
on
$X_R$
to its formal completion
$\hat{\scrF}$
along the closed fiber
is an equivalence of abelian categories.
We obtain the induced derived equivalence
\begin{align*}
\Perf(X_R) \simeq D^b (X_R) \simeq D^b (\scrX) \simeq \Perf(\scrX).
\end{align*}
Hence for
$E, F \in \Perf_{dg}(X_R)$
with formal completions
$\hat{E}, \hat{F} \in \Perf_{dg}(\scrX)$
we may write
\begin{align*}
\widehat{\Ext^i_{X_R}(E, F)}
=
\Ext^i_{\scrX}(\hat{E}, \hat{F}).
\end{align*}
Now,
one sees that
the objects and morphisms in
$\Perf(\scrX)$
satisfy universality with respect to the induced inverse system on homotopy categories. 
Thus the dg functor
\begin{align*}
\Perf_{dg}(\scrX) \to \widehat{\Perf}_{dg}(X_R)
\end{align*}
uniquely determined by universality of the limit is a quasi-equivalence.
Namely,
the formal completion of
$\Perf_{dg}(X_R)$
is quasi-equivalent to
$\Perf_{dg}(\scrX)$.
\end{rmk}

\begin{cor} \label{cor:algebraizable}
Any effective universal formal family for
$\Def^{mo}_{\Perf_{dg}(X_0)}$
is algebraizable.
In particular,
an algebraization is given by
$\Perf_{dg}(X_S)$
where
$X_S$
is a versal deformation of
$X_0$.
\end{cor}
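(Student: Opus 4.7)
The plan is to transfer the algebraization from the scheme-level deformation functor $\Def_{X_0}$ via the natural isomorphism $\zeta$ of Theorem~\ref{thm:natisom}. By Corollary~\ref{cor:prorep}, $\Def^{mo}_{\Perf_{dg}(X_0)}$ is prorepresented by $R$, so up to isomorphism any effective universal formal family has the form $(R, \tilde{\xi})$, and via $\zeta$ the family $\tilde{\xi}$ corresponds to the effective universal formal family $\xi$ of $\Def_{X_0}$. From the proof of Theorem~\ref{thm:natisom} we have $\tilde{\xi}_n = [\Perf_{dg}(X_n), i_n^*]$, where $(X_n, i_n)$ is the scheme-level deformation defining $\xi_n$.

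By the construction in Subsection~\ref{subsec:DefCY} together with Lemma~\ref{lem:smprojVD}, the functor $\Def_{X_0}$ admits a versal deformation $(\Spec S, s, X_S)$ obtained via Artin's approximation \cite[Corollary 2.1]{Art69a}, with $X_S$ smooth projective over $S$ and $X_S \times_S S/\frakm_s^{n+1} \cong X_n$ for each $n$. The proposal is that $\Perf_{dg}(X_S)$ serves as an algebraization of $\tilde{\xi}$: for each $n$ one invokes \cite[Theorem 1.2]{BFN} together with \cite[Corollary 5.7]{Coh}, exactly as in Remark~\ref{rmk:BFN1} and the proof of Corollary~\ref{cor:effective}, to obtain Morita equivalences
\begin{align*}
\Perf_{dg}(X_S) \otimes^L_S S/\frakm_s^{n+1} \simeq_{mo} \Perf_{dg}(X_n) = \tilde{\xi}_n,
\end{align*}
which assemble into a compatible system realizing $\tilde{\xi}$ upon formal completion along $s$.

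The main obstacle is that $\Def^{mo}_{\Perf_{dg}(X_0)}$ is not known to be locally of finite presentation, so Artin's algebraization theorem cannot be applied directly at the dg level. The strategy circumvents this by performing the algebraization on the underlying schemes, where the classical Artin theory applies to $\Def_{X_0}$, and then propagating to Morita deformations via the compatibility of $\Perf_{dg}(-)$ with derived base change furnished by \cite[Theorem 1.2]{BFN}. The essential technical check is to verify that the resulting Morita equivalences are compatible with the transition maps of the inverse system defining $\tilde{\xi}$, which reduces to the naturality of $\zeta$ together with the known compatibility for $\xi$.
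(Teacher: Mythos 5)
Your proposal is correct and follows essentially the same route as the paper: both arguments algebraize at the scheme level (where $\Def_{X_0}$ is locally of finite presentation and Artin's theorem applies) to obtain the versal deformation $(\Spec S, s, X_S)$, and then transport the compatible system to $\Perf_{dg}(X_S)$ via the derived base-change compatibility of \cite[Theorem 1.2]{BFN} exactly as in Corollary~\ref{cor:effective}. The paper's proof is terser but contains no additional idea beyond what you wrote.
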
  
\begin{proof}
Consider the triple
$(\Spec S, s, \Perf_{dg}(X_S))$.
Since the reduction of
$X_S$
along the natural quotient maps
$S/\frakm^{n+1}_S \to S/\frakm^n_S$
yields a compatible system isomorphic to 
$\xi$,
the reduction of
$\Perf_{dg}(X_S)$
yields a compatible system isomorphic to 
$\tilde{\xi}$.
Thus
$(\Spec S, s, \Perf_{dg}(X_S))$
gives a versal Morita deformation of
$\Perf_{dg}(X_0)$.
\end{proof}

\begin{prop} \label{prop:dgCGF}
There is a quasi-equivalence
\begin{align*}
\Perf_{dg}(X_S) / \Perf_{dg}(X_S)_0 \simeq_{qeq} \Perf_{dg}(X_{Q(S)})
\end{align*}
where
$Q(S)$
is the quotient field of
$S$
and
$X_{Q(S)}$
is the generic fiber of
$X_S$.
\end{prop}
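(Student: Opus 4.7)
The plan is to realize the base-change dg functor as a universal functor out of the Drinfeld quotient and recognize the resulting comparison as a dg enhancement of the Thomason--Neeman localization theorem. Since $Q(S)$ is $S$-flat, the assignment $E \mapsto E \otimes^L_S Q(S)$ yields a dg functor
\[
\Phi \colon \Perf_{dg}(X_S) \to \Perf_{dg}(X_{Q(S)}).
\]
Writing $Q(S) = \varinjlim_{f \in S \setminus \{0\}} S[f^{-1}]$, any object of $\Perf_{dg}(X_S)_0$ has bounded coherent $S$-torsion cohomology, so it is annihilated by a single $f$; hence $\Phi$ sends $\Perf_{dg}(X_S)_0$ to acyclic objects. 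By the universal property of the Drinfeld quotient, $\Phi$ descends to
\[
\bar\Phi \colon \Perf_{dg}(X_S) / \Perf_{dg}(X_S)_0 \to \Perf_{dg}(X_{Q(S)}),
\]
and it is this comparison functor which I would show to be a quasi-equivalence.

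To do so, I would identify both sides with the filtered colimit
\[
\varinjlim_{f \in S \setminus \{0\}} \Perf_{dg}(X_{S[f^{-1}]}).
\]
On the right, this uses that every perfect complex on $X_{Q(S)}$ spreads out to some $X_{S[f^{-1}]}$ (by noetherianity of $S$ and finite presentation of $X_S/S$), together with the fact that filtered colimits compute Hom-complexes between compact objects under flat base change. On the left, I would observe the equality $\Perf_{dg}(X_S)_0 = \bigcup_f \Perf_{dg}(X_S)_{V(f)}$, where $\Perf_{dg}(X_S)_{V(f)}$ denotes the full dg subcategory of complexes supported on $V(f) \times_{\Spec S} X_S$; one then applies the dg enhancement of Thomason--Neeman--Keller localization to match
\[
\Perf_{dg}(X_S)/\Perf_{dg}(X_S)_{V(f)} \simeq_{qeq} \Perf_{dg}(X_{S[f^{-1}]})
\]
for each $f$, and commutes the Drinfeld quotient with the filtered union to obtain the two coincidences.

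The main obstacle is the idempotent-completion subtlety arising in Thomason--Neeman localization: for a single $f$, the Verdier quotient $\Perf(X_S)/\Perf(X_S)_{V(f)}$ only embeds as a dense thick subcategory of $\Perf(X_{S[f^{-1}]})$, with cokernel controlled by the map $K_0(\Perf(X_S)) \to K_0(\Perf(X_{S[f^{-1}]}))$. In our filtered colimit this cokernel is absorbed, because every coherent sheaf on $X_{S[f^{-1}]}$ extends to a coherent sheaf on $X_S$ (by quasi-coherent extension on a noetherian scheme) and the regularity of $S$ -- hence of $X_S$, by smoothness -- ensures $\Perf(X_S) = D^b(\coh X_S)$, so every class in $K_0$ is represented by such an extension. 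Verifying that this extension argument actually produces a quasi-equivalence on morphism complexes (not merely on the level of Karoubi closures) is the technical heart of the proof, and will require either an appeal to a dg version of Thomason's theorem or a direct computation of Hom in the Drinfeld quotient via roofs through objects of $\Perf_{dg}(X_S)_0$.
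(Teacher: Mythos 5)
Your proposal is correct in outline, but it takes a genuinely different route from the paper. The paper's proof is a three-step citation chain: Drinfeld's theorem identifying the homotopy category of the Drinfeld quotient with the Verdier quotient $\Perf(X_S)/\Perf(X_S)_0$, the author's earlier triangulated-level result (\cite[Theorem 1.1]{Morb}) giving $\Perf(X_S)/\Perf(X_S)_0 \simeq \Perf(X_{Q(S)})$, and the uniqueness of dg enhancements (\cite[Theorem B]{CNS}) to promote that triangulated equivalence to a quasi-equivalence. You instead construct an explicit comparison dg functor $\bar\Phi$ and re-derive the triangulated statement by spreading out along $Q(S)=\varinjlim_f S[f^{-1}]$ and applying Thomason--Neeman localization; this essentially inlines the proof of \cite{Morb} and, because you exhibit an explicit functor, lets you replace the heavy appeal to uniqueness of enhancements by the much softer fact that $[\fraka/\frakb]\simeq[\fraka]/[\frakb]$ (\cite[Theorem 3.4]{Dri}): the restriction dg functor $\Perf_{dg}(X_S)\to\Perf_{dg}(X_{S[f^{-1}]})$ descends to the Drinfeld quotient and is a quasi-equivalence precisely because it induces the Thomason--Neeman equivalence on homotopy categories. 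This also resolves the worry in your last paragraph --- the ``dg version of Thomason's theorem'' you ask for is obtained exactly this way, not by a separate computation of roofs. Two small refinements: the $K_0$-obstruction already vanishes for each individual $f$ (regularity of $X_S$ gives $\Perf(X_S)=D^b(\coh X_S)$ and coherent extension gives surjectivity of $G_0(X_S)\to G_0(X_{S[f^{-1}]})$), so the filtered colimit is not needed to ``absorb'' anything; it is needed only because $X_{Q(S)}$ is not itself an open subscheme of $X_S$. And your claim that an object of $\Perf_{dg}(X_S)_0$ is killed by a single $f$ uses that $X_S$ is noetherian (finitely many affines) and that $S$ is a domain, both of which hold here.
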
  
\begin{proof}
By
\cite[Theorem 3.4]{Dri}
and
\cite[Theorem 1.1]{Morb}
we have an equivalence
\begin{align*}
[\Perf_{dg}(X_S) / \Perf_{dg}(X_S)_0]
\simeq
\Perf(X_S) / \Perf(X_S)_0
\simeq
\Perf(X_{Q(S)})
\end{align*}
of idempotent complete triangulated categories,
where
the middle category is the Verdier quotient by the full triangulated subcategory
$\Perf(X_S)_0 \subset \Perf(X_S)$
of perfect complexes with $S$-torsion cohomology.
Then the claim follows from
\cite[Theorem B]{CNS}.
\end{proof}

\begin{rmk}
From the proof
one sees that 
the dg categorical generic fiber is a natural dg enhancement of the categorical generic fiber introduced in
\cite{Morb},
which is in turn based on the categorical general fiber by Huybrechts--Macr\`i--Stellari
\cite{HMS11}.
\end{rmk}

\section{Independence from geometric realizations}\label{sec:Independence}

Due to Corollary
\pref{cor:algebraizable},
a versal Morita deformation of
$\Perf_{dg}(X_0)$
is given by
$\Perf_{dg}(X_S)$
where
$X_S$
is a versal deformatiion of
$X_0$.
Suppose that
there is another Calabi--Yau manifold
$X^\prime_0$
derived-equivalent to
$X_0$.
Since by
\cite[Theorem B]{CNS}
dg enhancements of
\begin{align*}
\Perf(X_0)
\simeq
D^b(X_0)
\simeq
D^b(X^\prime_0)
\simeq
\Perf(X^\prime_0)
\end{align*}
are unique,
we obtain a quasi-equivalence
\begin{align*}
\Perf_{dg}(X_0) \to \Perf_{dg}(X^\prime_0).
\end{align*}
Hence
$\Perf_{dg}(X_S)$
gives also a versal Morita deformation of
$\Perf_{dg}(X^\prime_0)$.

By
\pref{lem:smprojVD}
we may assume
$X_S$
to be smooth projective over
$S$.
Then one finds a smooth projective versal deformation
$X^\prime_S$
over the same base.
The construction requires the deformation theory of Fourier--Mukai kernels,
which we briefly review below.
It passes through effectivizations,
i.e.,
there are effectivizations
$X_R, X^\prime_R$
of
$X_0, X^\prime_0$
over the same regular affine scheme
$\Spec R$.
Applying
\cite[Corollary 4.2]{Mora}
and
\cite[Theorem B]{CNS},
we obtain a quasi-equivalence
\begin{align} \label{eq:qeqeff}
\Perf_{dg}(X_R) \simeq_{qeq} \Perf_{dg}(X^\prime_R).
\end{align}

Unwinding the construction of versal deformations recalled in
Section
\pref{subsec:DefCY},
one sees that,
up to equivalence of deformations,
the ambiguity of
$X_S$ 
essentially stems from the choice of indices
$i \in I$
of the filtered inductive system
$\{ R_i \}_{i \in I}$,
where
$R_i$
are finitely generated $T$-subalgebras of
$R$
whose colimit is
$R$.
The versal deformations
$X_S, X^\prime_S$
over the same base are obtained by choosing the same sufficiently large index. 
From this observation combined with
\pref{thm:natisom}
and the quasi-equivalence
\pref{eq:qeqeff},
it is natural to expect that
the versal Morita deformations
$\Perf_{dg}(X_S), \Perf_{dg}(X^\prime_S)$
become quasi-equivalent close to effectivizations.
In this section,
we prove our second main result
which yields the quasi-equivalence as a corollary.

\subsection{Deformations of Fourier--Mukai kernels}
Suppose that
the derived equivalence of
$X_0, X^\prime_0$
is given by a Fourier--Mukai kernel
$\cP_0 \in D^b (X_0 \times X^\prime_0)$.
In order to define a relative integral functor from
$D^b (X_S)$
to
$D^b (X^\prime_S)$,
we deform
$\cP_0$
to a perfect complex
$\cP_S$
on
$X_S \times_S X^\prime_S$.
Here,
for a deformation
$[X_\bfP, i_\bfP] \in \Def_X((\bfP, \frakm_\bfP))$
of a $\bfk$-scheme
$X$,
by a deformation of
$E \in \Perf(X)$
over
$(\bfP, \frakm_\bfP)$
we mean a pair
$(E_\bfP, u_\bfP)$,
where
$E_\bfP \in \Perf(X_\bfP)$
and
$u_\bfP \colon E_\bfP \otimes^L_\bfP \bfk \to E$
is an isomorphism.
Two deformations
$(E_\bfP, u_\bfP)$
and
$(F_\bfP, v_\bfP)$
are equivalent
if there is an isomorphism
$E_\bfP \to F_\bfP$
reducing to an isomorphism of
$E$.

The $R_n$-deformations
$X_n, X^\prime_n$
of
$X_0, X^\prime_0$
and
their fiber product
$X_n \times_{R_n} X^\prime_n$
form the diagram
\begin{align*}
\begin{gathered}
\xymatrix{
&
X_n \times_{R_n} X^\prime_n
\ar _{ q_n }[dl]
\ar ^{ p_n }[dr]
&\\
X_n
&
&
X^\prime_n}
\end{gathered}
\end{align*}
with the natural projections
$q_n$
and
$p_n$.
For any perfect complex
$\cP_n$
on
$X_n \times_{R_n} X^\prime_n$,
the relative integral functor
\begin{align*}
\Phi_{\cP_n} \left( - \right)  = Rp_{n *} \left( \cP_n \otimes^L q^*_n \left( - \right) \right)
\end{align*}
sends each object of
$D^b(X_n)$
to
$D^b(X^\prime_n)$.
Due to the Grothendieck--Verdier duality
the functor
$\Phi_{\cP_n}$
admits the right adjoint
$\Phi^R_{\cP_n} = \Phi_{(\cP_n)_R}$
with kernel
$(\cP_n)_R
=
\cP^\vee_n \otimes p^*_n \omega_{\pi^\prime_n}[\dim X_0]$,
where
$\omega_{\pi^\prime_n}$
is the determinant of the relative cotangent sheaf associated with the natural projection
$\pi^\prime_n \colon X^\prime_n \to \Spec R_n$.

\begin{lem} {\rm{(}\cite[Lemma 3.1, 3.2]{Mora}\rm{)}} \label{lem:OBSf}
Assume that
$\Phi_{\cP_n}$
is an equivalence.
Then for any thickening
$X_n \hookrightarrow X_{n+1}$
there exist a thickening
$X^\prime_n \hookrightarrow X^\prime_{n+1}$
and
a perfect complex
$\cP_{n+1}$
on
$X_{n+1} \times_{R_{n+1}} X^\prime_{n+1}$
with an isomorphism
$\cP_{n+1} \otimes^L_{R_{n+1}} R_n \cong \cP_n$
such that
the integral functor
$\Phi_{\cP_{n+1}} \colon D^b(X_{n+1}) \to D^b(X^\prime_{n+1})$
is an equivalence.
\end{lem}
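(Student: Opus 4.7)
The plan is to use the Fourier--Mukai equivalence $\Phi_{\cP_n}$ to transport the Kodaira--Spencer class of the given thickening $X_n \hookrightarrow X_{n+1}$ to a Kodaira--Spencer class defining the desired $X^\prime_n \hookrightarrow X^\prime_{n+1}$, and then to check that with this choice the obstruction to deforming $\cP_n$ over the product thickening vanishes. Factoring $R_{n+1} \twoheadrightarrow R_n$ into a sequence of square-zero extensions, we may assume $0 \to I \to R_{n+1} \to R_n \to 0$ is square zero with $I$ free of rank $l$ over $R_n$. By \pref{prop:CLASSscheme} the thickening $X_n \hookrightarrow X_{n+1}$ corresponds to a Kodaira--Spencer class $\kappa_X \in H^1(X_n, \scrT_{X_n/R_n})^{\oplus l}$, which under the relative HKR isomorphism sits inside $HH^2(X_n/R_n)^{\oplus l}$.

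Since $\Phi_{\cP_n}$ is an equivalence, its kernel induces an isomorphism $\Phi^{HH}_{\cP_n} \colon HH^\bullet(X_n/R_n) \xrightarrow{\sim} HH^\bullet(X^\prime_n/R_n)$. For a strict Calabi--Yau manifold of dimension $>2$ one has $H^2(\scrO) = 0$ and $H^0(\wedge^2 \scrT) = H^0(\Omega^{\dim - 2}) = 0$ by the Hodge vanishings; these persist for the flat $R_n$-deformation $X^\prime_n$, so $HT^2(X^\prime_n/R_n) = H^1(X^\prime_n, \scrT_{X^\prime_n/R_n})$, and $\kappa_{X^\prime} := \Phi^{HH}_{\cP_n}(\kappa_X)$ defines via \pref{prop:CLASSscheme} a thickening $X^\prime_n \hookrightarrow X^\prime_{n+1}$. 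Writing $Y_n := X_n \times_{R_n} X^\prime_n$ with projections $q_n, p_n$, the Kodaira--Spencer class of the product thickening $Y_n \hookrightarrow Y_{n+1} := X_{n+1} \times_{R_{n+1}} X^\prime_{n+1}$ decomposes through the K\"unneth-type summands of $HT^2(Y_n/R_n)$ as $q_n^* \kappa_X + p_n^* \kappa_{X^\prime}$, and the obstruction to lifting $\cP_n$ is its cup product with the Atiyah class $\mathrm{At}(\cP_n)$. Under the standard identification of this cup product with the action of the kernel $\cP_n$ on Hochschild cohomology, the two contributions are matched by $\Phi^{HH}_{\cP_n}$ and cancel by the very definition of $\kappa_{X^\prime}$; hence the obstruction in $\Ext^2_{Y_n}(\cP_n, \cP_n)^{\oplus l}$ vanishes and a perfect lift $\cP_{n+1}$ on $Y_{n+1}$ exists.

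Finally, to verify that $\Phi_{\cP_{n+1}}$ is an equivalence one checks that the unit $\eta_{n+1}$ and counit $\varepsilon_{n+1}$ of the adjunction $\Phi_{\cP_{n+1}} \dashv \Phi_{(\cP_{n+1})_R}$ are isomorphisms on every perfect object. By derived base change they reduce modulo $I$ to the unit and counit of $\Phi_{\cP_n} \dashv \Phi_{(\cP_n)_R}$, which are isomorphisms by hypothesis; since the kernels are perfect and $X_{n+1}, X^\prime_{n+1}$ are proper flat over the artinian local ring $R_{n+1}$, the derived Nakayama lemma promotes these base-changed isomorphisms to isomorphisms of $\eta_{n+1}$ and $\varepsilon_{n+1}$ themselves, yielding the desired equivalence. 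The main obstacle is the explicit identification of $o(\cP_n)$ as $q_n^* \kappa_X \cup \mathrm{At}(\cP_n) - p_n^* \kappa_{X^\prime} \cup \mathrm{At}(\cP_n)$ and the matching of these two contributions under $\Phi^{HH}_{\cP_n}$; this amounts to a careful Atiyah-class computation on the product scheme compatible with the HKR decomposition and the push-pull definition of the $\cP_n$-action on Hochschild cohomology.
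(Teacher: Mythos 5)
Your strategy --- transporting the Kodaira--Spencer class of $X_n\hookrightarrow X_{n+1}$ through the isomorphism on relative Hochschild cohomology induced by $\cP_n$, using the Calabi--Yau vanishings $H^2(\scrO)=H^0(\wedge^2\scrT)=0$ to ensure the image lands in $H^1(\scrT_{X'_n/R_n})^{\oplus l}$ and hence defines an honest thickening $X'_{n+1}$, killing the obstruction to lifting $\cP_n$ to the product, and then upgrading the base-changed (co)unit isomorphisms via Nakayama over the artinian base --- is exactly the argument of the cited \cite[Lemma 3.1, 3.2]{Mora}, which this paper does not reprove. The one step you explicitly defer, namely the identification of the obstruction class in $\Ext^2(\cP_n,\cP_n)^{\oplus l}$ with the mismatch of the two Hochschild actions and its vanishing for $\kappa_{X'}=\Phi^{HH}_{\cP_n}(\kappa_X)$, is precisely Toda's theorem \cite{Tod} in its relative form, so treating it as a known input is legitimate rather than a gap.
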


Iterative application of
\pref{lem:OBSf}
allows us to deform the Fourier--Mukai kernel
$\cP_0 \in D^b(X_0 \times X^\prime_0)$
to some Fourier--Mukai kernel
$\cP_n \in \Perf(X_n \times_{R_n} X^\prime_n)$
for arbitrary order
$n$.   
We obtain a system of deformations
$\cP_n \in \Perf (X_n \times_{R_n} X^\prime_n)$
of
$\cP_0$
with compatible isomorphisms
$\cP_{n + 1} \otimes^L_{R_{n + 1}} R_n \to \cP_n$. 
According to
\cite[Proposition 3.6.1]{Lie}
there exists an effectivization,
i.e.,
a perfect complex $\cP_R$ on $X_R \times_R X^\prime_R$
with compatible isomorphisms
$\cP_R \otimes^L_R R_n \to \cP_n$. 
Recall that
to algebrize
$\scrX$
we used a filtered inductive system
$\{ R_i \}_{i \in I}$
of finitely generated $T$-subalgebras of
$R$
whose colimit is
$R$.
Taking an index
$i$
sufficiently large,
one finds smooth projective $R_i$-deformations
$X_{R_i}, X^\prime_{R_i}$
of
$X_0, X^\prime_0$
whose pullback along the canonical homomorphism
$R_i \hookrightarrow R$
are
$X_R, X^\prime_R$.
Since we have
$X_R \times_R X^\prime_R
\cong
\left( X_{R_i} \times_{R_i} X^\prime_{R_i} \right) \times_{R_i} R$,
by
\cite[Proposition 2.2.1]{Lie}
there exists a perfect complex
$\cP_{R_i}$
on
$X_{R_i} \times_{R_i} X^\prime_{R_i}$
with an isomorphism
$\cP_{R_i} \otimes^L_{R_i} R \to \cP_R$.
Finally,
the derived pullback
$\cP_S \in \Perf (X_S \times_S X^\prime_S)$
along
$R_i \to S$
yields a deformation of
$\cP_0$.

\begin{lem} {\rm{(}\cite[Proposition 3.3]{Mora}\rm{)}}
Let
$\cP_0$
be a Fourier--Mukai kernel 
defining the derived equivalence of Calabi--Yau manifolds
$X_0, X^\prime_0$
of dimension more than two.
Then there exists a perfect complex
$\cP_S$
on the fiber product
$X_S \times_S X^\prime_S$
of smooth projective versal deformations
with an isomorphism
$\cP_S \otimes^L_S \bfk \to \cP_0$.
\end{lem}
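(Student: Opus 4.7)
The plan is to construct $\cP_S$ by stepping through the layered structure already set up in the paper: first produce a compatible system of formal deformations, then effectivize to $R$, then descend to a finitely generated subalgebra, then pull back to $S$.

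First I would iteratively apply \pref{lem:OBSf} to the Fourier--Mukai kernel $\cP_0$. Starting from the equivalence $\Phi_{\cP_0}$, and given the sequence of thickenings $X_0 \hookrightarrow X_1 \hookrightarrow X_2 \hookrightarrow \cdots$ associated with the universal formal family $(R, \xi)$ for $\Def_{X_0}$, one produces inductively thickenings $X^\prime_n \hookrightarrow X^\prime_{n+1}$ and perfect complexes $\cP_{n+1}$ on $X_{n+1} \times_{R_{n+1}} X^\prime_{n+1}$ together with isomorphisms $\cP_{n+1} \otimes^L_{R_{n+1}} R_n \cong \cP_n$ such that each $\Phi_{\cP_{n+1}}$ remains an equivalence. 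Since $R$ prorepresents $\Def_{X^\prime_0}$ as well, the sequence $\{X^\prime_n\}$ is compatible with the formal family of $X^\prime_0$, so all kernels live over the appropriate formal fiber products.

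Next I would invoke Lieblich's effectivization theorem \cite[Proposition~3.6.1]{Lie} applied to the proper formal scheme obtained from $X_R \times_R X^\prime_R$ via the formal completion along the closed fiber, noting that $X_R, X^\prime_R$ are projective over the complete local noetherian ring $R$ thanks to the discussion of effectivity in the Calabi--Yau setting (\pref{subsec:DefCY}). This produces a perfect complex $\cP_R$ on $X_R \times_R X^\prime_R$ whose reductions reproduce the system $\{\cP_n\}$ up to the prescribed compatibility. Then, fixing an index $i \in I$ large enough so that both $X_R$ and $X^\prime_R$ arise as pullbacks of smooth projective $R_i$-deformations $X_{R_i}, X^\prime_{R_i}$ along $R_i \hookrightarrow R$, I would use \cite[Proposition~2.2.1]{Lie} together with the identification
\begin{align*}
X_R \times_R X^\prime_R \cong (X_{R_i} \times_{R_i} X^\prime_{R_i}) \times_{R_i} R
\end{align*}
to descend $\cP_R$ to a perfect complex $\cP_{R_i}$ on $X_{R_i} \times_{R_i} X^\prime_{R_i}$ with an isomorphism $\cP_{R_i} \otimes^L_{R_i} R \to \cP_R$. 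Finally, taking the derived pullback of $\cP_{R_i}$ along the first order approximation $R_i \to S$ from \pref{subsec:DefCY} yields $\cP_S \in \Perf(X_S \times_S X^\prime_S)$ together with the desired isomorphism $\cP_S \otimes^L_S \bfk \to \cP_0$.

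The main obstacle, as I see it, is the effectivization step: unlike sheaves or classical schemes, perfect complexes on formal schemes do not in general effectivize, and one must genuinely rely on the projectivity of $X_R \to \Spec R$ (and hence of the fiber product) and Lieblich's theory of the mapping stack of perfect complexes. All the remaining steps, namely the inductive construction of $\cP_n$, the descent along $R_i \hookrightarrow R$, and the base change to $S$, are fairly formal once one has the effective object $\cP_R$ and has matched up the compatibility data with the formal family $\xi$ of $X_S$.
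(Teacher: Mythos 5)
Your proposal follows exactly the same route as the paper: iterated application of \pref{lem:OBSf} to build the compatible system $\{\cP_n\}$, effectivization via \cite[Proposition 3.6.1]{Lie} to obtain $\cP_R$, descent to a finitely generated $T$-subalgebra $R_i$ via \cite[Proposition 2.2.1]{Lie} and the identification $X_R \times_R X^\prime_R \cong (X_{R_i} \times_{R_i} X^\prime_{R_i}) \times_{R_i} R$, and finally derived pullback along $R_i \to S$. This matches the paper's argument step for step, including the citations, so the proposal is correct.
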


\subsection{Inherited equivalences}
The schemes
$X_{R_i}, X^\prime_{R_i}$,
and
their fiber product
$X_{R_i} \times_{R_i} X^\prime_{R_i}$
together with the pullbacks along $T$-algebra homomorphisms
$R_i \to R_j \to R$
for
$i \leq j$
form the commutative diagram  
\begin{align*} 
\begin{gathered}
\xymatrix{
& X_R \times_R X^\prime_R \ar[dl]_{q} \ar[d]^{f^{\prime \prime}_j} \ar[dr]^{p} & \\
X_R \ar[d]^{f_j} & X_{R_j} \times_{R_j} X^\prime_{R_j} \ar[dl]_{q_j} \ar[d]^{f^{\prime \prime}_{ij}} \ar[dr]^{p_j} & X^\prime_R \ar[d]^{f^\prime_j} \\
X_{R_j} \ar[d]^{f_{ij}} & X_{R_i} \times_{R_i} X^\prime_{R_i} \ar[dl]_{q_i} \ar[dr]^{p_i} & X^\prime_{R_j} \ar[d]^{f^\prime_{ij}} \\
X_{R_i} & & X^\prime_{R_i},
}
\end{gathered}
\end{align*}
where
$q_i, p_i$
are smooth projective of relative dimension
$\dim X_0$.
Given a collection
$\{ \cP_i \}_{i \in I}$
with
$\cP_i \in \Perf (X_{R_i} \times_{R_i} X^\prime_{R_i})$
satisfying
$\cP_j \cong \cP_{R_i} \otimes^L_{R_i} R_j$
and
$\cP_R \cong \cP_{R_j} \otimes^L_{R_j} R$
for all $i \leq j$,
consider the relative integral functors
\begin{align*}
\Phi_{\cP_i} 
=
Rp_{i*} \left( \cP_i \otimes^L q^*_i \left( - \right) \right)
\colon
D^b (X_{R_i})
\to
D^b (X^\prime_{R_i}).
\end{align*}
Since
$p_i$
is projective
and
$\cP_i$ is of finite homological dimension,
i.e.,
$\cP_i \otimes^L q^*_i F_{R_i}$
are bounded for each object
$F_{R_i} \in D^b (X_{R_i})$,
one can apply
\cite[Lemma 1.8]{LST}
to see that
$\Phi_{\cP_i}$
send perfect complexes to perfect complexes.
We use the same symbol to denote the restricted functor.

\begin{thm} \label{thm:inherit}
There exists an index
$j \in I$
such that
for all
$k \geq j$
the functors
\begin{align*}
\Phi_{\cP_k}
\colon
\Perf(X_{R_k})
\to
\Perf(X^\prime_{R_k})
\end{align*}
are equivalences of triangulated categories of perfect complexes.
In particular,
the dg categories
$\Perf_{dg}(X_{R_k}), \Perf_{dg}(X^\prime_{R_k})$
of perfect complexes are quasi-equivalent.
\end{thm}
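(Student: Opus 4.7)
The strategy is to lift the derived equivalence all the way to the effectivization $X_R, X^\prime_R$ and then descend the equivalence to some sufficiently large finite level $R_j$ by a filtered colimit argument. Iterated application of \pref{lem:OBSf} produces a compatible system $\{\cP_n\}_n$ of deformations of $\cP_0$ such that each $\Phi_{\cP_n}$ is an equivalence. By \cite[Proposition 3.6.1]{Lie} this system effectivizes to $\cP_R \in \Perf(X_R \times_R X^\prime_R)$, and combining \cite[Corollary 4.2]{Mora} with \cite[Theorem B]{CNS} the integral functor $\Phi_{\cP_R}$ is an equivalence $\Perf(X_R) \simeq \Perf(X^\prime_R)$.

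Next I would bring the kernel down to a finite level. By \cite[Proposition 2.2.1]{Lie} there exists an index $i \in I$ and a perfect complex $\cP_{R_i}$ on $X_{R_i} \times_{R_i} X^\prime_{R_i}$ whose derived pullback to $X_R \times_R X^\prime_R$ is $\cP_R$; for $k \geq i$ we set $\cP_{R_k} = \cP_{R_i} \otimes^L_{R_i} R_k$. The functor $\Phi_{\cP_{R_k}}$ is an equivalence exactly when the unit $\eta_{R_k} \colon \scrO_{\Delta_{X_{R_k}/R_k}} \to (\cP_{R_k})_R *_{R_k} \cP_{R_k}$ and counit $\epsilon_{R_k} \colon \cP_{R_k} *_{R_k} (\cP_{R_k})_R \to \scrO_{\Delta_{X^\prime_{R_k}/R_k}}$ of the Grothendieck--Verdier adjunction $\Phi_{\cP_{R_k}} \dashv \Phi_{(\cP_{R_k})_R}$ are isomorphisms of perfect complexes. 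By \cite[Theorem 1.2]{BFN}, together with Remark \pref{rmk:BFN1}, convolution of Fourier--Mukai kernels and the formation of the Grothendieck--Verdier dual kernel commute with derived base change along $R_k \hookrightarrow R$, so $\eta_R \cong \eta_{R_k} \otimes^L_{R_k} R$ and $\epsilon_R \cong \epsilon_{R_k} \otimes^L_{R_k} R$, both of which are already known to be isomorphisms.

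Finally, I would exploit that $R = \colim_k R_k$ is a filtered colimit of the finitely generated $T$-subalgebras $R_k$ and that $X_{R_k}, X^\prime_{R_k}$ are smooth projective over $R_k$. The cones $\Cone(\eta_{R_k})$ and $\Cone(\epsilon_{R_k})$ are perfect complexes on proper smooth $R_k$-schemes with coherent cohomology sheaves, hence with finitely generated hypercohomology over $R_k$. Since coherent cohomology of a proper morphism commutes with filtered colimits of the base and since $(-)\otimes^L_{R_k} R = \colim_{l \geq k} (-)\otimes^L_{R_k} R_l$, vanishing of these cones after base change to $R$ forces their vanishing at some finite level $j \geq i$. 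For such $j$ both $\eta_{R_j}$ and $\epsilon_{R_j}$ are isomorphisms, so $\Phi_{\cP_{R_j}}$ is an equivalence; for any $k \geq j$ further derived base change along $R_j \to R_k$ preserves the equivalence. The quasi-equivalence of dg enhancements in the last assertion then follows from \cite[Theorem B]{CNS}.

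The main obstacle will be making the last paragraph fully rigorous: one must verify that the isomorphism property for morphisms between perfect complexes over $X_R \times_R X_R$ and $X^\prime_R \times_R X^\prime_R$ genuinely descends to some $X_{R_j} \times_{R_j} X_{R_j}$, which requires combining the filtered colimit description of $R$ with derived base change and properness of the fiber products, as well as checking that this descent is compatible with the Fourier--Mukai convolution used to express $\eta_R$ and $\epsilon_R$.
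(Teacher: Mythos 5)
Your overall architecture is viable and genuinely different from the paper's. You work entirely at the level of kernels: granting that the Grothendieck--Verdier adjunction is implemented by morphisms of kernels $\eta \colon \scrO_\Delta \to \cP_R * \cP$ and $\epsilon \colon \cP * \cP_R \to \scrO_\Delta$ compatible with derived base change, you descend the vanishing of $\Cone(\eta_R)$ and $\Cone(\epsilon_R)$ from $R$ to some finite level $R_j$ using the filtered-colimit description of perfect complexes. The paper never uses the equivalence over $R$ at the kernel level: it applies the counit to a single generator $E_{R_j}$ of $\Perf(X_{R_j})$, shows the resulting cone vanishes over $X_R$ by restricting to the closed fiber $X_0$ (where $\Phi_{\cP_0}$ is an equivalence) and using that every closed point of $X_R$ lies in the closed fiber, descends that single cone to finite level by the same \cite[Proposition 2.2.1]{Lie} you invoke, and then runs a fiberwise argument over closed points $u \in \Spec R_j$ (using that $E_{R_j}|_{X_u}$ generates $\Perf(X_u)$, together with \cite[Proposition 1.3]{LST} to pass from fibers to the total space). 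The paper's detour through fibers buys exactly what your approach needs but does not supply: it only ever uses kernel-level identities $\cP_{u,j} * (\cP_{u,j})_L \cong \scrO_{\Delta_{u,j}}$ over smooth projective $\bfk$-varieties, where the standard Fourier--Mukai calculus applies.

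The gap in your write-up is not where you locate it. The descent step in your last paragraph is unproblematic: $\Cone(\eta_{R_i})$ is a perfect complex on the smooth proper $R_i$-scheme $X_{R_i} \times_{R_i} X_{R_i}$ whose derived pullback to $R$ vanishes, so \cite[Proposition 2.2.1]{Lie} gives vanishing at some finite stage; no hypercohomology finiteness is needed. What does need an argument is your assertion that $\eta_R$ and $\epsilon_R$ are ``already known to be isomorphisms''. From $\Phi_{\cP_R}$ being an equivalence you only get that the \emph{functor-level} unit and counit are isomorphisms; to transfer this to the kernels you must either (a) invoke the full strength of \cite[Theorem 1.2]{BFN} to identify kernels with functors so that isomorphisms are reflected, after checking that your duality-theoretic $\eta_R$, $\epsilon_R$ really do induce the adjunction unit and counit under this identification, or (b) run a support argument: $\Phi_{\Cone(\eta_R)}$ kills all skyscrapers $\scrO_x$, every closed point of $X_R \times_R X_R$ lies over the closed point of $\Spec R$ by properness, and a perfect complex whose support contains no closed point is zero. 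Either route works, but (b) is essentially the paper's central trick, so it should be made explicit rather than absorbed into a citation; you should also record why the kernel-level $\eta$, $\epsilon$ exist and commute with the (Tor-independent) base changes along $R_i \to R_k \to R$.
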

\begin{proof}
Under the assumption
one always finds deformations
$[X_{R_j}, i_{R_j}], [X^\prime_{R_j}, i^\prime_{R_j}]$
smooth projective over
$(R_j, \frakm_{R_j})$
for sufficiently large index
$j \in I$.
Moreover,
the pullbacks along
$R_j \to R$
and
$R_j \to S$
yield respectively
effectivizations
$X_R, X^\prime_R$
of universal formal families
$\xi, \xi^\prime$
and
versal deformations
$(\Spec S, s, X_S), (\Spec S, s, X^\prime_S)$
of
$X_0, X^\prime_0$.
Recall that
$(\Spec S, s)$
is an \'etale neighborhood of
$t$
in
$\Spec T$
with
$t$
corresponding to the maximal ideal
$(t_1, \ldots, t_d) \subset T$,
and
the formal completions of
$X_S, X^\prime_S$
along the closed fibers over
$s$
are isomorphic to
$\hat{X}_R, \hat{X}^\prime_R$. 
In summary,
we have the pullback diagrams
\begin{align*}
\begin{gathered}
\xymatrix{
X_0  \ar@{^{(}->}[r]^-{} \ar_{\pi}[d] & X_S \ar[r]^-{f_{jS}} \ar_{\pi_S}[d] & X_{R_j} \ar_{\pi_{R_j}}[d] & X_R \ar[l]_-{f_j} \ar_{\pi_R}[d] \\
\Spec \bfk \ar@{^{(}->}[r]^-{} & \Spec S \ar[r]_-{} & \Spec R_j & \Spec R \ar[l] \\
X^\prime_0 \ar@{^{(}->}[r]^-{} \ar^{\pi^\prime}[u] & X^\prime_S \ar[r]^-{f^\prime_{jS}} \ar^{\pi^\prime_S}[u] & X^\prime_{R_j} \ar^{\pi^\prime_{R_j}}[u] & X^\prime_R. \ar[l]_-{f^\prime_j} \ar^{\pi^\prime_R}[u]
}
\end{gathered}
\end{align*}

Let
$\cP_0 \in D^b(X_0 \times_\bfk X^\prime_0)$
be a Fourier--Mukai kernel defining the derived equivalence.
As explained above,
one can deform
$\cP_0$
to a perfect complex
$\cP_j \in D^b(X_{R_j} \times_{R_j} X^\prime_{R_j})$.
Due to the Grothendieck--Verdier duality
the functor
$\Phi_{\cP_j}$
admits a left adjoint
$\Phi^L_{\cP_j} = \Phi_{(\cP_i)_L}$
with kernel
$(\cP_j)_L
=
\cP^\vee_{R_j} \otimes p^*_j \omega_{\pi^\prime_{R_j}}[\dim X_0]$.
By
\cite[Corollary 3.1.2]{BV}
the category
$\Perf (X_{R_j})$
is generated by some single object
$E_{R_j}$.
Namely,
each object
$F_{R_j} \in \Perf(X_{R_j})$
can be obtained from
$E_{R_j}$
by taking
isomorphisms,
finite direct sums,
direct summands,
shifts,
and
bounded number of cones.
The counit morphism
$\eta_j \colon \Phi^L_{\cP_j} \circ \Phi_{\cP_j} \to \id_{\Perf (X_{R_j})}$
gives the distinguished triangle
\begin{align*} 
\Phi^L_{\cP_j} \circ \Phi_{\cP_j} (E_{R_j})
\xrightarrow{\eta_j(E_{R_j})}
E_{R_j}
\to
C(E_{R_j}) \coloneqq \Cone (\eta_j ( E_{R_j})).
\end{align*}
For sufficiently large
$k \geq j$
we will show that
$\eta_k(E_{R_k})$
is an isomorphism
and then
$\Phi_{\cP_k}$
is fully faithful.
Similarly,
one can show that
$\Phi^L_{\cP_k}$
is also fully faithful.
Thus
$\Phi_{\cP_k}$
is an equivalence,
as it is a fully faithful functor admitting a fully faithful left adjoint.

Pullback along
$R_j \subset R_k$
yields
\begin{align} \label{eq:DT}
\Phi^L_{\cP_k} \circ \Phi_{\cP_k} (E_{R_k})
\xrightarrow{f^*_{jk} \eta_j(E_{R_j})}
E_{R_k}
\to
f^*_{jk}C(E_{R_j})
\end{align}
with
$E_k = f^*_{jk} E_j$
and
$\cP_k = (f_{jk} \times f^\prime_{jk})^* \cP_j$.
Further pullback along
$R_k \subset R$
yields
\begin{align*} 
\Phi^L_{\cP_R} \circ \Phi_{\cP_R} (E_R)
\xrightarrow{f^*_j \eta_j(E_{R_j})}
E_R
\to
f^*_j C(E_{R_j}),
\end{align*}
where
$f_j \colon X_R \to X_{R_j}$
satisfies
$f_{jk} \circ f_k = f_j$.
Restriction to the closed fiber
$X_0$
yields
\begin{align*} 
\Phi^L_{\cP_0} \circ \Phi_{\cP_0} (E_R |_{X_0})
\xrightarrow{\eta_j(E_{R_j} |_{X_0})}
E_R |_{X_0}
\to
(f^*_j C(E_{R_j})) |_{X_0}.
\end{align*}
Note that
since
$f^{-1}_j(X_0) = X_0$
and
the restriction of the counit morphism is the counit morphism,
we have
$(f^*_j \eta(E_{R_j})) |_{X_0} = \eta(E_{R_j} |_{X_0})$.
Each term in the above distinguished triangle is perfect 
so that
we may consider the restriction to the closed fiber.
Since
$\Phi_{\cP_0}$
is an equivalence,
$\eta_j(E_{R_j} |_{X_0})$
is an isomorphism
and
we obtain a quasi-isomorphism
$f^*_j C(E_{R_j}) |_{X_0} \cong 0$.
Then the support of
$f^*_j C(E_{R_j})$
is a proper closed subscheme of
$X_R$
which does not contain any closed point of
$X_R$.
Thus the quasi-isomorphism extends to
$f^*_j C(E_{R_j}) \cong 0$. 
From
\cite[Proposition 2.2.1]{Lie}
it follows
$f^*_{jk} C(E_{R_j}) \cong 0$
when
$k \in I$
is sufficiently large.

Take any closed point
$u \in \Spec R_j$
whose inverse image by
$g_{jk} \colon \Spec R_k \to \Spec R_j$
is not empty.
We have the pullback diagrams
\begin{align*}
\begin{gathered}
\xymatrix{
X_{R_j} \ar_{\pi_{R_j}}[d]  & X_u \ar@{_{(}->}[l]^-{} \ar[d] & f^{-1}_{jk}(X_u) \ar_{f_{u, jk}}[l] \ar@{^{(}->}[r]^-{} \ar[d] & X_{R_k} \ar_{\pi_{R_k}}[d] \\
\Spec R_j & \Spec \bfk \ar@{_{(}->}[l]^-{} & g^{-1}_{jk}(u) \ar_{g_{u, jk}}[l] \ar@{^{(}->}[r]^-{} & \Spec R_k.
}
\end{gathered}
\end{align*}
Note that
$f_{u, jk}$
is surjective by construction
and
flat
as
$g_{u, jk}$
is flat. 
The restriction of
\pref{eq:DT}
to
$f^{-1}_{jk} (X_u)$
yields
\begin{align*}
\Phi^L_{\cP_{u, jk}}
\circ
\Phi_{\cP_{u, jk}} (E_{R_k} |_{f^{-1}_{jk} (X_u)})
\xrightarrow{f^*_{u, jk} \eta_j (E_{R_j} |_{X_u})}
E_{R_k} |_{f^{-1}_{jk} (X_u)}
\to
f^*_{u, jk} C(E_{R_j} |_{X_u})
\cong
0,
\end{align*}
where
$\cP_{u, jk} = \cP_k |_{f^{-1}_{jk} (X_u) \times_{g^{-1}_{jk}(u)} f^{-1}_{jk} (X^\prime_u)}$.
It follows
$C(E_{R_j}|_{X_u}) \cong 0$
and
$\eta_j(E_{R_j} |_{X_u})$
is an isomorphism.

By
\cite[Lemma 3.4.1]{BV}
the restriction
$E_{R_j} |_{X_u}$
is a generator of
$\Perf(X_u)$.
Then each object
$F_u \in \Perf(X_u)$
can be obtained from
$E_{R_j} |_{X_u}$
by taking
isomorphisms,
finite direct sums,
direct summands,
shifts,
and
bounded number of cones.
We may assume that
$E_{R_j} |_{X_u}$
has no nontrivial direct summands,
as
$\Phi^L_{\cP_{u, j}}$
and
$\Phi_{\cP_{u, j}}$
commute with direct sums on
$\Perf(X^\prime_u)$
and
$\Perf(X_u)$
respectively
with
$\cP_{u, j} = \cP_j |_{X_u \times X^\prime_u}$
\cite[Corollary 3.3.4]{BV}.
One inductively sees that 
the counit morphism
$\Phi^L_{\cP_{u, j}} \circ \Phi_{\cP_{u, j}}(F_u) \to F_u$
is an isomorphism. 
In other words,
the restriction
$\Phi_{\cP_{u, j}}$
of
$\Phi_{\cP_j}$
to
$X_u$
is fully faithful.
Similarly,
the restriction
$\Phi^L_{\cP_{u, j}}$
of
$\Phi^L_{\cP_j}$
to
$X^\prime_u$
is also fully faithful.
Thus
$\Phi_{\cP_{u, j}}$
is an equivalence.

Since
$X_u$
is a smooth projective $\bfk$-variety,
\begin{align*}
\Phi^L_{\cP_{u, j}} \circ \Phi_{\cP_{u, j}} \cong \id_{\Perf (X_u)}, \
\Phi_{\cP_{u, j}} \circ \Phi^L_{\cP_{u, j}} \cong \id_{\Perf (X^\prime_u)}
\end{align*}
imply
\begin{align*}
\cP_{u, j} * (\cP_{u, j})_L \cong \scrO_{\Delta_{u,j}}, \
(\cP_{u, j})_L * \cP_{u, j} \cong \scrO_{\Delta^\prime_{u,j}}
\end{align*}
where
\begin{align*}
\Delta_{u,j} \colon X_u \hookrightarrow X_u \times X_u, \
\Delta^\prime_{u,j} \colon X^\prime_u \hookrightarrow X^\prime_u \times X^\prime_u
\end{align*}
are the diagonal embeddings.
Pullback by
$f_{u, jk}$
yields
\begin{align*}
\cP_{u, jk} * (\cP_{u, jk})_L \cong \scrO_{\Delta_{u,jk}}, \
(\cP_{u, jk})_L * \cP_{u, jk} \cong \scrO_{\Delta^\prime_{u,jk}}
\end{align*}
where
\begin{align*}
\Delta_{u,jk} \colon f^{-1}_{jk}(X_u) \hookrightarrow f^{-1}_{jk}(X_u) \times_{g^{-1}_{jk}(u)} f^{-1}_{jk}(X_u), \
\Delta^\prime_{u,jk} \colon (f^\prime_{jk})^{-1}(X^\prime_u) \hookrightarrow (f^\prime_{jk})^{-1}(X^\prime_u) \times_{g^{-1}_{jk}(u)} (f^\prime_{jk})^{-1}(X^\prime_u)
\end{align*}
are the relative diagonal embeddings.
Thus
$\Phi_{\cP_{u, jk}}$
is an equivalence.
Since
$X_{R_k}$
is covered by the collection
$\{ f^{-1}_{jk} (X_u) \}_u$
with
$u$
running through all the closed points of
$\Spec R_j$,
from
\cite[Proposition 1.3]{LST}
it follows that
$\Phi_{\cP_k}$
is an equivalence.
By the same argument,
we conclude that
$\Phi_{\cP_l}$
are equivalences for all
$l \geq k$.
Applying
\cite[Theorem B]{CNS},
we obtain a quasi-equivalence
$\Perf_{dg}(X_{R_l}) \simeq_{qeq} \Perf_{dg}(X^\prime_{R_l})$
for all
$l \geq k$.
\end{proof}

\begin{cor} \label{cor:inherit}
Let
$X_0, X^\prime_0$
be derived-equivalent Calabi--Yau manifolds of dimension more than two
and
$X_S, X^\prime_S$
their smooth projective versal deformations
over a common nonsingular affine $\bfk$-variety
$\Spec S$.
Assume that
$X_S, X^\prime_S$
correspond to a first order approximation
$R_j \to S$
of
$R_j \hookrightarrow R$
for sufficiently large
$j \in I$.
Then
$X_S, X^\prime_S$
are derived-equivalent.
In particular,
the dg categories
$\Perf_{dg}(X_S), \Perf_{dg}(X^\prime_S)$
of perfect complexes are quasi-equivalent.
\end{cor}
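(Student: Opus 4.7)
The plan is to deduce the statement directly from \pref{thm:inherit} by pulling back the Fourier--Mukai kernel along the first order approximation $\varphi \colon R_j \to S$. I would choose $j \in I$ sufficiently large so that \pref{thm:inherit} applies and produces a perfect complex $\cP_{R_j} \in \Perf(X_{R_j} \times_{R_j} X^\prime_{R_j})$ whose integral functor is an equivalence of triangulated categories of perfect complexes. Since $X_S, X^\prime_S$ are obtained as pullbacks of $X_{R_j}, X^\prime_{R_j}$ along $\varphi$, the induced morphism $X_S \times_S X^\prime_S \to X_{R_j} \times_{R_j} X^\prime_{R_j}$ allows me to define $\cP_S$ as the derived pullback of $\cP_{R_j}$, reducing the task to verifying that $\Phi_{\cP_S} \colon \Perf(X_S) \to \Perf(X^\prime_S)$ is an equivalence.

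I would establish this by replicating the fiber-wise strategy from the proof of \pref{thm:inherit}, now with $S$ in place of $R_k$. Because $\Spec S$ is an \'etale neighborhood of the $\bfk$-rational point $t \in \Spec T$, every closed point $s^\prime \in \Spec S$ is $\bfk$-rational, and its image under $\varphi$ is a closed point $u \in \Spec R_j$. By construction of the base change, the fibers $X_{s^\prime}, X^\prime_{s^\prime}$ are canonically isomorphic to $X_u, X^\prime_u$, and under this isomorphism the restriction of $\cP_S$ agrees with that of $\cP_{R_j}$. The fiber-wise computation in the proof of \pref{thm:inherit} then yields
\begin{align*}
\cP_{s^\prime} * (\cP_{s^\prime})_L \cong \scrO_{\Delta_{s^\prime}}, \quad (\cP_{s^\prime})_L * \cP_{s^\prime} \cong \scrO_{\Delta^\prime_{s^\prime}}
\end{align*}
for every such $s^\prime$. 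Applying \cite[Proposition 1.3]{LST} to the covering of $X_S$ by neighborhoods of these closed fibers upgrades the fiber-wise identities to global ones $\cP_S * (\cP_S)_L \cong \scrO_{\Delta_S}$ and $(\cP_S)_L * \cP_S \cong \scrO_{\Delta^\prime_S}$, yielding the desired equivalence $\Phi_{\cP_S}$. The claim on dg enhancements then follows from \cite[Theorem B]{CNS}.

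The main obstacle will be verifying that the Grothendieck--Verdier adjoint $(\cP_S)_L$ and the convolution operation genuinely commute with the base change $\varphi$, so that the fiber-wise identities assemble into global ones via \cite[Proposition 1.3]{LST}. This is feasible because $\pi_S \colon X_S \to \Spec S$ and $\pi^\prime_S$ are smooth projective and the closed points of $\Spec S$ are $\bfk$-rational, ensuring that $(\cP_S)_L$ is the pullback of $(\cP_{R_j})_L$ along $X_S \times_S X^\prime_S \to X_{R_j} \times_{R_j} X^\prime_{R_j}$ and that the convolutions behave correctly upon restriction to closed fibers. Once these compatibilities are secured, the remainder of the argument is a direct transplant of the proof of \pref{thm:inherit}.
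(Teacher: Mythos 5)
Your proposal is correct and follows essentially the same route as the paper: invoke \pref{thm:inherit} to choose $j$ with $\Phi_{\cP_j}$ already an equivalence over $R_j$, pull the kernel back along $R_j \to S$, and rerun the fiber-wise argument from the proof of \pref{thm:inherit} with $X_S \to X_{R_j}$ in place of $X_{R_k} \to X_{R_j}$, using that closed points of $\Spec S$ are $\bfk$-rational and map to closed points of $\Spec R_j$, before concluding with \cite[Proposition 1.3]{LST} and \cite[Theorem B]{CNS}. The compatibility of the Grothendieck--Verdier adjoint and the convolution with this base change, which you flag as the remaining obstacle, is exactly the point handled in the proof of \pref{thm:inherit} by pulling back the fiber-wise convolution identities, so nothing new is needed.
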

\begin{proof}
By
assumption
one can apply
\pref{thm:inherit}
to find an index
$j \in I$
such that
$X_S, X^\prime_S$
are the pullbacks of smooth projective families
$X_{R_j}, X^\prime_{R_j}$
over
$R_j$
satisfying
$\Perf_{dg}(X_{R_j}) \simeq_{qeq} \Perf_{dg}(X^\prime_{R_j})$.
Consider the distinguished triangle
\begin{align*} 
\Phi^L_{\cP_j} \circ \Phi_{\cP_j} (E_{R_j})
\xrightarrow{\eta_j(E_{R_j})}
E_{R_j}
\to
C(\eta_j(E_{R_j}))
\cong
0.
\end{align*}
Applying the same argument in the above proof to
$X_S \to X_{R_j}$
instead of
$X_{R_k} \to X_{R_j}$,
one sees that
$\Phi_{\cP_S} \colon D^b(X_S) \to D^b(X^\prime_S)$
is an equivalence
with
$\cP_S = (f_{jS} \times f^\prime_{jS})^* \cP_j$.
\end{proof}

\begin{prop}
Let
$X_0, X^\prime_0$
be derived-equivalent Calabi--Yau manifolds of dimension more than two
and
$X_S, X^\prime_S$
smooth projective versal deformations over a common nonsingular affine variety
$\Spec S$.
Then the dg categorical generic fibers are quasi-equivalent.
\end{prop}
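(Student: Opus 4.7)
The plan is to combine Proposition \pref{prop:dgCGF} with Corollary \pref{cor:inherit}. By Proposition \pref{prop:dgCGF}, there are quasi-equivalences
\begin{align*}
\Perf_{dg}(X_S)/\Perf_{dg}(X_S)_0 &\simeq_{qeq} \Perf_{dg}(X_{Q(S)}), \\
\Perf_{dg}(X^\prime_S)/\Perf_{dg}(X^\prime_S)_0 &\simeq_{qeq} \Perf_{dg}(X^\prime_{Q(S)}),
\end{align*}
so it suffices to produce a quasi-equivalence $\Perf_{dg}(X_{Q(S)}) \simeq_{qeq} \Perf_{dg}(X^\prime_{Q(S)})$ between the generic fibers.

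Under the standing hypothesis inherited from Corollary \pref{cor:inherit} --- that $X_S, X^\prime_S$ arise from a first order approximation $R_j \to S$ of $R_j \hookrightarrow R$ for sufficiently large $j \in I$ --- the proof of that corollary provides a Fourier--Mukai kernel $\cP_S \in \Perf(X_S \times_S X^\prime_S)$ whose relative integral functor $\Phi_{\cP_S} \colon \Perf(X_S) \to \Perf(X^\prime_S)$ is an equivalence of triangulated categories. In fact, chasing the proof of \pref{thm:inherit}, this equivalence is witnessed by convolution identities of the form $\cP_S * (\cP_S)_L \cong \scrO_{\Delta_S}$ and $(\cP_S)_L * \cP_S \cong \scrO_{\Delta^\prime_S}$, where $(\cP_S)_L = \cP_S^\vee \otimes p_S^* \omega_{\pi^\prime_S}[\dim X_0]$ is the kernel of the left adjoint and $\Delta_S, \Delta^\prime_S$ denote the relative diagonals.

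Next I would base change along the flat localization $S \to Q(S)$. Let $\cP_{Q(S)}$ denote the pullback of $\cP_S$ to $X_{Q(S)} \times_{Q(S)} X^\prime_{Q(S)}$. Since flat base change commutes with derived pushforward and pullback along the smooth projective relative projections, and since the convolution of Fourier--Mukai kernels is built from such operations together with derived tensor products, the two displayed convolution identities descend to
\begin{align*}
\cP_{Q(S)} * (\cP_{Q(S)})_L \cong \scrO_{\Delta_{Q(S)}}, \quad (\cP_{Q(S)})_L * \cP_{Q(S)} \cong \scrO_{\Delta^\prime_{Q(S)}}.
\end{align*}
Thus $\Phi_{\cP_{Q(S)}} \colon \Perf(X_{Q(S)}) \to \Perf(X^\prime_{Q(S)})$ is an equivalence of triangulated categories. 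Since $X_{Q(S)}, X^\prime_{Q(S)}$ inherit smoothness and projectivity from $X_S, X^\prime_S$, invoking \cite[Theorem B]{CNS} lifts this equivalence to a quasi-equivalence of dg enhancements, which combined with the two displayed quasi-equivalences above completes the argument.

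The main technical point to verify carefully is the compatibility of the Fourier--Mukai convolution and the formation of left adjoint kernels with flat base change along $S \to Q(S)$. However this is a standard consequence of the projection formula together with flat base change for derived pushforward along smooth projective morphisms, and no new ideas beyond those already used in the proof of \pref{thm:inherit} are needed.
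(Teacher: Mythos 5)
Your proof follows the same skeleton as the paper's: reduce via Proposition \pref{prop:dgCGF} to comparing $\Perf_{dg}(X_{Q(S)})$ and $\Perf_{dg}(X^\prime_{Q(S)})$, establish a derived equivalence of the generic fibers using the deformed Fourier--Mukai kernel, and conclude with the uniqueness of dg enhancements. The only real difference is the middle step: the paper simply cites Corollary \pref{cor:inherit} together with the external results [Mora, Theorem~1.1] and [Morb, Corollary~4.2], which extend the derived equivalence from the special to the general/generic fibre, whereas you argue it directly by flat base change of the kernel along $S \to Q(S)$. That is a legitimate, self-contained route, but it contains one imprecision: the proof of Theorem \pref{thm:inherit} does \emph{not} establish the global convolution identities $\cP_S * (\cP_S)_L \cong \scrO_{\Delta_S}$ over $S$; it only proves such identities after restriction to (preimages of) closed fibres and then invokes [LST, Proposition~1.3] to conclude that the \emph{functor} $\Phi_{\cP_S}$ is an equivalence. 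Passing from ``$\Phi_{(\cP_S)_L * \cP_S}$ is isomorphic to the identity functor'' to the kernel-level identity would require uniqueness of relative Fourier--Mukai kernels, which is not available off the shelf. The repair is easy and stays entirely within your framework: what the proof of Corollary \pref{cor:inherit} does give globally over $X_S$ is the vanishing of the cone of the counit on a generator, $C(\eta(E_S)) \cong 0$; since $S \to Q(S)$ is flat, the pullback of a generator generates $\Perf(X_{Q(S)})$ and the vanishing descends to the generic fibre, yielding full faithfulness of $\Phi_{\cP_{Q(S)}}$ and of its adjoint, hence the desired equivalence, after which [CNS, Theorem~B] applies as you say.
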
  
\begin{proof}
We have
\begin{align*}
\Perf_{dg}(X_S) / \Perf_{dg}(X_S)_0
\simeq_{qeq}
\Perf_{dg}(X_{Q(S)})
\simeq_{qeq}
\Perf_{dg}(X^\prime_{Q(S)})
\simeq_{qeq}
\Perf_{dg}(X_S) / \Perf_{dg}(X_S)_0,
\end{align*}
where
the first
and
the tirhd quasi-equivalences follow from Proposition
\pref{prop:dgCGF}.
The second quasi-equivalence follows from
the above corollary,
\cite[Theorem 1.1]{Mora},
\cite[Corollary 4.2]{Morb},
and
\cite[Theorem B]{CNS}.
\end{proof}


\end{document}